\newtheorem{theorem}{Theorem}[section]
\newtheorem{lemma}[theorem]{Lemma}
\newtheorem{corollary}[theorem]{Corollary}
\newtheorem{prop}[theorem]{Proposition}
\newtheorem{ass}[theorem]{Assumption}
\theoremstyle{definition}
\newtheorem{definition}[theorem]{Definition}
\newtheorem{construction}[theorem]{Construction}
\theoremstyle{remark}
\newtheorem{remark}[theorem]{Remark}
\numberwithin{equation}{section}
\setlist{leftmargin=*}
\newsavebox{\@brx}
\newcommand{\llangle}[1][]{\savebox{\@brx}{\(\m@th{#1\langle}\)}%
  \mathopen{\copy\@brx\kern-0.5\wd\@brx\usebox{\@brx}}}
\newcommand{\rrangle}[1][]{\savebox{\@brx}{\(\m@th{#1\rangle}\)}%
  \mathclose{\copy\@brx\kern-0.5\wd\@brx\usebox{\@brx}}}
\newcommand\nc{\newcommand}
\nc{\E}{\mathbb{E}}
\nc{\R}{\mathbb R}
\nc{\C}{\mathbb C}
\nc{\Z}{\mathbb Z}
\nc{\wt}{\widetilde}
\nc{\rnc}{\renewcommand}
\nc{\e}{\varepsilon}
\nc{\grad}{\nabla}
\nc{\fsp}{\fontdimen2\font=2.21pt}
\nc{\fspp}{\fontdimen2\font=2pt}
\rnc{\t}{{t}}
\nc{\s}{{s}}
\nc{\x}{{x}}
\nc{\y}{{y}}
\nc{\w}{{w}}
\nc{\z}{{z}}
\rnc{\r}{{r}}
\rnc{\k}{{k}}
\rnc{\j}{{j}}
\nc{\m}{{m}}
\nc{\n}{{n}}
\rnc{\i}{{i}}
\nc{\p}{{p}}
\rnc{\textstyle}{{}}
\nc{\abbr}[1]{{\sc\lowercase{#1}}}
\rnc{\leq}{\leqslant}
\rnc{\geq}{\geqslant}
\rnc{\d}{\mathrm{d}}
\newenvironment{nouppercase}{%
  \renewcommand{\uppercasenonmath}[1]{}}{}
\title{\fsp\Large \abbr{KPZ}-type equation from growth driven by a non-Markovian diffusion\vspace{-0.1cm}}
\author{Amir Dembo$^{\ast}$}
\thanks{$^\ast$Department of Statistics and Department of Mathematics,
Stanford University, Stanford, CA 94305, USA. Email: adembo@stanford.edu}
\author{Kevin Yang$^\dagger$}
\thanks{$^\dagger$Department of Mathematics, Harvard University, Cambridge MA 02138, USA. Email: kevinyang@math.harvard.edu}
\begin{document}
\setstretch{0.97}
\subjclass[2010]{82C24, 60H15, 58J65, 35R60}
\fsp
\raggedbottom
\begin{nouppercase}
\maketitle
\end{nouppercase}
\begin{center}
\today
\end{center}

\begin{abstract}
{\fspp We study a stochastic \abbr{PDE} model for an evolving set $\mathds{M}(\t)\subseteq\R^{\d+1}$ that resembles a continuum version of origin-excited or reinforced random walk \cite{BW,D,KZ,K7,K12}. We show that long-time fluctuations of an associated height function are given by a regularized Kardar-Parisi-Zhang (\abbr{KPZ})-type \abbr{PDE} on a hypersurface in $\R^{\d+1}$, modulated by a Dirichlet-to-Neumann operator. We also show that for $\d+1=2$, the regularization in this \abbr{KPZ}-type equation can be removed after renormalization. To our knowledge, this gives the first instance of \abbr{KPZ}-type behavior in Laplacian growth, which was asked about (for somewhat different models) in \cite{PZ,RS}.}
\end{abstract}

{\hypersetup{linkcolor=blue}
\setcounter{tocdepth}{1}
\tableofcontents}

\fsp

\section{Introduction}
\fsp Stochastic interfaces driven by harmonic measure provide rich models for many biological and physical processes, including (internal) diffusion-limited aggregation \cite{IDLA,DLA}, dielectric breakdown \cite{DBM}, as well as the Hastings-Levitov process \cite{HL}, the last of which also has connections to turbulence in fluid mechanics. Because the driving mechanism for the growth is determined by harmonic measure, such interfaces are often known as (stochastic) Laplacian growth models.

A central question concerns the long-time behavior of said interfaces \cite{BW,D,KZ,K7,K12}. In \cite{RS}, the authors asked whether or not the stochastic interface studied therein has a \emph{Kardar-Parisi-Zhang} (\abbr{KPZ}) scaling limit. (See also \cite{PZ} in the physics literature, which addresses a related question for diffusion-limited aggregation, namely its relation to the ``Eden model".) Since then, the derivation of \abbr{KPZ}-related scaling limits in Laplacian growth models more generally has remained open, despite surging interest in \abbr{KPZ} universality over the past few decades \cite{C11,CS,Qua,KPZAim}. 

The goal of this paper is to derive a {{} \abbr{KPZ}-type stochastic \abbr{PDE}} \eqref{eq:spde} for a Laplacian growth model {{}that resembles} a continuum version the \emph{origin-excited random walk} and \emph{once-reinforced random walk with strong reinforcement}, whose history and background is addressed at length in \cite{BW,D,KZ,K7,K12}. To be more precise, we show the following two results.
\begin{enumerate}
\item Fluctuations of an associated ``height function" converge (in some scaling limit) to a regularized \abbr{KPZ}-type equation. (See Theorem \ref{theorem:main}.)
\item \emph{After renormalization}, solutions to the \abbr{KPZ}-type equation converge as we remove the regularization; this is done on hypersurfaces of dimension $\d=1$ in $\R^{2}$. (See Theorem \ref{theorem:3}.)
\end{enumerate}
\emph{Throughout this paper, we often use subscripts for inputs at which we evaluate functions of space, time, or space-time. This is in lieu of parentheses, which would make formulas and displays overloaded.}
\subsection{{{}\abbr{KPZ}-type equation} for the random growth}
{{} The model of interest in this paper is an \abbr{SPDE} for random growth driven by a Brownian particle. Before we present a precise formulation of this \abbr{SPDE}, which we defer to Construction \ref{construction:model}, let us describe the model in words.

Fix a compact, connected set $\mathds{M}\subseteq\R^{\d+1}$ with smooth boundary; run the ``boundary trace" of a reflecting Brownian motion in $\mathds{M}$. That is, run a reflecting Brownian motion in $\mathds{M}$ with unit inwards normal reflection off of $\partial\mathds{M}$, time-change it according to level sets of its boundary local time, then speed it up by $\e^{-1}$, where $\e\ll1$ is a scaling parameter. We will be interested in the evolution of the interface process $\t\mapsto\partial\mathds{M}(\t)$; this will be the graph of a ``height function" $(\t,\x)\mapsto\mathbf{I}^{\e}_{\t,\x}$ (with respect to $\x$). The evolution of this height function is given by simultaneously ``inflating" or ``growing" $\mathbf{I}^{\e}$ around the boundary trace particle and smoothing $\mathbf{I}^{\e}$, e.g. via heat flow. \emph{Finally, we specify that the Riemannian metric on $\mathds{M}$ with respect to which the reflecting Brownian motion is defined is determined by the evolving height function $\mathbf{I}^{\e}$.} Thus, neither the ``boundary trace" particle nor the height function $\mathbf{I}^{\e}$ is Markovian on its own; this is a key feature of the origin-excited and once-reinforced random walk models from \cite{BW,D,KZ,K7,K12}.

Intuitively, the boundary trace particle is ``inflating" the set $\mathds{M}$ outwards while its boundary smooths out. A discrete-time and discrete-step version of the above model, phrased in terms of a growing set $\mathds{M}(\t)\subseteq\R^{\d+1}$ beginning at $\mathds{M}$, could be given as follows. Run a reflecting Brownian motion inside $\mathds{M}(\t)$ until its boundary local time is equal to {{}$\e^{-1}$}. At the point where the Brownian motion is stopped, inflate $\mathds{M}(\t)$ outwards. Then, we smooth $\partial\mathds{M}(\t)$. We now iterate, but with the new starting location of the particle and the updated set. See Figure \ref{figure:1} for a depiction of this discrete version. However, instead of the $\mathbb{M}(\t)$ process in this paragraph, we study the graph of an evolving height function as our growth model since it is more amenable to analysis. 

We emphasize that the smoothing and inflation dynamics of the growth occur on the same time-scale as a reflecting Brownian motion that is time-changed according to boundary local time.  Using local time instead of ``real time" has the benefit that the inflation does not slow down or speed up as the volume of the set grows; this puts all dynamics on the same footing. More generally, fixing the interface dynamics to be of unit speed and reparameterizing the particle speed into these units is standard in Laplacian growth models \cite{LBG}.

We will now present a precise formulation for the model of interest in this paper.}
\begin{figure}[h!]
\includegraphics[width=0.4\textwidth]{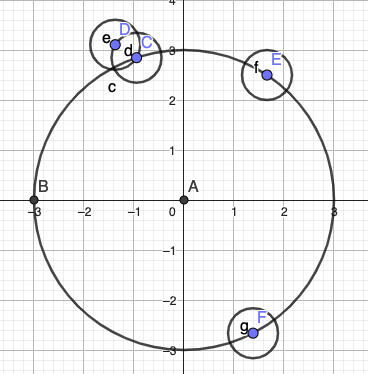}
\caption{A discrete-time version of the model when the initial set $\mathds{M}$ is a circle centered at point $\mathbf{A}$ (\emph{and without the extra heat flow step}). Start with $\mathbf{B}$ (on the left). $\mathbf{C}$ is sampled by starting reflecting Brownian motion $\mathbf{B}$ and stopping when its boundary local time hits {{}$\e^{-1}$}. At $\mathbf{C}$, add a small ball. $\mathbf{D}$ is sampled by starting a reflecting Brownian motion on the updated set at $\mathbf{C}$ and doing the same process. Augment the set by adding a circle centered at $\mathbf{D}$. Iterate to sample $\mathbf{E}$, $\mathbf{F}$, and their respective circles.}
\label{figure:1}
\end{figure}
\begin{construction}\label{construction:model}
\fsp Fix $\d\geq1$, and take a compact, connected subset $\mathds{M}\subseteq\R^{\d+1}$ with smooth boundary $\partial\mathds{M}$; {{}we will assume that $\mathrm{Vol}(\partial\mathds{M})=1$}. Let $\t\mapsto(\mathbf{I}^{\e}_{\t,\cdot},\mathfrak{q}^{\e}_{\t})\in\mathscr{C}^{\infty}(\partial\mathds{M})\times\partial\mathds{M}$ be the following Markov process.
\begin{enumerate}
\item With notation to be explained afterwards, we let $\mathbf{I}^{\e}$ solve
\begin{align}
\partial_{\t}\mathbf{I}^{\e}_{\t,\x}={{}\Delta}\mathbf{I}^{\e}_{\t,\x}+{{}\e^{-\frac14}}\mathrm{Vol}_{\mathbf{I}^{\e}_{\t}}\mathbf{K}_{\x,\mathfrak{q}^{\e}_{\t}}.\label{eq:modelflow}
\end{align}
%
\begin{itemize}
\item $\e>0$ is a scaling parameter that we eventually take to $0$.
\item ${{}\Delta}$ is the Laplacian on the embedded submanifold $\partial\mathds{M}$.
\item For any $\mathbf{I}\in\mathscr{C}^{\infty}(\partial\mathds{M})$, the term $\mathrm{Vol}_{\mathbf{I}}$ is the volume of the image of $\partial\mathds{M}$ under $\mathbf{I}$, i.e. the volume of its graph. By a standard change-of-variables computation, we have
\begin{align}
\mathrm{Vol}_{\mathbf{I}}={\textstyle\int_{\partial\mathds{M}}}(1+|\grad_{}\mathbf{I}_{\z}|^{2})^{\frac12}\d\z,
\end{align}
where $\d\z$ is integration with respect to surface measure on $\partial\mathds{M}$ and $\grad$ denotes gradient on $\partial\mathds{M}$ (given its Euclidean surface metric).
\item The kernel $\mathbf{K}\in\mathscr{C}^{\infty}(\partial\mathds{M}\times\partial\mathds{M})$ is real-valued, symmetric, and it satisfies {$\int_{\partial\mathds{M}}\mathbf{K}_{\x,\z}\d\z=1$}.
\end{itemize}
\item {{}Now, with notation to be explained after, we define the following "boundary trace" valued in $\partial\mathds{M}$:
\begin{align*}
\mathfrak{q}^{\e}_{\t}:=\mathfrak{b}_{\tau({{}\e^{-1}}\t)}, \quad\tau({{}\e^{-1}}\t):=\sup\{s\geq0:{L}^{\mathfrak{b}}_{\s}\leq{{}\e^{-1}}\t\}.
\end{align*}
Above, ${L}^{\mathfrak{b}}$ is the boundary local time process of $\mathfrak{b}$, which is a reflecting Brownian motion on $\mathds{M}$ (with unit inwards normal reflection off of $\partial\mathds{M}$) with respect to a Riemannian metric on $\mathds{M}$ determined by $\mathbf{I}^{\e}$. We define it precisely as follows:
\begin{itemize}
\item For any $\mathbf{I}\in\mathscr{C}^{\infty}(\partial\mathds{M})$, consider the graph map $\partial\mathds{M}\to\partial\mathds{M}\times\R$ given by {$\x\mapsto(\x,\mathbf{I}_{\x})$}. We equip $\partial\mathds{M}\times\R$ with the product metric (here $\partial\mathds{M}$ is given Euclidean surface metric), and give the graph of $\mathbf{I}$ its induced Riemannian metric. Let $\mathbf{g}[\grad\mathbf{I}]$ be the pullback of this metric under the graph map. (It is a metric on $\partial\mathds{M}$. This notation is used since it depends only on first-order derivatives of $\mathbf{I}$.)
\item For concreteness, we extend $\mathbf{g}[\grad_{}\mathbf{I}]$ from $\partial\mathds{M}$ to $\mathds{M}$ in the following fashion. Choose a collar $\mathbf{C}[\rho_{0}]\subseteq\mathds{M}$, i.e. a set such that for some $\rho_{0}>0$, we have an isomorphism $\mathbf{C}[\rho_{0}]\simeq\partial\mathds{M}\times[0,\rho_{0}]$ that identifies a pair $(z,\rho)$ with the unique point $x\in\mathbf{C}[\rho_{0}]$ that is distance $\rho$ from $z$. Next, fix a smooth function $\chi:\R\to[0,1]$ such that $\chi\equiv0$ on $(-\infty,0]$ and $\chi\equiv1$ on $[\rho_{0},\infty)$. For any $\rho\in[0,\rho_{0}]$, we define the following metric on $\partial\mathds{M}$:
\begin{align*}
\mathbf{g}[\grad_{}\mathbf{I}]_{\rho}:=(1-\chi[\rho])\cdot\mathbf{g}[\grad_{}\mathbf{I}]_{}+\chi[\rho]\mathbf{g}[0].
\end{align*}
In particular, we interpolate between $\mathbf{g}[\grad\mathbf{I}]$ on $\partial\mathds{M}$ at $\rho=0$ and Euclidean surface metric on $\partial\mathds{M}$ at $\rho=\rho_{0}$. This is a smooth family of metrics on $\partial\mathds{M}$ parameterized by $\rho\in[0,\rho_{0}]$, so in order to define a metric on the foliation $\mathbf{C}[\rho_{0}]\simeq\partial\mathds{M}\times[0,\rho_{0}]$, it suffices to take the flat metric on $[0,\rho_{0}]$. Finally, on $\mathds{M}\setminus\mathbf{C}[\rho_{0}]$, we let $\mathbf{g}[\grad_{}\mathbf{I}]$ be the standard Euclidean metric. (We anticipate that our work would hold for many other extensions as well.)
\item Let $\mathfrak{b}$ be reflecting Brownian motion on $\mathds{M}$ with respect to the time-dependent metric $\mathbf{g}[\grad_{}\mathbf{I}^{\e}_{\t}]$. (So, its infinitesimal generator at time $\t$ is the Laplacian on $\mathds{M}$ with respect to the metric $\mathbf{g}[\grad_{}\mathbf{I}^{\e}_{\t}]$ and Neumann boundary conditions with respect to the unit inward normal vector induced by the embedding $\mathds{M}\subseteq\R^{\d+1}$. This agrees with the unit inward normal vector coming from the extended metric above.)
\end{itemize}
}
\end{enumerate}
\end{construction}
{{}
\begin{remark}
\fsp The $\mathrm{Vol}$ factor in \eqref{eq:modelflow} is there because the particle $\mathfrak{q}^{\e}$ evolves at speed ${{}\e^{-1}}$ and thus ``averages out". The $\mathrm{Vol}$ factor in \eqref{eq:modelflow} ensures that the leading-order behavior of $\mathbf{I}^{\e}$ corresponds to inflating $\partial\mathds{M}(\t)$ at speed ${{}\e^{-1/4}}$ everywhere. (Without the $\mathrm{Vol}$ factor, the leading-order behavior would correspond to sampling a point on $\partial\mathds{M}(\t)$ uniformly at random to inflate. Such a flux is non-local, in that the evolution at a point in $\partial\mathds{M}(\t)$ depends on the entire volume of $\partial\mathds{M}(\t)$ and thus globally on $\partial\mathds{M}(\t)$.) Our interest is in \emph{local} flux, which is why we include the $\mathrm{Vol}$ factor in \eqref{eq:modelflow}. However, one could also study \eqref{eq:modelflow} without the $\mathrm{Vol}$ factor and perhaps obtain a similar small-$\e$ limit. We discuss this further after the statement of Theorem \ref{theorem:main}.)
\end{remark}
}
{{}Since $\mathfrak{q}^{\e}$ averages out, the leading order behavior of $\mathbf{I}^{\e}$ is a constant-speed growth.} What is more interesting is the following \emph{fluctuation field}:
\begin{align}
\mathbf{Y}^{\e}_{\t,\x}={{}\e^{-\frac14}[\mathbf{I}^{\e}_{\t,\x}-{{}\e^{-\frac14}}\t]}.\label{eq:flucprocess}
\end{align}
%
\begin{ass}\label{ass:id}
\fsp Assume that $\mathbf{Y}^{\e}_{0,\cdot}=\mathbf{Y}^{\mathrm{init}}_{0,\cdot}$ for some $\mathbf{Y}^{\mathrm{init}}_{0,\cdot}\in\mathscr{C}^{\infty}(\partial\mathds{M},\R)$ independent of $\e$.
\end{ass}
It turns out that the small-$\e$ limit of $\mathbf{Y}^{\e}$ is given by the following \abbr{SPDE}, which we explain afterwards:
\begin{align}
\partial_{\t}\mathfrak{h}^{\mathbf{K}}_{\t,\x}&={{}\Delta}\mathfrak{h}^{\mathbf{K}}_{\t,\x}+{\textstyle\int_{\partial\mathds{M}}}\mathbf{K}_{\x,\z}|{{}\grad_{}}\mathfrak{h}^{\mathbf{K}}_{\t,\z}|^{2}\d\z+{\textstyle\int_{\partial\mathds{M}}}[\mathbf{K}_{\x,\z}-1](-\mathscr{L})^{-\frac12}\xi_{\t,\z}\d\z\label{eq:scalinglimitII}\\
\mathfrak{h}^{\mathbf{K}}_{0,\cdot}&=\mathbf{Y}^{\mathrm{init}}_{0,\cdot}.\nonumber
\end{align}
Technically, by \eqref{eq:scalinglimitII}, we mean the Duhamel representation below (see Lemma \ref{lemma:duhamel}):
\begin{align}
\mathfrak{h}^{\mathbf{K}}_{\t,\x}&=\{\exp[\t{{}\Delta}]\mathbf{Y}^{\mathrm{init}}_{0,\cdot}\}_{\x}+{\textstyle\int_{0}^{\t}}\left\{\exp[(\t-\s){{}\Delta}]{\textstyle\int_{\partial\mathds{M}}}\mathbf{K}_{\cdot,\z}|{{}\grad_{}}\mathfrak{h}^{\mathbf{K}}_{\s,\z}|^{2}\d\z\right\}_{\x}\d\s\label{eq:scalinglimitIIduhamela}\\
&+{\textstyle\int_{0}^{\t}}\left\{\exp[(\t-\s){{}\Delta}]{\textstyle\int_{\partial\mathds{M}}}[\mathbf{K}_{\cdot,\z}-1](-\mathscr{L})^{-\frac12}\xi_{\t,\z}\d\z\right\}_{\x}\d\s.\label{eq:scalinglimitIIduhamelb}
\end{align}
%
\begin{itemize}
\item $\exp[\mathfrak{t}{{}\Delta}]$ is the associated heat semigroup for ${{}\Delta}$, and ${{}\grad_{}}$ denotes {{}the} gradient on $\partial\mathds{M}$.
\item $\mathscr{L}$ denotes the \emph{Dirichlet-to-Neumann} map on $\mathds{M}$. Given any $\varphi\in\mathscr{C}^{\infty}(\partial\mathds{M})$, the function $\mathscr{L}\varphi$ is defined to be $\x\mapsto\grad_{\mathsf{N}[\x]}\mathscr{U}^{\varphi}_{\x}$, where $\grad_{\mathsf{N}[\x]}$ is gradient in the unit inwards normal direction at $\x\in\partial\mathds{M}$, and $\mathscr{U}^{\varphi}$ is the harmonic extension of $\varphi$ to $\mathds{M}$ (so that ${{}\Delta_{\mathds{M}}}\mathscr{U}^{\varphi}=0$, where ${{}\Delta_{\mathds{M}}}$ is Laplacian on $\mathds{M}\subseteq\R^{\d+1}$).

The operator $\mathscr{L}$ is a self-adjoint with core $\mathscr{C}^{\infty}(\partial\mathds{M})$ (with respect to the surface measure on $\partial\mathds{M}$, i.e. the Riemannian measure induced by surface metric on $\partial\mathds{M}$). It is negative semi-definite with a discrete spectrum and a one-dimensional null-space  spanned by constant functions on $\partial\mathds{M}$. So, $(-\mathscr{L})^{-1/2}$ on the RHS of \eqref{eq:spde} is well-defined, since the function $\z\mapsto\mathbf{K}_{\x,\z}-1$ is orthogonal to the null-space of $\mathscr{L}$ (i.e. it has vanishing integral on $\partial\mathds{M}$ for any $\x$). See Lemmas \ref{lemma:dtonbasics} and \ref{lemma:dtonestimates}.
\item $\xi$ is a space-time white noise on $[0,\infty)\times\partial\mathds{M}$. Intuitively, it is the Gaussian field with covariance kernel $\E\xi_{\t,\x}\xi_{\s,\y}=\delta_{\t=\s}\delta_{\x=\y}$. More precisely, for any orthonormal basis $\{e_{\k}\}_{\k}$ of $\mathrm{L}^{2}(\partial\mathds{M})$, it has the following representation (in the language of {{}It\^{o}} calculus), where $b_{\t,k}$ are independent standard Brownian motions:
\begin{align}
\xi_{\t,\cdot}\d t=\sum_{\k}\d b_{\t,k}e_{k}.
\end{align}
\end{itemize}
We emphasize that \eqref{eq:scalinglimitII} is essentially the usual \abbr{KPZ} equation (see \cite{C11,KPZ}) except for two differences. The first is the regularization kernel $\mathbf{K}$; we will shortly consider the delta-function limit for $\mathbf{K}$ in the case $\d=1$. The second is the $(-\mathscr{L})^{-1/2}$ operator. We explain this term immediately after Theorem \ref{theorem:main}.

Before we state the first main result (convergence of $\mathbf{Y}^{\e}\to\mathfrak{h}^{\mathbf{K}}$), we comment on the well-posedness of \eqref{eq:scalinglimitII}. By smoothing of the ${{}\Delta}$ semigroup (see Lemma \ref{lemma:regheat}) and since $\mathscr{L}$ maps smooth functions to smooth functions (see Lemma \ref{lemma:dtonbasics}), the \abbr{SPDE} \eqref{eq:scalinglimitIIduhamela}-\eqref{eq:scalinglimitIIduhamelb} is locally well-posed in $\mathscr{C}^{2}(\partial\mathds{M})$ (until a possibly random, finite stopping time denoted by $\tau_{\mathfrak{h}^{\mathbf{K}}}$). 

Finally, let us introduce the following notion of high probability (to be used throughout this paper).
\begin{definition}\label{definition:hp}
\fsp We say an event $\mathscr{E}$ holds with high probability if $\mathbb{P}[\mathscr{E}]\to1$ as $\e\to0$.
\end{definition}
\begin{theorem}\label{theorem:main}
\fsp There exists a coupling between $\{\mathbf{Y}^{\e}\}_{\e\to0}$ and $\mathfrak{h}^{\mathbf{K}}$ such that with high probability, for any $\delta>0$ and $0\leq\tau\leq\tau_{\mathfrak{h}^{\mathbf{K}}}-\delta$, the following holds for some $\kappa[\delta,\e]\geq0$ that vanishes as $\e\to0$:
\begin{align}
\sup_{0\leq\t\leq\tau\wedge1}\|\mathbf{Y}^{\e}_{\t,\cdot}-\mathfrak{h}^{\mathbf{K}}_{\t,\cdot}\|_{\mathscr{C}^{2}(\partial\mathds{M})}\leq\kappa[\delta,\e].
\end{align}
(Here, $\mathscr{C}^{k}(\partial\mathds{M})$ is the usual space of $k$-times continuously differentiable functions on the hypersurface $\partial\mathds{M}$, and its norm is clarified in Section \ref{section:notation}. Also, we have used the notation $a\wedge b=\min(a,b)$.)
\end{theorem}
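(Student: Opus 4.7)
The strategy is two-scale homogenization of the fast driver $\mathfrak{q}^{\e}$, combined with a Duhamel comparison against $\mathfrak{h}^{\mathbf{K}}$. Substituting $\mathbf{I}^{\e}_{\t,\x}=\e^{-1/3}\t+\e^{1/3}\mathbf{Y}^{\e}_{\t,\x}$ into \eqref{eq:modelflow} and dividing by $\e^{1/3}$ yields
\begin{equation*}
\partial_\t \mathbf{Y}^{\e}_{\t,\x}=\Delta_{\partial\mathds{M}}\mathbf{Y}^{\e}_{\t,\x}+\e^{-2/3}\bigl[\mathrm{Vol}_{\mathbf{I}^{\e}_\t}\mathbf{K}_{\x,\mathfrak{q}^{\e}_\t}-1\bigr].
\end{equation*}
Writing this equation and \eqref{eq:scalinglimitII} in Duhamel form against $\exp[\t\Delta_{\partial\mathds{M}}]$, the theorem reduces to identifying the $\e\to0$ limit of the time-integrated driving term against the heat kernel, then closing the comparison via a Grönwall-type estimate on $[0,\tau\wedge 1]$.

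The homogenization hinges on the following structural fact: the process $\s\mapsto\mathfrak{q}^{\e}_{\e^{4/3}\s}$ is, up to lower-order metric corrections, the Dirichlet-to-Neumann diffusion on $\partial\mathds{M}$ generated by $\mathscr{L}$, and its invariant probability measure is the normalized surface measure induced by the pulled-back metric, i.e.\ $(1+|\grad_{\partial\mathds{M}}\mathbf{I}^{\e}|^2)^{1/2}\d\z/\mathrm{Vol}_{\mathbf{I}^{\e}}$. Averaging $\mathrm{Vol}_{\mathbf{I}^{\e}}\mathbf{K}_{\x,\mathfrak{q}^{\e}_\t}$ against this probability measure and using $\int\mathbf{K}_{\x,\z}\d\z=1$ gives
\begin{equation*}
\int_{\partial\mathds{M}}\mathbf{K}_{\x,\z}(1+|\grad_{\partial\mathds{M}}\mathbf{I}^{\e}_{\t,\z}|^2)^{1/2}\d\z=1+\tfrac{1}{2}\e^{2/3}\int_{\partial\mathds{M}}\mathbf{K}_{\x,\z}|\grad_{\partial\mathds{M}}\mathbf{Y}^{\e}_{\t,\z}|^2\d\z+O(\e^{4/3}),
\end{equation*}
by Taylor expansion of $(1+\e^{2/3}|\grad\mathbf{Y}^{\e}|^2)^{1/2}$. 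Multiplication by $\e^{-2/3}$ cancels the $-1$ and recovers exactly the KPZ-type nonlinearity of \eqref{eq:scalinglimitIIduhamela}, up to a prefactor absorbable into $\mathbf{K}$. For the centered fluctuation, I solve the Poisson equation $-\mathscr{L}\psi_{\x}=\mathbf{K}_{\x,\cdot}-1$ (well-posed by Lemma \ref{lemma:dtonbasics}, since the right-hand side is orthogonal to $\ker\mathscr{L}$). Dynkin's formula applied to $\psi_{\x}(\mathfrak{q}^{\e}_\t)$ on the accelerated DtN diffusion identifies $\int_0^\t[\mathbf{K}_{\x,\mathfrak{q}^{\e}_\s}-1]\d\s$ with $\e^{4/3}$ times a martingale, plus $O(\e^{4/3})$ boundary terms; the martingale's quadratic variation, computed via the Dirichlet form of $\mathscr{L}$ and then ergodically averaged, is to leading order $2\e^{-4/3}\t\,\|(-\mathscr{L})^{-1/2}(\mathbf{K}_{\x,\cdot}-1)\|_{L^2}^2$. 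A functional CLT, uniform and sufficiently regular in $\x$, then converges $\e^{-2/3}\int_0^\t[\mathbf{K}_{\x,\mathfrak{q}^{\e}_\s}-1]\d\s$ to the Gaussian field in \eqref{eq:scalinglimitIIduhamelb}.

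The main obstacle is closing the self-consistent loop: the generator and the invariant measure of $\mathfrak{q}^{\e}$ depend on $\mathbf{I}^{\e}$, which is itself driven by $\mathfrak{q}^{\e}$. This forces the homogenization to run in parallel with the SPDE convergence, with quantitative $\x$-uniform rates both in the ergodic averaging and in the CLT, regular enough in $\x$ to survive the $\mathscr{C}^{2}(\partial\mathds{M})$-norm in the conclusion. The standard remedy is to freeze $\mathbf{g}[\grad_{\partial\mathds{M}}\mathbf{I}^{\e}]$ on a fine time-mesh of size a small power of $\e$, so that on each sub-interval the fast process is a time-homogeneous diffusion for which sharp homogenization rates are available, and the mesh-discretization error is controllable; combined with an a-priori $\mathscr{C}^{2}$-bound on $\mathbf{Y}^{\e}$ from a stopping-time/bootstrap argument (which, by the assumption $\tau\leq\tau_{\mathfrak{h}^{\mathbf{K}}}-\delta$ and continuity of $\mathfrak{h}^{\mathbf{K}}$, remains inactive with high probability), inserting these ingredients into the Duhamel form and applying Grönwall yields the stated bound $\kappa[\delta,\e]\to0$.
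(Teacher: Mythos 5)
Your high-level outline matches the paper's architecture: derive the $\mathbf{Y}^{\e}$-equation from \eqref{eq:modelflow} and \eqref{eq:flucprocess}, Taylor-expand $(1+|\grad_{\partial\mathds{M}}\mathbf{I}^{\e}|^2)^{1/2}$ to extract the KPZ quadratic, identify the centered fluctuation of $\mathbf{K}_{\x,\mathfrak{q}^{\e}_\s}$ as a martingale via a Poisson/Dynkin device, compute the quadratic variation via a Carr\'e-du-Champ/Dirichlet-form calculation, and close by Duhamel plus Gr\"onwall. The quadratic-variation formula you write, $2\t\|(-\mathscr{L})^{-1/2}(\mathbf{K}_{\x,\cdot}-1)\|_{L^2}^2$, is exactly the paper's $[\mathbf{M}^{\mathrm{limit}}]$ in \eqref{eq:scalinglimitmartingaleI}. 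You have the right a-priori stopping-time structure (both here and in Theorem \ref{theorem:2}) and the right power-counting.

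The gap is in the step where you handle the non-Markovianity. You propose to freeze $\mathbf{g}[\grad_{\partial\mathds{M}}\mathbf{I}^{\e}]$ on a fine time-mesh of width a small power of $\e$ and then apply standard homogenization with sharp rates on each sub-interval. The paper argues directly against a version of exactly this (see the penultimate paragraph of Section \ref{subsubsection:difficultiesthm1}): the estimate for the Poisson-equation inverse $(-\mathscr{L}^{\e,\mathbf{I}}_{\mathrm{DtN}})^{-1}$ relies on the algebraic fact that $\mathrm{Fluc}^{\mathrm{noise},\e}$ is orthogonal to $\ker\mathscr{L}^{\e,\mathbf{I}}_{\mathrm{DtN}}$, i.e.\ mean-zero against the $\mathbf{g}[\grad_{\partial\mathds{M}}\mathbf{I}^{\e}]$-measure. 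When you propagate over a mesh interval, $\mathbf{I}^{\e}$ drifts, and this orthogonality — which is what controls the resolvent uniformly in $\e$ — is not stable. A naive mesh-discretization error estimate does not automatically regain it, so ``sharp homogenization rates are available'' on each sub-interval is precisely the assertion that needs proof and is not supplied. The paper's resolution is to work with the full generator $\mathscr{L}^{\e}_{\mathrm{total}}=\mathscr{L}^{\e,\mathfrak{q}^{\e}}_{\mathrm{flow}}+\mathscr{L}^{\e,\mathbf{I}^{\e}}_{\mathrm{DtN}}$ of the joint process and to insert a spectral regularization $\lambda=\e^{-4/3+\gamma}$; the resolvent $(\lambda-\mathscr{L}^{\e}_{\mathrm{total}})^{-1}$ is then bounded by $\lambda^{-1}$ unconditionally, the flow generator $\mathscr{L}^{\e,\mathfrak{q}^{\e}}_{\mathrm{flow}}$ is treated perturbatively via Fr\'echet differentiation of the DtN resolvent (Section \ref{section:dtondomain}), and the $\lambda$-regularization is removed last by iterating the resolvent expansion (Corollary \ref{corollary:thm13new3}). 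The freezing idea is not obviously wrong, but it is the hard part and your sketch simply asserts it works; the paper's $\lambda$-trick exists precisely because the obvious route through it has this instability.

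Two smaller points. First, the paper does not prove Theorem \ref{theorem:main} in one shot: it introduces the intermediate process $\mathbf{h}^{\e}$ solving \eqref{eq:scalinglimit} driven by a ``good martingale'' $\mathbf{M}^{\e}$, couples $\mathbf{Y}^{\e}$ to $\mathbf{h}^{\e}$ with quantitative $\e^{\beta}$ error (Theorem \ref{theorem:1}), and separately shows $\mathbf{h}^{\e}\to\mathfrak{h}^{\mathbf{K}}$ via a martingale problem (Theorem \ref{theorem:2}). This decouples the hard stochastic homogenization from the soft weak-convergence step; your one-shot ``functional CLT uniform in $\x$'' collapses the two, which makes the regularity bookkeeping in $\mathscr{C}^{2}(\partial\mathds{M})$ much harder to keep track of. Second, the Poisson equation you solve, $-\mathscr{L}\psi_{\x}=\mathbf{K}_{\x,\cdot}-1$, centers against surface measure, but the correct centering is against the $\mathbf{g}[\grad_{\partial\mathds{M}}\mathbf{I}^{\e}_{\s}]$-induced measure (and the correct operator is $\mathscr{L}^{\e,\mathbf{I}^{\e}_{\s}}_{\mathrm{DtN}}$); these agree only up to $O(\e^{1/3})$, so one still needs a comparison estimate like \eqref{eq:1lastI4c}, which your sketch omits.
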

We now clarify the statement of Theorem \ref{theorem:main}.
\begin{itemize}
\item Theorem \ref{theorem:main} essentially asserts convergence in law of $\mathbf{Y}^{\e}$ to $\mathfrak{h}^{\mathbf{K}}$. Because of the need for a stopping time $\tau_{\mathfrak{h}^{\mathbf{K}}}$, we found it most convenient to state it in terms of couplings.
\item Stopping before $1$ is to make sure we work on compact time-intervals; there is nothing special about $1$. 
\item We could have used $\mathscr{C}^{\k}(\partial\mathds{M})$ for any $\k\geq2$. Going to $\k=1$, for example, perhaps requires more work.
\item Theorem \ref{theorem:main} holds locally in time. This is more-or-less because we work in the $\mathscr{C}^{2}(\partial\mathds{M})$ topology, not a weaker topology like $\mathscr{C}^{0}(\partial\mathds{M})$, for example; see Remark \ref{remark:ch}.
\end{itemize}
{{}The mechanism from which Theorem \ref{theorem:main} will ultimately follow is that the dependence on $\mathfrak{q}^{\e}_{\t}$ in the last term in \eqref{eq:modelflow} averages out, as noted after Construction \ref{construction:model}. More precisely, it will turn out that 
\begin{align}
\e^{-\frac14}\Big({{}\e^{-\frac14}}\mathrm{Vol}_{\mathbf{I}^{\e}_{\t}}\mathbf{K}_{\x,\mathfrak{q}^{\e}_{\t}}-\e^{-\frac14}\t\Big)=\e^{-\frac12}\Big(\int_{\partial\mathds{M}}\mathbf{K}_{\x,\z}\sqrt{1+|\grad\mathbf{I}^{\e}_{\t,\x}|^{2}}\d\z-1\Big)+\mathrm{noise},\nonumber
\end{align}
where the first term comes from a change-of-variables calculation for the Riemannian measure induced by $\mathbf{g}[\grad\mathbf{I}^{\e}_{\t}]$, and where the noise term above is a fluctuation that will ultimately produce the noise in \eqref{eq:scalinglimitII}. (The exact form of the noise in \eqref{eq:scalinglimitII} ultimately follows from standard formulas for \abbr{CLT}s of Markov processes in terms of their generators; see Chapter 2.6 of \cite{KLO}. Indeed, the generator for the boundary trace of a reflecting Brownian motion in $\mathds{M}$ is $\mathscr{L}$ \cite{Hsu0}.) By Taylor expanding $(1+|\grad\mathbf{I}^{\e}_{\t,\z}|^{2})^{1/2}$ and $\grad\mathbf{I}^{\e}=\e^{1/4}\grad\mathbf{Y}^{\e}$, we get
\begin{align}
\e^{-\frac12}\Big(\int_{\partial\mathds{M}}\mathbf{K}_{\x,\z}\sqrt{1+|\grad\mathbf{I}^{\e}_{\t,\x}|^{2}}\d\z-1\Big)\approx \e^{-\frac12}\Big(\int_{\partial\mathds{M}}\mathbf{K}_{\x,\z}\d\z-1\Big)+\int_{\partial\mathds{M}}\mathbf{K}_{\x,\z}|\grad\mathbf{Y}^{\e}_{\t,\z}|^{2}\d\z,\nonumber
\end{align}
where $\approx$ means equality up to terms with strictly positive powers of $\e$. The first term on the \abbr{RHS} vanishes by assumption, so this explains the quadratic term in \eqref{eq:scalinglimitII}. The main technical difficulty in making the above picture rigorous is that the particle process $\t\mapsto\mathfrak{q}^{\e}_{\t}$ is \emph{not} Markovian. We discuss this more in Section \ref{subsubsection:difficultiesthm1}.

We now briefly mention what would happen if we dropped the $\mathrm{Vol}$ factor in \eqref{eq:modelflow}. The necessary Taylor expansion (corresponding to the quadratic nonlinearity in \eqref{eq:scalinglimitII}) would then be
\begin{align}
\e^{-\frac12}\left(\int_{\partial\mathds{M}}\mathbf{K}_{\x,\z}\tfrac{\sqrt{1+|\grad\mathbf{I}^{\e}_{\t,\x}|^{2}}}{\int_{\partial\mathds{M}}\sqrt{1+|\grad\mathbf{I}^{\e}_{\t,\w}|^{2}}\d\w}\d\z-1\right)\approx\int_{\partial\mathds{M}}(\mathbf{K}_{\x,\z}-1)|\grad\mathbf{Y}^{\e}_{\t,\z}|^{2}\d\z.\nonumber
\end{align}
Our main interest, to be discussed in the following subsection, is what happens to \eqref{eq:scalinglimitII} when $\mathbf{K}$ converges to a delta function on the diagonal of $\partial\mathds{M}\times\partial\mathds{M}$ (to obtain a growth model with spatially local flux). Dropping the $\mathrm{Vol}$ factor in \eqref{eq:modelflow}, as illustrated in the above display, instead yields an \abbr{SPDE} limit for $\mathbf{I}^{\e}$ that is given by \eqref{eq:scalinglimitII}, except with the following replacement therein:
\begin{align*}
{\textstyle\int_{\partial\mathds{M}}}\mathbf{K}_{\x,\z}|{{}\grad_{}}\mathfrak{h}^{\mathbf{K}}_{\t,\z}|^{2}\d\z \rightsquigarrow {\textstyle\int_{\partial\mathds{M}}}(\mathbf{K}_{\x,\z}-1)\cdot |{{}\grad_{}}\mathfrak{h}^{\mathbf{K}}_{\t,\z}|^{2}\d\z.
\end{align*}
Even as $\mathbf{K}$ localizes, the nonlinearity on the \abbr{RHS} is non-local in space, whereas our interest is in spatially local flux (since this is the type of term encountered in the \abbr{KPZ} equation). We do not anticipate any significant mathematical differences, however, between \eqref{eq:scalinglimitII} with and without the modification in the previous display.}
\subsection{The singular limit of \eqref{eq:scalinglimitII}}
In \eqref{eq:scalinglimitII}, if we formally replace $\mathbf{K}$ by the delta function on the diagonal of $\partial\mathds{M}\times\partial\mathds{M}$, we get the following \abbr{SPDE}, which we (formally) pose in any dimension $\d\geq1$:
\begin{align}
\partial_{\t}\mathfrak{h}_{\t,\x}={{}\Delta}\mathfrak{h}_{\t,\x}+|{{}\grad_{}}\mathfrak{h}_{\t,\x}|^{2}+\Pi^{\perp}(-\mathscr{L})^{-\frac12}\xi_{\t,\x}, \quad (\t,\x)\in(0,\infty)\times\partial\mathds{M}.\label{eq:spde}
\end{align}
Above, $\Pi^{\perp}$ denotes projection away from the null-space of $\mathscr{L}$, i.e. away from the span of constant functions on $\partial\mathds{M}$. Our goal now is to make sense of \eqref{eq:spde} itself, so that we can rigorously show convergence of \eqref{eq:scalinglimitII} to \eqref{eq:spde} in the limit where $\mathbf{K}$ converges to a delta function. However, \eqref{eq:spde} is not classically well-posed. Indeed, $\mathscr{L}$ is a first-order pseudo-differential operator, so $(-\mathscr{L})^{-1/2}$ gains half a derivative. But integrating the heat kernel for ${{}\Delta}$ against $\xi$, in dimension $1$, lets us take strictly less than half a derivative. We cannot take a full derivative and expect to get a function that we can square to define the quadratic nonlinearity in \eqref{eq:spde}. 

Therefore, we perform the standard procedure for singular \abbr{SPDE}s via regularization, renormalization, and showing existence of limits as we remove the regularization. {{}Although we do not anticipate that the specific choice of regularization is important, it will be convenient to work with and fix the following. (In particular, we do not expect the choice of regularization to affect what the limiting object ultimately is.) First, however, let us restrict to the case $\d=1$.
\begin{itemize}
\item Since $\partial\mathds{M}$ is a compact one-dimensional Riemannian manifold, it is a disjoint union $\partial\mathds{M}=\mathbb{T}_{1}\cup\ldots\cup\mathbb{T}_{\mathrm{N}}$ where each $\mathbb{T}_{i}$ is isometric to a circle. Therefore, we have the decomposition 
\begin{align*}
\mathrm{L}^{2}(\partial\mathds{M})\simeq\bigoplus_{i=1}^{\mathrm{N}}\mathrm{L}^{2}(\mathbb{T}_{i})\simeq\bigoplus_{i=1}^{\mathrm{N}}\bigoplus_{\k=0}^{\infty}\mathbf{V}_{\lambda_{i,k}},
\end{align*}
where $\mathbf{V}_{\lambda_{i,k}}$ denotes the eigenspace of $(-\Delta_{\mathbb{T}_{i}})^{-1/2}$ of eigenvalue $\lambda_{i,k}$. Here, $\Delta_{\mathbb{T}_{i}}$ is the Laplacian on $\mathbb{T}_{i}$, and we order the eigenvalues $\{\lambda_{i,k}\}_{k=0}^{\infty}$ in increasing order, so that $\lambda_{i,0}=0$ and $\lambda_{i,k+1}>\lambda_{i,k}$. (Since $\mathbb{T}_{i}$ is isometric to a circle, we have that $\lambda_{i,k}=2\pi|k||\mathbb{T}_{i}|^{-1}$ is the $k$-th eigenvalue of the half-Laplacian on a circle of length $|\mathbb{T}_{i}|$.)
\item For any $\eta>0$, we let $\Pi^{\eta,\perp}$ be the composition of two projections. First, we let $\Pi^{\eta}$ be the projection onto
\begin{align}
\bigoplus_{i=1}^{\mathrm{N}}\bigoplus_{k=0}^{\lfloor\eta^{-1}\rfloor}\mathbf{V}_{\lambda_{i,k}}.\label{eq:subspaceproject}
\end{align}
Then, we compose $\Pi^{\eta}$ with projection onto the orthogonal complement of the space of constant functions on $\partial\mathds{M}$. (Note that $\Pi^{\eta,\perp}\to\Pi^{\perp}$ and $\Pi^{\eta}\to\mathrm{Id}$ as $\eta\to0$ in the strong operator topology.)
\item We now consider the following \abbr{SPDE}:
\begin{align}
\partial_{t}\mathfrak{h}^{\eta}_{\t,\x}&=\Delta\mathfrak{h}^{\eta}_{\t,\x}+\Pi^{\eta}|\grad\mathfrak{h}^{\eta}_{\t,\x}|^{2}-{{}\mathscr{C}_{\eta}}+\Pi^{\eta,\perp}(-\mathscr{L})^{-\frac12}\xi_{t,x},\label{eq:singSPDE}\\
{{}\mathscr{C}_{\eta}}&{{}:=\sum_{\k=1}^{\lfloor\eta^{-1}\rfloor}\tfrac{16\pi^{2}|k|^{2}}{\lambda_{i,k}^{3}|\mathbb{T}_{i}|^{3}}\sim\frac{2}{\pi}\log(\eta^{-1})}.\label{eq:renorm}
\end{align}
\end{itemize}
{{}Above, the notation $\sim$ means that the difference converges as $\eta\to0$ to a finite constant.}

Let us briefly clarify this construction. First, since $\Pi^{\eta}$ projects away from the space of constant functions on $\partial\mathds{M}$, which is the null-space of $\mathscr{L}$, the noise term in \eqref{eq:singSPDE} is well-defined (after integrating against space-time test functions). Second, the $\Pi^{\eta}$ in front of the quadratic term in \eqref{eq:singSPDE} corresponds to the $\mathbf{K}$-smoothing in the quadratic term in \eqref{eq:scalinglimitII}. Finally, while the exact form of the renormalization constants requires some calculations to justify, we note that \eqref{eq:renorm} diverges logarithmically as $\eta\to0$, which agrees with the regularity heuristic given after \eqref{eq:spde}. (See the proof of Lemma \ref{lemma:thm33} for where \eqref{eq:renorm} comes from more precisely.)

Standard \abbr{PDE} theory implies that \eqref{eq:singSPDE} is locally well-posed with smooth solutions for any $\eta>0$ fixed. The following result, \emph{which restricts to $\d+1=2$}, states that the $\eta\to0$ limit of these solutions exists.}
\begin{theorem}\label{theorem:3}
\fsp Suppose that $\mathds{M}\subseteq\R^{2}$ is a compact subset with smooth boundary $\partial\mathds{M}$. 

For any $\mathfrak{h}^{\mathrm{initial}}\in\mathscr{C}^{2}(\partial\mathds{M})$ independent of $\eta>0$, the sequence of solutions $\{\mathfrak{h}^{\eta}\}_{\eta>0}$ to \eqref{eq:singSPDE} with initial data $\mathfrak{h}^{\mathrm{initial}}$ converges in probability in the following (analytically) weak sense. {{}There exists an almost surely positive (and possibly random) time $\tau_{\mathrm{stop}}$, which may depend on $\mathfrak{h}^{\mathrm{initial}}$, such that} for any test function $\mathtt{F}\in\mathscr{C}^{\infty}(\R\times\partial\mathds{M})$, the sequence of random variables below converges in probability as $\eta\to0$:
\begin{align}
{\textstyle\int_{[0,{{}\tau_{\mathrm{stop}}})}\int_{\partial\mathds{M}}}\mathtt{F}_{\t,\x}\mathfrak{h}^{\eta}_{\t,\x}\d\x\d\t.
\end{align}
\end{theorem}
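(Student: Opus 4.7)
The plan is to analyze \eqref{eq:singSPDE} by a Da Prato--Debussche decomposition adapted to the manifold noise $\Pi^\eta(-\mathscr{L})^{-1/2}\xi$. Write $\mathfrak{h}^\eta=\mathfrak{z}^\eta+\mathfrak{v}^\eta$, where $\mathfrak{z}^\eta$ is the stochastic convolution solving the linear equation $\partial_\t \mathfrak{z}^\eta=\Delta_{\partial\mathds{M}}\mathfrak{z}^\eta+\Pi^\eta(-\mathscr{L})^{-1/2}\xi$ with zero initial data. Then $\mathfrak{v}^\eta$ satisfies the remainder equation
\begin{equation*}
\partial_\t \mathfrak{v}^\eta=\Delta_{\partial\mathds{M}}\mathfrak{v}^\eta+\bigl(|\grad_{\partial\mathds{M}}\mathfrak{z}^\eta|^2-\mathscr{C}_\eta\bigr)+2\,\grad_{\partial\mathds{M}}\mathfrak{z}^\eta\cdot\grad_{\partial\mathds{M}}\mathfrak{v}^\eta+|\grad_{\partial\mathds{M}}\mathfrak{v}^\eta|^2,
\end{equation*}
with smooth initial data $\mathfrak{h}^{\mathrm{initial}}$; it suffices to pass to the limit in the triple $(\mathfrak{z}^\eta, |\grad_{\partial\mathds{M}}\mathfrak{z}^\eta|^2-\mathscr{C}_\eta, \mathfrak{v}^\eta)$.

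The first step is regularity and convergence of $\mathfrak{z}^\eta$. By the spectral resolution of $\mathscr{L}$ (Lemma \ref{lemma:dtonbasics}) and the eigenvalue asymptotics of Lemma \ref{lemma:dtonestimates}, the operator $(-\mathscr{L})^{-1/2}$ gains one half of a spatial derivative, so $\Pi^\eta(-\mathscr{L})^{-1/2}\xi$ has parabolic H\"older--Besov regularity $-1-\kappa$ uniformly in $\eta$ for any small $\kappa>0$. Schauder estimates for the heat semigroup on the compact one-dimensional manifold $\partial\mathds{M}$ then give $\mathfrak{z}^\eta$ bounded in parabolic $\mathscr{C}^{1-\kappa}$, and a second-chaos Kolmogorov argument applied to the difference of two regularizations, using the explicit spectral expansion, shows convergence $\mathfrak{z}^\eta\to\mathfrak{z}$ in probability in that space.

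The main obstacle is constructing the Wick-renormalized square, i.e.\ showing that $|\grad_{\partial\mathds{M}}\mathfrak{z}^\eta|^2-\mathscr{C}_\eta$ converges in probability to a nontrivial limit in $\mathscr{C}_\t\mathscr{C}_\x^{-\kappa}$. The second-moment calculation separates into an off-diagonal contribution lying in the second Wiener chaos, which is controlled by standard moment bounds using only the heat kernel and the eigenvalue decay $\lambda_\ell\asymp\ell$, and a diagonal piece $\E|\grad_{\partial\mathds{M}}\mathfrak{z}^\eta(\t,\x)|^2$. The analytic crux is to verify that the divergent part of this diagonal piece is asymptotically $\x$-independent (with a uniformly bounded $\x$-dependent remainder) and matches precisely the specific form \eqref{eq:singSPDERN} of $\mathscr{C}_\eta$. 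This uses that $\partial\mathds{M}$ is one-dimensional, so that $\sum_{\ell\leq 1/\eta}\lambda_\ell^{-1}$ diverges only logarithmically and the local spectral counting function for $\mathscr{L}$ is asymptotically equidistributed on $\partial\mathds{M}$, together with a careful comparison between the spectral resolutions of $\mathscr{L}$ and of $\Delta_{\partial\mathds{M}}$. Verifying that a single, spatially constant renormalization suffices is the delicate geometric input the paper alludes to after the statement.

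Once the pair $(\mathfrak{z}^\eta, |\grad_{\partial\mathds{M}}\mathfrak{z}^\eta|^2-\mathscr{C}_\eta)$ is known to converge in the correct space, I would solve the remainder equation by a local-in-time fixed point in $\mathscr{C}_\t\mathscr{C}_\x^{2-2\kappa}$. In this space the cross term $\grad_{\partial\mathds{M}}\mathfrak{z}^\eta\cdot\grad_{\partial\mathds{M}}\mathfrak{v}^\eta$ is a classical product (the sum of H\"older regularities is positive), and $|\grad_{\partial\mathds{M}}\mathfrak{v}^\eta|^2$ is locally Lipschitz. Contraction yields $\mathfrak{v}^\eta\to\mathfrak{v}$ in probability up to a stopping time $\tau_{\eqref{eq:singSPDE}}[\mathfrak{h}^{\mathrm{initial}}]$ at which $\|\mathfrak{v}\|_{\mathscr{C}^{2-2\kappa}}$ might blow up. Adding $\mathfrak{z}^\eta+\mathfrak{v}^\eta$, testing against $\mathtt{F}\in\mathscr{C}^{\infty}(\R\times\partial\mathds{M})$, and passing to the limit then yields the stated weak convergence in probability.
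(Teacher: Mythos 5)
Your decomposition is structurally the same Da Prato--Debussche splitting as the paper's, but there is a genuine gap in your choice of linear piece. You take $\mathfrak{z}^\eta$ to solve $\partial_\t\mathfrak{z}^\eta=\Delta_{\partial\mathds{M}}\mathfrak{z}^\eta+\Pi^\eta(-\mathscr{L})^{-1/2}\xi$. Since the noise is spectral for $\mathscr{L}$ and the heat semigroup is spectral for $\Delta_{\partial\mathds{M}}$, and these two operators do not commute on a generic manifold, the stochastic convolution does not diagonalize: $\exp[(\t-\s)\Delta_{\partial\mathds{M}}]\psi_{\k;\cdot}$ is not a multiple of $\psi_{\k;\cdot}$, so $\E|\grad_{\partial\mathds{M}}\mathfrak{z}^\eta_{\t,\x}|^2=\sum_\k\lambda_\k^{-1}\int_0^\t|\grad_{\partial\mathds{M}}\{\exp[(\t-\s)\Delta_{\partial\mathds{M}}]\psi_{\k;\cdot}\}_\x|^2\d\s$ is not expressible in terms of $\mathscr{L}$-eigendata, and it is not clear that its divergent part is exactly the $\mathscr{L}$-spectrally defined constant $\mathscr{C}_\eta$ of \eqref{eq:singSPDERN}. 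You flag this as ``a careful comparison between the spectral resolutions of $\mathscr{L}$ and of $\Delta_{\partial\mathds{M}}$'' but give no mechanism for carrying it out; that comparison is precisely the crux. The paper's resolution is to replace $\Delta_{\partial\mathds{M}}$ by $-\mathscr{L}^2$ in the linear equation (so the stochastic convolution diagonalizes into independent stationary OU modes with variance $\lambda_\k^{-3}$), to push the discrepancy $(\Delta_{\partial\mathds{M}}+\mathscr{L}^2)\mathfrak{h}^{\eta,\mathrm{lin}}$ into an extra regular piece $\mathfrak{h}^{\eta,\mathrm{reg},1}$, and to prove the geometric fact (Lemma \ref{lemma:dtoncom}) that $\Delta_{\partial\mathds{M}}+\mathscr{L}^2$ is a \emph{zeroth-order} pseudo-differential operator when $\d=1$, because the first-order symbol is the second fundamental form minus its trace, which vanishes on curves. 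This cancellation is the geometric input you are implicitly invoking but never identify; without it there is no reason the divergence is spatially constant or matches $\mathscr{C}_\eta$.

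Two smaller but still substantive issues. First, you propose running Schauder in parabolic H\"older spaces, but $\mathscr{L}$-eigenfunctions have sub-optimal H\"older growth on a general manifold (by \eqref{eq:dtonestimatesIII}, $\|\psi_{\n;\cdot}\|_{\mathscr{C}^{0,\upsilon}}\lesssim\lambda_\n^{\upsilon+1/4}$, losing a quarter-power), which is exactly why the paper switches to $\mathrm{L}^2$-based Sobolev norms; your $\mathscr{C}^{1-\kappa}$ bound on $\mathfrak{z}^\eta$ is not a free Gaussian-Kolmogorov consequence. Second, the off-diagonal (second-chaos) contribution is not ``standard'': the naive bound $|\grad_{\partial\mathds{M}}\psi_{\j;\z}\grad_{\partial\mathds{M}}\psi_{\ell;\z}|\lesssim\lambda_\j\lambda_\ell$ is not available under Sobolev multiplication on a generic $\partial\mathds{M}$, and the paper has to exploit time-averaging via an It\^o/resolvent trick for the OU modes (Lemmas \ref{lemma:thm3382}--\ref{lemma:thm3384}) to recover the needed power-saving. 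As written, your proposal would not close without supplying all three of these ingredients.
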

{{}In Theorem \ref{theorem:3} above, we can take $\tau_{\mathrm{stop}}$ to be any time that is strictly smaller than an appropriate blow-up time for the \abbr{PDE} \eqref{eq:hreg2a}-\eqref{eq:hreg2b}, which resembles \eqref{eq:singSPDE} without any noise; see Lemma \ref{lemma:thm35}.}
\begin{remark}\label{remark:ch}
\fsp One can also solve \eqref{eq:spde} by using the Cole-Hopf map $\mathfrak{h}:=\log\mathfrak{Z}$, where $\mathfrak{Z}$ solves a linear \abbr{SPDE} (though this exponential map does not linearize \eqref{eq:singSPDE} because of the $\Pi^{\eta}$ operator). It is likely, but possibly difficult to prove, that this Cole-Hopf solution agrees with the limit constructed in Theorem \ref{theorem:3}. If it does, then it gives a way to show infinite lifetime for \eqref{eq:spde} (and that $\tau_{\mathrm{stop}}$ in Theorem \ref{theorem:3} is infinite almost surely).
\end{remark}
Theorem \ref{theorem:3} gives a weak type of convergence for $\mathfrak{h}^{\eta}$. It can be upgraded rather easily to more quantitative convergence using our methods. We do not pursue this here, because it is more of a detail than the main point. (Similarly, the assumption that $\mathfrak{h}^{\mathrm{initial}}$ is in $\mathscr{C}^{2}(\partial\mathds{M})$ is likely sub-optimal, but this is also besides the point.)

By Theorems \ref{theorem:main} and \ref{theorem:3}, in the case $\d=1$ (so hypersurfaces in $\R^{2}$), we get a singular \abbr{KPZ}-type equation limit for \eqref{eq:flucprocess}. In particular, we can take $\mathbf{K}$ in Theorem \ref{theorem:main} to converge to a delta function on the diagonal of $\partial\mathds{M}\times\partial\mathds{M}$ sufficiently slowly and deduce convergence of $\mathbf{Y}^{\e}$ to \eqref{eq:singSPDE}. 

Let us also mention that the analytic topologies used in Theorems \ref{theorem:main} and \ref{theorem:3} are quite different ($\mathscr{C}^{2}(\partial\mathds{M})$ versus weak-$\ast$ convergence). As mentioned above, improving the topology of convergence in Theorem \ref{theorem:3} is probable, but it {{}cannot hold} in $\mathscr{C}^{2}(\partial\mathds{M})$ since \eqref{eq:singSPDE} is a singular \abbr{SPDE}. Convergence in Theorem \ref{theorem:main} in a topology weaker than $\mathscr{C}^{2}(\partial\mathds{M})$ seems to be difficult (as noted after Theorem \ref{theorem:main}), since the proof is largely based on elliptic regularity. It would be interesting to close this gap; this would strengthen the double-scaling limit result (e.g. quantify convergence of $\mathbf{K}$ to a delta). 
\subsection{Background and previous work}\label{subsection:background}
\subsubsection{KPZ from flows}
To our knowledge, \emph{\abbr{KPZ}-type \abbr{SPDEs}} for diffusions interacting with their range had not appeared in the literature before. The closest work that we are aware of to ours is \cite{HGM}, which derives the \abbr{KPZ} equation from a stochastic version of mean-curvature flow. However, \cite{HGM} has randomness coming from a background environment (with mixing and independence-type properties), while the randomness in our flow model comes from a single particle.
\subsubsection{Singular \abbr{SPDE}s on manifolds}
While we were finishing this work, \cite{HS23} was posted to the arXiv. This treats singular \abbr{SPDE}s on manifolds via regularity structures \cite{Hai14}. {However, due to the Dirichlet-to-Neumann operator in \eqref{eq:singSPDE}, the \abbr{SPDE}s here and in \cite{HS23} are a bit different.}
\subsubsection{Shape theorems}\label{subsubsection:shape}
{This paper studies fluctuation scaling for the height function, i.e. study \eqref{eq:flucprocess}. In \cite{DY}, we studied (a Poissonization of) the discrete version of Figure \ref{figure:1} without heat flow regularization. The main result of \cite{DY} was a shape theorem for the growth model therein, in particular a scaling limit for the evolving vector field process that we alluded to before Construction \ref{construction:model} {{}but with speed slowed down by $\e^{1/4}$ so that the interface evolves at speed $1$; see \eqref{eq:modelflow}}. {{}(We clarify that the particle speed in \cite{DY} is also denoted by $\e^{-1}$, so that $\e$ here refers to $\e^{4/3}$ in \cite{DY}.} In particular, under the scaling of \cite{DY}, the heat flow term in \eqref{eq:modelflow} vanishes, so the results of \cite{DY} hold even if we included said term. A similar shape theorem (for more general processes than Brownian motion but for radial growth) was shown in \cite{DGHS}.}
\subsection{A word about universality}\label{subsection:universality}
The methods we use require very little about the Brownian nature of the randomness in \eqref{eq:modelflow}. (Indeed, as indicated in Section \ref{subsubsection:difficultiesthm1}, only spectral gap estimates are needed.) This can be interpreted as another instance of universality. (Of course, if we change Brownian motion to another process, the $\mathscr{L}$-operator in \eqref{eq:spde} may change. {The quadratic term, however, will not}.)

If we drop Laplacians in \eqref{eq:modelflow} and {{}\eqref{eq:scalinglimitII}}, Theorem \ref{theorem:main} would still hold for the resulting \abbr{SPDE}s. {{}In addition, if one were to take a singular limit as in Theorem \ref{theorem:3} without said Laplacians, then one should arrive (formally) at \eqref{eq:spde} without the Laplacian therein. However, we currently cannot take such a singular limit. Furthermore, it is an interesting question as to whether or not \eqref{eq:spde} has a \abbr{KPZ} fixed point scaling limit (after posing it on the real line instead of $\partial\mathds{M}$).}
\subsection{A changing diffeomorphism class}\label{subsection:geometry}
{{}The evolving graph of the height function is always diffeomorphic to the original interface $\partial\mathds{M}$. Another situation of interest would be to study a random growth model driven by a diffusive particle which can change its diffeomorphism class at a possibly random time. For work along these lines (in the case of a Stefan \abbr{PDE} whose singularities are resolved through a particle system), see \cite{DNS}.}
\subsection{Organization of the paper}
Section \ref{section:outlinenew} outlines the methods (and essentially proves Theorems \ref{theorem:3} and \ref{theorem:main} modulo technicalities to be checked). The rest of the paper is outlined at the end of Section \ref{section:outlinenew}.
\subsection{Acknowledgements}
We thank Martin Hairer and Harprit Singh for useful conversation regarding their recent work. {{}We also thank the referee for their comments, which significantly improved the presentation of this paper. We thank the referee in particular for pointing out a simpler argument for Theorem \ref{theorem:3}.} Research supported in part by \abbr{NSF} grants \abbr{DMS}-1954337 and \abbr{DMS}-2348142 (A.D.), and by the \abbr{NSF} Mathematical Sciences Postdoctoral Fellowship program under Grant. No. \abbr{DMS}-2203075 (K.Y.).
%
%
%
\section{Function spaces and other notation}\label{section:notation}
We now give a list of function spaces (and a few other pieces of notation) to be used throughout the paper.
\begin{enumerate}
\item For any set $I$ and $a,b\in\R$, when we write $a\lesssim_{I}b$, we mean $|a|\leq\Lambda|b|$ for an implied constant $\Lambda\geq0$ depending only on $I$. (If $I$ is a finite subset of $\R^{n}$ for some $n\geq1$, the dependence of $\Lambda$ is assumed to be smooth in the elements of $I$.) By $a\gtrsim_{I}b$, we mean $b\lesssim_{I}a$. By $a\asymp b$, we mean $a\lesssim b$ and $b\lesssim a$ with possibly different implied constants. Also, by $a=\mathrm{O}_{I}(b)$, we mean $a\lesssim_{\mathrm{I}}b$.
\item {{}It will be convenient to adopt the following convention. When we say that $a\lesssim_{{I}}b$ with high probability for a finite set $I=\{i_{1},\ldots,i_{n}\}$ of real numbers, we mean that there exists a parameter $\upsilon[\e]\to0$ such that $\mathbb{P}[|a|\leq C(i_{1},\ldots,i_{n})|b|]\geq1-\upsilon[\e]$ for some deterministic continuous function $C:\R^{n}\to[0,\infty)$. We note that this function $C$ can depend on $\e$.}
\item For any $a,b\in\R$, we define $a\wedge b=\min(a,b)$.
\item When we say $\beta\in\R$ is uniformly positive, we mean $\beta\geq C$ for $C>0$ that depends on no parameters.
\item For $\p\geq1$, let {{}$\mathrm{L}^{\p}=\mathrm{L}^{\p}(\partial\mathds{M})$} be the usual $\mathrm{L}^{\p}$-space, where $\partial\mathds{M}\subseteq\R^{\d+1}$ is given the Riemannian surface measure.
\item Fix any integer $\k\geq0$. Fix an orthonormal frame $\mathsf{e}_{1},\ldots,\mathsf{e}_{\d}$ (i.e. a smoothly varying orthonormal basis for tangent spaces of the manifold $\partial\mathds{M}$ with Euclidean surface metric). For smooth $\varphi:\partial\mathds{M}\to\R$, set
\begin{align}
{{}\|\varphi\|_{\mathscr{C}^{\k}}:=}\|\varphi\|_{\mathscr{C}^{\k}(\partial\mathds{M})}:=\sup_{\x\in\partial\mathds{M}}|\varphi_{\x}|+\sup_{\x\in\partial\mathds{M}}\sup_{\i_{1},\ldots,\i_{\k}}|\grad_{\i_{1}}\ldots\grad_{\i_{\k}}\varphi_{\t,\x}|,
\end{align}
where $\grad_{\i}$ is gradient in the direction of the orthonormal frame vector $\mathsf{e}_{\i}$, and the inner supremum is over subsets of size $\k$ in $\{1,\ldots,\d\}$. Let ${{}\mathscr{C}^{\k}{}:=}\mathscr{C}^{\k}(\partial\mathds{M})$ be the corresponding closure of smooth functions on $\partial\mathds{M}$. {{}(We will also consider similar spaces but for different domains, such as $\mathds{M}$ instead of $\partial\mathds{M}$, in which case we explicitly write the domain at hand. In particular, we only use the $\mathscr{C}^{\k}$ shorthand for $\mathscr{C}^{\k}(\partial\mathds{M})$. A similar comment applies to other function spaces to be introduced below.)}
\item Let ${{}\mathscr{C}^{0,\upsilon}:=}\mathscr{C}^{0,\upsilon}(\partial\mathds{M})$, for $\upsilon\in(0,1)$, be the H\"{o}lder norm on the manifold $\partial\mathds{M}$ with Euclidean surface metric.
\item Fix $\mathfrak{t}\geq0$. Let {$\mathscr{C}^{\infty}_{\mathfrak{t}}\mathscr{C}^{\infty}_{}$} be the space of smooth $\varphi:[0,\mathfrak{t}]\times\partial\mathds{M}\to\R$. Fix integers $\k_{1},\k_{2}\geq0$, and set
\begin{align}
\|\varphi\|_{\mathscr{C}^{\k_{1}}_{\mathfrak{t}}\mathscr{C}^{\k_{2}}}:=\sup_{0\leq\t\leq\mathfrak{t}}\left\{\|\partial_{\t}^{\k_{1}}\varphi_{\t,\cdot}\|_{\mathscr{C}^{0}{}}+\|\varphi_{\t,\cdot}\|_{\mathscr{C}^{\k_{2}}{}}\right\}, \quad\varphi\in\mathscr{C}^{\infty}_{\mathfrak{t}}\mathscr{C}^{\infty}.
\end{align}
We let {$\mathscr{C}^{\k_{1}}_{\mathfrak{t}}\mathscr{C}^{\k_{2}}$} be the closure of smooth functions on $[0,\mathfrak{t}]\times\partial\mathds{M}$ under this norm.
\item Fix any integer $\k\geq0$. For any $\varphi:\partial\mathds{M}\to\R$ smooth, we define
\begin{align}
{{}\|\varphi\|_{\mathrm{H}^{\k}}:=}\|\varphi\|_{\mathrm{H}^{\k}(\partial\mathds{M})}^{2}:=\|\varphi\|_{\mathrm{L}^{2}}^{2}+\sup_{\i_{1},\ldots,\i_{\k}}\|\grad_{\i_{1}}\ldots\grad_{\i_{\k}}\varphi\|_{\mathrm{L}^{2}}^{2}.
\end{align}
Let ${{}\mathrm{H}^{\k}:=}\mathrm{H}^{\k}(\partial\mathds{M})$ denote the closure of $\mathscr{C}^{\infty}(\partial\mathds{M})$ under this norm. For any fractional $\alpha\geq0$, define the ${{}\mathrm{H}^{\alpha}:=}\mathrm{H}^{\alpha}(\partial\mathds{M})$ via the usual interpolation procedure (though it is enough to take $\alpha\geq0$ to be an integer throughout this paper. Alternatively, one can cover $\partial\mathds{M}$ with an atlas, define the $\mathrm{H}^{\alpha}(\partial\mathds{M})$-norm by using a diffeomorphism with an open subset of $\R^{\d}$, and sum over all charts in the atlas.)
\item Fix any integer $\k\geq0$ and $\alpha\geq0$. Fix any $\mathfrak{t}\geq0$. For any $\varphi:[0,\mathfrak{t}]\times\partial\mathds{M}\to\R$ smooth, we define
\begin{align}
\|\varphi\|_{\mathscr{C}^{\k}_{\mathfrak{t}}\mathrm{H}^{\alpha}_{}}:=\sup_{0\leq\t\leq\mathfrak{t}}\left\{\|\partial_{\t}^{\k_{1}}\varphi_{\t,\cdot}\|_{\mathrm{H}^{0}}+\|\varphi_{\t,\cdot}\|_{\mathrm{H}^{\alpha}}\right\}.
\end{align}
We let {$\mathscr{C}^{\k}_{\mathfrak{t}}\mathrm{H}^{\alpha}_{}$} be the closure of smooth functions on $[0,\mathfrak{t}]\times\partial\mathds{M}$ under this norm.
\end{enumerate}
\section{Outline of the proofs of Theorems \ref{theorem:main} and \ref{theorem:3}}\label{section:outlinenew}
We give steps towards proving Theorem \ref{theorem:main}. We then describe the technical heart to prove each step (and Theorem \ref{theorem:3}) in Section \ref{subsection:difficulties}. We conclude this section with an outline for the rest of the paper.
\subsection{Step 1: comparing $\mathbf{Y}^{\e}$ to an $\e$-dependent \abbr{SPDE}}
Even if one computes the evolution equation for $\mathbf{Y}^{\e}$ using \eqref{eq:modelflow} and \eqref{eq:flucprocess}, it is not clearly an approximation to \eqref{eq:scalinglimitII}. (The problem is the last term in \eqref{eq:modelflow}.) The first step towards proving Theorem \ref{theorem:main} is to therefore compare $\mathbf{Y}^{\e}$ to the \abbr{PDE}
\begin{align}
\partial_{\t}\mathbf{h}^{\e}_{\t,\x}&={{}\Delta}\mathbf{h}^{\e}_{\t,\x}+{\textstyle\int_{\partial\mathds{M}}}\mathbf{K}_{\x,\z}|{{}\grad_{}}\mathbf{h}^{\e}_{\t,\z}|^{2}\d\z+\tfrac{\d\mathbf{M}^{\e}_{\t,\x}}{\d\t}\label{eq:scalinglimit}\\
\mathbf{h}^{\e}_{0,\cdot}&=\mathbf{Y}^{\mathrm{init}}_{0,\cdot},\nonumber
\end{align}
where $\mathbf{M}^{\e}$ denotes a martingale that ``resembles" the last term in the differential equation \eqref{eq:scalinglimitII}. Let us make precise what ``resembles" means in the following definition (which we explain afterwards).
\begin{definition}\label{definition:espde}
\fsp {{}We say that the family of processes $\t\mapsto\mathbf{M}^{\e}_{\t,\cdot}\in\mathscr{C}^{\infty}(\partial\mathds{M})$, indexed by $\e>0$, is a (family of) \emph{good martingales} if the following hold. (First, for notation, see point (2) in Section \ref{section:notation}.)}
\begin{itemize}
\item The process $\t\mapsto\mathbf{M}^{\e}_{\t,\cdot}\in\mathscr{C}^{\infty}(\partial\mathds{M})$ is a {{}c\`{a}dl\`{a}g} martingale with respect to the filtration of $(\mathbf{I}^{\e},\mathfrak{q}^{\e})$. 

Next, fix any stopping time $0\leq\tau\leq1$. With probability $1$, if $\mathfrak{t}\leq\tau$ is a jump time, then for any $\k\geq0$ and for some $\kappa[\e]$ that vanishes as $\e\to0$, we have 
\begin{align}
\|\mathbf{M}^{\e}_{\mathfrak{t},\cdot}-\mathbf{M}^{\e}_{\mathfrak{t}^{-},\cdot}\|_{\mathscr{C}^{\k}{}}\lesssim_{\k,\|\mathbf{Y}^{\e}\|_{\mathscr{C}^{0}_{\tau}\mathscr{C}^{2}_{}}}\kappa[\e].
\end{align}
\item Fix any $\Lambda\geq0$ and any stopping time $0\leq\tau\leq1$ such that for all $\t\leq\tau$, we have $\|\mathbf{Y}^{\e}_{\t,\cdot}\|_{\mathscr{C}^{2}{}}\leq\Lambda$. For any $\k\geq0$ deterministic, we have the following with high probability:
\begin{align}
\sup_{0\leq\t\leq\tau}\|\mathbf{M}^{\e}_{\t,\cdot}\|_{\mathscr{C}^{\k}{}}&\lesssim_{\k,\Lambda}1.
\end{align}
For any stopping time $0\leq\tau\leq1$, with high probability, we have the following for any $\k\geq0$:
\begin{align}
\sup_{0\leq\t\leq\tau}\|[\mathbf{M}^{\e}]_{\t,\cdot}-[\mathbf{M}^{\mathrm{limit}}]_{\t,\cdot}\|_{\mathscr{C}^{\k}{}}&\lesssim_{\k,\|\mathbf{Y}^{\e}\|_{\mathscr{C}^{0}_{\tau}\mathscr{C}^{2}_{}}}\e^{\beta}, \label{eq:scalinglimitmartingale}
\end{align}
The exponent $\beta>0$ is fixed (e.g. independent of all other data, including $\e$), and 
\begin{align}
[\mathbf{M}^{\mathrm{limit}}]_{\t,\x}:=2\t{\textstyle\int_{\partial\mathds{M}}}[\mathbf{K}_{\x,\z}-1]\times\left\{-\mathscr{L}^{-1}[\mathbf{K}_{\x,\z}-1]\right\}\d\z\label{eq:scalinglimitmartingaleI}
\end{align}
is a time-integrated ``energy" functional. {{}Above, the operator $\mathscr{L}^{-1}$ acts on the $z$-variable in the function $z\mapsto\mathbf{K}_{\x,\z}-1$.}
\end{itemize}
\end{definition}
\begin{remark}\label{remark:whichvariable}
\fsp {{}In this paper, we use the notation that resembles $\wt{\mathscr{L}}\varphi_{\x,\z}$ and $\wt{\mathscr{L}}\varphi_{\x,\mathfrak{q}^{\e}}$. Here, $\wt{\mathscr{L}}$ is the Dirichlet-to-Neumann operator on $\mathds{M}$ with respect to a metric that will be clarified in context, and $\varphi\in\mathscr{C}^{\infty}(\partial\mathds{M}\times\partial\mathds{M})$. Also, $x,z,\mathfrak{q}^{\e}\in\partial\mathds{M}$. \emph{In this notation, the Dirichlet-to-Neumann operator $\wt{\mathscr{L}}$ will act on the second variable, which we then evaluate at $z$ or $\mathfrak{q}^{\e}$} (depending on the context), as in \eqref{eq:scalinglimitmartingaleI}.}
\end{remark}
Let us now explain Definition \ref{definition:espde}. The {{}c\`{a}dl\`{a}g}-in-time and smooth-in-space regularity of $\mathbf{M}^{\e}$ is enough for local well-posedness of \eqref{eq:scalinglimit} in $\mathscr{C}^{2}{}$, for example. Indeed, if one writes \eqref{eq:scalinglimit} in its Duhamel form (see Lemma \ref{lemma:duhamel}), then one can move the time-derivative acting on $\mathbf{M}^{\e}$ in \eqref{eq:scalinglimit} onto the heat kernel of the semigroup $\mathfrak{t}\mapsto\exp[\mathfrak{t}{{}\Delta}]$. This turns into a Laplacian ${{}\Delta}$ acting on said heat kernel, which is okay since we integrate against $\mathbf{M}^{\e}$ in space, and $\mathbf{M}^{\e}$ is smooth in space (see Lemma \ref{lemma:regheat}).

We now explain the second bullet point in Definition \ref{definition:espde}. It first states a priori control on regularity of the martingale (in a way that is technically convenient later on). It also says that at the level of bracket processes, $\mathbf{M}^{\e}$ matches the last term in \eqref{eq:scalinglimitII} up to $\mathrm{O}(\e^{\beta})$. By standard martingale theory, this is enough to characterize the small-$\e$ limit of $\mathbf{M}^{\e}$. We expand on this in the discussion of the next step, Theorem \ref{theorem:2}.
\begin{theorem}\label{theorem:1}
\fsp There exists a {{}family of good martingales} $\mathbf{M}^{\e}$ in the sense of Definition \ref{definition:espde} such that if $\mathbf{h}^{\e}$ is the solution to \eqref{eq:scalinglimit} with this choice of $\mathbf{M}^{\e}$, then we have the following.
\begin{itemize}
\item First, for any $\Lambda\geq0$, define the stopping time 
\begin{align}
\tau_{\mathbf{h}^{\e},\Lambda}=\inf\{\t\geq0:\|\mathbf{h}^{\e}_{\t,\cdot}\|_{\mathscr{C}^{2}{}}\geq\Lambda\}.
\end{align}
\item {{}There exists a constant $\beta>0$ independent of all other parameters, including $\e$, and a parameter $\upsilon[\e]>0$ that vanishes as $\e\to0$ such that the following holds. For any {{}deterministic} $\Lambda\geq0$ and $\delta>0$, we have the following estimate with probability at least $1-\upsilon[\e]$}:
\begin{align}
\sup_{0\leq\t\leq(\tau_{\mathbf{h}^{\e},\Lambda}\wedge1)-\delta}\|\mathbf{Y}^{\e}_{\t,\cdot}-\mathbf{h}^{\e}_{\t,\cdot}\|_{\mathscr{C}^{2}{}}\lesssim_{\delta,\Lambda}\e^{\beta}.\label{eq:thm1result}
\end{align}
(The $\wedge$ notation means minimum. Also, $\beta$ here may not match $\beta$ in Definition \ref{definition:espde}.)
\end{itemize}
\end{theorem}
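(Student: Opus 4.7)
The plan is to derive the SPDE satisfied by $\mathbf{Y}^{\e}$ directly from \eqref{eq:flucprocess} and \eqref{eq:modelflow}, then apply a Markovian homogenization argument (Poisson equation plus Ito's formula) to the fast boundary-trace process $\mathfrak{q}^{\e}$, in order to identify simultaneously the KPZ nonlinearity in \eqref{eq:scalinglimit} and the good martingale $\mathbf{M}^{\e}$. Differentiating \eqref{eq:flucprocess}, inserting \eqref{eq:modelflow}, and using that $\Delta_{\partial\mathds{M}}$ annihilates the spatial constant $\e^{-1/3}\t$, one obtains
\begin{align}
\partial_{\t}\mathbf{Y}^{\e}_{\t,\x}\;=\;\Delta_{\partial\mathds{M}}\mathbf{Y}^{\e}_{\t,\x}+\e^{-2/3}\bigl[\mathrm{Vol}_{\mathbf{I}^{\e}_{\t}}\mathbf{K}_{\x,\mathfrak{q}^{\e}_{\t}}-1\bigr].
\end{align}
The singular $\e^{-2/3}$ prefactor is absorbed by splitting the bracket into its $\mathfrak{q}^{\e}$-invariant mean (which, after Taylor-expanding $\mathrm{Vol}$, produces the nonlinearity in \eqref{eq:scalinglimit}) and its mean-zero fluctuation (which, after solving a Poisson equation and applying Ito, becomes $\mathbf{M}^{\e}$).

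By the Hsu correspondence \cite{Hsu0}, $\mathfrak{q}^{\e}$ is a time-inhomogeneous trace process on $\partial\mathds{M}$ whose infinitesimal generator is a constant multiple of $\e^{-4/3}\mathscr{L}_{\mathbf{I}^{\e}_{\t}}$, where $\mathscr{L}_{\mathbf{I}^{\e}_{\t}}$ is the Dirichlet-to-Neumann operator for the perturbed metric $\mathbf{g}[\grad_{\partial\mathds{M}}\mathbf{I}^{\e}_{\t}]$; its invariant measure on $\partial\mathds{M}$ is proportional to the surface measure $(1+|\grad_{\partial\mathds{M}}\mathbf{I}^{\e}_{\z}|^{2})^{1/2}\d\z$. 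Consequently, the invariant-measure average of $\mathrm{Vol}_{\mathbf{I}^{\e}_{\t}}\mathbf{K}_{\x,\mathfrak{q}^{\e}_{\t}}$ is exactly $\int_{\partial\mathds{M}}\mathbf{K}_{\x,\z}(1+\e^{2/3}|\grad_{\partial\mathds{M}}\mathbf{Y}^{\e}_{\t,\z}|^{2})^{1/2}\d\z$, which Taylor-expands (after subtracting $1$ and rescaling by $\e^{-2/3}$) to $\int_{\partial\mathds{M}}\mathbf{K}_{\x,\z}|\grad_{\partial\mathds{M}}\mathbf{Y}^{\e}_{\t,\z}|^{2}\d\z+O(\e^{2/3})$, matching the nonlinearity in \eqref{eq:scalinglimit} up to an absorbable constant. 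For the centered fluctuation $\mathbf{K}_{\x,\mathfrak{q}^{\e}_{\t}}-\langle\mathbf{K}_{\x,\cdot}\rangle_{\mathrm{inv}}$, I solve the $(\x,\t)$-indexed family of Poisson equations $\mathscr{L}_{\mathbf{I}^{\e}_{\t}}\psi^{\e}_{\x,\t}=\mathbf{K}_{\x,\cdot}-\langle\mathbf{K}_{\x,\cdot}\rangle_{\mathrm{inv}}$ (well-posed on the orthogonal complement of constants) and apply Ito's formula to $\psi^{\e}_{\x,\t}(\mathfrak{q}^{\e}_{\t})$. The $\e^{-4/3}$ factor in the generator cancels the $\e^{-2/3}$ singularity; what remains after rearrangement is a martingale $\mathbf{M}^{\e}$ plus $\partial_{\t}\psi^{\e}$ and boundary terms that are $O(\e^{\beta})$. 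A carré-du-champ computation for the quadratic variation of $\mathbf{M}^{\e}$, together with the fact that $\mathscr{L}_{\mathbf{I}^{\e}_{\t}}\to\mathscr{L}$ as $\e\to 0$ over the perturbation class we care about, yields $[\mathbf{M}^{\e}]\to[\mathbf{M}^{\mathrm{limit}}]$ as in \eqref{eq:scalinglimitmartingaleI}.

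With both the nonlinearity and the martingale identified, the comparison of $\mathbf{Y}^{\e}$ and $\mathbf{h}^{\e}$ closes by a standard Duhamel--Gr\"onwall argument. I write both in mild form against $\exp[\t\Delta_{\partial\mathds{M}}]$, subtract, and use Lipschitz continuity of $\mathbf{h}\mapsto\int_{\partial\mathds{M}}\mathbf{K}_{\cdot,\z}|\grad_{\partial\mathds{M}}\mathbf{h}_{\z}|^{2}\d\z$ on the $\mathscr{C}^{2}(\partial\mathds{M})$-ball of radius $2\Lambda$, together with the smoothing of the heat semigroup (Lemma \ref{lemma:regheat}), to obtain
\begin{align}
\|\mathbf{Y}^{\e}_{\t,\cdot}-\mathbf{h}^{\e}_{\t,\cdot}\|_{\mathscr{C}^{2}(\partial\mathds{M})}\;\leq\;\kappa[\Lambda]\e^{\beta}+C_{\Lambda}\int_{0}^{\t}(\t-\s)^{-\gamma}\|\mathbf{Y}^{\e}_{\s,\cdot}-\mathbf{h}^{\e}_{\s,\cdot}\|_{\mathscr{C}^{2}(\partial\mathds{M})}\d\s
\end{align}
for some $\gamma<1$, valid up to $(\tau_{\mathbf{h}^{\e},\Lambda}\wedge 1)-\delta$, once one secures the bootstrap $\|\mathbf{Y}^{\e}\|_{\mathscr{C}^{2}(\partial\mathds{M})}\leq 2\Lambda$ on that window (routine from the $\e^{\beta}$ closeness itself). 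A generalized Gr\"onwall inequality then delivers \eqref{eq:thm1result}.

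The hardest step is producing the corrector $\psi^{\e}_{\x,\t}$ with enough uniform regularity, in both $\x$ and $\t$, to justify averaging at the level of the $\mathscr{C}^{2}(\partial\mathds{M})$ topology. Concretely, one needs uniform elliptic regularity for $\mathscr{L}_{\mathbf{I}^{\e}_{\t}}$ over the perturbation class $\|\grad_{\partial\mathds{M}}\mathbf{I}^{\e}\|_{\mathscr{C}^{\k}}\lesssim\e^{1/3}$, so that $\mathscr{L}_{\mathbf{I}^{\e}_{\t}}$ is a smooth $O(\e^{1/3})$-perturbation of the flat $\mathscr{L}$; a uniform spectral gap to keep $\psi^{\e}_{\x,\t}$ and its $\x$-derivatives bounded; and control of $\partial_{\t}\psi^{\e}_{\x,\t}$, obtained by differentiating the Poisson equation in $\t$ and inserting \eqref{eq:modelflow}. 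The delicate point in the last item is that $\partial_{\t}\mathbf{I}^{\e}$ carries a factor of $\e^{-1/3}$ via the inflation term, which must still be dominated by the $\e^{-4/3}$ time-acceleration built into $\mathfrak{q}^{\e}$'s generator. All three ingredients are pseudo-differential / Weyl-type inputs for $\mathscr{L}$ of the sort developed in Lemmas \ref{lemma:dtonbasics}--\ref{lemma:dtonestimates}, but the real work is in making them uniform over the stochastic, time-evolving family of metrics $\mathbf{g}[\grad_{\partial\mathds{M}}\mathbf{I}^{\e}]$.
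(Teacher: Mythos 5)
Your high-level plan matches the paper's: write the evolution equation for $\mathbf{Y}^{\e}$, split the singular $\e^{-2/3}$ term into an invariant-measure average (giving the KPZ quadratic via Taylor expansion) and a mean-zero fluctuation (giving $\mathbf{M}^{\e}$ via a corrector argument), then close with Duhamel--Gr\"onwall. The identification of the invariant measure for the DtN trace process as $(1+|\grad_{\partial\mathds{M}}\mathbf{I}^{\e}|^{2})^{1/2}\d\z$, the resulting Taylor expansion producing the quadratic, and the Lipschitz/bootstrap closure are all correct and agree with what the paper does (Lemmas \ref{lemma:thm11}--\ref{lemma:thm12}, the Gr\"onwall step in Section \ref{section:thm1proof}).

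Where you differ genuinely is in the corrector step (the hard step, Proposition \ref{prop:thm13}). You propose to solve the exact Poisson equation $\mathscr{L}_{\mathbf{I}^{\e}_{\t}}\psi^{\e}_{\x,\t}=\mathbf{K}_{\x,\cdot}-\langle\mathbf{K}_{\x,\cdot}\rangle_{\mathrm{inv}}$ and then control $\partial_{\t}\psi^{\e}$ by differentiating the equation. The paper instead replaces $\mathscr{L}^{-1}$ by $(\lambda-\mathscr{L}^{\e,\mathbf{I}^{\e}}_{\mathrm{DtN}})^{-1}$ with $\lambda=\e^{-4/3+\gamma}$, and then \emph{iterates} the Ito/resolvent identity to remove the $\lambda$-residual one power of $\e^{\gamma}$ at a time (Lemma \ref{lemma:thm13new1}, Corollary \ref{corollary:thm13new3}). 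The reason the paper does this is exactly the point you half-identify but do not fully confront: the spectral-gap estimate that makes $\mathscr{L}^{-1}$ bounded requires its argument to be orthogonal to constants with respect to the $\mathbf{I}^{\e}_{\t}$-dependent measure, and this property is \emph{not} stable term-by-term when the flow generator $\mathscr{L}^{\e,\mathfrak{q}}_{\mathrm{flow}}$ acts. In your route, differentiating the Poisson equation gives $\mathscr{L}_{\mathbf{I}^{\e}_{\t}}\dot\psi^{\e} = \dot f - \dot{\mathscr{L}}_{\mathbf{I}^{\e}_{\t}}\psi^{\e}$; the combined right-hand side \emph{is} orthogonal to constants (this follows by differentiating the solvability conditions $\langle f,1\rangle_{\mu_{\t}}=0$ and $\langle\mathscr{L}_{\t}\psi_{\t},1\rangle_{\mu_{\t}}=0$ in $\t$, using self-adjointness and $\mathscr{L}1=0$), so $\dot\psi^{\e}$ is well defined and controllable by the spectral gap. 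But the individual pieces $\dot f$ and $\dot{\mathscr{L}}\psi^{\e}$ are \emph{not} separately orthogonal, so you cannot estimate them one at a time via $\mathscr{L}^{-1}$; you must keep them bundled, and this is precisely the fragility the paper calls an ``unstable algebraic property'' and avoids by taking $\lambda>0$ so the resolvent is unconditionally $\lesssim\lambda^{-1}$ (at the cost of needing the iterative expansion to beat the leftover $\e^{-2/3}$). Your approach buys you a one-shot identity with no iteration; the paper's approach buys robustness (no cancellation bookkeeping) at the cost of iterating finitely many times and tracking the resolvent parameter. Both close, but if you pursue yours, you must make the orthogonality-preservation computation explicit and be careful never to split $\dot f - \dot{\mathscr{L}}\psi^{\e}$ before applying $\mathscr{L}^{-1}$.

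Two smaller remarks. First, you mention the ``carr\'e-du-champ computation'' for $[\mathbf{M}^{\e}]\to[\mathbf{M}^{\mathrm{limit}}]$ in a single clause; in the paper this is a full lemma (Lemma \ref{lemma:thm13new7}) requiring a secondary homogenization step (replacing $\mathfrak{q}^{\e}$ by its invariant-measure average inside the bracket), which itself uses another Ito/Poisson argument for the time-integrated Carr\'e-du-Champ. You should at least flag that identifying the limiting bracket is not just a pointwise algebraic computation but requires a second averaging argument of the same flavor. Second, your $(\t-\s)^{-\gamma}$ kernel in the Gr\"onwall is harmless but unnecessary: since the heat semigroup is bounded on $\mathscr{C}^{2}(\partial\mathds{M})$ and $\mathbf{K}$ is smooth, the paper gets a singularity-free Gr\"onwall (see \eqref{eq:1proof4b}), which is slightly cleaner.
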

Let us now briefly explain what Theorem \ref{theorem:1} is saying exactly (and why it is even plausible).
\begin{itemize}
\item In words, Theorem \ref{theorem:1} says that we can couple $\mathbf{Y}^{\e}$ to the solution $\mathbf{h}^{\e}$ of \eqref{eq:scalinglimit} if we make an appropriate choice of good martingales $\mathbf{M}^{\e}$ that comes from a martingale decomposition for the last term in \eqref{eq:modelflow}.
\item The Laplacian in \eqref{eq:modelflow} clearly matches that in \eqref{eq:scalinglimit}.
\item Take the second term on the RHS of \eqref{eq:modelflow}. Even though $\mathfrak{q}^{\e}$ is not Markovian since the underlying metric is determined by the $\mathbf{I}^{\e}$ process, it is faster than $\mathbf{I}^{\e}$, so it is the unique ``fast variable" (in the language of homogenization). Thus, we expect that the second term on the RHS of \eqref{eq:modelflow} homogenizes in $\mathfrak{q}^{\e}$ with respect to the Riemannian measure induced by $\mathbf{g}[{{}\grad_{}}\mathbf{I}^{\e}]$. (Intuitively, on time-scales for which $\mathfrak{q}^{\e}$ homogenizes, $\mathbf{I}^{\e}$ is roughly constant. So, $\mathfrak{q}^{\e}$ ``looks" Markovian, and we have homogenization.) Thus, we replace the second term on the RHS of \eqref{eq:modelflow} by the following homogenized statistic (if we include the {{}$\e^{-1/4}$} scaling in \eqref{eq:flucprocess}):
\begin{align}
{{}\e^{-\frac12}}{\textstyle\int_{\partial\mathds{M}}}\mathbf{K}_{\x,\z}(1+|{{}\grad_{}}\mathbf{I}^{\e}_{\t,\z}|^{2})^{\frac12}\d\z.\label{eq:thm1resultI}
\end{align}
(We clarify that $(1+|{{}\grad_{}}\mathbf{I}^{\e}_{\t,\z}|^{2})^{1/2}\d\z$ is the Riemannian measure induced by $\mathbf{g}[{{}\grad_{}}\mathbf{I}^{\e}_{\t,\cdot}]$.) We can now Taylor expand in ${{}\grad_{}}\mathbf{I}^{\e}={{}\e^{1/4}}{{}\grad_{}}\mathbf{Y}^{\e}$ to second-order to turn \eqref{eq:thm1resultI} into the second term on the RHS of \eqref{eq:scalinglimit} but evaluated at $\mathbf{Y}^{\e}$ instead of $\mathbf{h}^{\e}$ (plus lower-order errors).
\item It remains to explain the noise in \eqref{eq:scalinglimit}. It turns out replacing the second term on the RHS of \eqref{eq:modelflow} by \eqref{eq:thm1resultI} does not introduce vanishing errors. This fluctuation is order $1$. Indeed, the difference of the last term in \eqref{eq:modelflow} and \eqref{eq:thm1resultI} is a noise of speed {{}$\e^{-1}$}. After we time-integrate, we get square-root cancellation and a power-saving of {{}$(\e^{-1})^{-1/2}=\e^{1/2}$}. This cancels the {{}$\e^{-1/2}$}-scaling of \eqref{eq:thm1resultI}.
\end{itemize}
\subsection{Step 2: the small-$\e$ limit of $\mathbf{h}^{\e}$}
The remaining ingredient to proving Theorem \ref{theorem:main} is the following. It is essentially Theorem \ref{theorem:main} but for $\mathbf{h}^{\e}$ instead of $\mathbf{Y}^{\e}$. Recall notation of Theorem \ref{theorem:main}.
\begin{theorem}\label{theorem:2}
\fsp There exists a coupling between the sequence $\{\mathbf{h}^{\e}\}_{\e\to0}$ and $\mathfrak{h}^{\mathbf{K}}$ such that the following two points hold with high probability.
\begin{enumerate}
\item For any $\rho>0$, there exists $\Lambda=\Lambda(\rho)$ so that for all $\e>0$ small, we have $\tau_{\mathfrak{h}^{\mathbf{K}}}\wedge\tau_{\mathbf{h}^{\e},\Lambda}\geq\tau_{\mathfrak{h}^{\mathbf{K}}}-\rho$.
\item For any $\delta>0$, there exists $\kappa[\delta,\e]\geq0$ that vanishes as $\e\to0$ such that
\begin{align}
\sup_{0\leq\t\leq(\tau_{\mathfrak{h}^{\mathbf{K}}}\wedge1)-\delta}\|\mathbf{h}^{\e}_{\t,\cdot}-\mathfrak{h}^{\mathbf{K}}_{\t,\cdot}\|_{\mathscr{C}^{2}{}}\leq\kappa[\delta,\e].
\end{align}
\end{enumerate}
\end{theorem}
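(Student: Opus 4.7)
Since $\mathbf{h}^{\e}$ and $\mathfrak{h}^{\mathbf{K}}$ share initial data, the semigroup $\exp[\t\Delta_{\partial\mathds{M}}]$, and the $\mathbf{K}$-convolved quadratic nonlinearity, the plan is to compare their Duhamel representations. Subtracting gives, for $\Delta^{\e}:=\mathbf{h}^{\e}-\mathfrak{h}^{\mathbf{K}}$,
\begin{align*}
\Delta^{\e}_{\t,\x} = \int_{0}^{\t}\Big\{\exp[(\t-\s)\Delta_{\partial\mathds{M}}]\int_{\partial\mathds{M}}\mathbf{K}_{\cdot,\z}\bigl(|\grad_{\partial\mathds{M}}\mathbf{h}^{\e}_{\s,\z}|^{2}-|\grad_{\partial\mathds{M}}\mathfrak{h}^{\mathbf{K}}_{\s,\z}|^{2}\bigr)\d\z\Big\}_{\x}\d\s + \mathcal{N}^{\e}_{\t,\x},
\end{align*}
where $\mathcal{N}^{\e}$ collects the difference of the two Duhamel noise contributions. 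Provided (a) we produce a coupling under which $\|\mathcal{N}^{\e}\|_{\mathscr{C}^{2}(\partial\mathds{M})}\to 0$ w.h.p.\ and (b) we have an a priori $\mathscr{C}^{2}$-bound on $\mathbf{h}^{\e}$, the local Lipschitz estimate $\||\grad u|^{2}-|\grad v|^{2}\|_{\mathscr{C}^{1}}\lesssim_{\Lambda}\|u-v\|_{\mathscr{C}^{2}}$ (valid when $\|u\|_{\mathscr{C}^{2}},\|v\|_{\mathscr{C}^{2}}\leq\Lambda$), combined with parabolic smoothing of $\exp[(\t-\s)\Delta_{\partial\mathds{M}}]$ via Lemma~\ref{lemma:regheat}, closes the estimate by Gronwall in $\mathscr{C}^{2}$.

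To secure (a), I would identify the limit noise via the martingale functional CLT. A direct Ito/white-noise computation using self-adjointness of $\mathscr{L}$ shows that the driving noise in \eqref{eq:scalinglimitIIduhamelb} is the differential of a continuous Gaussian martingale $\mathbf{W}$ whose bracket process is precisely $[\mathbf{M}^{\mathrm{limit}}]$ of \eqref{eq:scalinglimitmartingaleI}. By Definition \ref{definition:espde}, $\mathbf{M}^{\e}$ is a cadlag martingale with vanishing jumps and bracket matching $[\mathbf{M}^{\mathrm{limit}}]$ up to $\e^{\beta}$, so the martingale FCLT (Rebolledo; Jacod--Shiryaev VIII) yields $\mathbf{M}^{\e}\Rightarrow\mathbf{W}$ in Skorokhod on paths valued in $\mathrm{H}^{\kappa}(\partial\mathds{M})$ for any fixed $\kappa\geq 0$, and Skorokhod representation promotes this to an a.s.\ coupling. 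To upgrade from weak-in-space to $\mathscr{C}^{2}$ control on the Duhamel noise, I would integrate by parts in time,
\[
\int_{0}^{\t}\exp[(\t-\s)\Delta_{\partial\mathds{M}}]\d\mathbf{M}^{\e}_{\s,\cdot}=\mathbf{M}^{\e}_{\t,\cdot}-\int_{0}^{\t}\Delta_{\partial\mathds{M}}\exp[(\t-\s)\Delta_{\partial\mathds{M}}]\mathbf{M}^{\e}_{\s,\cdot}\d\s,
\]
and analogously for $\mathbf{W}$. Taking $\kappa$ large and using the uniform $\mathscr{C}^{\kappa}$-control on $\mathbf{M}^{\e}$ built into Definition \ref{definition:espde} together with parabolic smoothing, Sobolev closeness of $\mathbf{M}^{\e}-\mathbf{W}$ converts into $\mathscr{C}^{2}$ closeness of the integrals above via Aubin--Lions-type interpolation, giving $\|\mathcal{N}^{\e}\|_{\mathscr{C}^{2}}\to 0$.

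Item (1) and the a priori bound (b) are obtained simultaneously by bootstrap: pick $\Lambda(\rho)$ with $\|\mathfrak{h}^{\mathbf{K}}_{\t,\cdot}\|_{\mathscr{C}^{2}}\leq\Lambda/2$ on $[0,\tau_{\mathfrak{h}^{\mathbf{K}}}-\rho]$ (finite by definition of the stopping time and continuity of $\mathfrak{h}^{\mathbf{K}}$). Then for $\t<\tau_{\mathbf{h}^{\e},\Lambda}\wedge(\tau_{\mathfrak{h}^{\mathbf{K}}}-\rho)$ the Gronwall estimate above forces $\|\Delta^{\e}_{\t,\cdot}\|_{\mathscr{C}^{2}}\leq\Lambda/2$ with high probability, hence $\|\mathbf{h}^{\e}_{\t,\cdot}\|_{\mathscr{C}^{2}}<\Lambda$ strictly, preventing $\tau_{\mathbf{h}^{\e},\Lambda}$ from triggering first; this produces both (1) and (2). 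The main obstacle is precisely the upgrade from weak martingale convergence of $\mathbf{M}^{\e}$ to $\mathscr{C}^{2}$-convergence of the stochastic Duhamel integral that Gronwall on the quadratic requires, and it is overcome by combining the high-regularity a priori $\mathscr{C}^{\kappa}$-bounds engineered in Definition \ref{definition:espde} with the two derivatives gained from the heat semigroup after integration by parts in time.
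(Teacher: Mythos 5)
Your proposal follows essentially the same route as the paper: identify the limit noise via a martingale problem (your Rebolledo FCLT) exploiting the bracket match \eqref{eq:scalinglimitmartingale} and the vanishing jumps built into Definition \ref{definition:espde}; use Skorokhod representation to upgrade weak convergence to an almost-sure coupling; close the nonlinear term by parabolic regularity; and obtain the stopping-time comparison (point (1)) by a bootstrap/contradiction. The one organizational difference is the decomposition: the paper first splits $\mathbf{h}^{\e}=\mathbf{h}^{\e,1}+\mathbf{h}^{\e,2}$ (and $\mathfrak{h}^{\mathbf{K}}=\mathfrak{h}^{\mathbf{K},1}+\mathfrak{h}^{\mathbf{K},2}$) into a linear martingale-driven piece and a deterministic nonlinear piece, proves convergence of each separately, and reassembles, whereas you apply Gronwall directly to $\Delta^{\e}=\mathbf{h}^{\e}-\mathfrak{h}^{\mathbf{K}}$ and carry the noise residual $\mathcal{N}^{\e}$ inside the iteration. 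But the $\mathscr{C}^{2}$-vanishing of $\mathcal{N}^{\e}$ that your Gronwall needs is precisely the statement $\mathbf{h}^{\e,1}\to\mathfrak{h}^{\mathbf{K},1}$ in $\mathscr{C}^{0}_{\mathfrak{t}}\mathscr{C}^{2}_{\partial\mathds{M}}$ from the paper's split, so the two arguments are logically equivalent once unpacked; the paper's split is marginally cleaner in that the nonlinear step becomes a deterministic PDE stability fact with a converging input, rather than an estimate that threads a stochastic residual through the fixed point.

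Two small points worth tightening in your write-up. First, the ``Aubin--Lions-type interpolation'' phrase is more machinery than needed: the uniform $\mathscr{C}^{\kappa}(\partial\mathds{M})$ control on $\mathbf{M}^{\e}$ from Definition \ref{definition:espde} gives tightness in $\mathscr{C}^{0}_{\mathfrak{t}}\mathscr{C}^{2}_{\partial\mathds{M}}$ directly via Kolmogorov/Arzel\`a--Ascoli, which is what the paper invokes and is the elementary thing to cite. Second, in the bootstrap for point (1) you should make explicit that the choice of $\Lambda(\rho)$ is $\e$-independent and that the argument yields the bound on the full event $\{\liminf_{\e\to0}\tau_{\mathbf{h}^{\e},\Lambda}<\tau_{\mathfrak{h}^{\mathbf{K}}}-\rho\}$ simultaneously for all small $\e$, not just along a subsequence; the paper handles this with the explicit contradiction at the first hitting time.
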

(To be totally clear, point (1) in Theorem \ref{theorem:2} states that $\tau_{\mathbf{h}^{\e},\Lambda}\approx\tau_{\mathfrak{h}^{\mathbf{K}}}$ if we take $\Lambda>0$ sufficiently large and $\e>0$ sufficiently small. The key feature is that the necessary choice of $\Lambda$ does not depend on $\e>0$.)

Taking a minimum with {$\tau_{\mathfrak{h}^{\mathbf{K}}}$} is probably unnecessary in the first point of Theorem \ref{theorem:2}, but it makes things easier. In any case, convergence in both points (1) and (2) of Theorem \ref{theorem:2} is classical, because both \abbr{SPDE}s \eqref{eq:scalinglimit} and \eqref{eq:scalinglimitII} are parabolic equations with smooth RHS. The one detail that may be subtle is that the noise in \eqref{eq:scalinglimit} is only weakly close to that in \eqref{eq:scalinglimitII}. (Indeed, control of predictable brackets \eqref{eq:scalinglimitmartingale} is not a very strong statement.) Thus, we need to show that \eqref{eq:scalinglimitII} is characterized by a martingale problem (which, again, is not hard because \eqref{eq:scalinglimit} and \eqref{eq:scalinglimitII} have smooth RHS).
\subsection{Proof of Theorem \ref{theorem:main}, assuming Theorems \ref{theorem:1} and \ref{theorem:2}}
Take $\delta>0$ small and $0\leq\tau\leq[1\wedge\tau_{\mathfrak{h}^{\mathbf{K}}}]-\delta$. By the triangle inequality, we have 
\begin{align}
\sup_{0\leq\t\leq\tau}\|\mathbf{Y}^{\e}_{\t,\cdot}-\mathfrak{h}^{\mathbf{K}}_{\t,\cdot}\|_{\mathscr{C}^{2}{}}\leq\sup_{0\leq\t\leq\tau}\|\mathbf{Y}^{\e}_{\t,\cdot}-\mathbf{h}^{\e}_{\t,\cdot}\|_{\mathscr{C}^{2}{}}+\sup_{0\leq\t\leq\tau}\|\mathbf{h}^{\e}_{\t,\cdot}-\mathfrak{h}^{\mathbf{K}}_{\t,\cdot}\|_{\mathscr{C}^{2}{}}.
\end{align}
The last term on the RHS vanishes as $\e\to0$ in probability by point (2) of Theorem \ref{theorem:2}. In order to control the first term on the RHS, we first know with high probability that $\tau_{\mathfrak{h}^{\mathbf{K}}}-\delta<\tau_{\mathbf{h}^{\e},\Lambda}-\frac12\delta$ if we take $\Lambda\geq0$ large enough (but independent of $\e$); this is by point (1) of Theorem \ref{theorem:2}. We can now use Theorem \ref{theorem:1} to show vanishing of the first term on the RHS of the previous display as $\e\to0$. \qed
\subsection{Technical challenges and methods for Theorems \ref{theorem:1} and \ref{theorem:3}}\label{subsection:difficulties}
As noted after Theorem \ref{theorem:2}, there is not much to its proof; so, we focus on the ideas behind Theorems \ref{theorem:1} and \ref{theorem:3}.
\subsubsection{Theorem \ref{theorem:1}}\label{subsubsection:difficultiesthm1}
\emph{Suppose, just for now until we say otherwise, that the Brownian particle $\mathfrak{q}^{\e}$ evolves on the set $\mathds{M}$ with respect to the fixed, initial metric $\mathbf{g}[{{}\grad_{}}0]$.} (Put differently, suppose \emph{just for now} that in the definition of $\mathfrak{q}^{\e}$ in Construction \ref{construction:model}, we replace $\mathbf{g}[{{}\grad_{}}\mathbf{I}^{\e}]$ by $\mathbf{g}[{{}\grad_{}}0]$, where $0$ denotes the $0$ function.) In this case, we know that $\mathfrak{q}^{\e}$ is Markovian. 

Take the second term on the RHS of \eqref{eq:modelflow}; it is a function of $\mathfrak{q}^{\e}$. We are interested in the fluctuation below, in which $\mathbf{I}\in\mathscr{C}^{\infty}{}$ is arbitrary:
\begin{align}
\mathsf{F}_{\mathbf{I},\x,\mathfrak{q}^{\e}_{\t}}:={{}\e^{-\frac12}}\mathrm{Vol}_{\mathbf{I}}\mathbf{K}_{\x,\mathfrak{q}^{\e}_{\t}}-{{}\e^{-\frac12}}{\textstyle\int_{\partial\mathds{M}}}\mathbf{K}_{\x,\z}(1+|{{}\grad_{}}\mathbf{I}_{\z}|^{2})^{\frac12}\d\z. \label{eq:thm1resultII}
\end{align}
(We will only use the $\mathsf{F}$-notation in this outline.) Because of the italicized temporary assumption above, we will first consider the case where $\mathbf{I}\equiv0$ until we say otherwise.

As explained in the bullet points after Theorem \ref{theorem:1}, showing that \eqref{eq:thm1resultII} is asymptotically the desired noise term is the only goal left. {{}We first write the following, where $\mathscr{L}$ and its inverse act on the second spatial variable (which is then evaluated at $\mathfrak{q}^{\e}_{t}$, per Remark \ref{remark:whichvariable}):}
\begin{align}
\mathsf{F}_{0,\x,\mathfrak{q}^{\e}_{\t}}={{}\e^{-1}}\mathscr{L}[{{}\e^{-1}}\mathscr{L}]^{-1}\mathsf{F}_{0,\x,\mathfrak{q}^{\e}_{\t}}.\label{eq:thm1resultIII}
\end{align}
The inverse operator on the RHS of \eqref{eq:thm1resultIII} is well-defined, since \eqref{eq:thm1resultII} (for $\mathbf{I}\equiv0$) vanishes with respect to the invariant measure of $\mathscr{L}$ by construction (see Lemma \ref{lemma:dtonbasics} for the invariant measure of $\mathfrak{q}^{\e}$).

Since {{}$\e^{-1}\mathscr{L}$} is the generator of $\mathfrak{q}^{\e}$ by our italicized assumption above and Proposition 4.1 of \cite{Hsu0}, we can use the {{}It\^{o}} formula to remove the outer ${{}\e^{-1}}\mathscr{L}$ operator at the cost of two copies of $[{{}\e^{-1}}\mathscr{L}]^{-1}\mathsf{F}$ evaluated at different times (i.e. boundary terms) plus a martingale. Boundary terms are easy to control, since $[{{}\e^{-1}}\mathscr{L}]^{-1}\mathsf{F}$ is intuitively $\mathrm{O}({{}\e^{1/2}})$. (This is by a spectral gap for $\mathscr{L}$, which bounds $\mathscr{L}^{-1}$, plus the a priori bound of order $\e^{-2/3}$ for \eqref{eq:thm1resultII}.) The martingale has scaling of order $1$ as explained in the fourth bullet point after Theorem \ref{theorem:1}. That its bracket is given by a time-integrated energy \eqref{eq:scalinglimitmartingaleI} (more or less) is because quadratic variations of {{}It\^{o}} martingales are Carre-du-Champ operators.

\emph{Now, we return the actual context in which the metric for $\mathfrak{q}^{\e}$ is determined by $\mathbf{I}^{\e}$}. In this case, we will be interested in \eqref{eq:thm1resultII} for the actual interface process $\mathbf{I}=\mathbf{I}^{\e}_{\t,\cdot}$: 
\begin{align}
\mathsf{F}_{\mathbf{I}^{\e}_{\t,\cdot},\x,\mathfrak{q}^{\e}_{\t}}:={{}\e^{-\frac12}}\mathrm{Vol}_{\mathbf{I}^{\e}_{\t,\cdot}}\mathbf{K}_{\x,\mathfrak{q}^{\e}_{\t}}-{{}\e^{-\frac12}}{\textstyle\int_{\partial\mathds{M}}}\mathbf{K}_{\x,\z}(1+|{{}\grad_{}}\mathbf{I}^{\e}_{\t,\z}|^{2})^{\frac12}\d\z.
\end{align}
{{}By the same reasoning as we gave after \eqref{eq:thm1resultIII}, this term vanishes in expectation with respect to the invariant measure of the Dirichlet-to-Neumann operator on $\mathds{M}$ equipped with the metric $\mathbf{g}[{{}\grad_{}}\mathbf{I}^{\e}_{\t}]$ from Construction \ref{construction:model}. However, we} do not have an {{}It\^{o}} formula for just $\mathfrak{q}^{\e}$, since it is no longer Markovian. But, as we noted after Theorem \ref{theorem:1}, $\mathfrak{q}^{\e}$ is still the unique fast variable; on time-scales for which it would homogenize if it were Markovian, $\mathbf{I}^{\e}$ is approximately constant. So, $\mathfrak{q}^{\e}$ ``looks Markovian" on time-scales that it sees as long. Thus, the same homogenization picture above should hold, if we replace $\mathscr{L}$ by Dirichlet-to-Neumann on the Riemannian manifold $(\mathds{M},\mathbf{g}[{{}\grad_{}}\mathbf{I}^{\e}_{\t,\cdot}])$, and the measure for homogenization is Riemannian measure induced by $\mathbf{g}[{{}\grad_{}}\mathbf{I}^{\e}_{\t,\cdot}]$.

The way we make the previous paragraph rigorous and study \eqref{eq:thm1resultII} resembles \eqref{eq:thm1resultIII}, except we include the generator of the $\mathbf{I}^{\e}_{\t,\cdot}$ process as well. Let {$\mathscr{L}_{\mathrm{total}}^{\e}$} be the generator for the Markov process $(\mathbf{I}^{\e},\mathfrak{q}^{\e})$. Write
\begin{align}
\mathsf{F}_{\mathbf{I}^{\e}_{\t,\cdot},\x,\mathfrak{q}^{\e}_{\t}}=\mathscr{L}_{\mathrm{total}}^{\e}[\mathscr{L}_{\mathrm{total}}^{\e}]^{-1}\mathsf{F}_{\mathbf{I}^{\e}_{\t,\cdot},\x,\mathfrak{q}^{\e}_{\t}}. \label{eq:thm1resultIV}
\end{align}
We can then use {{}It\^{o}} as before to remove the outer {$\mathscr{L}_{\mathrm{total}}^{\e}$}-operator to get boundary terms and a martingale. Since $\mathfrak{q}^{\e}$ is much faster than $\mathbf{I}^{\e}$, the operator {$\mathscr{L}_{\mathrm{total}}^{\e}$} is asymptotically just the Dirichlet-to-Neumann operator on $(\mathds{M},\mathbf{g}[{{}\grad_{}}\mathbf{I}^{\e}])$. In other words, dynamics of $\mathbf{I}^{\e}$, and their $\mathrm{O}({{}\e^{-1/4}})$ contribution to the generator $\mathscr{L}^{\e}_{\mathrm{total}}$, are lower-order. So, if $\mathscr{L}^{\e,\mathbf{I}}_{\mathrm{DtN}}$ denotes the same scaling factor ${{}\e^{-1}}$ times the Dirichlet-to-Neumann map on $(\mathds{M},\mathbf{g}[{{}\grad_{}}\mathbf{I}])$, then since $\mathscr{L}^{\e,\mathbf{I}^{\e}_{\t,\cdot}}_{\mathrm{DtN}}$ is the generator for $\mathfrak{q}^{\e}_{\t}$ at time $\t$ (again, see Proposition 4.1 in \cite{Hsu0}), we get
\begin{align}
{{}\mathsf{F}_{\mathbf{I}^{\e}_{\t,\cdot},\x,\mathfrak{q}^{\e}_{\t}}\approx\mathscr{L}^{\e}_{\mathrm{total}}[\mathscr{L}^{\e,\mathbf{I}^{\e}_{\t,\cdot}}_{\mathrm{DtN}}]^{-1}\mathsf{F}_{\mathbf{I}^{\e}_{\t,\cdot},\x,\mathfrak{q}^{\e}_{\t}}}\label{eq:thm1resultV}
\end{align}
(Note that $\mathscr{L}^{\e,\mathbf{I}}_{\mathrm{DtN}}$  depends on $\mathbf{I}^{\e}_{\t,\cdot}$, reflecting the non-Markovianity of $\mathfrak{q}^{\e}$.)

Thus, our estimation of the boundary terms and martingale is the same as before. We deduce from \eqref{eq:thm1resultV} and the It\^{o} formula that \eqref{eq:thm1resultIV} is asymptotically a martingale whose bracket is \eqref{eq:scalinglimitmartingaleI}, except $\mathscr{L}$, which is the Dirichlet-to-Neumann map on $\mathds{M}$ with metric $\mathbf{g}[{{}\grad_{}}0]$, in \eqref{eq:scalinglimitmartingaleI} is replaced by the Dirichlet-to-Neumann map on $\mathds{M}$ with metric $\mathbf{g}[{{}\grad_{}}\mathbf{I}^{\e}]$, i.e. ${{}\e^{}}\mathscr{L}^{\e,\mathbf{I}^{\e}_{\t,\cdot}}_{\mathrm{DtN}}$. Since $\mathbf{Y}^{\e}$ should be order $1$, $\mathbf{g}[{{}\grad_{}}\mathbf{I}^{\e}]=\mathbf{g}[{{}\e^{1/4}}{{}\grad_{}}\mathbf{Y}^{\e}]$ (see \eqref{eq:flucprocess}) should be close to $\mathbf{g}[{{}\grad_{}}0]$. So, \eqref{eq:scalinglimitmartingaleI} as written is indeed the right answer for asymptotics of the bracket for the martingale part of \eqref{eq:thm1resultII}. 

There are obstructions to this argument. The most prominent of which is that we cannot just remove the generator of $\mathbf{I}^{\e}$ from {$\mathscr{L}_{\mathrm{total}}^{\e}$} in \eqref{eq:thm1resultIV}. Indeed, this term acts on the resolvent in \eqref{eq:thm1resultIV}; when it does, it varies the metric defining the resolvent and \eqref{eq:thm1resultII} itself. However, our estimate for the resolvent in \eqref{eq:thm1resultIV} depends on vanishing of \eqref{eq:thm1resultII} for $\mathbf{I}=\mathbf{I}^{\e}$ after integration with respect to the measure on $\partial\mathds{M}$ induced by $\mathbf{g}[{{}\grad_{}}\mathbf{I}^{\e}]$ (which, again, is changing when we act by the generator of $\mathbf{I}^{\e}$). In other words, estimates for the resolvent in \eqref{eq:thm1resultIV} rely on an unstable algebraic property of \eqref{eq:thm1resultII} that is broken when we vary $\mathbf{I}^{\e}$. For this reason, we actually need regularize {$\mathscr{L}_{\mathrm{total}}^{\e}$} with a resolvent parameter $\lambda$, i.e. consider {$-\lambda+\mathscr{L}_{\mathrm{total}}^{\e}$} for $0\leq\lambda\ll{{}\e^{-1}}$ instead of {$\mathscr{L}_{\mathrm{total}}^{\e}$}. Indeed, the inverse of {$-\lambda+\mathscr{L}_{\mathrm{total}}^{\e}$} is always at most order $\lambda^{-1}$, regardless of what it acts on. Moreover, since $\lambda\ll{{}\e^{-1}}$ is much smaller than the speed of $\mathfrak{q}^{\e}$, once we use $\lambda$-regularization to remove the generator of $\mathbf{I}^{\e}$, we can then remove $\lambda$ itself, essentially by perturbation theory for resolvents. This is how we ultimately arrive at \eqref{eq:thm1resultV} rigorously.

We also mention the issue of the core/domain of the generator for $\mathbf{I}^{\e}$, because $\mathbf{I}^{\e}$ is valued in an infinite-dimensional space of smooth functions. We must compute explicitly the action of the $\mathbf{I}^{\e}$-generator whenever we use it. This, again, is built on perturbation theory for operators and resolvents.
\subsubsection{Theorem \ref{theorem:3}}\label{subsubsection:difficultiesthm3}
{{}The idea is built on the method of Da Prato-Debussche \cite{DPD}. We treat \eqref{eq:singSPDE} as a perturbation of the following equation:
\begin{align}
\partial_{t}{\mathfrak{h}}^{\eta,\mathrm{lin}}_{\t,\x}&=\Delta{\mathfrak{h}}^{\eta,\mathrm{lin}}_{\t,\x}+\wt{\Pi}^{\eta,\perp}(-\Delta)^{-\frac14}\xi_{\t,\x}.\label{eq:3resultI}
\end{align}
Above, $\wt{\Pi}^{\eta,\perp}$ further projects onto the subspace \eqref{eq:subspaceproject} after dropping the $\lambda_{i,k}=0$-eigenspaces, so that $\Delta$ is self-adjoint and invertible on the image of this space. We note that the solution to \eqref{eq:3resultI} admits an explicit Gaussian Fourier series representation, since $\partial\mathds{M}$ is a finite union of circles. 

After \eqref{eq:3resultI}, the remaining piece to the equation \eqref{eq:singSPDE} is the following equation:
\begin{align*}
\partial_{t}\mathfrak{h}^{\eta,\mathrm{rem}}_{\t,\x}&=\Delta\mathfrak{h}^{\eta,\mathrm{rem}}_{\t,\x}+\Pi^{\eta}|\grad{\mathfrak{h}}^{\eta,\mathrm{lin}}_{\t,\x}|^{2}-{{}\mathscr{C}_{\eta}}\\
&+2\Pi^{\eta}(\grad{\mathfrak{h}}^{\eta,\mathrm{lin}}_{\t,\x}\grad\mathfrak{h}^{\eta,\mathrm{rem}}_{\t,\x})+\Pi^{\eta}|\grad\mathfrak{h}^{\eta,\mathrm{rem}}_{\t,\x}|^{2}\\
&+\wt{\Pi}^{\eta,\perp}\Big\{(-\mathscr{L})^{-\frac12}-(-\Delta)^{-\frac14}\Big\}\xi_{\t,\x}+(\Pi^{\eta,\perp}-\wt{\Pi}^{\eta,\perp})(-\mathscr{L})^{-\frac12}\xi_{\t,\x}.
\end{align*}
The renormalized quadratic on the \abbr{RHS} is treated using the explicit Gaussian representation for \eqref{eq:3resultI}. The second line above is okay essentially because $\mathfrak{h}^{\eta,\mathrm{rem}}$ turns out be sufficiently regular. To handle the last line, we will analyze $-\mathscr{L}$ as a perturbation of $(-\Delta)^{1/2}$ (after projecting onto the image of $\wt{\Pi}^{\eta,\perp}$, on which both are self-adjoint, positive semi-definite, and invertible). We also remark that $\Pi^{\eta,\perp}-\wt{\Pi}^{\eta,\perp}$ is a projection onto a subspace spanned by smooth (piecewise constant) functions on $\partial\mathds{M}$ whose dimension is independent of $\eta$. In particular, the last term in the previous display is smooth in space. The proof of Theorem \ref{theorem:3} is dedicated to making this picture precise.}
\subsection{Outline of the paper}
This paper has two halves to it. The first half is focused on the proof of Theorem \ref{theorem:1}. This consists of Sections \ref{section:thm1proof}-\ref{section:flowproofs}. The second half focuses on the proof of Theorem \ref{theorem:3}. Let us now explain the goal of each individual section in more detail.
\begin{enumerate}
\item Proof of Theorem \ref{theorem:1}.
\begin{itemize}
\item In Section \ref{section:thm1proof}, we give the ingredients for the proof. This includes computing a stochastic equation for $\mathbf{Y}^{\e}$. We ultimately reduce Theorem \ref{theorem:1} to the problem of getting a noise out of a fluctuation, exactly as we explained in Section \ref{subsubsection:difficultiesthm1}. (Said problem is proving Proposition \ref{prop:thm13}.)
\item In Section \ref{section:thm13proof}, we give a precise version of heuristics in Section \ref{subsubsection:difficultiesthm1}. In particular, we reduce the proof of Proposition \ref{prop:thm13} to perturbation theory estimates, which are proved in Sections \ref{section:dtondomain} and \ref{section:flowproofs}.
\end{itemize}
\item Proof of Theorem \ref{theorem:3}.
\begin{itemize}
\item {{}In Section \ref{section:thm3proof}, we make rigorous the Da Prato-Debussche-type schematic from Section \ref{subsubsection:difficultiesthm3}.}
\end{itemize}
\item Proof of Theorem \ref{theorem:2}.
\begin{itemize}
\item This is the last non-appendix section; as we mentioned after Theorem \ref{theorem:2}, it is a classical argument.
\end{itemize}
\end{enumerate}
Finally, the goal of the appendix is to gather useful auxiliary estimates used throughout this paper. 
%
%
%
\section{Proof outline for Theorem \ref{theorem:1}}\label{section:thm1proof}
In this section, we give the main ingredients for Theorem \ref{theorem:1}. All but one of them (Proposition \ref{prop:thm13}) will be proven; Proposition \ref{prop:thm13} requires a sequence of preparatory lemmas, so we defer it to a later section.
\subsection{Stochastic equation for $\mathbf{Y}^{\e}$}
The first step is to use \eqref{eq:modelflow} and \eqref{eq:flucprocess} to write an equation for $\mathbf{Y}^{\e}$, decomposing it into terms that we roughly outlined after Theorem \ref{theorem:1}. First, we recall notation from after \eqref{eq:spde} and from Construction \ref{construction:model}. We also consider the heat kernel
\begin{align}
\partial_{\t}\Gamma^{{}}_{\t,\x,\w}={{}\Delta}\Gamma^{{}}_{\t,\x,\w}\quad\Gamma^{{}}_{\t,\x,\w}\to_{\t\to0^{+}}\delta_{\x=\w},\label{eq:heatkerneldef}
\end{align}
where the Laplacian acts either on $\x$ or $\w$, where $\t>0$ and $\x,\w\in\partial\mathds{M}$ in the \abbr{PDE}, and where the convergence as $\t\to0$ from above is in the space of probability measures on $\partial\mathds{M}$.
\begin{lemma}\label{lemma:thm11}
\fsp Fix $\t\geq0$ and $\x\in\partial\mathds{M}$. We have 
\begin{align}
\partial_{\t}\mathbf{Y}^{\e}_{\t,\x}={{}\Delta}\mathbf{Y}^{\e}_{\t,\x}&+{{}\e^{-\frac12}}{\textstyle\int_{\partial\mathds{M}}}\mathbf{K}_{\x,\z}[(1+|{{}\grad_{}}\mathbf{I}^{\e}_{\t,\z}|^{2})^{\frac12}-1]\d\z\label{eq:thm11Ia}\\
&+{{}\e^{-\frac12}}\left[\mathrm{Vol}_{\mathbf{I}^{\e}_{\t}}\mathbf{K}_{\x,\mathfrak{q}^{\e}_{\t}}-{\textstyle\int_{\partial\mathds{M}}}\mathbf{K}_{\x,\z}(1+|{{}\grad_{}}\mathbf{I}^{\e}_{\t,\z}|^{2})^{\frac12}\d\z\right].\label{eq:thm11Ib}
\end{align}
By the Duhamel principle (Lemma \ref{lemma:duhamel}), we therefore deduce
\begin{align}
\mathbf{Y}^{\e}_{\t,\x}&:={\textstyle\int_{\partial\mathds{M}}}\Gamma^{{}}_{\t,\x,\z}\mathbf{Y}^{\mathrm{init}}_{0,\z}\d\z+\Phi^{\mathrm{KPZ},\e}_{\t,\x}+\Phi^{\mathrm{noise},\e}_{\t,\x}, \label{eq:thm11II}
\end{align}
where
\begin{align}
\Phi^{\mathrm{KPZ},\e}_{\t,\x}&:={\textstyle\int_{0}^{\t}\int_{\partial\mathds{M}}}\Gamma^{{}}_{\t-\s,\x,\w}\left\{{{}\e^{-\frac12}}{\textstyle\int_{\partial\mathds{M}}}\mathbf{K}_{\w,\z}[(1+|{{}\grad_{}}\mathbf{I}^{\e}_{\s,\z}|^{2})^{\frac12}-1]\d\z\right\}\d\w\d\s\label{eq:thm11IIa}\\
\Phi^{\mathrm{noise},\e}_{\t,\x}&:={\textstyle\int_{0}^{\t}\int_{\partial\mathds{M}}}\Gamma^{{}}_{\t-\s,\x,\w}\left\{{{}\e^{-\frac12}}\left[\mathrm{Vol}_{\mathbf{I}^{\e}_{\s,\cdot}}\mathbf{K}_{\w,\mathfrak{q}^{\e}_{\s}}-{\textstyle\int_{\partial\mathds{M}}}\mathbf{K}_{\w,\z}(1+|{{}\grad_{}}\mathbf{I}^{\e}_{\s,\z}|^{2})^{\frac12}\d\z\right]\right\}\d\w\d\s.\label{eq:thm11IIb}
\end{align}
\end{lemma}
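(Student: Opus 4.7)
The statement is essentially an algebraic identity obtained by substituting \eqref{eq:modelflow} into the time derivative of \eqref{eq:flucprocess}, followed by a reorganization of the right-hand side. The plan is therefore purely computational, with no real obstacle beyond bookkeeping.

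First I would differentiate \eqref{eq:flucprocess} in time to obtain $\partial_{\t}\mathbf{Y}^{\e}_{\t,\x}=\e^{-1/3}\partial_{\t}\mathbf{I}^{\e}_{\t,\x}-\e^{-2/3}$, and then substitute the right-hand side of \eqref{eq:modelflow} for $\partial_{\t}\mathbf{I}^{\e}$. The Laplacian term becomes $\e^{-1/3}\Delta_{\partial\mathds{M}}\mathbf{I}^{\e}_{\t,\x}=\Delta_{\partial\mathds{M}}\mathbf{Y}^{\e}_{\t,\x}$ (since the two differ only by the spatially constant subtraction $\e^{-1/3}\cdot\e^{-1/3}\t$), while the inflation term contributes $\e^{-2/3}\mathrm{Vol}_{\mathbf{I}^{\e}_{\t}}\mathbf{K}_{\x,\mathfrak{q}^{\e}_{\t}}$.

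Next I would split the remaining expression $\e^{-2/3}\mathrm{Vol}_{\mathbf{I}^{\e}_{\t}}\mathbf{K}_{\x,\mathfrak{q}^{\e}_{\t}}-\e^{-2/3}$ into the ``homogenized'' piece \eqref{eq:thm11Ia} and the mean-zero fluctuation \eqref{eq:thm11Ib} by adding and subtracting $\e^{-2/3}\int_{\partial\mathds{M}}\mathbf{K}_{\x,\z}(1+|\grad_{\partial\mathds{M}}\mathbf{I}^{\e}_{\t,\z}|^{2})^{1/2}\d\z$. The second piece is already \eqref{eq:thm11Ib}. For the first, I use the normalization $\int_{\partial\mathds{M}}\mathbf{K}_{\x,\z}\d\z=1$ from Construction \ref{construction:model} to rewrite the remaining constant as $\e^{-2/3}=\e^{-2/3}\int_{\partial\mathds{M}}\mathbf{K}_{\x,\z}\d\z$, which combines with the preceding integral to yield \eqref{eq:thm11Ia}.

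For the second part of the lemma, \eqref{eq:thm11II}--\eqref{eq:thm11IIb}, I would simply apply the Duhamel formula (Lemma \ref{lemma:duhamel}) to the linear equation $\partial_{\t}\mathbf{Y}^{\e}=\Delta_{\partial\mathds{M}}\mathbf{Y}^{\e}+F^{\e}$ where $F^{\e}$ is the sum of the two forcing terms \eqref{eq:thm11Ia}--\eqref{eq:thm11Ib}, using the heat kernel $\Gamma^{(\partial\mathds{M})}$ defined in \eqref{eq:heatkerneldef} and the initial condition from Assumption \ref{ass:id}. The only very mild subtlety is that \eqref{eq:thm11Ib} contains the jump process $\mathfrak{q}^{\e}$, so the forcing is cadlag in time; this is fine since Duhamel's formula for the heat semigroup only requires local integrability of the forcing in time, which is immediate as $\mathfrak{q}^{\e}$ is bounded and $\mathbf{I}^{\e}$ is smooth in space.
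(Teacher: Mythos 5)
Your proof is correct and follows the same route as the paper: differentiate \eqref{eq:flucprocess}, substitute \eqref{eq:modelflow}, use $\e^{-1/3}\Delta_{\partial\mathds{M}}\mathbf{I}^{\e}=\Delta_{\partial\mathds{M}}\mathbf{Y}^{\e}$, split the remaining driving term by adding and subtracting the homogenized integral, and finish with Duhamel. Your extra remarks on using $\int_{\partial\mathds{M}}\mathbf{K}_{\x,\z}\d\z=1$ to absorb the $-\e^{-2/3}$ and on the cadlag-in-time forcing being acceptable for Duhamel are sound and merely spell out what the paper leaves implicit.
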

\begin{proof}
Plug \eqref{eq:modelflow} into the time-derivative of \eqref{eq:flucprocess}. This gives
\begin{align}
\partial_{\t}\mathbf{Y}^{\e}_{\t,\x}={{}\e^{-\frac14}}{{}\Delta}\mathbf{I}^{\e}_{\t,\x}+{{}\e^{-\frac12}}\mathrm{Vol}_{\mathbf{I}^{\e}_{\t,\cdot}}\mathbf{K}_{\x,\mathfrak{q}^{\e}_{\t}}-{{}\e^{-\frac12}}={{}\e^{-\frac14}}{{}\Delta}\mathbf{I}^{\e}_{\t,\x}+{{}\e^{-\frac12}}[\mathrm{Vol}_{\mathbf{I}^{\e}_{\t,\cdot}}\mathbf{K}_{\x,\mathfrak{q}^{\e}_{\t}}-1].\label{eq:thm11I1a}
\end{align}
By \eqref{eq:flucprocess}, we can replace {${{}\e^{-1/4}}{{}\Delta}\mathbf{I}^{\e}_{\t,\x}\mapsto{{}\Delta}\mathbf{Y}^{\e}_{\t,\x}$}. This turns the first term on the far RHS of \eqref{eq:thm11I1a} into the first term on the RHS of \eqref{eq:thm11Ia}. The remaining two terms (the last in \eqref{eq:thm11Ia} and \eqref{eq:thm11Ib}) add to the last term in \eqref{eq:thm11I1a}, so \eqref{eq:thm11Ia}-\eqref{eq:thm11Ib} follows. The Duhamel expression \eqref{eq:thm11II} follows by Lemma \ref{lemma:duhamel} and Assumption \ref{ass:id}.
\end{proof}
Let us now explain Lemma \ref{lemma:thm11} in the context of the proof strategy briefly described after Theorem \ref{theorem:1}. The second bullet point there says \eqref{eq:thm11IIa} gives $|{{}\grad_{}}\mathbf{Y}^{\e}|^{2}$ integrated against $\mathbf{K}$ (by Taylor expansion). The third bullet point says that \eqref{eq:thm11IIb} gives a noise.
\subsection{Producing a quadratic from \eqref{eq:thm11IIa}}
Let us first establish some notation. First, define
\begin{align}
\Phi^{\mathrm{quad},\e}_{\t,\x}&:=\tfrac12{\textstyle\int_{0}^{\t}\int_{\partial\mathds{M}}}\Gamma^{{}}_{\t-\s,\x,\z}\left\{{\textstyle\int_{\partial\mathds{M}}}\mathbf{K}_{\w,\z}|{{}\grad_{}}\mathbf{Y}^{\e}_{\s,\z}|^{2}\d\z\right\}\d\w\d\s\label{eq:thm12Ia}
\end{align}
as the heat kernel acting on a $\mathbf{K}$-regularized quadratic. Recall the $\mathscr{C}^{0}_{\mathfrak{t}}\mathscr{C}^{\k}_{}$-norm from Section \ref{section:notation}.
\begin{lemma}\label{lemma:thm12}
\fsp Fix any integer $\k\geq0$ and any time-horizon $\mathfrak{t}\geq0$. We have the deterministic estimate
\begin{align}
\|\Phi^{\mathrm{KPZ},\e}-\Phi^{\mathrm{quad},\e}\|_{\mathscr{C}^{0}_{\mathfrak{t}}\mathscr{C}^{\k}_{}}\lesssim_{\mathfrak{t},\k}{{}\e^{\frac14}}\|\mathbf{Y}^{\e}\|_{\mathscr{C}^{0}_{\mathfrak{t}}\mathscr{C}^{1}_{}}^{3}.\label{eq:thm12I}
\end{align}
\end{lemma}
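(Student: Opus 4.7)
The plan is to estimate the difference pointwise in the ``driving'' integrand (the curly braces in \eqref{eq:thm11IIa} and \eqref{eq:thm12Ia}), and then pass through the heat kernel and the $\mathbf{K}$-convolution using standard parabolic/smoothing estimates.

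First I would rewrite things using \eqref{eq:flucprocess}. Since $\mathbf{I}^{\e}_{\t,\x}=\e^{\frac{1}{3}}\mathbf{Y}^{\e}_{\t,\x}+\e^{-\frac{1}{3}}\t$ and the last term is constant in space, we have $\grad_{\partial\mathds{M}}\mathbf{I}^{\e}_{\t,\z}=\e^{\frac{1}{3}}\grad_{\partial\mathds{M}}\mathbf{Y}^{\e}_{\t,\z}$. Hence the ``driving'' integrands of $\Phi^{\mathrm{KPZ},\e}$ and $\Phi^{\mathrm{quad},\e}$ differ (at each $(\s,\w)$) by
\begin{align*}
R^{\e}_{\s,\w}\;:=\;{\textstyle\int_{\partial\mathds{M}}}\mathbf{K}_{\w,\z}\Bigl\{\e^{-\frac{2}{3}}\bigl[(1+|\grad_{\partial\mathds{M}}\mathbf{I}^{\e}_{\s,\z}|^{2})^{\frac{1}{2}}-1\bigr]-\tfrac{1}{2}|\grad_{\partial\mathds{M}}\mathbf{Y}^{\e}_{\s,\z}|^{2}\Bigr\}\,\d\z.
\end{align*}

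The main analytic input is the following elementary Taylor-type bound: the function $g(a):=(1+a^{2})^{1/2}-1-\tfrac{1}{2}a^{2}$ satisfies $|g(a)|\lesssim|a|^{3}$ for all $a\in\R$. Indeed, for $|a|\leq 1$ one has $g(0)=g'(0)=g''(0)=0$ and $g^{(k)}$ uniformly bounded on $[-1,1]$, giving $|g(a)|\lesssim a^{4}\leq|a|^{3}$; for $|a|\geq 1$ one just uses the triangle inequality $|g(a)|\leq(1+a^{2})^{1/2}+1+\tfrac{1}{2}a^{2}\lesssim a^{2}\leq|a|^{3}$. Setting $a=|\grad_{\partial\mathds{M}}\mathbf{I}^{\e}_{\s,\z}|=\e^{\frac{1}{3}}|\grad_{\partial\mathds{M}}\mathbf{Y}^{\e}_{\s,\z}|$ and multiplying by $\e^{-2/3}$ yields the pointwise bound
\begin{align*}
\Bigl|\e^{-\frac{2}{3}}\bigl[(1+|\grad_{\partial\mathds{M}}\mathbf{I}^{\e}_{\s,\z}|^{2})^{\frac{1}{2}}-1\bigr]-\tfrac{1}{2}|\grad_{\partial\mathds{M}}\mathbf{Y}^{\e}_{\s,\z}|^{2}\Bigr|\;\lesssim\;\e^{\frac{1}{3}}|\grad_{\partial\mathds{M}}\mathbf{Y}^{\e}_{\s,\z}|^{3}.
\end{align*}

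Combining with the hypothesis that $\mathbf{K}\in\mathscr{C}^{\infty}(\partial\mathds{M}\times\partial\mathds{M})$ and the fact that $\partial\mathds{M}$ has finite surface measure, this integrates to $\|R^{\e}_{\s,\cdot}\|_{\mathscr{C}^{0}(\partial\mathds{M})}\lesssim\e^{1/3}\|\mathbf{Y}^{\e}\|_{\mathscr{C}^{0}_{\mathfrak{t}}\mathscr{C}^{1}_{\partial\mathds{M}}}^{3}$. To pass from $\mathscr{C}^{0}$ to $\mathscr{C}^{\k}$ in the variable $\w$, I simply observe that $\w$-derivatives of $R^{\e}_{\s,\w}$ fall onto $\mathbf{K}_{\w,\z}$, whose smoothness in $\w$ is uniform; hence $\|R^{\e}_{\s,\cdot}\|_{\mathscr{C}^{\k}(\partial\mathds{M})}\lesssim_{\k}\e^{1/3}\|\mathbf{Y}^{\e}\|_{\mathscr{C}^{0}_{\mathfrak{t}}\mathscr{C}^{1}_{\partial\mathds{M}}}^{3}$ as well.

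Finally, since $\Phi^{\mathrm{KPZ},\e}_{\t,\x}-\Phi^{\mathrm{quad},\e}_{\t,\x}=\int_{0}^{\t}\bigl\{\exp[(\t-\s)\Delta_{\partial\mathds{M}}]R^{\e}_{\s,\cdot}\bigr\}_{\x}\,\d\s$, I would apply the standard $\mathscr{C}^{\k}$-boundedness of the heat semigroup $\exp[\mathfrak{s}\Delta_{\partial\mathds{M}}]$ on the compact manifold $\partial\mathds{M}$ (this is the trivial, non-smoothing estimate in Lemma \ref{lemma:regheat}) together with the triangle inequality in time. This yields
\begin{align*}
\sup_{0\leq\t\leq\mathfrak{t}}\|\Phi^{\mathrm{KPZ},\e}_{\t,\cdot}-\Phi^{\mathrm{quad},\e}_{\t,\cdot}\|_{\mathscr{C}^{\k}(\partial\mathds{M})}\;\leq\;\mathfrak{t}\sup_{0\leq\s\leq\mathfrak{t}}\|R^{\e}_{\s,\cdot}\|_{\mathscr{C}^{\k}(\partial\mathds{M})}\;\lesssim_{\mathfrak{t},\k}\;\e^{\frac{1}{3}}\|\mathbf{Y}^{\e}\|_{\mathscr{C}^{0}_{\mathfrak{t}}\mathscr{C}^{1}_{\partial\mathds{M}}}^{3},
\end{align*}
which is \eqref{eq:thm12I}. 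The only nontrivial step is verifying the cubic Taylor estimate \emph{uniformly} on $\R$, which requires splitting into $|a|\leq 1$ and $|a|\geq 1$; everything else is a routine application of the $\mathbf{K}$-smoothness and the heat semigroup.
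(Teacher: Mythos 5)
Your proof is correct and follows essentially the same route as the paper's: Taylor-expand $(1+a^2)^{1/2}$ about $a=0$ to peel off the quadratic, use $\grad_{\partial\mathds{M}}\mathbf{I}^{\e}=\e^{1/3}\grad_{\partial\mathds{M}}\mathbf{Y}^{\e}$ to extract the factor $\e^{1/3}$, and push $\x$-derivatives through the heat semigroup while letting them land on the smooth kernel $\mathbf{K}$. Two minor points worth flagging. First, your global cubic bound $|g(a)|\lesssim|a|^3$ for all $a\in\R$ (split at $|a|=1$) is actually more careful than the paper's local Taylor expansion $O(\upsilon^3)$, which as written is only valid near $\upsilon=0$; the lemma claims a deterministic estimate with no smallness hypothesis on $\mathbf{Y}^{\e}$, so the global version is what one genuinely needs, and you supply it. Second, your citation of Lemma \ref{lemma:regheat} for the $\mathscr{C}^{\k}$-boundedness of $\exp[\tau\Delta_{\partial\mathds{M}}]$ is slightly off: that lemma is stated only for $\mathrm{H}^{\alpha}(\partial\mathds{M})$. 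The fact itself (the heat semigroup is bounded on $\mathscr{C}^{\k}(\partial\mathds{M})$ locally uniformly in $\tau$) is true and standard, but the paper reaches it differently, by combining the $\mathrm{H}^{\alpha}$-boundedness of the semigroup with smoothness of $\mathbf{K}$ and a Sobolev embedding $\mathrm{H}^{\alpha}\hookrightarrow\mathscr{C}^{\k}$ for $\alpha$ large. Either route closes the argument; if you want to stay strictly within the cited lemma, just follow the paper's Sobolev-then-embed chain.
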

\begin{proof}
Taylor expansion gives $(1+\upsilon^{2})^{1/2}=1+\frac12\upsilon^{2}+\mathrm{O}(\upsilon^{3})$. This implies
\begin{align}
(1+|{{}\grad_{}}\mathbf{I}^{\e}_{\s,\z}|^{2})^{\frac12}-1=\tfrac12|{{}\grad_{}}\mathbf{I}^{\e}_{\s,\z}|^{2}+\mathrm{O}(|{{}\grad_{}}\mathbf{I}^{\e}_{\s,\z}|^{3}).\label{eq:thm12I1}
\end{align}
By \eqref{eq:flucprocess}, we know that ${{}\grad_{}}\mathbf{I}^{\e}={{}\e^{1/4}}{{}\grad_{}}\mathbf{Y}^{\e}$. Thus, we deduce
\begin{align}
{{}\e^{-\frac12}}(1+|{{}\grad_{}}\mathbf{I}^{\e}_{\s,\z}|^{2})^{\frac12}-1=\tfrac12|{{}\grad_{}}\mathbf{Y}^{\e}_{\s,\z}|^{2}+\mathrm{O}({{}\e^{\frac14}}|{{}\grad_{}}\mathbf{Y}^{\e}_{\s,\z}|^{3}).\label{eq:thm12I2}
\end{align}
By \eqref{eq:thm11IIa}, \eqref{eq:thm12Ia}, and \eqref{eq:thm12I2}, we can compute
\begin{align}
\Phi^{\mathrm{KPZ},\e}_{\t,\x}-\Phi^{\mathrm{quad},\e}_{\t,\x}&={\textstyle\int_{0}^{\t}\int_{\partial\mathds{M}}}\Gamma^{{}}_{\t-\s,\x,\w}{\textstyle\int_{\partial\mathds{M}}}\mathbf{K}_{\w,\z}\mathrm{O}({{}\e^{\frac14}}|{{}\grad_{}}\mathbf{Y}^{\e}_{\s,\z}|^{3})\d\z\d\w\d\s.\label{eq:thm12I3}
\end{align}
Integrating against $\Gamma^{{}}$ is a bounded operator from the Sobolev space $\mathrm{H}^{\alpha}{}$ (see Section \ref{section:notation}) to itself, with norm $\lesssim_{\alpha}1$ locally uniformly in time; this holds by Lemma \ref{lemma:regheat}. Also, $\mathbf{K}$ is smooth in both variables by assumption. So \eqref{eq:thm12I3} implies a version of \eqref{eq:thm12I} where we replace $\mathscr{C}^{\k}$ on the LHS by $\mathrm{H}^{\alpha}$. But then Sobolev embedding implies \eqref{eq:thm12I} as written if we take $\alpha$ sufficiently large depending on $\k$.
\end{proof}
We note that \eqref{eq:thm12I} is meaningful, in the sense that $\mathbf{Y}^{\e}$ is supposed to be controlled in $\mathscr{C}^{2}{}$ if Theorem \ref{theorem:1} is true. In particular, the upper bound on the RHS of \eqref{eq:thm12I} is supposed to vanish as $\e\to0$.
\subsection{Producing a noise from \eqref{eq:thm11IIb}}
Roughly speaking, we want to compare $\Phi^{\mathrm{noise},\e}$ to the following function (the first line is just a formal way of writing it, and the second line is a rigorous definition of said function in terms of integration-by-parts in time):
\begin{align}
\Phi^{\mathbf{M}^{\e}}_{\t,\x}&:={\textstyle\int_{0}^{\t}\int_{\partial\mathds{M}}}\Gamma^{{}}_{\t-\s,\x,\z}\tfrac{\d\mathbf{M}^{\e}_{\s,\z}}{\d\s}\d\z\d\s\label{eq:thm13Ia}\\
&:=\mathbf{M}^{\e}_{\t,\x}-{\textstyle\int_{\partial\mathds{M}}}\Gamma^{{}}_{\t,\x,\z}\mathbf{M}^{\e}_{0,\z}\d\z-{\textstyle\int_{0}^{\t}\int_{\partial\mathds{M}}}\partial_{\s}\Gamma^{{}}_{\t-\s,\x,\z}\mathbf{M}^{\e}_{\s,\z}\d\z\d\s.\label{eq:thm13Ib}
\end{align}
In the following result, we will make a choice of $\mathbf{M}^{\e}$ for which we can actually compare $\Phi^{\mathrm{noise},\e}$ and $\Phi^{\mathbf{M}^{\e}}$.
\begin{prop}\label{prop:thm13}
\fsp There exists a {{}family of good martingales} $\t\mapsto\mathbf{M}^{\e}_{\t,\cdot}$ (see Definition \ref{definition:espde}) such that:
\begin{itemize}
\item For any stopping time $0\leq\tau\leq1$ and $\k\geq0$, there exists universal $\beta>0$ such that with high probability,
\begin{align}
\|\Phi^{\mathrm{noise},\e}-\Phi^{\mathbf{M}^{\e}}\|_{\mathscr{C}^{0}_{\tau}\mathscr{C}^{\k}_{}}\lesssim_{\k,\|\mathbf{Y}^{\e}\|_{\mathscr{C}^{0}_{\tau}\mathscr{C}^{2}_{}}}\e^{\beta}.\label{eq:thm13I}
\end{align}
\end{itemize}
\end{prop}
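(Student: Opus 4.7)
\medskip

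\textbf{Proof proposal for Proposition \ref{prop:thm13}.}

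The plan is to extract the martingale $\mathbf{M}^{\e}$ through a resolvent/It\^{o} decomposition, following exactly the heuristic of Section \ref{subsubsection:difficultiesthm1}. Define the fluctuation
\begin{align}
\mathsf{F}_{\mathbf{I},\x,\y}:=\mathrm{Vol}_{\mathbf{I}}\mathbf{K}_{\x,\y}-{\textstyle\int_{\partial\mathds{M}}}\mathbf{K}_{\x,\z}(1+|\grad_{\partial\mathds{M}}\mathbf{I}_{\z}|^{2})^{\frac12}\d\z,
\end{align}
so that the integrand of \eqref{eq:thm11IIb} is $\e^{-2/3}\mathsf{F}_{\mathbf{I}^{\e}_{\s,\cdot},\w,\mathfrak{q}^{\e}_{\s}}$. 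The crucial algebraic fact is that $\mathsf{F}_{\mathbf{I},\x,\cdot}$ has zero mean against the Riemannian surface measure induced by $\mathbf{g}[\grad_{\partial\mathds{M}}\mathbf{I}]$; this is the precise statement that $\mathsf{F}_{\mathbf{I},\x,\cdot}$ lives in the image of the (frozen-$\mathbf{I}$) Dirichlet-to-Neumann operator $\mathscr{L}^{\mathbf{I}}_{\mathrm{DtN}}$, acting on the space of mean-zero smooth functions.

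\medskip

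First, introduce a resolvent parameter $1\ll\lambda\ll\e^{-4/3}$ and let $\mathscr{L}^{\e}_{\mathrm{total}}$ be the generator of $(\mathbf{I}^{\e},\mathfrak{q}^{\e})$. Because $\mathscr{L}^{\e}_{\mathrm{total}}$ is dissipative, $[-\lambda+\mathscr{L}^{\e}_{\mathrm{total}}]^{-1}$ is bounded by $\lambda^{-1}$ regardless of the integrability of the right-hand side; this sidesteps the obstruction that the generator of $\mathbf{I}^{\e}$ breaks the mean-zero condition on $\mathsf{F}$. Set $\Psi^{\e,\lambda}_{\mathbf{I},\x,\y}:=[-\lambda+\mathscr{L}^{\e}_{\mathrm{total}}]^{-1}\mathsf{F}_{\mathbf{I},\x,\y}$, applied in the $(\mathbf{I},\y)$-variables. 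Applying It\^{o}'s formula to $\t\mapsto\Psi^{\e,\lambda}_{\mathbf{I}^{\e}_{\t,\cdot},\x,\mathfrak{q}^{\e}_{\t}}$ and rearranging yields
\begin{align}
\mathsf{F}_{\mathbf{I}^{\e}_{\t,\cdot},\x,\mathfrak{q}^{\e}_{\t}}\,\d\t = \d\Psi^{\e,\lambda}-\lambda\Psi^{\e,\lambda}\d\t-\d\mathsf{N}^{\e,\lambda}_{\t,\x},
\end{align}
where $\mathsf{N}^{\e,\lambda}_{\cdot,\x}$ is a martingale. Define $\mathbf{M}^{\e}_{\t,\x}:=\e^{-2/3}\mathsf{N}^{\e,\lambda}_{\t,\x}$ for a judicious choice $\lambda=\lambda(\e)$ (specifically $\lambda$ chosen so that $\lambda\to\infty$ but $\lambda\e^{4/3}\to0$; the precise rate will come out of the estimates below).

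\medskip

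Next, plug the above decomposition into \eqref{eq:thm11IIb}, use the definition \eqref{eq:thm13Ib} of $\Phi^{\mathbf{M}^{\e}}$, and move time derivatives of the heat kernel appropriately. What remains to bound in $\mathscr{C}^{0}_{\tau}\mathscr{C}^{\k}_{\partial\mathds{M}}$ are two families of boundary-type error terms involving $\e^{-2/3}\Psi^{\e,\lambda}$ evaluated at fixed times, plus $\lambda\int_{0}^{\t}\Gamma^{(\partial\mathds{M})}\ast\Psi^{\e,\lambda}\,\d\s$. I would control them using a spectral-gap estimate for $\mathscr{L}^{\mathbf{I}^{\e}_{\t,\cdot}}_{\mathrm{DtN}}$ (which is uniform in $\mathbf{I}^{\e}$ provided $\|\mathbf{Y}^{\e}\|_{\mathscr{C}^{2}}$ is controlled, since then the metric $\mathbf{g}[\e^{1/3}\grad_{\partial\mathds{M}}\mathbf{Y}^{\e}]$ is close to $\mathbf{g}[0]$), together with perturbation theory for resolvents, to show
\begin{align}
\|\Psi^{\e,\lambda}_{\mathbf{I}^{\e},\x,\cdot}\|_{\mathscr{C}^{\k}}\lesssim_{\k,\|\mathbf{Y}^{\e}\|_{\mathscr{C}^{2}}}\e^{4/3}\big(1+\mathrm{O}(\lambda\e^{4/3})\big).
\end{align}
After multiplying by $\e^{-2/3}$ and by $\lambda$ respectively, this gives $\e^{2/3}$ and $\lambda\e^{2/3}$, both of which vanish as $\e\to0$ for the chosen $\lambda$.

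\medskip

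Finally I would verify that $\mathbf{M}^{\e}$ is a good martingale in the sense of Definition \ref{definition:espde}. The cadlag jump-size bound and the a priori $\mathscr{C}^{\k}$-bound follow from the same resolvent estimates above. The bracket identity \eqref{eq:scalinglimitmartingale}-\eqref{eq:scalinglimitmartingaleI} comes from identifying the quadratic variation of $\mathsf{N}^{\e,\lambda}$ with the Carr\'{e}-du-Champ form of the generator: to leading order in $\e$, the generator $\mathscr{L}^{\e}_{\mathrm{total}}$ reduces to $\e^{-4/3}\mathscr{L}^{\mathbf{I}^{\e}_{\t,\cdot}}_{\mathrm{DtN}}$ (the generator of the $\mathbf{I}^{\e}$-process contributes only $\mathrm{O}(\e^{-1/3})$), and then
\begin{align}
\Gamma^{\e}(\Psi^{\e,\lambda}_{\cdot,\x,\cdot},\Psi^{\e,\lambda}_{\cdot,\x,\cdot})\approx 2\e^{-4/3}\Psi^{\e,\lambda}\cdot\mathscr{L}^{\mathbf{I}^{\e}_{\t,\cdot}}_{\mathrm{DtN}}\Psi^{\e,\lambda}\approx -2\e^{-4/3}\cdot\mathsf{F}\cdot(\mathscr{L}^{\mathbf{I}^{\e}_{\t,\cdot}}_{\mathrm{DtN}})^{-1}\mathsf{F}.
\end{align}
Multiplying by $\e^{-4/3}$ (from $\mathbf{M}^{\e}=\e^{-2/3}\mathsf{N}$) gives the prefactor $2$, integrating in $\z$ against the invariant measure (plus controlling the error from freezing $\mathbf{I}^{\e}$) yields \eqref{eq:scalinglimitmartingaleI}. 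The main obstacle will be Step 2, namely quantifying that the action of the infinite-dimensional $\mathbf{I}^{\e}$-generator on $\Psi^{\e,\lambda}$ (which varies the metric inside $\mathscr{L}^{\mathbf{I}}_{\mathrm{DtN}}$) remains lower order; this is where the $\lambda$-regularization and the perturbation-theoretic expansion of $\mathscr{L}^{\mathbf{I}}_{\mathrm{DtN}}$ around $\mathbf{I}=0$ as $\mathbf{I}=\e^{1/3}\mathbf{Y}^{\e}$ do the real work, and it is what forces the later sections on perturbation theory for Dirichlet-to-Neumann operators.
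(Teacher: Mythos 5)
Your strategy follows the paper's heuristic in Section \ref{subsubsection:difficultiesthm1} faithfully, and the definition of the martingale via It\^{o}'s formula applied to a $\lambda$-regularized resolvent is exactly right in spirit. However, the key quantitative claim does not close with a \emph{single} application of the resolvent $[-\lambda+\mathscr{L}^{\e}_{\mathrm{total}}]^{-1}$, and this is a genuine gap, not a technicality.

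Here is the issue. You claim $\|\Psi^{\e,\lambda}\|\lesssim\e^{4/3}$, and you need $\lambda\ll\e^{-2/3}$ so that the time-integral error $\e^{-2/3}\lambda\int_{0}^{\t}\Psi^{\e,\lambda}\d\s\lesssim\lambda\e^{2/3}$ vanishes. To justify $\|\Psi^{\e,\lambda}\|\lesssim\e^{4/3}$ one has to use the spectral gap of $\mathscr{L}^{\e,\mathbf{I}}_{\mathrm{DtN}}$ on mean-zero functions, which requires the resolvent identity
\begin{align}
\Psi^{\e,\lambda}=(\lambda-\mathscr{L}^{\e,\mathbf{I}}_{\mathrm{DtN}})^{-1}\mathsf{F}+(\lambda-\mathscr{L}^{\e,\mathbf{I}}_{\mathrm{DtN}})^{-1}\mathscr{L}^{\e,\mathfrak{q}}_{\mathrm{flow}}\Psi^{\e,\lambda}.
\end{align}
The first term is $\lesssim\e^{4/3}$ as you say. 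But $\mathscr{L}^{\e,\mathfrak{q}}_{\mathrm{flow}}\Psi^{\e,\lambda}$ is \emph{not} mean-zero against the Riemannian measure $\mu[\grad_{\partial\mathds{M}}\mathbf{I}]$: when the Frechet derivative moves $\mathbf{I}$, it also moves the measure, so the vanishing-average property that underlies the spectral-gap bound is broken (this is the ``unstable algebraic property'' the paper singles out in Section \ref{subsubsection:difficultiesthm1}). The second term therefore only admits the crude resolvent bound $\lambda^{-1}$, giving $\lambda^{-1}\cdot\e^{-1/3}\cdot\lambda^{-1}=\lambda^{-2}\e^{-1/3}$. For this to be $\lesssim\e^{4/3}$ you need $\lambda\gtrsim\e^{-5/6}$, which contradicts $\lambda\ll\e^{-2/3}$. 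Put differently: the boundary-term estimate wants $\lambda$ large (near $\e^{-4/3}$), the $\lambda\int\Psi\,\d\s$ term wants $\lambda$ small (below $\e^{-2/3}$), and a single perturbative step cannot reconcile the two.

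The paper's actual proof (Lemma \ref{lemma:thm13new1} and Corollary \ref{corollary:thm13new3}) resolves this by an \emph{iterated} resolvent expansion with $\lambda=\e^{-4/3+\gamma}$ for small $\gamma>0$. At each iteration one writes $\int[\lambda(\lambda-\mathscr{L}^{\e,\mathbf{I}^{\e}_{\s}}_{\mathrm{DtN}})^{-1}]^{\ell}\mathrm{Fluc}\,\d\s$ as the same with $\ell+1$ (gaining a factor $\lambda\e^{4/3}=\e^{\gamma}$), plus a martingale from It\^{o}, plus an $\mathscr{L}^{\e,\mathfrak{q}^{\e}_{\s}}_{\mathrm{flow}}$-error; after $\ell_{\max}\gtrsim1/\gamma$ iterations the residual time-integral is $\lesssim\e^{\gamma(\ell_{\max}+1)-2/3}\ll1$, and the good martingale is the \emph{sum} $\sum_{\ell=0}^{\ell_{\max}}\mathbf{M}^{\e,\ell}$. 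Also note that the iterated resolvents act only on the finite-dimensional $\mathfrak{q}$-variable, with $\mathbf{I}^{\e}_{\s}$ frozen; this avoids ever having to control the full infinite-dimensional resolvent $(\lambda-\mathscr{L}^{\e}_{\mathrm{total}})^{-1}$, which your argument needs but whose claimed $\lambda^{-1}$-bound (``dissipativity'' of $\mathscr{L}^{\e}_{\mathrm{total}}$) requires justification beyond what you offer, since $\mathscr{L}^{\e,\mathfrak{q}}_{\mathrm{flow}}$ is a nonsymmetric first-order transport operator. Your bracket heuristic, the $\lambda$-regularization idea, and the identification of the $\mathscr{L}^{\e,\mathfrak{q}}_{\mathrm{flow}}$-action as the core difficulty are all correct; the missing piece is the iteration structure, without which the scaling does not close.
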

The proof of Proposition \ref{prop:thm13} is essentially the point of Section \ref{subsubsection:difficultiesthm1}.
\subsection{Proof of Theorem \ref{theorem:1} assuming Proposition \ref{prop:thm13}}
First define the stopping time $\tau_{\mathbf{Y}^{\e},\Lambda}$ as the first time the $\mathscr{C}^{2}{}$-norm of $\mathbf{Y}^{\e}$ equals $\Lambda$. (Note that $\mathbf{Y}^{\e}$ is continuous in time.) Throughout this argument, we will fix $\Lambda\geq0$ (independently of $\e$). Define $\mathbf{X}^{\e}=\mathbf{Y}^{\e}-\mathbf{h}^{\e}$, where $\mathbf{h}^{\e}$ solves \eqref{eq:scalinglimit} with the martingale $\mathbf{M}^{\e}$ from Proposition \ref{prop:thm13}. We claim, with explanation after, that
\begin{align}
\mathbf{X}^{\e}_{\t,\x}&:=\tfrac12{\textstyle\int_{0}^{\t}\int_{\partial\mathds{M}}}\Gamma^{{}}_{\t-\s,\x,\w}\left[{\textstyle\int_{\partial\mathds{M}}}\mathbf{K}_{\w,\z}(|{{}\grad_{}}\mathbf{Y}^{\e}_{\s,\z}|^{2}-|{{}\grad_{}}\mathbf{h}^{\e}_{\s,\z}|^{2})\d\z\right]\d\w\d\s\label{eq:1proof1a}\\
&+\Phi^{\mathrm{noise},\e}_{\t,\x}-\Phi^{\mathbf{M}^{\e}}_{\t,\x}+\Phi^{\mathrm{KPZ},\e}_{\t,\x}-\Phi^{\mathrm{quad},\e}_{\t,\x}.\label{eq:1proof1b}
\end{align}
To see this, recall $\Phi^{\mathrm{quad},\e}$ from \eqref{eq:thm12Ia} and $\Phi^{\mathbf{M}^{\e}}$ from \eqref{eq:thm13Ia}-\eqref{eq:thm13Ib}. Now, rewrite \eqref{eq:scalinglimit} in its Duhamel form (by Lemma \ref{lemma:duhamel}). \eqref{eq:1proof1a}-\eqref{eq:1proof1b} now follows directly from \eqref{eq:thm11II} and this Duhamel form for \eqref{eq:scalinglimit}. (In particular, the martingale integrals in the $\mathbf{Y}^{\e}$ and $\mathbf{h}^{\e}$ equations cancel out.) 

In what follows, everything holds with high probability. Because we make finitely many such statements, by a union bound, the intersection of the events on which our claims hold also holds with high probability.

Let $\tau$ be any stopping time in $[0,\tau_{\mathbf{Y}^{\e},\Lambda}]$. By Lemma \ref{lemma:thm12} and Proposition \ref{prop:thm13}, we have
\begin{align}
\|\eqref{eq:1proof1b}\|_{\mathscr{C}^{0}_{\tau}\mathscr{C}^{2}_{}}\lesssim_{\|\mathbf{Y}^{\e}\|_{\mathscr{C}^{0}_{\tau}\mathscr{C}^{2}_{}}}\e^{\beta}\lesssim_{\Lambda}\e^{\beta},\label{eq:1proof2}
\end{align}
where $\beta>0$ is strictly positive (uniformly in $\e$). Note the second estimate in \eqref{eq:1proof2} follows by definition of $\tau_{\mathbf{Y}^{\e},\Lambda}\geq\tau$. On the other hand, by the elementary calculation $a^{2}-b^{2}=2b[a-b]+[a-b]^{2}$, we have
\begin{align}
|{{}\grad_{}}\mathbf{Y}^{\e}_{\s,\z}|^{2}-|{{}\grad_{}}\mathbf{h}^{\e}_{\s,\z}|^{2}&\lesssim|{{}\grad_{}}\mathbf{Y}^{\e}_{\s,\z}||{{}\grad_{}}\mathbf{Y}^{\e}_{\s,\z}-{{}\grad_{}}\mathbf{h}^{\e}_{\s,\z}|\label{eq:1proof3a}\\
&+[{{}\grad_{}}\mathbf{Y}^{\e}_{\s,\z}-{{}\grad_{}}\mathbf{h}^{\e}_{\s,\z}]^{2}\label{eq:1proof3b}\\
&=\mathrm{O}_{|{{}\grad_{}}\mathbf{Y}^{\e}_{\s,\z}|}(|{{}\grad_{}}\mathbf{X}^{\e}_{\s,\z}|+|{{}\grad_{}}\mathbf{X}^{\e}_{\s,\z}|^{2}), \label{eq:1proof3c}
\end{align}
where the dependence of the big-Oh term in \eqref{eq:1proof3c} is smooth in $|{{}\grad_{}}\mathbf{Y}^{\e}|$. Now, recall that $\mathbf{K}$ is a smooth kernel, and that {$\Gamma^{{}}$} is the kernel for a bounded operator $\mathscr{C}^{\k_{1}}{}\to\mathscr{C}^{\k_{2}}{}$ (for any $\k_{2}>0$ and for $\k_{1}$ big enough depending on $\k_{2}$; indeed this is the argument via Lemma \ref{lemma:regheat} and Sobolev embedding that we used in the proof of Lemma \ref{lemma:thm12}). Using this and \eqref{eq:1proof3a}-\eqref{eq:1proof3c}, we claim the following for any $\t\leq\tau$: 
\begin{align}
\|\mathrm{RHS}\eqref{eq:1proof1a}\|_{\mathscr{C}^{2}{}}&\lesssim{\textstyle\int_{0}^{\t}}\||{{}\grad_{}}\mathbf{Y}^{\e}_{\s,\cdot}|^{2}-|{{}\grad_{}}\mathbf{h}^{\e}_{\s,\cdot}|^{2}\|_{\mathscr{C}^{0}{}}\d\s\label{eq:1proof3d}\\
&\lesssim_{\Lambda}{\textstyle\int_{0}^{\t}}\||{{}\grad_{}}\mathbf{X}^{\e}|+|{{}\grad_{}}\mathbf{X}^{\e}|^{2}\|_{\mathscr{C}^{0}_{\s}\mathscr{C}^{0}_{}}\d\s.\label{eq:1proof3e}
\end{align}
Indeed, to get the first bound, when we take derivatives in $\partial\mathds{M}$ of the RHS of \eqref{eq:1proof1a}, boundedness of integration against $\Gamma^{{}}$ lets us place all derivatives onto $\mathbf{K}$. Now, use that $\mathbf{K}$ is smooth. This leaves the integral of $||{{}\grad_{}}\mathbf{Y}^{\e}_{\s,\cdot}|^{2}-|{{}\grad_{}}\mathbf{h}^{\e}_{\s,\cdot}|^{2}|$ on $\partial\mathds{M}$ (which we can bound by its $\mathscr{C}^{0}{}$-norm since $\partial\mathds{M}$ is compact). The second inequality above follows by \eqref{eq:1proof3a}-\eqref{eq:1proof3c} (and noting that for $\t\leq\tau\leq\tau_{\mathbf{Y}^{\e},\Lambda}$, the implied constant in \eqref{eq:1proof3c} is controlled in terms of $\Lambda$). Since $\t\leq\tau$, we can extend the time-integration in \eqref{eq:1proof3e} from $[0,\t]$ to $[0,\tau]$. The resulting bound is independent of the $\t$-variable on the LHS of \eqref{eq:1proof3d}, so
\begin{align}
\|\mathrm{RHS}\eqref{eq:1proof1a}\|_{\mathscr{C}^{0}_{\tau}\mathscr{C}^{2}_{}}&\lesssim{\textstyle\int_{0}^{\tau}}\||{{}\grad_{}}\mathbf{X}^{\e}|+|{{}\grad_{}}\mathbf{X}^{\e}|^{2}\|_{\mathscr{C}^{0}_{\s}\mathscr{C}^{0}_{}}\d\s.\label{eq:1proof3f}
\end{align}
Combine \eqref{eq:1proof1a}-\eqref{eq:1proof1b}, \eqref{eq:1proof2}, and \eqref{eq:1proof3f}. We get the deterministic bound
\begin{align}
\|\mathbf{X}^{\e}\|_{\mathscr{C}^{0}_{\tau}\mathscr{C}^{2}_{}}&\lesssim_{\Lambda}\e^{\beta}+{\textstyle\int_{0}^{\tau}}\||{{}\grad_{}}\mathbf{X}^{\e}|+|{{}\grad_{}}\mathbf{X}^{\e}|^{2}\|_{\mathscr{C}^{0}_{\s}\mathscr{C}^{0}_{}}\d\s. \label{eq:1proof4}
\end{align}
Now, recall $\tau_{\mathbf{h}^{\e},\Lambda}$ is the first time that the $\mathscr{C}^{2}{}$-norm of $\mathbf{h}^{\e}$ is at least $\Lambda$. Since $\mathbf{X}^{\e}=\mathbf{Y}^{\e}-\mathbf{h}^{\e}$, for any time $\s\leq\tau_{\mathbf{Y}^{\e},\Lambda}\wedge\tau_{\mathbf{h}^{\e},\Lambda}$, we know that the $\mathscr{C}^{2}{}$ of $\mathbf{X}^{\e}$ at time $\s$ is $\lesssim_{\Lambda}1$. Thus, for $\tau\leq\tau_{\mathbf{Y}^{\e},\Lambda}\wedge\tau_{\mathbf{h}^{\e},\Lambda}$, we can bound the square on the RHS of \eqref{eq:1proof4} by a linear term, so that \eqref{eq:1proof4} becomes
\begin{align}
\|\mathbf{X}^{\e}\|_{\mathscr{C}^{0}_{\tau}\mathscr{C}^{2}_{}}&\lesssim_{\Lambda}\e^{\beta}+{\textstyle\int_{0}^{\tau}}\|{{}\grad_{}}\mathbf{X}^{\e}\|_{\mathscr{C}^{0}_{\s}\mathscr{C}^{0}_{}}\d\s. \label{eq:1proof4b}
\end{align}
It now suffices to use Gronwall to deduce that for $\tau\leq\tau_{\mathbf{Y}^{\e},\Lambda}\wedge\tau_{\mathbf{h}^{\e},\Lambda}$, we have
\begin{align}
\|\mathbf{X}^{\e}\|_{\mathscr{C}^{0}_{\tau\wedge1}\mathscr{C}^{2}_{}}\lesssim_{\Lambda}\e^{\beta}. \label{eq:1proof5b}
\end{align}
We now claim that it holds for all $\tau\leq\tau_{\mathbf{h}^{\e},\Lambda/2}-\delta$ for $\delta>0$ fixed, as long as $\e>0$ is small enough depending only on $\Lambda,\delta$. This would yield \eqref{eq:thm1result} (upon rescaling $\Lambda$ therein) and thus complete the proof. To prove this claim, it suffices to show $\tau_{\mathbf{Y}^{\e},\Lambda}\wedge\tau_{\mathbf{h}^{\e},\Lambda}\geq\tau_{\mathbf{h}^{\e},\Lambda/2}-\delta$. Suppose the opposite, so that $\tau_{\mathbf{Y}^{\e},\Lambda}\leq\tau_{\mathbf{h}^{\e},\Lambda/2}-\delta$ (since $\tau_{\mathbf{h}^{\e},\Lambda/2}\leq\tau_{\mathbf{h}^{\e},\Lambda}$ trivially). This means that \eqref{eq:1proof5b} holds for all $\tau\leq\tau_{\mathbf{Y}^{\e},\Lambda}$. From this and $\mathbf{X}^{\e}=\mathbf{Y}^{\e}-\mathbf{h}^{\e}$, we deduce that at time $\tau_{\mathbf{Y}^{\e},\Lambda}$, we have $\mathbf{Y}^{\e}=\mathbf{h}^{\e}+\mathrm{O}_{\Lambda}(\e^{\beta})$. But at time $\tau_{\mathbf{Y}^{\e},\Lambda}\leq\tau_{\mathbf{h}^{\e},\Lambda/2}$, this implies that the $\mathscr{C}^{2}{}$-norm of $\mathbf{Y}^{\e}$ is $\leq\frac12\Lambda+\mathrm{O}_{\Lambda}(\e^{\beta})$. If $\e>0$ is small enough, then this is $\leq\frac23\Lambda$, violating the definition of $\tau_{\mathbf{Y}^{\e},\Lambda}$. This completes the contradiction, so the proof is finished. \qed
%
%
%
\section{Proof outline for Proposition \ref{prop:thm13}}\label{section:thm13proof}
In Sections \ref{section:thm13proof}-\ref{section:flowproofs}, we need to track dependence on the number of derivatives we take of $\mathbf{I}^{\e}$ (since estimates for certain operators depend on the metric $\mathbf{g}[{{}\grad_{}}\mathbf{I}^{\e}]$). In particular, we will need to control said number of derivatives by the $\mathscr{C}^{2}{}$-norm of $\mathbf{Y}^{\e}$ (see the implied constant in \eqref{eq:thm13I}). We will be precise about this. However, by Lemma \ref{lemma:aux1}, as long as the number of derivatives of $\mathbf{I}^{\e}$ that we take is $\mathrm{O}(1)$, this is okay.
\subsection{A preliminary reduction}
Recall \eqref{eq:thm13Ia}-\eqref{eq:thm13Ib} and \eqref{eq:thm11IIb} for the notation in the statement of Proposition \ref{prop:thm13}. {{}We first have
\begin{align}
&\Phi^{\mathrm{noise},\e}_{\t,\x}={\textstyle\int_{0}^{\t}}\Gamma^{{}}_{\t-\s,\x,\w}\partial_{\s}\mathrm{Int}^{\mathrm{noise},\e}_{\s,\w,\mathfrak{q}^{\e}_{\s},\mathbf{I}^{\e}_{\s}}\d\w\label{eq:thm13prelimIa}\\
&=\mathrm{Int}^{\mathrm{noise},\e}_{\t,\x,\mathfrak{q}^{\e}_{\cdot},\mathbf{I}^{\e}_{\cdot}}-{\textstyle\int_{\partial\mathds{M}}}\Gamma^{{}}_{\t,\x,\w}\mathrm{Int}^{\mathrm{noise},\e}_{0,\w,\mathfrak{q}^{\e}_{\cdot},\mathbf{I}^{\e}_{\cdot}}\d\w-{\textstyle\int_{0}^{\t}\int_{\partial\mathds{M}}}\partial_{\s}\Gamma^{{}}_{\t-\s,\x,\w}\mathrm{Int}^{\mathrm{noise},\e}_{\s,\w,\mathfrak{q}^{\e}_{\cdot},\mathbf{I}^{\e}_{\cdot}}\d\w\d\s.\label{eq:thm13prelimIb}
\end{align}
where used integration-by-parts in $\s$ and introduced the following time-integral:
\begin{align}
\mathrm{Int}^{\mathrm{noise},\e}_{\t,\x,\mathfrak{q}^{\e}_{\cdot},\mathbf{I}^{\e}_{\cdot}}:={{}\e^{-\frac12}}{\textstyle\int_{0}^{\t}}\left[\mathrm{Vol}_{\mathbf{I}^{\e}_{\s}}\mathbf{K}_{\x,\mathfrak{q}^{\e}_{\s}}-{\textstyle\int_{\partial\mathds{M}}}\mathbf{K}_{\x,\z}(1+|{{}\grad_{}}\mathbf{I}^{\e}_{\s,\z}|^{2})^{\frac12}\d\z\right]\d\s. \label{eq:thm13prelimI}
\end{align}
}{{}Now, by standard regularity estimates for the heat kernel $\Gamma^{{}}$ along with \eqref{eq:thm13prelimIa}-\eqref{eq:thm13prelimIb} and \eqref{eq:thm13Ia}-\eqref{eq:thm13Ib}, to prove Proposition \ref{prop:thm13} amounts to proving the following instead.}
\begin{prop}\label{prop:thm13new}
\fsp There exists a {{}family of good martingales} $\t\mapsto\mathbf{M}^{\e}_{\t,\cdot}\in\mathscr{C}^{\infty}{}$ (see Definition \ref{definition:espde}) such that the following is satisfied:
\begin{itemize}
\item For any stopping time $0\leq\tau\leq1$ and $\k\geq0$, there exists universal $\beta>0$ such that with high probability,
\begin{align}
\sup_{0\leq\t\leq\tau}\|\mathrm{Int}^{\mathrm{noise},\e}_{\t,\cdot,\mathfrak{q}^{\e}_{\cdot},\mathbf{I}^{\e}_{\cdot}}-\mathbf{M}^{\e}_{\t,\cdot}\|_{\mathscr{C}^{\k}{}}\lesssim_{\k,\|\mathbf{Y}^{\e}\|_{\mathscr{C}^{0}_{\tau}\mathscr{C}^{2}_{}}}\e^{\beta}.\label{eq:thm13newI}
\end{align}
In \eqref{eq:thm13newI}, the $\mathscr{C}^{\k}{}$-norm on the LHS is with respect to the omitted $\x$-variables in $\mathrm{Int}^{\mathrm{noise},\e}_{\t,\x,\mathfrak{q}^{\e}_{\cdot},\mathbf{I}^{\e}_{\cdot}}$ and $\mathbf{M}^{\e}_{\t,\x}$.
\end{itemize}
\end{prop}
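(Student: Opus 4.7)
The plan is to realize $\mathrm{Int}^{\mathrm{noise},\e}$ via a Kipnis--Varadhan-type identity applied to the joint Markov process $(\mathbf{I}^{\e},\mathfrak{q}^{\e})$, whose generator I denote $\mathscr{L}^{\e}_{\mathrm{total}}$. Write the integrand of \eqref{eq:thm13prelimI} as
\begin{align*}
\mathsf{F}^{\e}_{\t,\x}\ :=\ \e^{-\frac23}\mathrm{Vol}_{\mathbf{I}^{\e}_{\t}}\mathbf{K}_{\x,\mathfrak{q}^{\e}_{\t}}-\e^{-\frac23}{\textstyle\int_{\partial\mathds{M}}}\mathbf{K}_{\x,\z}(1+|\grad_{\partial\mathds{M}}\mathbf{I}^{\e}_{\t,\z}|^{2})^{\frac12}\d\z,
\end{align*}
which, for each frozen $\mathbf{I}^{\e}_{\t,\cdot}=\mathbf{I}$, has mean zero against the Riemannian probability measure induced by $\mathbf{g}[\grad_{\partial\mathds{M}}\mathbf{I}]$ and so is orthogonal to the kernel of $\mathscr{L}^{\e,\mathbf{I}}_{\mathrm{DtN}}$. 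I would introduce a regularization $\lambda=\e^{-\alpha}$ with $0<\alpha<\tfrac23$ and solve the resolvent equation $\mathsf{F}^{\e}_{\t,\x}=(\lambda-\mathscr{L}^{\e}_{\mathrm{total}})\varphi^{\e,\lambda}_{\t,\x}$, so that $\varphi^{\e,\lambda}$ is a functional of $(\mathbf{I}^{\e}_{\t},\mathfrak{q}^{\e}_{\t})$ parametrized by $\x\in\partial\mathds{M}$. Itô's formula applied to $\varphi^{\e,\lambda}$ then yields the decomposition
\begin{align*}
{\textstyle\int_{0}^{\t}}\mathsf{F}^{\e}_{\s,\x}\d\s\ =\ \varphi^{\e,\lambda}_{0,\x}-\varphi^{\e,\lambda}_{\t,\x}+\lambda{\textstyle\int_{0}^{\t}}\varphi^{\e,\lambda}_{\s,\x}\d\s+\mathbf{M}^{\e,\lambda}_{\t,\x},
\end{align*}
and I will take the good martingale required by Proposition \ref{prop:thm13new} to be the Dynkin martingale $\mathbf{M}^{\e}:=\mathbf{M}^{\e,\lambda}$.

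Verification then splits into three claims. \emph{First}, the boundary terms are $O(\e^{2/3})$ in $\mathscr{C}^{\k}(\partial\mathds{M})$: a uniform spectral gap for $\mathscr{L}^{\e,\mathbf{I}}_{\mathrm{DtN}}$ across $\mathbf{I}$ of bounded $\mathscr{C}^{2}$-norm (Lemma \ref{lemma:dtonbasics} and perturbation theory) together with $\|\mathsf{F}^{\e}\|_{\mathrm{L}^{2}}=O(\e^{-2/3})$ yields $\|\varphi^{\e,\lambda}\|_{\mathrm{L}^{2}}=O(\e^{2/3})$, and elliptic regularity for $\mathscr{L}^{\e,\mathbf{I}}_{\mathrm{DtN}}$ transfers this bound to $\mathscr{C}^{\k}$. \emph{Second}, $\lambda\int_{0}^{\t}\varphi^{\e,\lambda}\d\s=O(\lambda\e^{2/3})=o(1)$ by the choice $\alpha<\tfrac23$. \emph{Third}, by the Carré-du-Champ,
\begin{align*}
\d[\mathbf{M}^{\e,\lambda}]_{\s,\x}=\bigl\{\mathscr{L}^{\e}_{\mathrm{total}}(\varphi^{\e,\lambda}_{\s,\x})^{2}-2\varphi^{\e,\lambda}_{\s,\x}\mathscr{L}^{\e}_{\mathrm{total}}\varphi^{\e,\lambda}_{\s,\x}\bigr\}\d\s,
\end{align*}
and after dropping the subleading $\mathbf{I}^{\e}$-generator contribution to $\mathscr{L}^{\e}_{\mathrm{total}}$, replacing $\varphi^{\e,\lambda}$ by $-[\e^{-4/3}\mathscr{L}^{\e,\mathbf{I}^{\e}}_{\mathrm{DtN}}]^{-1}\mathsf{F}^{\e}$ via the Neumann expansion below, and collapsing $\mathbf{g}[\grad_{\partial\mathds{M}}\mathbf{I}^{\e}]\to\mathbf{g}[0]$ using $\grad_{\partial\mathds{M}}\mathbf{I}^{\e}=\e^{1/3}\grad_{\partial\mathds{M}}\mathbf{Y}^{\e}$, this expression reduces to the target \eqref{eq:scalinglimitmartingaleI} up to $O(\e^{\beta})$.

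The principal obstacle is controlling the resolvent $(\lambda-\mathscr{L}^{\e}_{\mathrm{total}})^{-1}$. One has $\mathscr{L}^{\e}_{\mathrm{total}}=\e^{-4/3}\mathscr{L}^{\e,\mathbf{I}^{\e}}_{\mathrm{DtN}}+\mathscr{L}^{\mathrm{flow},\e}$, where the flow generator $\mathscr{L}^{\mathrm{flow},\e}$, of order $\e^{-1/3}$ (inherited from the $\e^{-1/3}$ prefactor in \eqref{eq:modelflow}), differentiates in the $\mathbf{I}^{\e}$-direction. This differentiation simultaneously perturbs both the operator $\mathscr{L}^{\e,\mathbf{I}^{\e}}_{\mathrm{DtN}}$ one wants to invert \emph{and} the measure defining the subspace on which $\mathsf{F}^{\e}$ has mean zero, so one cannot simply invert slice by slice in $\mathbf{I}$. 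The $\lambda$-regularization bypasses this via the dissipative bound $\|(\lambda-\mathscr{L}^{\e}_{\mathrm{total}})^{-1}\|\leq\lambda^{-1}$, valid unconditionally. One then needs a Neumann expansion to show that $\varphi^{\e,\lambda}\approx-[\e^{-4/3}\mathscr{L}^{\e,\mathbf{I}^{\e}}_{\mathrm{DtN}}]^{-1}\mathsf{F}^{\e}$ with error controlled by $\|\mathscr{L}^{\mathrm{flow},\e}\|\cdot\e^{4/3}\asymp\e$. Because $\mathscr{L}^{\mathrm{flow},\e}$ is an unbounded operator on functionals of the infinite-dimensional field $\mathbf{I}^{\e}$, verifying that $\varphi^{\e,\lambda}$ lies in its domain and computing $\mathscr{L}^{\mathrm{flow},\e}\varphi^{\e,\lambda}$ pointwise must be performed by hand (as flagged at the end of Section \ref{subsubsection:difficultiesthm1}), not by abstract operator theory. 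I expect this domain/resolvent analysis to absorb the bulk of the technical work in Sections \ref{section:dtondomain} and \ref{section:flowproofs}, after which the three claims above follow from standard martingale and elliptic estimates.
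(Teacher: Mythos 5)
Your approach is close in spirit to the paper's, but the single-shot Dynkin decomposition through the full-generator resolvent does not close, and I don't believe it can with any choice of $\lambda$. Here is the obstruction.

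Grant your $O(\e^{2/3})$-bound on $\varphi^{\e,\lambda}$ for a moment. Then the residual $\lambda\int_0^{\t}\varphi^{\e,\lambda}\d\s$ is $O(\lambda\e^{2/3})=O(\e^{2/3-\alpha})$, so you need $\alpha<2/3$. Now try to actually \emph{establish} $\|\varphi^{\e,\lambda}\|\lesssim\e^{2/3}$. The raw dissipative bound only gives $\lambda^{-1}\e^{-2/3}=\e^{\alpha-2/3}$, which diverges for $\alpha<2/3$, so you must use the spectral gap, and you propose to do so via a Neumann expansion with base $A^{-1}:=(\lambda-\mathscr{L}^{\e,\mathbf{I}}_{\mathrm{DtN}})^{-1}$. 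The zeroth-order term $A^{-1}\mathsf{F}^{\e}$ is indeed $O(\e^{4/3}\cdot\e^{-2/3})=O(\e^{2/3})$. But the first-order correction is $B^{-1}\mathscr{L}^{\mathrm{flow},\e}(A^{-1}\mathsf{F}^{\e})$ with $B=\lambda-\mathscr{L}^{\e}_{\mathrm{total}}$. Applying $\mathscr{L}^{\mathrm{flow},\e}$ to $A^{-1}\mathsf{F}^{\e}$ by the Leibniz rule produces the derivative of the resolvent, $A^{-1}(\mathscr{L}^{\mathrm{flow}}\mathscr{L}_{\mathrm{DtN}})A^{-1}\mathsf{F}^{\e}$, and $A^{-1}\mathscr{L}^{\mathrm{flow}}\mathsf{F}^{\e}$. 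In both, the \emph{outer} $A^{-1}$ acts on a quantity which is no longer mean-zero (this is exactly the instability you flag), so only the dissipative $\lambda^{-1}$ is available there; together with $\mathscr{L}^{\mathrm{flow}}\mathscr{L}_{\mathrm{DtN}}=O(\e^{-5/3})$ and $\mathscr{L}^{\mathrm{flow}}\mathsf{F}^{\e}=O(\e^{-1})$ (Lemma \ref{lemma:flowcompute1}, Corollary \ref{corollary:flowcompute4}), one finds $\mathscr{L}^{\mathrm{flow},\e}(A^{-1}\mathsf{F}^{\e})=O(\lambda^{-1}\e^{-1})$, and thus the first-order Neumann correction is $O(\lambda^{-2}\e^{-1})=O(\e^{2\alpha-1})$. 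For this to be $\lesssim\e^{2/3}$ (so the zeroth-order term dominates) you need $\alpha\geq5/6$. Since $5/6>2/3$, the constraint from the residual ($\alpha<2/3$) and the constraint for the Neumann expansion to converge ($\alpha\geq5/6$) are incompatible, and your quoted error ``$\|\mathscr{L}^{\mathrm{flow},\e}\|\cdot\e^{4/3}\asymp\e$'' implicitly assumes the outer resolvent still enjoys the spectral gap bound $\e^{4/3}$, which it does not.

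What the paper does differently is essential, not cosmetic. It never inverts the joint generator. Instead it iterates the single operator $\lambda(\lambda-\mathscr{L}^{\e,\mathbf{I}^{\e}}_{\mathrm{DtN}})^{-1}$, which \emph{does} preserve the mean-zero subspace (since $\mathscr{L}_{\mathrm{DtN}}$ is self-adjoint), so the spectral gap applies at every level and each iteration gains $\lambda\e^{4/3}=\e^{\gamma}$ with $\lambda=\e^{-4/3+\gamma}$. The flow generator $\mathscr{L}^{\e,\mathfrak{q}}_{\mathrm{flow}}$ appears only as an additive error in \eqref{eq:thm13new3Ic} (where the lost spectral gap is compensated by the $\e^{-1/3}$ weakness of the flow), rather than inside the resolvent chain, and the residual is $[\lambda A^{-1}]^{\ell_{\max}+1}\mathsf{F}^{\e}$, which is killed by taking $\ell_{\max}$ large rather than by shrinking $\lambda$. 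Your proposal essentially tries to collapse the paper's $\ell_{\max}$ iterations into one inversion of $\mathscr{L}^{\e}_{\mathrm{total}}$; the resulting $\varphi^{\e,\lambda}$ is implicitly defined on an infinite-dimensional state space, loses the stable algebraic property that gives the spectral gap, and leaves a residual and a first-order error that cannot be small simultaneously.
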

\subsection{Step 1: Setting up an {{}It\^{o}} formula for $(\mathbf{I}^{\e},\mathfrak{q}^{\e})$}
See Section \ref{subsubsection:difficultiesthm1} for the motivation for an {{}It\^{o}} formula for the joint process $(\mathbf{I}^{\e},\mathfrak{q}^{\e})$. We must now explicitly write the generator of this joint process. It has the form
\begin{align}
\mathscr{L}_{\mathrm{total}}^{\e}=\mathscr{L}^{\e,\mathfrak{q}^{\e}}_{\mathrm{flow}}+\mathscr{L}^{\e,\mathbf{I}^{\e}}_{\mathrm{DtN}}.\label{eq:step1I}
\end{align}
The first term is the instantaneous flow of $\mathbf{I}^{\e}$ defined by \eqref{eq:modelflow} (for $\mathfrak{q}^{\e}\in\partial\mathds{M}$), and the second term is a scaled Dirichlet-to-Neumann map with metric $\mathbf{g}[{{}\grad_{}}\mathbf{I}^{\e}]$ on $\mathds{M}$ determined by $\mathbf{I}^{\e}\in\mathscr{C}^{\infty}{}$. (Superscripts for these operators always indicate what is being fixed, i.e. the opposite of whose dynamics we are considering.) To be precise:
\begin{itemize}
\item For any $\mathbf{I}\in\mathscr{C}^{\infty}{}$, recall the metric $\mathbf{g}[{{}\grad_{}}\mathbf{I}]$ on $\mathds{M}$ (see Construction \ref{construction:model}). Let ${{}\Delta_{\mathbf{I},\mathds{M}}}$ denote the Laplacian with respect to this metric. Now, given any $\varphi\in\mathscr{C}^{\infty}{}$, we set 
\begin{align}
\mathscr{L}^{\e,\mathbf{I}^{\e}}_{\mathrm{DtN}}\varphi={{}\e^{-1}}\grad_{\mathsf{N}}\mathscr{U}^{\varphi,\mathbf{I}^{\e}},\label{eq:step1dton}
\end{align}
where $\mathsf{N}$ is the inward unit normal vector field on $\partial\mathds{M}$, and $\mathscr{U}^{\varphi,\mathbf{I}}$ is ${{}\Delta_{\mathbf{I},\mathds{M}}}$-harmonic extension of $\varphi$ to $\mathds{M}$. (In particular, we have ${{}\Delta_{\mathbf{I},\mathds{M}}}\mathscr{U}^{\varphi,\mathbf{I}}=0$ and $\mathscr{U}^{\varphi,\mathbf{I}}|_{\partial\mathds{M}}=\varphi$. Again, we refer to Proposition 4.1 in \cite{Hsu0} for why \eqref{eq:step1dton} is the generator of $\mathfrak{q}^{\e}$, and that its dependence on $\mathbf{I}^{\e}$ shows non-Markovianity of $\mathfrak{q}^{\e}$.)
\item Fix $\mathfrak{q}^{\e}\in\partial\mathds{M}$. The second term in \eqref{eq:step1I} is {{}a directional} derivative on functions $\mathscr{C}^{\infty}{}\to\R$ such that, when evaluated at $\mathbf{I}\in\mathscr{C}^{\infty}{}$, it is in the direction of the function $\x\mapsto{{}\Delta}\mathbf{I}^{\e}_{\x}+{{}\e^{-1/4}}\mathrm{Vol}_{\mathbf{I}}\mathbf{K}_{\x,\mathfrak{q}^{\e}}$. Precisely, given any functional $\mathscr{F}:\mathscr{C}^{\infty}{}\to\R$ and $\mathbf{I}\in\mathscr{C}^{\infty}{}$, we have 
\begin{align}
\mathscr{L}^{\e,\mathfrak{q}^{\e}}_{\mathrm{flow}}\mathscr{F}[\mathbf{I}]:=\lim_{h\to0}\tfrac{1}{h}\left\{\mathscr{F}[\mathbf{I}+h{{}\Delta}\mathbf{I}+h\mathrm{Vol}_{\mathbf{I}}\mathbf{K}_{\cdot,\mathfrak{q}^{\e}}]-\mathscr{F}[\mathbf{I}]\right\},\label{eq:step1flow}
\end{align}
provided that this limit exists (\emph{which needs to be verified carefully}, since $\mathscr{C}^{\infty}{}$ is infinite-dimensional). {{}We note that the exact domain of $\mathscr{L}^{\e,\mathfrak{q}^{\e}}_{\mathrm{flow}}$ will not be important for us to know. However, we also note that the domain of $\mathscr{L}^{\e,\mathfrak{q}^{\e}}_{\mathrm{flow}}$ consists of continuous linear functionals $\mathscr{C}^{\infty}{}\to\R$. By the Leibniz rule, it also contains polynomials of such continuous linear functionals.}
\end{itemize}
\subsubsection{Issues about the domain of {$\mathscr{L}^{\e,\mathfrak{q}^{\e}}_{\mathrm{flow}}$}}
Throughout this section, we will often let {$\mathscr{L}^{\e,\mathfrak{q}^{\e}}_{\mathrm{flow}}$} hit various functionals of the $\mathbf{I}^{\e}$ process. Of course, as noted immediately above, anytime we do this, we must verify that the limit \eqref{eq:step1flow} which defines it exists. Each verification (or statement of such) takes a bit to write down. So, instead of stating explicitly that each application of {$\mathscr{L}^{\e,\mathfrak{q}^{\e}}_{\mathrm{flow}}$} is well-defined throughout this section, \emph{we instead take it for granted, and, in Section \ref{section:dtondomain}, we verify explicitly that all applications of} {$\mathscr{L}^{\e,\mathfrak{q}^{\e}}_{\mathrm{flow}}$} \emph{are justified}.
\subsubsection{An expansion for $\mathrm{Int}^{\mathrm{noise},\e}$}
Before we start, we first introduce notation for the following fluctuation term, which is just the time-derivative of \eqref{eq:thm13prelimI}:
\begin{align}
\mathrm{Fluc}^{\mathrm{noise},\e}_{\x,\mathfrak{q}^{\e}_{\t},\mathbf{I}^{\e}_{\t}}&:={{}\e^{-\frac12}}\left[\mathrm{Vol}_{\mathbf{I}^{\e}_{\t}}\mathbf{K}_{\x,\mathfrak{q}^{\e}_{\t}}-{\textstyle\int_{\partial\mathds{M}}}\mathbf{K}_{\x,\z}(1+|{{}\grad_{}}\mathbf{I}^{\e}_{\t,\z}|^{2})^{\frac12}\d\z\right].\label{eq:step1fluc}
\end{align}
Not only is this notation useful, but we emphasize that it does not depend on time $\t$ (except through $(\mathbf{I}^{\e},\mathfrak{q}^{\e})$). So, as far as an {{}It\^{o}} formula is concerned, we do not have to worry about time-derivatives.

As discussed in Section \ref{subsubsection:difficultiesthm1}, we will eventually get a martingale from $\mathrm{Int}^{\mathrm{noise},\e}$ by the {{}It\^{o}} formula. We also noted in Section \ref{subsubsection:difficultiesthm1} that we have to regularize the total generator \eqref{eq:step1I} by a spectral parameter $\lambda$. In particular, for the sake of illustrating the idea, we will want to write the following for $\lambda$ chosen shortly:
\begin{align}
\mathrm{Fluc}^{\mathrm{noise},\e}_{\x,\mathfrak{q}^{\e}_{\t},\mathbf{I}^{\e}_{\t}}=(\lambda-\mathscr{L}_{\mathrm{total}}^{\e})[\lambda-\mathscr{L}_{\mathrm{total}}^{\e}]^{-1}\mathrm{Fluc}^{\mathrm{noise},\e}_{\x,\mathfrak{q}^{\e}_{\t},\mathbf{I}^{\e}_{\t}}.
\end{align}
{{}It\^{o}} tells us how to integrate {$\mathscr{L}^{\e}_{\mathrm{total}}[\lambda-\mathscr{L}_{\mathrm{total}}^{\e}]^{-1}\mathrm{Fluc}^{\mathrm{noise},\e}_{\x,\mathfrak{q}^{\e}_{\t},\mathbf{I}^{\e}_{\t}}$} in time. We are still left with terms of the form
\begin{align}
\lambda[\lambda-\mathscr{L}_{\mathrm{total}}^{\e}]^{-1}\mathrm{Fluc}^{\mathrm{noise},\e}_{\x,\mathfrak{q}^{\e}_{\t},\mathbf{I}^{\e}_{\t}}.
\end{align}
We will again hit this term with $(\lambda-\mathscr{L}^{\e}_{\mathrm{total}})[\lambda-\mathscr{L}^{\e}_{\mathrm{total}}]^{-1}$ (so that the previous display now plays the role of $\mathrm{Fluc}^{\mathrm{noise},\e}$). If we repeat (i.e. use the {{}It\^{o}} formula to take care of $\mathscr{L}^{\e}_{\mathrm{total}}[\lambda-\mathscr{L}^{\e}_{\mathrm{total}}]^{-1}$), we are left with
\begin{align}
(\lambda[\lambda-\mathscr{L}^{\e}_{\mathrm{total}}]^{-1})^{2}\mathrm{Fluc}^{\mathrm{noise},\e}_{\x,\mathfrak{q}^{\e}_{\t},\mathbf{I}^{\e}_{\t}}.
\end{align}
By iterating, the residual terms become just higher and higher powers of $\lambda[\lambda-\mathscr{L}^{\e}_{\mathrm{total}}]^{-1}$. For later and later terms in this expansion to eventually become very small, we will want to choose the spectral parameter
\begin{align}
\lambda={{}\e^{-1+\gamma}},\label{eq:lambda}
\end{align}
where $\gamma>0$ strictly positive and universal (though eventually small). Indeed, \eqref{eq:lambda} is much smaller than the {{}$\e^{-1}$} speed of $\mathscr{L}^{\e}_{\mathrm{total}}$, so each power of $\lambda[\lambda-\mathscr{L}^{\e}_{\mathrm{total}}]^{-1}$ gives us $\lesssim\e^{\gamma}$.

Let us now make this precise with the following set of results. We start with an elementary computation. It effectively writes more carefully how to go from $(\lambda[\lambda-\mathscr{L}^{\e}_{\mathrm{total}}]^{-1})^{\ell}$ to $(\lambda[\lambda-\mathscr{L}^{\e}_{\mathrm{total}}]^{-1})^{\ell+1}$. (Except, it uses $\mathscr{L}^{\e,\mathbf{I}}_{\mathrm{DtN}}$ instead of $\mathscr{L}^{\e}_{\mathrm{total}}$ in the resolvents, which only requires a few cosmetic adjustments.)
\begin{lemma}\label{lemma:thm13new1}
\fsp Fix any integer $\ell\geq0$. We have the following deterministic identity:
\begin{align}
&{\textstyle\int_{0}^{\t}}[\lambda(\lambda-\mathscr{L}_{\mathrm{DtN}}^{\e,\mathbf{I}^{\e}_{\s}})^{-1}]^{\ell}\mathrm{Fluc}^{\mathrm{noise},\e}_{\x,\mathfrak{q}^{\e}_{\s},\mathbf{I}^{\e}_{\s}}\d\s\label{eq:thm13new1Ia}\\
&={\textstyle\int_{0}^{\t}}[\lambda(\lambda-\mathscr{L}_{\mathrm{DtN}}^{\e,\mathbf{I}^{\e}_{\s}})^{-1}]^{\ell+1}\mathrm{Fluc}^{\mathrm{noise},\e}_{\x,\mathfrak{q}^{\e}_{\s},\mathbf{I}^{\e}_{\s}}\d\s\label{eq:thm13new1Ib}\\
&-{\textstyle\int_{0}^{\t}}\mathscr{L}_{\mathrm{total}}^{\e}(\lambda-\mathscr{L}^{\e,\mathbf{I}^{\e}_{\s}}_{\mathrm{DtN}})^{-1}[\lambda(\lambda-\mathscr{L}_{\mathrm{DtN}}^{\e,\mathbf{I}^{\e}_{\s}})^{-1}]^{\ell}\mathrm{Fluc}^{\mathrm{noise},\e}_{\x,\mathfrak{q}^{\e}_{\s},\mathbf{I}^{\e}_{\s}}\d\s\label{eq:thm13new1Ic}\\
&+{\textstyle\int_{0}^{\t}}\mathscr{L}_{\mathrm{flow}}^{\e,\mathfrak{q}^{\e}_{\s}}(\lambda-\mathscr{L}^{\e,\mathbf{I}^{\e}_{\s}}_{\mathrm{DtN}})^{-1}[\lambda(\lambda-\mathscr{L}_{\mathrm{DtN}}^{\e,\mathbf{I}^{\e}_{\s}})^{-1}]^{\ell}\mathrm{Fluc}^{\mathrm{noise},\e}_{\x,\mathfrak{q}^{\e}_{\s},\mathbf{I}^{\e}_{\s}}\d\s.\label{eq:thm13new1Id}
\end{align}
\end{lemma}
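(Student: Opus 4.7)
The claimed identity is really just a pointwise-in-$s$ algebraic identity between resolvents, so the time integral on both sides plays no role beyond letting us match the displays. The plan is therefore to drop the $\int_{0}^{\t}\d\s$, fix $\s\in[0,\t]$, and prove the operator identity at that single time $\s$. After that, integrating the identity in $\s$ is immediate.

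\textbf{Key algebraic step.} Abbreviate $R:=(\lambda-\mathscr{L}^{\e,\mathbf{I}^{\e}_{\s}}_{\mathrm{DtN}})^{-1}$, $A:=\lambda R$, and $F:=\mathrm{Fluc}^{\mathrm{noise},\e}_{\x,\mathfrak{q}^{\e}_{\s},\mathbf{I}^{\e}_{\s}}$. The workhorse is the trivial resolvent identity
\begin{align*}
\mathscr{L}^{\e,\mathbf{I}^{\e}_{\s}}_{\mathrm{DtN}}R \;=\; \bigl(\mathscr{L}^{\e,\mathbf{I}^{\e}_{\s}}_{\mathrm{DtN}}-\lambda+\lambda\bigr)R \;=\; -I+A.
\end{align*}
Combined with the decomposition $\mathscr{L}^{\e}_{\mathrm{total}}=\mathscr{L}^{\e,\mathfrak{q}^{\e}_{\s}}_{\mathrm{flow}}+\mathscr{L}^{\e,\mathbf{I}^{\e}_{\s}}_{\mathrm{DtN}}$ coming from \eqref{eq:step1I}, this gives
\begin{align*}
-\mathscr{L}^{\e}_{\mathrm{total}}RA^{\ell}F+\mathscr{L}^{\e,\mathfrak{q}^{\e}_{\s}}_{\mathrm{flow}}RA^{\ell}F \;=\; -\mathscr{L}^{\e,\mathbf{I}^{\e}_{\s}}_{\mathrm{DtN}}RA^{\ell}F \;=\; (I-A)A^{\ell}F \;=\; A^{\ell}F-A^{\ell+1}F.
\end{align*}
Adding $A^{\ell+1}F$ to both sides recovers exactly the pointwise version of the identity \eqref{eq:thm13new1Ia}--\eqref{eq:thm13new1Id}. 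So the whole proof boils down to (i) the splitting of $\mathscr{L}^{\e}_{\mathrm{total}}$ and (ii) the one-line manipulation $\mathscr{L}^{\e,\mathbf{I}^{\e}_{\s}}_{\mathrm{DtN}}R=-I+A$.

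\textbf{What needs checking.} The only non-formal point is that every operator appearing in the identity actually makes sense on whatever function $F$ (and its iterated resolvent images) it is applied to. Two things need justification: first, that the resolvent $(\lambda-\mathscr{L}^{\e,\mathbf{I}^{\e}_{\s}}_{\mathrm{DtN}})^{-1}$ exists and is bounded on a space containing $F$; this is immediate because $\lambda>0$ and $\mathscr{L}^{\e,\mathbf{I}^{\e}_{\s}}_{\mathrm{DtN}}$ is self-adjoint and negative semi-definite (the analog of Lemma \ref{lemma:dtonbasics} for the metric $\mathbf{g}[\grad_{\partial\mathds{M}}\mathbf{I}^{\e}_{\s}]$), so $\lambda-\mathscr{L}^{\e,\mathbf{I}^{\e}_{\s}}_{\mathrm{DtN}}\geq\lambda>0$. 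Second, we must know that $\mathscr{L}^{\e,\mathfrak{q}^{\e}_{\s}}_{\mathrm{flow}}$ can legally act on the functional $\mathbf{I}\mapsto (\lambda-\mathscr{L}^{\e,\mathbf{I}}_{\mathrm{DtN}})^{-1}[\lambda(\lambda-\mathscr{L}^{\e,\mathbf{I}}_{\mathrm{DtN}})^{-1}]^{\ell}\mathrm{Fluc}^{\mathrm{noise},\e}_{\x,\mathfrak{q}^{\e}_{\s},\mathbf{I}}$ evaluated at $\mathbf{I}=\mathbf{I}^{\e}_{\s}$, since this functional depends on $\mathbf{I}$ through both the resolvent and through $\mathrm{Fluc}^{\mathrm{noise},\e}$ via \eqref{eq:step1fluc}.

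\textbf{Main obstacle.} The second point above is the only real content: one must verify that the Frechet derivative defining $\mathscr{L}^{\e,\mathfrak{q}^{\e}_{\s}}_{\mathrm{flow}}$ in \eqref{eq:step1flow} exists when applied to these resolvent-valued functionals of $\mathbf{I}$, which is nontrivial because $\mathscr{C}^{\infty}(\partial\mathds{M})$ is infinite-dimensional and the dependence of $(\lambda-\mathscr{L}^{\e,\mathbf{I}}_{\mathrm{DtN}})^{-1}$ on $\mathbf{I}$ passes through the metric $\mathbf{g}[\grad_{\partial\mathds{M}}\mathbf{I}]$ on $\mathds{M}$. However, per the note immediately before Section \ref{section:thm13proof}'s use of the Ito formula, well-definedness of all applications of $\mathscr{L}^{\e,\mathfrak{q}^{\e}_{\s}}_{\mathrm{flow}}$ is deferred to Section \ref{section:dtondomain}; granting it here, the identity is purely algebraic, and the proof consists of the one-line computation above.
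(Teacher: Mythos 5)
Your proof is correct and is essentially identical to the paper's: both observe that the operators in \eqref{eq:thm13new1Ic} and \eqref{eq:thm13new1Id} combine via $\mathscr{L}^{\e}_{\mathrm{total}}=\mathscr{L}^{\e,\mathfrak{q}^{\e}_{\s}}_{\mathrm{flow}}+\mathscr{L}^{\e,\mathbf{I}^{\e}_{\s}}_{\mathrm{DtN}}$ to give $-\mathscr{L}^{\e,\mathbf{I}^{\e}_{\s}}_{\mathrm{DtN}}(\lambda-\mathscr{L}^{\e,\mathbf{I}^{\e}_{\s}}_{\mathrm{DtN}})^{-1}[\lambda(\lambda-\mathscr{L}^{\e,\mathbf{I}^{\e}_{\s}}_{\mathrm{DtN}})^{-1}]^{\ell}$, and then add $\lambda-\mathscr{L}^{\e,\mathbf{I}^{\e}_{\s}}_{\mathrm{DtN}}$ back to the numerator to recover \eqref{eq:thm13new1Ia}. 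Your additional remarks on resolvent boundedness and the deferral of the $\mathscr{L}^{\e,\mathfrak{q}^{\e}_{\s}}_{\mathrm{flow}}$ domain question to Section \ref{section:dtondomain} match the paper's stated conventions.
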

\begin{proof}
Note that the operators in \eqref{eq:thm13new1Ic} and \eqref{eq:thm13new1Id} add to
\begin{align}
-\mathscr{L}^{\e,\mathbf{I}^{\e}_{\s}}_{\mathrm{DtN}}(\lambda-\mathscr{L}^{\e,\mathbf{I}^{\e}_{\s}}_{\mathrm{DtN}})^{-1}[\lambda(\lambda-\mathscr{L}_{\mathrm{DtN}}^{\e,\mathbf{I}^{\e}_{\s}})^{-1}]^{\ell}.
\end{align}
Adding this to the operator in \eqref{eq:thm13new1Ib}, which can be written as $\lambda(\lambda-\mathscr{L}^{\e,\mathbf{I}^{\e}_{\s}}_{\mathrm{DtN}})^{-1}[\lambda(\lambda-\mathscr{L}_{\mathrm{DtN}}^{\e,\mathbf{I}^{\e}_{\s}})^{-1}]^{\ell}$, gives
\begin{align}
(\lambda-\mathscr{L}^{\e,\mathbf{I}^{\e}_{\s}}_{\mathrm{DtN}})(\lambda-\mathscr{L}^{\e,\mathbf{I}^{\e}_{\s}}_{\mathrm{DtN}})^{-1}[\lambda(\lambda-\mathscr{L}_{\mathrm{DtN}}^{\e,\mathbf{I}^{\e}_{\s}})^{-1}]^{\ell}=[\lambda(\lambda-\mathscr{L}_{\mathrm{DtN}}^{\e,\mathbf{I}^{\e}_{\s}})^{-1}]^{\ell}.
\end{align}
This is just the operator in \eqref{eq:thm13new1Ia}. Act on $\mathrm{Fluc}^{\mathrm{noise},\e}_{\x,\mathfrak{q}^{\e}_{\s},\mathbf{I}^{\e}_{\s}}$ and integrate over $\s\in[0,\t]$ to get \eqref{eq:thm13new1Ia}-\eqref{eq:thm13new1Id}.
\end{proof}
Next, we use the {{}It\^{o}} formula to compute \eqref{eq:thm13new1Ic} in terms of a martingale and boundary terms. We can also compute the predictable bracket of the martingale we get (essentially by standard theory).
\begin{lemma}\label{lemma:thm13new2}
\fsp Fix any $\ell\geq0$. There exists a martingale $\t\mapsto\mathbf{M}^{\e,\ell}_{\t,\cdot}\in\mathscr{C}^{\infty}{}$ such that 
\begin{align}
\eqref{eq:thm13new1Ic}=\mathbf{M}^{\e,\ell}_{\t,\x}&+(\lambda-\mathscr{L}^{\e,\mathbf{I}^{\e}_{\s}}_{\mathrm{DtN}})^{-1}[\lambda(\lambda-\mathscr{L}_{\mathrm{DtN}}^{\e,\mathbf{I}^{\e}_{\s}})^{-1}]^{\ell}\mathrm{Fluc}^{\mathrm{noise},\e}_{\x,\mathfrak{q}^{\e}_{\s},\mathbf{I}^{\e}_{\s}}|_{\s=0}\label{eq:thm13new2Ia}\\
&-(\lambda-\mathscr{L}^{\e,\mathbf{I}^{\e}_{\s}}_{\mathrm{DtN}})^{-1}[\lambda(\lambda-\mathscr{L}_{\mathrm{DtN}}^{\e,\mathbf{I}^{\e}_{\s}})^{-1}]^{\ell}\mathrm{Fluc}^{\mathrm{noise},\e}_{\x,\mathfrak{q}^{\e}_{\s},\mathbf{I}^{\e}_{\s}}|_{\s=\t}.\label{eq:thm13new2Ib}
\end{align}
The predictable bracket $[\mathbf{M}^{\e,\ell}]$ of $\mathbf{M}^{\e,\ell}$, i.e. the process such that $|\mathbf{M}^{\e,\ell}_{\t,\cdot}|^{2}-[\mathbf{M}^{\e,\ell}]_{\t,\cdot}$ is a martingale, is
\begin{align}
[\mathbf{M}^{\e,\ell}]_{\t,\x}&={\textstyle\int_{0}^{\t}}(\mathscr{L}^{\e,\mathfrak{q}^{\e}_{\s}}_{\mathrm{flow}}+\mathscr{L}^{\e,\mathbf{I}^{\e}_{\s,\cdot}}_{\mathrm{DtN}})\left\{|(\lambda-\mathscr{L}^{\e,\mathbf{I}^{\e}_{\s,\cdot}}_{\mathrm{DtN}})^{-1}[\lambda(\lambda-\mathscr{L}^{\e,\mathbf{I}^{\e}_{\s,\cdot}}_{\mathrm{DtN}})^{-1}]^{\ell}\mathrm{Fluc}^{\mathrm{noise},\e}_{\x,\mathfrak{q}^{\e}_{\s},\mathbf{I}^{\e}_{\s}}|^{2}\right\}\d\s\label{eq:thm13new2IIa}\\
&-2{\textstyle\int_{0}^{\t}}\{(\lambda-\mathscr{L}^{\e,\mathbf{I}^{\e}_{\s,\cdot}}_{\mathrm{DtN}})^{-1}[\lambda(\lambda-\mathscr{L}^{\e,\mathbf{I}^{\e}_{\s,\cdot}}_{\mathrm{DtN}})^{-1}]^{\ell}\mathrm{Fluc}^{\mathrm{noise},\e}_{\x,\mathfrak{q}^{\e}_{\s},\mathbf{I}^{\e}_{\s}}\}\label{eq:thm13new2IIb}\\
&\quad\quad\ \ \times\{(\mathscr{L}^{\e,\mathfrak{q}^{\e}_{\s}}_{\mathrm{flow}}+\mathscr{L}^{\e,\mathbf{I}^{\e}_{\s,\cdot}}_{\mathrm{DtN}})[(\lambda-\mathscr{L}^{\e,\mathbf{I}^{\e}_{\s,\cdot}}_{\mathrm{DtN}})^{-1}[\lambda(\lambda-\mathscr{L}^{\e,\mathbf{I}^{\e}_{\s,\cdot}}_{\mathrm{DtN}})^{-1}]^{\ell}\mathrm{Fluc}^{\mathrm{noise},\e}_{\x,\mathfrak{q}^{\e}_{\s},\mathbf{I}^{\e}_{\s}}]\}\d\s.\label{eq:thm13new2IIc}
\end{align}
\end{lemma}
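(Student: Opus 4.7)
The identity is a direct application of Dynkin's formula (Itô at the level of the generator) to the joint Markov process $(\mathbf{I}^{\e},\mathfrak{q}^{\e})$, whose generator is $\mathscr{L}^{\e}_{\mathrm{total}} = \mathscr{L}^{\e,\mathfrak{q}^{\e}}_{\mathrm{flow}} + \mathscr{L}^{\e,\mathbf{I}^{\e}}_{\mathrm{DtN}}$. For each fixed $\x\in\partial\mathds{M}$, introduce the state-space functional
\begin{align}
\mathsf{G}_{\x}[\mathbf{I},q]\,:=\,(\lambda-\mathscr{L}^{\e,\mathbf{I}}_{\mathrm{DtN}})^{-1}[\lambda(\lambda-\mathscr{L}^{\e,\mathbf{I}}_{\mathrm{DtN}})^{-1}]^{\ell}\mathrm{Fluc}^{\mathrm{noise},\e}_{\x,q,\mathbf{I}},
\end{align}
and abbreviate $\mathsf{G}_{\x,\s}:=\mathsf{G}_{\x}[\mathbf{I}^{\e}_{\s},\mathfrak{q}^{\e}_{\s}]$. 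Dynkin produces a cadlag martingale $\mathbf{M}^{\e,\ell}_{\t,\x}$ with
\begin{align}
\mathsf{G}_{\x,\t}-\mathsf{G}_{\x,0} \,=\, {\textstyle\int_{0}^{\t}}(\mathscr{L}^{\e}_{\mathrm{total}}\mathsf{G}_{\x})[\mathbf{I}^{\e}_{\s},\mathfrak{q}^{\e}_{\s}]\,\d\s \,+\, \mathbf{M}^{\e,\ell}_{\t,\x}.
\end{align}
Rearranging isolates $-{\textstyle\int_{0}^{\t}}\mathscr{L}^{\e}_{\mathrm{total}}\mathsf{G}_{\x}\,\d\s$, which is exactly \eqref{eq:thm13new1Ic}, as $\mathbf{M}^{\e,\ell}_{\t,\x}+\mathsf{G}_{\x,0}-\mathsf{G}_{\x,\t}$; this matches \eqref{eq:thm13new2Ia}--\eqref{eq:thm13new2Ib}.

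For the predictable bracket, I will invoke the standard carré-du-champ identity: the martingale part of $\mathsf{G}_{\x,\cdot}$ under $\mathscr{L}^{\e}_{\mathrm{total}}$ has bracket ${\textstyle\int_{0}^{\t}}\{\mathscr{L}^{\e}_{\mathrm{total}}(\mathsf{G}_{\x}^{2})-2\mathsf{G}_{\x}\cdot\mathscr{L}^{\e}_{\mathrm{total}}\mathsf{G}_{\x}\}[\mathbf{I}^{\e}_{\s},\mathfrak{q}^{\e}_{\s}]\,\d\s$. Expanding $\mathscr{L}^{\e}_{\mathrm{total}}=\mathscr{L}^{\e,\mathfrak{q}^{\e}}_{\mathrm{flow}}+\mathscr{L}^{\e,\mathbf{I}^{\e}}_{\mathrm{DtN}}$ reproduces \eqref{eq:thm13new2IIa}--\eqref{eq:thm13new2IIc} line-by-line: the term in \eqref{eq:thm13new2IIa} is $\mathscr{L}^{\e}_{\mathrm{total}}(\mathsf{G}_{\x}^{2})$, while \eqref{eq:thm13new2IIb}--\eqref{eq:thm13new2IIc} reads $-2\mathsf{G}_{\x}\cdot\mathscr{L}^{\e}_{\mathrm{total}}\mathsf{G}_{\x}$.

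The main obstacle is establishing that $\mathsf{G}_{\x}$ lies in the domain of $\mathscr{L}^{\e}_{\mathrm{total}}$, so that Dynkin and the carré-du-champ identity actually apply. The Dirichlet-to-Neumann piece is comparatively benign: $\mathrm{Fluc}^{\mathrm{noise},\e}_{\x,\cdot,\mathbf{I}}$ is mean-zero on $\partial\mathds{M}$ by inspection of \eqref{eq:step1fluc}, hence orthogonal to the one-dimensional null-space of $\mathscr{L}^{\e,\mathbf{I}}_{\mathrm{DtN}}$, and a spectral-gap estimate plus elliptic regularity for harmonic extensions with respect to $\mathbf{g}[\grad_{\partial\mathds{M}}\mathbf{I}]$ place $\mathsf{G}_{\x}$ in $\mathscr{C}^{\infty}(\partial\mathds{M})$ with quantitative bounds uniform in $\s$. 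The genuinely delicate piece is $\mathscr{L}^{\e,\mathfrak{q}^{\e}}_{\mathrm{flow}}\mathsf{G}_{\x}$: because $\mathbf{I}\mapsto(\lambda-\mathscr{L}^{\e,\mathbf{I}}_{\mathrm{DtN}})^{-1}$ depends on $\mathbf{I}$ through the metric $\mathbf{g}[\grad_{\partial\mathds{M}}\mathbf{I}]$, computing the limit \eqref{eq:step1flow} amounts to Fréchet-differentiating a resolvent with respect to the underlying metric. As flagged in the paper, this perturbation-theoretic computation is precisely the content deferred to Section \ref{section:dtondomain}; granted those domain estimates, martingale (rather than merely local-martingale) status of $\mathbf{M}^{\e,\ell}$ follows from the same uniform $\mathscr{C}^{\k}(\partial\mathds{M})$ control on $\mathsf{G}_{\x,\s}$, and Lemma \ref{lemma:thm13new2} is complete.
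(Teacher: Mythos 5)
Your proposal is correct and takes essentially the same approach as the paper: apply the Ito--Dynkin formula to the joint process $(\mathbf{I}^{\e},\mathfrak{q}^{\e})$ with generator $\mathscr{L}^{\e}_{\mathrm{total}}$ and test function $\mathsf{G}_{\x}$, then read off the predictable bracket as the time-integrated Carr\'e-du-Champ. Your additional remarks on the domain of $\mathscr{L}^{\e,\mathfrak{q}^{\e}}_{\mathrm{flow}}$ correctly mirror the paper's own caveat, which defers those verifications to the perturbation-theory computations in Section \ref{section:dtondomain}.
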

\begin{proof}
The {{}It\^{o}}-Dynkin formula (see Appendix 1.5 of \cite{KL}) says that for any Markov process $\mathfrak{X}$ (valued in a Polish space) with generator $\mathscr{G}$, and for any $\varphi$ in the domain of $\mathscr{G}$, we have 
\begin{align}
{\textstyle\int_{0}^{\t}}\mathscr{G}\varphi_{\mathfrak{X}[\s]}\d\s=\varphi_{\mathfrak{X}[\t]}-\varphi_{\mathfrak{X}[0]}-\mathbf{M}^{\varphi}_{\t},
\end{align}
where $\t\mapsto\mathbf{M}^{\varphi}_{\t}$ is a martingale whose predictable bracket is a time-integrated Carre-du-Champ:
\begin{align}
{\textstyle\int_{0}^{\t}}[\mathscr{G}(|\varphi_{\mathfrak{X}[\s]}|^{2})-2\varphi_{\mathfrak{X}[\s]}\mathscr{G}\varphi_{\mathfrak{X}[\s]}]\d\s.
\end{align}
Use this with $\mathfrak{X}=(\mathbf{I}^{\e},\mathfrak{q}^{\e})$ and $\mathscr{G}=\eqref{eq:step1I}$ and $\varphi=(\lambda-\mathscr{L}^{\e,\mathbf{I}^{\e}_{\s,\cdot}}_{\mathrm{DtN}})^{-1}[\lambda(\lambda-\mathscr{L}^{\e,\mathbf{I}^{\e}_{\s,\cdot}}_{\mathrm{DtN}})^{-1}]^{\ell}\mathrm{Fluc}^{\mathrm{noise},\e}_{\x,\mathfrak{q}^{\e}_{\s},\mathbf{I}^{\e}_{\s}}$.
\end{proof}
We now combine Lemmas \ref{lemma:thm13new1} and \ref{lemma:thm13new2} to write the expansion for $\mathrm{Int}^{\mathrm{noise},\e}$. Indeed, note that \eqref{eq:thm13new1Ia} for $\ell=0$ is just $\mathrm{Int}^{\mathrm{noise},\e}$. We remark that the following result, namely its expansion \eqref{eq:thm13new3Ia}-\eqref{eq:thm13new3Ie}, will only ever be a finite sum (that we do not iterate to get an infinite sum). Thus there is no issue of convergence. (As we noted before Lemma \ref{lemma:thm13new1}, every step in the iteration gives a uniformly positive power of $\e$, so only finitely many steps are needed to gain a large enough power-saving in $\e$ to beat every other $\e$-dependent factor.)
\begin{corollary}\label{corollary:thm13new3}
\fsp Fix any integer $\ell_{\max}\geq0$. Recall \eqref{eq:thm13prelimI}, \eqref{eq:step1fluc} and notation from Lemma \ref{lemma:thm13new2}. We have 
\begin{align}
\mathrm{Int}^{\mathrm{noise},\e}_{\t,\x,\mathfrak{q}^{\e},\mathbf{I}^{\e}_{\cdot}}&={\textstyle\sum_{\ell=0}^{\ell_{\max}}}\mathbf{M}^{\e,\ell}_{\t,\x}\label{eq:thm13new3Ia}\\
&+{\textstyle\int_{0}^{\t}}[\lambda(\lambda-\mathscr{L}^{\e,\mathbf{I}^{\e}_{\s}}_{\mathrm{DtN}})^{-1}]^{\ell_{\max}+1}\mathrm{Fluc}^{\mathrm{noise},\e}_{\x,\mathfrak{q}^{\e}_{\s},\mathbf{I}^{\e}_{\s}}\d\s\label{eq:thm13new3Ib}\\
&+{\textstyle\sum_{\ell=0}^{\ell_{\max}}}{\textstyle\int_{0}^{\t}}\mathscr{L}^{\e,\mathfrak{q}^{\e}_{\s}}_{\mathrm{flow}}(\lambda-\mathscr{L}^{\e,\mathbf{I}^{\e}_{\s,\cdot}}_{\mathrm{DtN}})^{-1}[\lambda(\lambda-\mathscr{L}^{\e,\mathbf{I}^{\e}_{\s,\cdot}}_{\mathrm{DtN}})^{-1}]^{\ell}\mathrm{Fluc}^{\mathrm{noise},\e}_{\x,\mathfrak{q}^{\e}_{\s},\mathbf{I}^{\e}_{\s,\cdot}}\d\s\label{eq:thm13new3Ic}\\
&+{\textstyle\sum_{\ell=0}^{\ell_{\max}}}(\lambda-\mathscr{L}^{\e,\mathbf{I}^{\e}_{\s,\cdot}}_{\mathrm{DtN}})^{-1}[\lambda(\lambda-\mathscr{L}^{\e,\mathbf{I}^{\e}_{\s,\cdot}}_{\mathrm{DtN}})^{-1}]^{\ell}\mathrm{Fluc}^{\mathrm{noise},\e}_{\x,\mathfrak{q}^{\e}_{\s},\mathbf{I}^{\e}_{\s,\cdot}}|_{\s=0}\label{eq:thm13new3Id}\\
&-{\textstyle\sum_{\ell=0}^{\ell_{\max}}}(\lambda-\mathscr{L}^{\e,\mathbf{I}^{\e}_{\s,\cdot}}_{\mathrm{DtN}})^{-1}[\lambda(\lambda-\mathscr{L}^{\e,\mathbf{I}^{\e}_{\s,\cdot}}_{\mathrm{DtN}})^{-1}]^{\ell}\mathrm{Fluc}^{\mathrm{noise},\e}_{\x,\mathfrak{q}^{\e}_{\s},\mathbf{I}^{\e}_{\s,\cdot}}|_{\s=\t}.\label{eq:thm13new3Ie}
\end{align}
\end{corollary}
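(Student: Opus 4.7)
The plan is to prove Corollary \ref{corollary:thm13new3} by a straightforward telescoping induction built from alternating applications of Lemmas \ref{lemma:thm13new1} and \ref{lemma:thm13new2}. The starting observation is that by the definitions \eqref{eq:thm13prelimI} and \eqref{eq:step1fluc},
\[
\mathrm{Int}^{\mathrm{noise},\e}_{\t,\x,\mathfrak{q}^{\e}_{\cdot},\mathbf{I}^{\e}_{\cdot}} = {\textstyle\int_{0}^{\t}} \mathrm{Fluc}^{\mathrm{noise},\e}_{\x,\mathfrak{q}^{\e}_{\s},\mathbf{I}^{\e}_{\s}} \, \d\s,
\]
which is precisely \eqref{eq:thm13new1Ia} at $\ell = 0$ (the zeroth power of the resolvent being the identity). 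Thus the corollary reduces to repeatedly applying the one-step identity furnished by Lemma \ref{lemma:thm13new1}, then invoking Lemma \ref{lemma:thm13new2} to process the $\mathscr{L}_{\mathrm{total}}^{\e}$-term that appears.

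The inductive step I would run is as follows: for $\ell \in \{0,\ldots,\ell_{\max}\}$, Lemma \ref{lemma:thm13new1} rewrites \eqref{eq:thm13new1Ia} as the sum of the next iterate \eqref{eq:thm13new1Ib}, the $\mathscr{L}^{\e}_{\mathrm{total}}$-term \eqref{eq:thm13new1Ic}, and the $\mathscr{L}^{\e,\mathfrak{q}^{\e}}_{\mathrm{flow}}$-term \eqref{eq:thm13new1Id}. Lemma \ref{lemma:thm13new2} then identifies \eqref{eq:thm13new1Ic} with the martingale $\mathbf{M}^{\e,\ell}_{\t,\x}$ plus the two boundary terms at $\s=0$ and $\s=\t$. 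Combining these two gives a one-step relation expressing the $\ell$-th iterate as the $(\ell+1)$-th iterate plus $\mathbf{M}^{\e,\ell}_{\t,\x}$, plus the $\ell$-th flow-generator contribution, plus the $\ell$-th pair of boundary terms.

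Iterating this relation from $\ell=0$ through $\ell=\ell_{\max}$ produces a telescoping sum: each intermediate iterate of order $\ell \in \{1,\ldots,\ell_{\max}\}$ is generated at step $\ell-1$ and immediately consumed at step $\ell$, so it cancels. What survives is the residual $(\ell_{\max}+1)$-th iterate \eqref{eq:thm13new3Ib}, together with the accumulated sums over $\ell \in \{0,\ldots,\ell_{\max}\}$ of the martingales \eqref{eq:thm13new3Ia}, the flow-generator terms \eqref{eq:thm13new3Ic}, and the boundary terms \eqref{eq:thm13new3Id}--\eqref{eq:thm13new3Ie}. This is exactly the claimed expansion, and since $\ell_{\max}$ is a fixed finite integer there is no convergence issue.

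The main (really, the only) obstacle is verifying that at each step $\mathscr{L}^{\e,\mathfrak{q}^{\e}}_{\mathrm{flow}}$ and the Ito--Dynkin formula are legitimately applicable to the functional $(\lambda-\mathscr{L}^{\e,\mathbf{I}^{\e}_{\s}}_{\mathrm{DtN}})^{-1}[\lambda(\lambda-\mathscr{L}^{\e,\mathbf{I}^{\e}_{\s}}_{\mathrm{DtN}})^{-1}]^{\ell}\mathrm{Fluc}^{\mathrm{noise},\e}$, whose dependence on $\mathbf{I}^{\e}_{\s}$ is highly non-trivial because the Dirichlet-to-Neumann operator itself depends on the metric $\mathbf{g}[\grad_{\partial\mathds{M}}\mathbf{I}^{\e}_{\s}]$. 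As announced in the remark ``Issues about the domain of $\mathscr{L}^{\e,\mathfrak{q}^{\e}}_{\mathrm{flow}}$'' preceding Lemma \ref{lemma:thm13new1}, the existence of the Frechet derivative \eqref{eq:step1flow} on all such resolvent-based functionals (and hence the legality of using the Ito formula in the proof of Lemma \ref{lemma:thm13new2}) is deferred to Section \ref{section:dtondomain}; granted this, the induction described above closes without further work.
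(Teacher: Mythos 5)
Your proposal is correct and is essentially the paper's own argument: the paper's proof is a formal induction on $\ell_{\max}$ whose inductive step is precisely the one-step relation you describe (Lemma \ref{lemma:thm13new1} to expand the residual iterate, followed by Lemma \ref{lemma:thm13new2} to convert the $\mathscr{L}^{\e}_{\mathrm{total}}$-term into a martingale plus boundary terms), and your "telescoping" view is the unrolled form of that induction. You also correctly flag that the only substantive issue — the domain verification for $\mathscr{L}^{\e,\mathfrak{q}^{\e}}_{\mathrm{flow}}$ on the resolvent functionals — is deferred to Section \ref{section:dtondomain}, exactly as the paper does.
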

\begin{proof}
By \eqref{eq:thm13prelimI} and \eqref{eq:step1fluc}, we clearly have 
\begin{align}
\mathrm{Int}^{\mathrm{noise},\e}_{\t,\x,\mathfrak{q}^{\e},\mathbf{I}^{\e}_{\cdot}}={\textstyle\int_{0}^{\t}}\mathrm{Fluc}^{\mathrm{noise},\e}_{\x,\mathfrak{q}^{\e}_{\s},\mathbf{I}^{\e}_{\s}}\d\s.
\end{align}
Let us now prove \eqref{eq:thm13new3Ia}-\eqref{eq:thm13new3Ie} for $\ell_{\max}=0$. This follows immediately from \eqref{eq:thm13new1Ia}-\eqref{eq:thm13new1Id} for $\ell=0$ and \eqref{eq:thm13new2Ia}-\eqref{eq:thm13new2Ib} to compute \eqref{eq:thm13new1Ic} for $\ell=0$. So, for the sake of induction, it suffices to assume that \eqref{eq:thm13new3Ia}-\eqref{eq:thm13new3Ie} holds for $\ell_{\max}\geq0$, and get it for $\ell_{\max}+1$. For this, we compute \eqref{eq:thm13new3Ib} for $\ell_{\max}$ using \eqref{eq:thm13new1Ia}-\eqref{eq:thm13new1Id} for $\ell=\ell_{\max}+1$. We deduce that its contribution is equal to
\begin{align}
&{\textstyle\int_{0}^{\t}}[\lambda(\lambda-\mathscr{L}^{\e,\mathbf{I}^{\e}_{\s}}_{\mathrm{DtN}})^{-1}]^{\ell_{\max}+2}\mathrm{Fluc}^{\mathrm{noise},\e}_{\x,\mathfrak{q}^{\e}_{\s},\mathbf{I}^{\e}_{\s}}\d\s\\
&-{\textstyle\int_{0}^{\t}}\mathscr{L}^{\e}_{\mathrm{total}}(\lambda-\mathscr{L}^{\e,\mathbf{I}^{\e}_{\s}}_{\mathrm{DtN}})^{-1}[\lambda(\lambda-\mathscr{L}_{\mathrm{DtN}}^{\e,\mathbf{I}^{\e}_{\s}})^{-1}]^{\ell_{\max}+1}\mathrm{Fluc}^{\mathrm{noise},\e}_{\x,\mathfrak{q}^{\e}_{\s},\mathbf{I}^{\e}_{\s}}\d\s\\
&+{\textstyle\int_{0}^{\t}}\mathscr{L}^{\e,\mathfrak{q}^{\e}_{\s}}_{\mathrm{flow}}(\lambda-\mathscr{L}^{\e,\mathbf{I}^{\e}_{\s}}_{\mathrm{DtN}})^{-1}[\lambda(\lambda-\mathscr{L}_{\mathrm{DtN}}^{\e,\mathbf{I}^{\e}_{\s}})^{-1}]^{\ell_{\max}+1}\mathrm{Fluc}^{\mathrm{noise},\e}_{\x,\mathfrak{q}^{\e}_{\s},\mathbf{I}^{\e}_{\s}}\d\s.
\end{align}
Thus, we have upgraded \eqref{eq:thm13new3Ib} into \eqref{eq:thm13new3Ib} but with $\ell_{\max}+2$ instead of $\ell_{\max}+1$, at the cost of the second and third lines of the previous display. The third line lets us turn the sum over $\ell=0,\ldots,\ell_{\max}$ in \eqref{eq:thm13new3Ic} into a sum over $\ell=0,\ldots,\ell_{\max}+1$. Moreover, if we apply \eqref{eq:thm13new2Ia}-\eqref{eq:thm13new2Ib} for $\ell=\ell_{\max}+1$, the second line gives a contribution that turns the sums over $\ell=0,\ldots,\ell_{\max}$ in \eqref{eq:thm13new3Ia}, \eqref{eq:thm13new3Id}, and \eqref{eq:thm13new3Ie} to over $\ell=0,\ldots,\ell_{\max}+1$. What we ultimately get is just \eqref{eq:thm13new3Ia}-\eqref{eq:thm13new3Ie} but $\ell_{\max}\mapsto\ell_{\max}+1$, which completes the induction.
\end{proof}
\subsection{Step 2: Estimates for \eqref{eq:thm13new3Ia}-\eqref{eq:thm13new3Ie} for $\ell_{\max}\gtrsim_{\gamma}1$}
Perhaps unsurprisingly, the martingale $\mathbf{M}^{\e}$ that we are looking for is the RHS of \eqref{eq:thm13new3Ia}. Thus, we must do two things.
\begin{enumerate}
\item Show that \eqref{eq:thm13new3Ib}-\eqref{eq:thm13new3Ie} vanish as $\e\to0$.
\item Compare the predictable bracket of the RHS of \eqref{eq:thm13new3Ia} (using \eqref{eq:thm13new2IIa}-\eqref{eq:thm13new2IIc}) to $[\mathbf{M}^{\mathrm{limit}}]$ given in \eqref{eq:scalinglimitmartingaleI}.
\end{enumerate}
Indeed, one can check directly that this would yield Proposition \ref{prop:thm13new}.
\subsubsection{Dirichlet-to-Neumann estimates}
Let us start with \eqref{eq:thm13new3Ib}, \eqref{eq:thm13new3Id}, and \eqref{eq:thm13new3Ie}, i.e. the terms which only have Dirichlet-to-Neumann maps (and no {$\mathscr{L}^{\e,\mathfrak{q}}_{\mathrm{flow}}$}-terms). The estimate which essentially handles all of these terms is the content of the following result.
\begin{lemma}\label{lemma:thm13new4}
\fsp Recall $\lambda={{}\e^{-1+\gamma}}$ from \eqref{eq:lambda}, and recall \eqref{eq:step1fluc}. For any stopping time $\tau\in[0,1]$, we have the following with probability $1$ for any $\ell\geq0$ and $\k\geq0$:
\begin{align}
\sup_{0\leq\s\leq\tau}\|(\lambda-\mathscr{L}^{\e,\mathbf{I}^{\e}_{\s}}_{\mathrm{DtN}})^{-1}[\lambda(\lambda-\mathscr{L}^{\e,\mathbf{I}^{\e}_{\s}}_{\mathrm{DtN}})^{-1}]^{\ell}\mathrm{Fluc}^{\mathrm{noise},\e}_{\cdot,\mathfrak{q}^{\e}_{\s},\mathbf{I}^{\e}_{\s}}\|_{\mathscr{C}^{\k}{}}\lesssim_{\k,\ell,\|\mathbf{Y}^{\e}\|_{\mathscr{C}^{0}_{\tau}\mathscr{C}^{2}_{}}}{{}\e^{}}\cdot[\lambda{{}\e^{}}]^{\ell}\cdot{{}\e^{-\frac12}}.\label{eq:thm13new4I}
\end{align}
(The norm on the LHS is with respect to the omitted $\x$-variable, which we indicated with $\cdot$.)
\end{lemma}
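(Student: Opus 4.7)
\medskip

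\textbf{Proof plan.} The estimate rests on two structural facts about the fluctuation \eqref{eq:step1fluc}, after which the bound reduces to resolvent calculus for $\mathscr{L}^{\e,\mathbf{I}}_{\mathrm{DtN}}$.

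\emph{Step 1: a priori bound and mean-zero property.} First I would observe that $\mathbf{K}$ is smooth, so for any $\mathbf{I}\in\mathscr{C}^{\infty}(\partial\mathds{M})$ with $\|\grad_{\partial\mathds{M}}\mathbf{I}\|_{\mathscr{C}^{1}(\partial\mathds{M})}\lesssim 1$, and any $\mathfrak{q}\in\partial\mathds{M}$, one has the crude pointwise estimate
\begin{align}
\|\mathrm{Fluc}^{\mathrm{noise},\e}_{\cdot,\mathfrak{q},\mathbf{I}}\|_{\mathscr{C}^{\k}(\partial\mathds{M})}\lesssim_{\k,\|\grad_{\partial\mathds{M}}\mathbf{I}\|_{\mathscr{C}^{1}(\partial\mathds{M})}}\e^{-\frac23}.
\end{align}
Since $\mathbf{I}^{\e}=\e^{1/3}\mathbf{Y}^{\e}+\e^{-1/3}\t$ (up to constants on $\partial\mathds{M}$), the required control on $\grad_{\partial\mathds{M}}\mathbf{I}^{\e}$ follows from the hypothesis on $\|\mathbf{Y}^{\e}\|_{\mathscr{C}^{0}_{\tau}\mathscr{C}^{2}_{\partial\mathds{M}}}$. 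Next, I would check the algebraic identity
\begin{align}
{\textstyle\int_{\partial\mathds{M}}}\mathrm{Fluc}^{\mathrm{noise},\e}_{\x,\mathfrak{q},\mathbf{I}}(1+|\grad_{\partial\mathds{M}}\mathbf{I}_{\mathfrak{q}}|^{2})^{\frac12}\d\mathfrak{q}=0,
\end{align}
which follows immediately from \eqref{eq:step1fluc} and the definition $\mathrm{Vol}_{\mathbf{I}}=\int_{\partial\mathds{M}}(1+|\grad_{\partial\mathds{M}}\mathbf{I}|^{2})^{1/2}$. Thus, for each fixed $\x,\mathbf{I}$, the function $\mathfrak{q}\mapsto\mathrm{Fluc}^{\mathrm{noise},\e}_{\x,\mathfrak{q},\mathbf{I}}$ is orthogonal, in $\mathrm{L}^{2}$ of the Riemannian surface measure induced by $\mathbf{g}[\grad_{\partial\mathds{M}}\mathbf{I}]$, to the null-space of $\mathscr{L}^{\e,\mathbf{I}}_{\mathrm{DtN}}$ (i.e.\ to constants).

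\emph{Step 2: resolvent estimate in $\mathscr{C}^{\k}$.} Since $-\mathscr{L}^{\e,\mathbf{I}}_{\mathrm{DtN}}=\e^{-4/3}(-\mathscr{L}_{\mathbf{I}})$, where $-\mathscr{L}_{\mathbf{I}}$ is a first-order self-adjoint elliptic pseudodifferential operator on $\partial\mathds{M}$ (w.r.t.\ the metric $\mathbf{g}[\grad_{\partial\mathds{M}}\mathbf{I}]$) with one-dimensional null-space spanned by constants and a uniform spectral gap depending smoothly on $\mathbf{I}$, the spectral theorem gives an $\mathrm{L}^{2}$-bound on the orthogonal complement $\Pi^{\perp,\mathbf{I}}$,
\begin{align}
\|(\lambda-\mathscr{L}^{\e,\mathbf{I}}_{\mathrm{DtN}})^{-1}\Pi^{\perp,\mathbf{I}}\|_{\mathrm{L}^{2}\to\mathrm{L}^{2}}\lesssim_{\|\grad_{\partial\mathds{M}}\mathbf{I}\|_{\mathscr{C}^{m}(\partial\mathds{M})}}\e^{\frac43},
\end{align}
for some $m=\mathrm{O}(1)$, uniformly in $\lambda\geq 0$. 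Standard elliptic regularity for the resolvent of $-\mathscr{L}_{\mathbf{I}}$ upgrades this to a bound $\mathscr{C}^{\k}\to\mathscr{C}^{\k}$ (via Sobolev embedding and commuting tangential derivatives past the resolvent, using the smoothness of the metric in $\grad_{\partial\mathds{M}}\mathbf{I}$), with the same $\e^{4/3}$ gain, at the cost of implied constants depending smoothly on $\|\grad_{\partial\mathds{M}}\mathbf{I}\|_{\mathscr{C}^{m'}(\partial\mathds{M})}$ for some $m'=\mathrm{O}_{\k}(1)$. Again via $\mathbf{I}^{\e}=\e^{1/3}\mathbf{Y}^{\e}+\mathrm{const}$, one has control of $\mathbf{I}^{\e}$ in arbitrary $\mathscr{C}^{m'}$-norms by the $\mathscr{C}^{2}$-norm of $\mathbf{Y}^{\e}$ (cf.\ Lemma \ref{lemma:aux1}). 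This is the step I expect to be the main technical obstacle, and I would presumably invoke a resolvent lemma proved in Section \ref{section:flowproofs} in place of this argument.

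\emph{Step 3: iteration.} Since $\mathrm{Fluc}^{\mathrm{noise},\e}_{\x,\cdot,\mathbf{I}^{\e}_{\s}}\in\mathrm{Range}(\Pi^{\perp,\mathbf{I}^{\e}_{\s}})$ by Step 1, and $\lambda(\lambda-\mathscr{L}^{\e,\mathbf{I}^{\e}_{\s}}_{\mathrm{DtN}})^{-1}$ preserves this subspace, each of the $\ell$ inner factors satisfies, using Step 2,
\begin{align}
\|\lambda(\lambda-\mathscr{L}^{\e,\mathbf{I}^{\e}_{\s}}_{\mathrm{DtN}})^{-1}\Pi^{\perp,\mathbf{I}^{\e}_{\s}}\|_{\mathscr{C}^{\k}\to\mathscr{C}^{\k}}\lesssim\lambda\e^{\frac43},
\end{align}
while the outermost factor contributes an additional $\e^{4/3}$. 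Multiplying by the a priori $\e^{-2/3}$ from Step 1 gives the claimed bound $\e^{4/3}(\lambda\e^{4/3})^{\ell}\e^{-2/3}$. Taking supremum over $\s\in[0,\tau]$ is immediate since all estimates are pathwise and depend on $\mathbf{I}^{\e}_{\s}$ only through its $\mathscr{C}^{m'}$-norm, which is controlled uniformly in $\s\leq\tau$ by $\|\mathbf{Y}^{\e}\|_{\mathscr{C}^{0}_{\tau}\mathscr{C}^{2}_{\partial\mathds{M}}}$. \qed
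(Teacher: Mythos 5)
Your proof takes essentially the same approach as the paper: the $\e^{-2/3}$ a priori bound, the orthogonality of $\mathrm{Fluc}^{\mathrm{noise},\e}$ to constants in the $\mathfrak{q}$-variable (with respect to the $\mathbf{g}[\grad_{\partial\mathds{M}}\mathbf{I}]$-induced surface measure), the spectral gap for $\mathscr{L}^{\e,\mathbf{I}}_{\mathrm{DtN}}$ yielding $\e^{4/3}$ per resolvent, and Sobolev embedding at the end are exactly the paper's ingredients. Two small imprecisions are worth flagging. First, the $\mathscr{C}^{\k}\to\mathscr{C}^{\k}$ operator bound claimed in Step 2 is stronger than needed and does not follow from spectral calculus alone; the paper instead proves bounds in $\mathrm{H}^{\alpha}$-norms in the $\mathfrak{q}$-variable (for every $\alpha$, using that these are multiplier norms for $\mathscr{L}^{\e,\mathbf{I}}_{\mathrm{DtN}}$, hence commute with the resolvent), and then applies Sobolev embedding to obtain a pointwise bound at $\mathfrak{q}=\mathfrak{q}^{\e}_{\s}$. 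Second, your ``multiplying by the a priori $\e^{-2/3}$'' in Step 3 is loose on variable bookkeeping — the resolvents act in $\mathfrak{q}$, while the $\mathscr{C}^{\k}$-norm in \eqref{eq:thm13new4I} is in $\x$ — but the fix is exactly what the paper does: commute $\partial_{\x}^{\iota}$ past the $\mathfrak{q}$-resolvents (they act on different variables), note that $\partial_{\x}^{\iota}\mathrm{Fluc}^{\mathrm{noise},\e}$ is still mean-zero in $\mathfrak{q}$ (the condition is linear), and repeat the $\mathrm{H}^{\alpha}_{\mathfrak{q}}$ plus Sobolev embedding argument.
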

\begin{proof}
Intuitively, every inverse gives {{}$\e^{}$}, and each $\lambda$ is just bounded by $\lambda$, and we bound $\mathrm{Fluc}^{\mathrm{noise},\e}$ by {{}$\e^{-1/2}$} directly (see \eqref{eq:step1fluc}). This gives \eqref{eq:thm13new4I}, roughly speaking. Let us make this precise.

In what follows, we denote by $\llangle\rrangle_{\alpha}$ the $\mathrm{H}^{\alpha}{}$-Sobolev norm of order $\alpha$ \emph{with respect to the $\mathfrak{q}^{\e}_{\s}$-variable} (see Section \ref{section:notation}). We now make the following observations.
\begin{enumerate}
\item For any $\mathbf{I}\in\mathscr{C}^{\infty}{}$, let $\mu[{{}\grad_{}}\mathbf{I}]$ be Riemannian measure on $\partial\mathds{M}$ induced by $\mathbf{g}[{{}\grad_{}}]$. As explained in Construction \ref{construction:model}, change-of-variables shows that 
\begin{align}
\d\mu[{{}\grad_{}}\mathbf{I}]_{\x}=(1+|{{}\grad_{}}\mathbf{I}_{\x}|^{2})^{\frac12}\d\x.
\end{align}
\item Consider $\mathrm{L}^{2}(\partial\mathds{M},\mu[{{}\grad_{}}\mathbf{I}^{\e}_{\s}])$. The Dirichlet-to-Neumann operator {$\mathscr{L}^{\e,\mathbf{I}^{\e}_{\s}}_{\mathrm{DtN}}$} has a self-adjoint extension to $\mathrm{L}^{2}(\partial\mathds{M},\mu[{{}\grad_{}}\mathbf{I}^{\e}_{\s}])$, and it has a one-dimensional null-space spanned by constant functions. It has a strictly positive spectral gap of order ${{}\e^{-1}}$ times something that depends only on the $\mathscr{C}^{1}{}$-norm of {${{}\grad_{}}\mathbf{I}^{\e}_{\s,\cdot}$}. (For the order of the spectral gap, see Lemma \ref{lemma:dtonestimates}. For the dependence on ${{}\grad_{}}\mathbf{I}^{\e}_{\s,\cdot}$, it suffices to control the density of the measure induced by $\mathbf{g}[{{}\grad_{}}\mathbf{I}^{\e}_{\s,\cdot}]$ with respect to surface measure on $\partial\mathds{M}$, i.e. $\mathbf{g}[{{}\grad_{}}0]$, where $0$ is the $0$ function. Indeed, spectral gaps are stable under multiplicative perturbations of the measure. But this measure depends only on the determinant of $\mathbf{g}[{{}\grad_{}}\mathbf{I}^{\e}_{\s,\cdot}]$ in local coordinates.)
\item The {$\mathrm{Fluc}^{\mathrm{noise},\e}_{\cdot,\mathfrak{q}^{\e}_{\s},\mathbf{I}^{\e}_{\s}}$}, as a function of {$\mathfrak{q}^{\e}_{\s}\in\partial\mathds{M}$}, is orthogonal to the null-space of {$\mathscr{L}^{\e,\mathbf{I}^{\e}_{\s}}_{\mathrm{DtN}}$}. This follows by construction; see \eqref{eq:step1fluc}. Moreover, so does every power of {$(\lambda-\mathscr{L}_{\mathrm{DtN}}^{\e,\mathbf{I}^{\e}_{\s}})^{-1}$} acting on {$\mathrm{Fluc}^{\mathrm{noise},\e}_{\cdot,\mathfrak{q}^{\e}_{\s},\mathbf{I}^{\e}_{\s}}$}, since {$\mathscr{L}^{\e,\mathbf{I}^{\e}_{\s}}_{\mathrm{DtN}}$} is self-adjoint.
\item Thus, we get that for any $\x\in\partial\mathds{M}$ and $\alpha\geq0$, we have the estimate below (for $n\lesssim1$):
\begin{align}
\llangle(\lambda-\mathscr{L}^{\e,\mathbf{I}^{\e}_{\s}}_{\mathrm{DtN}})^{-1}[\lambda(\lambda-\mathscr{L}^{\e,\mathbf{I}^{\e}_{\s}}_{\mathrm{DtN}})^{-1}]^{\ell}\mathrm{Fluc}^{\mathrm{noise},\e}_{\x,\mathfrak{q}^{\e}_{\s},\mathbf{I}^{\e}_{\s}}\rrangle_{\alpha}&\lesssim_{\alpha,\ell,\|{{}\grad_{}}\mathbf{I}^{\e}\|_{\mathscr{C}^{0}_{\s}\mathscr{C}^{n}_{}}}{{}\e^{\ell+1}}\lambda^{\ell}\llangle\mathrm{Fluc}^{\mathrm{noise},\e}_{\x,\mathfrak{q}^{\e}_{\s},\mathbf{I}^{\e}_{\s}}\rrangle_{\alpha}\label{eq:thm13new4I1a}\\
&\lesssim_{\alpha,\ell,\|\mathbf{Y}^{\e}\|_{\mathscr{C}^{0}_{\s}\mathscr{C}^{2}_{}}}{{}\e^{\ell+1}}\lambda^{\ell}\llangle\mathrm{Fluc}^{\mathrm{noise},\e}_{\x,\mathfrak{q}^{\e}_{\s},\mathbf{I}^{\e}_{\s}}\rrangle_{\alpha}.\label{eq:thm13new4I1b}
\end{align}
(To get the second estimate, use Lemma \ref{lemma:aux1} to control the implied constant in \eqref{eq:thm13new4I1a}.)
\item Finally, we note that the Sobolev norm in \eqref{eq:thm13new4I1b} is $\lesssim{{}\e^{-1/2}}$, with implied constant depending only on the $\mathscr{C}^{0}{}$-norm of ${{}\grad_{}}\mathbf{I}^{\e}={{}\e^{1/4}}{{}\grad_{}}\mathbf{Y}^{\e}$. (This follows immediately by \eqref{eq:step1fluc}.)
\end{enumerate}
Note that \eqref{eq:thm13new4I1a}-\eqref{eq:thm13new4I1b} is true for all $\alpha\geq0$; taking $\alpha\geq0$ big enough depending on dimension $\d$, we can use a Sobolev embedding and deduce that with probability $1$, we have the uniform estimate
\begin{align}
|(\lambda-\mathscr{L}^{\e,\mathbf{I}^{\e}_{\s}}_{\mathrm{DtN}})^{-1}[\lambda(\lambda-\mathscr{L}^{\e,\mathbf{I}^{\e}_{\s}}_{\mathrm{DtN}})^{-1}]^{\ell}\mathrm{Fluc}^{\mathrm{noise},\e}_{\x,\mathfrak{q}^{\e}_{\s},\mathbf{I}^{\e}_{\s}}|\lesssim_{\alpha,\ell,\|\mathbf{Y}^{\e}\|_{\mathscr{C}^{0}_{\s}\mathscr{C}^{2}_{}}}{{}\e^{\ell+1}}\lambda^{\ell}{{}\e^{-\frac12}}.\label{eq:thm13new4I2}
\end{align}
This is true for all $0\leq\s\leq\tau$, so the desired estimate \eqref{eq:thm13new4I} follows for $\k=0$. For general $\k\geq0$, just use the same argument, but replace $\mathrm{Fluc}^{\mathrm{noise},\e}$ by its $\k$-th order derivatives in $\x$. (Indeed, the mean-zero property used in point (3) above is still true if we take derivatives in $\x$, since it is a linear condition in the {$\mathfrak{q}^{\e}_{\s}$}-variable. One can also check this by direct inspection via \eqref{eq:step1fluc}.) This finishes the proof.
\end{proof}
As an immediate consequence of Lemma \ref{lemma:thm13new4}, we can bound \eqref{eq:thm13new3Ib}, \eqref{eq:thm13new3Id}, and \eqref{eq:thm13new3Ie}. The latter terms (namely \eqref{eq:thm13new3Id}, and \eqref{eq:thm13new3Ie}) are bounded directly by \eqref{eq:thm13new4I}, so we only treat \eqref{eq:thm13new3Ib}. Again, recall \eqref{eq:lambda}.
\begin{lemma}\label{lemma:thm13new5}
\fsp Fix any stopping time $\tau\in[0,1]$ and any $\ell_{\max},\k\geq0$. With probability $1$, we have
\begin{align}
&\sup_{0\leq\t\leq\tau}\|{\textstyle\int_{0}^{\t}}[\lambda(\lambda-\mathscr{L}^{\e,\mathbf{I}^{\e}_{\s}}_{\mathrm{DtN}})^{-1}]^{\ell_{\max}+1}\mathrm{Fluc}^{\mathrm{noise},\e}_{\cdot,\mathfrak{q}^{\e}_{\s},\mathbf{I}^{\e}_{\s}}\d\s\|_{\mathscr{C}^{\k}{}}\lesssim_{\k,\ell_{\max},\|\mathbf{Y}^{\e}\|_{\mathscr{C}^{0}_{\tau}\mathscr{C}^{2}_{}}}[\lambda{{}\e^{}}]^{\ell_{\max}+1}\cdot{{}\e^{-\frac12}}.\label{eq:thm13new5Ia}
\end{align}
\end{lemma}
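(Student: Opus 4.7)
The plan is to obtain Lemma \ref{lemma:thm13new5} as an almost immediate corollary of Lemma \ref{lemma:thm13new4}. The key algebraic observation is the refactoring
\[[\lambda(\lambda-\mathscr{L}^{\e,\mathbf{I}^{\e}_{\s}}_{\mathrm{DtN}})^{-1}]^{\ell_{\max}+1}=\lambda\cdot(\lambda-\mathscr{L}^{\e,\mathbf{I}^{\e}_{\s}}_{\mathrm{DtN}})^{-1}\cdot[\lambda(\lambda-\mathscr{L}^{\e,\mathbf{I}^{\e}_{\s}}_{\mathrm{DtN}})^{-1}]^{\ell_{\max}},\]
so that, up to a leading scalar $\lambda$, the operator in the integrand of \eqref{eq:thm13new5Ia} is exactly the one bounded in Lemma \ref{lemma:thm13new4} at $\ell=\ell_{\max}$.

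First, I would move the $\mathscr{C}^{\k}(\partial\mathds{M})$-norm inside the $\d\s$-integral via Minkowski's inequality (legitimate since the norm is taken in the $\x$-variable while the integration is in the independent variable $\s$), and then bound the time integral over $[0,\t]\subseteq[0,\tau]\subseteq[0,1]$ by $\tau\leq1$ times the supremum of the integrand in $\s$. Applying Lemma \ref{lemma:thm13new4} with $\ell=\ell_{\max}$ yields the pointwise-in-$\s$ estimate
\[\sup_{0\leq\s\leq\tau}\|(\lambda-\mathscr{L}^{\e,\mathbf{I}^{\e}_{\s}}_{\mathrm{DtN}})^{-1}[\lambda(\lambda-\mathscr{L}^{\e,\mathbf{I}^{\e}_{\s}}_{\mathrm{DtN}})^{-1}]^{\ell_{\max}}\mathrm{Fluc}^{\mathrm{noise},\e}_{\cdot,\mathfrak{q}^{\e}_{\s},\mathbf{I}^{\e}_{\s}}\|_{\mathscr{C}^{\k}(\partial\mathds{M})}\lesssim_{\k,\ell_{\max},\|\mathbf{Y}^{\e}\|_{\mathscr{C}^{0}_{\tau}\mathscr{C}^{2}_{\partial\mathds{M}}}}\e^{\frac43}[\lambda\e^{\frac43}]^{\ell_{\max}}\e^{-\frac23}.\]
Multiplying by the extracted scalar $\lambda$ absorbs one more factor of $\lambda\e^{4/3}$ and produces exactly $[\lambda\e^{4/3}]^{\ell_{\max}+1}\e^{-2/3}$, which is the claimed bound \eqref{eq:thm13new5Ia}.

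I do not anticipate a genuine obstacle at this step. All of the substantive analytic content---the $\e^{-4/3}$-scale spectral gap of $\mathscr{L}^{\e,\mathbf{I}^{\e}_{\s}}_{\mathrm{DtN}}$ on $\mathrm{L}^{2}(\partial\mathds{M},\mu[\grad_{\partial\mathds{M}}\mathbf{I}^{\e}_{\s}])$, the mean-zero property of $\mathrm{Fluc}^{\mathrm{noise},\e}$ against the invariant measure (preserved under powers of the resolvent), Sobolev embedding, and the uniform control of implied constants in the perturbed metric via Lemma \ref{lemma:aux1}---has already been absorbed into Lemma \ref{lemma:thm13new4}. The present step is a purely bookkeeping application of Minkowski combined with the refactoring identity above, and the trivial bound $\tau\leq1$ on the time integral.
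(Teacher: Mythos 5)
Your proof is correct and coincides with the paper's argument: move the norm under the time integral (Minkowski/triangle inequality, using $\tau\leq1$), factor out one scalar $\lambda$, and apply Lemma \ref{lemma:thm13new4} at $\ell=\ell_{\max}$. Indeed your refactoring $[\lambda(\lambda-\mathscr{L}^{\e,\mathbf{I}^{\e}_{\s}}_{\mathrm{DtN}})^{-1}]^{\ell_{\max}+1}=\lambda\,(\lambda-\mathscr{L}^{\e,\mathbf{I}^{\e}_{\s}}_{\mathrm{DtN}})^{-1}[\lambda(\lambda-\mathscr{L}^{\e,\mathbf{I}^{\e}_{\s}}_{\mathrm{DtN}})^{-1}]^{\ell_{\max}}$ is the cleaner bookkeeping; the paper's one-line proof writes ``$\ell=\ell_{\max}+1$'' but the parenthetical about the extra factor of $\lambda$ makes clear that $\ell=\ell_{\max}$ is what is intended (the stated index appears to be a typo, since at $\ell=\ell_{\max}+1$ the operator in \eqref{eq:thm13new4I} carries one more resolvent than the integrand of \eqref{eq:thm13new5Ia} and cannot be matched by multiplying by $\lambda$).
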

\begin{proof}
Use the triangle inequality to move the $\mathscr{C}^{\k}{}$-norm into the $\d\s$-integral, then use \eqref{eq:thm13new4I} for $\ell=\ell_{\max}+1$. (The extra factor of $\lambda$ on the RHS of \eqref{eq:thm13new5Ia} compared to the RHS of \eqref{eq:thm13new4I} for $\ell=\ell_{\max}+1$ is because there is an extra factor of $\lambda$ on the LHS of \eqref{eq:thm13new5Ia} compared to the LHS of \eqref{eq:thm13new4I}.)
\end{proof}
\subsubsection{{$\mathscr{L}^{\e,\mathfrak{q}^{\e}_{\s}}_{\mathrm{flow}}$} estimates}
We first give an estimate for \eqref{eq:thm13new3Ic}, i.e. bounding it by a uniformly positive power of $\e$. We then give an estimate comparing the predictable bracket for the martingale on the RHS of \eqref{eq:thm13new3Ia} to the proposed limit $[\mathbf{M}^{\mathrm{limit}}]$ (see \eqref{eq:scalinglimitmartingale} and \eqref{eq:scalinglimitmartingaleI}).

Our estimate for \eqref{eq:thm13new3Ic} is captured by the following result. This result was intuitively explained in Section \ref{subsubsection:difficultiesthm1}, but let us be a little more precise about power-counting in $\e$ (before we give a complete proof), just to provide intuition. As noted in Section \ref{subsubsection:difficultiesthm1}, the {$\mathscr{L}^{\e,\mathfrak{q}^{\e}_{\s}}_{\mathrm{flow}}$}-operator in \eqref{eq:thm13new3Ic} destroys the algebraic property that allowed us to leverage spectral gap estimates in the proof of Lemma \ref{lemma:thm13new4}. Thus, each resolvent in \eqref{eq:thm13new3Ic} only gives a factor of $\lesssim\lambda^{-1}$. Fortunately, {$\mathscr{L}^{\e,\mathfrak{q}^{\e}_{\s}}_{\mathrm{flow}}$} has scaling $\lesssim{{}\e^{-1/4}}$. So, \eqref{eq:thm13new3Ic} should be $\lesssim{{}\e^{-1/4}}\lambda{{}\e^{-1/2}}\lesssim{{}\e^{1/4-\gamma}}$, since the $\mathrm{Fluc}^{\mathrm{noise},\e}$ has scaling of order ${{}\e^{-1/2}}$ (see \eqref{eq:step1fluc}). If we choose $\gamma>0$ small enough, this is sufficient.

To make it rigorous, we must first compute the action of {$\mathscr{L}^{\e,\mathfrak{q}^{\e}_{\s}}_{\mathrm{flow}}$} on the resolvents in \eqref{eq:thm13new3Ic} (e.g. show that the resolvents are in the domain of {$\mathscr{L}^{\e,\mathfrak{q}^{\e}_{\s}}_{\mathrm{flow}}$}). We must also be a little careful about how many derivatives of $\mathbf{Y}^{\e}$ our estimates require, but this is not a big deal (especially given Lemma \ref{lemma:aux1}).
\begin{lemma}\label{lemma:thm13new6}
\fsp Take any stopping time $\tau\in[0,1]$ and $\k\geq0$. Let $\mathrm{Err}^{(\ell_{\max})}_{\t,\x}$ be \eqref{eq:thm13new3Ic}. There exists a uniformly positive $\beta>0$ such that with probability $1$, we have the following estimate:
\begin{align}
\|\mathrm{Err}^{(\ell_{\max})}\|_{\mathscr{C}^{0}_{\tau}\mathscr{C}^{\k}_{}}\lesssim_{\k,\ell_{\max},\|\mathbf{Y}^{\e}\|_{\mathscr{C}^{0}_{\tau}\mathscr{C}^{2}_{}}}\e^{\beta}.\label{eq:thm13new6I}
\end{align}
\end{lemma}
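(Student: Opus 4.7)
The plan is to compute $\mathscr{L}^{\e,\mathfrak{q}^{\e}_{\s}}_{\mathrm{flow}}$ applied to the nested resolvent expression inside \eqref{eq:thm13new3Ic} by the Leibniz and chain rules for Fr\'echet derivatives on $\mathscr{C}^{\infty}(\partial\mathds{M})$, and then bound each resulting piece by operator-norm estimates on Dirichlet-to-Neumann resolvents. Writing $\mathcal{R}_{\ell}[\mathbf{I}] := (\lambda - \mathscr{L}^{\e,\mathbf{I}}_{\mathrm{DtN}})^{-1}[\lambda(\lambda - \mathscr{L}^{\e,\mathbf{I}}_{\mathrm{DtN}})^{-1}]^{\ell}\mathrm{Fluc}^{\mathrm{noise},\e}_{\x,\mathfrak{q},\mathbf{I}}$, and $v_{\s} := \Delta_{\partial\mathds{M}}\mathbf{I}^{\e}_{\s} + \e^{-1/3}\mathrm{Vol}_{\mathbf{I}^{\e}_{\s}}\mathbf{K}_{\cdot,\mathfrak{q}^{\e}_{\s}}$ for the flow direction from \eqref{eq:step1flow}, the identity $(A^{-1})' = -A^{-1}A' A^{-1}$ applied to each resolvent factor together with the chain rule on $\mathrm{Fluc}^{\mathrm{noise},\e}$ expresses $\mathscr{L}^{\e,\mathfrak{q}^{\e}_{\s}}_{\mathrm{flow}}\mathcal{R}_{\ell}[\mathbf{I}^{\e}_{\s}]$ as a sum of $\mathrm{O}_{\ell}(1)$ terms: one ``diagonal'' term in which the derivative falls on the $\mathbf{I}$-dependence of $\mathrm{Fluc}^{\mathrm{noise},\e}$ with all resolvents intact, and the remaining ``sandwich'' terms in which one internal factor $(\lambda - \mathscr{L}^{\e,\mathbf{I}^{\e}_{\s}}_{\mathrm{DtN}})^{-1}$ is replaced by $(\lambda - \mathscr{L}^{\e,\mathbf{I}^{\e}_{\s}}_{\mathrm{DtN}})^{-1}(D_{v_{\s}}\mathscr{L}^{\e,\mathbf{I}^{\e}_{\s}}_{\mathrm{DtN}})(\lambda - \mathscr{L}^{\e,\mathbf{I}^{\e}_{\s}}_{\mathrm{DtN}})^{-1}$. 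That the resolvent-valued functional actually lies in the domain of $\mathscr{L}^{\e,\mathfrak{q}}_{\mathrm{flow}}$ (and that this chain-rule expansion is legitimate) is deferred to Section \ref{section:dtondomain}, where it follows from standard perturbation theory.

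For the scaling, observe first that $\|v_{\s}\|_{\mathscr{C}^{\k}(\partial\mathds{M})} \lesssim_{\k,\|\mathbf{Y}^{\e}\|} \e^{-1/3}$, since $\Delta_{\partial\mathds{M}}\mathbf{I}^{\e}_{\s} = \e^{1/3}\Delta_{\partial\mathds{M}}\mathbf{Y}^{\e}_{\s} = \mathrm{O}(\e^{1/3})$ while the $\e^{-1/3}\mathrm{Vol}\cdot\mathbf{K}$ term dominates; and that direct inspection of \eqref{eq:step1fluc} yields $\|D_{v_{\s}}\mathrm{Fluc}^{\mathrm{noise},\e}\|_{\mathscr{C}^{\k}(\partial\mathds{M})} \lesssim \e^{-2/3}\|v_{\s}\|_{\mathscr{C}^{1+n}(\partial\mathds{M})} \lesssim \e^{-1}$. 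Since $D_{v_{\s}}\mathrm{Fluc}^{\mathrm{noise},\e}$ is still mean-zero with respect to $\mu[\grad_{\partial\mathds{M}}\mathbf{I}^{\e}_{\s}]$ up to a correction of comparable size (obtained by differentiating the vanishing-integral identity for $\mathrm{Fluc}^{\mathrm{noise},\e}$), the diagonal term can be bounded using the exact argument of Lemma \ref{lemma:thm13new4} by $\e^{4/3}\cdot[\lambda\e^{4/3}]^{\ell}\cdot\e^{-1} = \e^{1/3 + \gamma\ell}$. For each sandwich term, the $(\lambda - \mathscr{L}^{\e,\mathbf{I}^{\e}_{\s}}_{\mathrm{DtN}})^{-1}$ immediately to the right of the inserted $D_{v_{\s}}\mathscr{L}^{\e,\mathbf{I}^{\e}_{\s}}_{\mathrm{DtN}}$ still acts on a mean-zero input and retains the spectral-gap gain $\e^{4/3}$, whereas each resolvent to the left of the insertion loses the gap and is bounded only by $\lambda^{-1} = \e^{4/3-\gamma}$; pairing the $\lambda$'s in the $[\lambda(\lambda - \mathscr{L})^{-1}]^{\ell}$-string against these $\lambda^{-1}$'s gives $\mathrm{O}(1)$ per pair, so the sandwich contribution is bounded by $\lambda\cdot\e^{4/3-\gamma}\cdot\|D_{v_{\s}}\mathscr{L}^{\e,\mathbf{I}^{\e}_{\s}}_{\mathrm{DtN}}\|\cdot\e^{4/3}\cdot[\lambda\e^{4/3}]^{\ell-1}\cdot\e^{-2/3}$, and we may choose $\gamma < 1/3$ in \eqref{eq:lambda} so that the overall exponent is uniformly positive.

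The technical heart is the operator-norm bound on $D_{v}\mathscr{L}^{\e,\mathbf{I}}_{\mathrm{DtN}}$, which encodes how the Dirichlet-to-Neumann map depends on the ambient metric $\mathbf{g}[\grad_{\partial\mathds{M}}\mathbf{I}]$. Differentiating the harmonic-extension problem $\Delta_{\mathbf{I}}\mathscr{U}^{\varphi,\mathbf{I}} = 0$ with respect to $\mathbf{I}$ along $v$ yields $\Delta_{\mathbf{I}}(D_{v}\mathscr{U}^{\varphi,\mathbf{I}}) = -(D_{v}\Delta_{\mathbf{I}})\mathscr{U}^{\varphi,\mathbf{I}}$ with vanishing Dirichlet data, where $D_{v}\Delta_{\mathbf{I}}$ is a second-order differential operator whose coefficients depend linearly on $\grad_{\partial\mathds{M}}v$ and smoothly on $\grad_{\partial\mathds{M}}\mathbf{I}$. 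Elliptic regularity for $\Delta_{\mathbf{I}}$ (with constants controlled by $\|\grad_{\partial\mathds{M}}\mathbf{I}\|_{\mathscr{C}^{n}}$, hence by $\|\mathbf{Y}^{\e}\|_{\mathscr{C}^{2}(\partial\mathds{M})}$ via Lemma \ref{lemma:aux1}), followed by taking the normal trace at $\partial\mathds{M}$, produces the target estimate $\|D_{v}\mathscr{L}^{\e,\mathbf{I}}_{\mathrm{DtN}}\varphi\|_{\mathscr{C}^{\k}(\partial\mathds{M})} \lesssim_{\k,\|\mathbf{Y}^{\e}\|} \e^{-4/3}\|v\|_{\mathscr{C}^{\k+n}(\partial\mathds{M})}\|\varphi\|_{\mathscr{C}^{\k+n}(\partial\mathds{M})}$; the $\e^{-4/3}$ prefactor is precisely that of $\mathscr{L}^{\e,\mathbf{I}}_{\mathrm{DtN}}$ and is absorbed by the spectral-gap gain of the flanking resolvent. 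Carrying this bound out rigorously (in a form strong enough to also justify the Fr\'echet-differentiability claims used in the first paragraph) is the main technical burden and is the purpose of Sections \ref{section:dtondomain} and \ref{section:flowproofs}; once it is in hand, Lemma \ref{lemma:thm13new6} reduces to the power-counting described above combined with the triangle inequality in time.
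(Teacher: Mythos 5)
Your overall strategy (compute $\mathscr{L}^{\e,\mathfrak{q}^{\e}_{\s}}_{\mathrm{flow}}$ via the resolvent identity and Leibniz rule, then power-count) does match the paper's, and your reduction of the resolvent-perturbation bound $\|D_{v}\mathscr{L}^{\e,\mathbf{I}}_{\mathrm{DtN}}\|\lesssim \e^{-4/3}\|v\|\|\varphi\|$ (equivalently, Corollary \ref{corollary:flowcompute4}) to elliptic regularity for the harmonic-extension problem is also correct. However, the power counting in the second paragraph has a concrete error that makes the stated bound on the sandwich terms fail.

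The number you quote for a sandwich term,
\[
\lambda\cdot\lambda^{-1}\cdot\|D_{v}\mathscr{L}^{\e,\mathbf{I}}_{\mathrm{DtN}}\|\cdot\e^{\frac43}\cdot[\lambda\e^{\frac43}]^{\ell-1}\cdot\e^{-\frac23}\asymp\e^{-1}[\lambda\e^{\frac43}]^{\ell-1}=\e^{-1+\gamma(\ell-1)},
\]
is not $\mathrm{O}(\e^{\beta})$ for any $\beta>0$ when $\ell$ is small (e.g.\ $\ell=0$ or $\ell=1$), and these values of $\ell$ are exactly the ones that occur in the sum \eqref{eq:thm13new3Ic}. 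No choice of $\gamma<1/3$ repairs this. The source is twofold. First, differentiating $(\lambda-\mathscr{L}^{\e,\mathbf{I}}_{\mathrm{DtN}})^{-1}$ via $(A^{-1})'=-A^{-1}A'A^{-1}$ increases the number of resolvent factors by one (from $\ell+1$ to $\ell+2$); your tally accounts for only $\ell+1$. Second, and more importantly, the spectral-gap gain $\e^{4/3}$ is available for \emph{every} resolvent to the right of the inserted $D_v\mathscr{L}^{\e,\mathbf{I}}_{\mathrm{DtN}}$—these all act on (iterates of) $\mathrm{Fluc}^{\mathrm{noise},\e}$, which is mean-zero, and the resolvent preserves the mean-zero property since $\mathscr{L}^{\e,\mathbf{I}}_{\mathrm{DtN}}$ is self-adjoint—not just for ``the one immediately to the right.'' This is precisely the content of Lemma \ref{lemma:thm13new4}, which you should invoke here for the whole right-hand string $(\lambda-\mathscr{L}^{\e,\mathbf{I}^{\e}_{\s}}_{\mathrm{DtN}})^{-\ell-1+n}\mathrm{Fluc}^{\mathrm{noise},\e}$ rather than for a single factor. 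Doing the count correctly (for insertion index $n$) gives $\lambda^{\ell}\cdot\lambda^{-(n+1)}\cdot\e^{-5/3}\cdot\e^{\frac43(\ell+1-n)}\cdot\e^{-2/3}=[\lambda\e^{4/3}]^{\ell-n-1}\e^{1/3}\lesssim\e^{1/3-\gamma}$, which is the paper's estimate and is uniformly positive for $\gamma<1/3$.

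A secondary point: for the diagonal term you argue that $D_{v}\mathrm{Fluc}^{\mathrm{noise},\e}$ is ``still mean-zero up to a correction of comparable size,'' and then apply ``the exact argument of Lemma \ref{lemma:thm13new4}.'' These two statements are in tension. The correction obtained by differentiating the vanishing-integral identity is \emph{of full size} $\e^{-1}$ (the measure $\mu[\grad_{\partial\mathds{M}}\mathbf{I}]$ changes under differentiation in $\mathbf{I}$, which is exactly why the paper introduces the $\lambda$-regularization in the first place), so the spectral-gap argument does not apply to this piece, and your claimed exponent $\e^{1/3+\gamma\ell}$ is not attainable. The paper instead bounds every resolvent in the diagonal term crudely by $\lambda^{-1}$, obtaining $\lambda^{-1}\e^{-1}\lesssim\e^{1/3-\gamma}$; this is weaker than your claim but still suffices, and—unlike the spectral-gap route—it is actually valid. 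It would be worth making this distinction explicit, since the entire $\lambda$-regularization device exists precisely to circumvent the failure of mean-zero-ness after differentiating in $\mathbf{I}$.
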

The proof of Lemma \ref{lemma:thm13new6} requires the calculations in the next section for computing the $\d\s$-integrand of \eqref{eq:thm13new3Ic}, so we delay this proof for Section \ref{section:flowproofs}.

Let us now analyze the predictable bracket of the martingale on the RHS of \eqref{eq:thm13new3Ia}. A rigorous proof of the result also requires the calculations in the next section, so we delay a proof until Section \ref{section:flowproofs} as well. However, let us at least give an intuitive argument (which is essentially how the proof goes).
\begin{itemize}
\item Take $\ell=0$ on the RHS of \eqref{eq:thm13new3Ia}; the predictable bracket of this martingale is \eqref{eq:thm13new2IIa}-\eqref{eq:thm13new2IIc} for $\ell=0$. The first step we take is to drop all {$\mathscr{L}^{\e,\mathfrak{q}^{\e}_{\s}}_{\mathrm{flow}}$}-operators. One can justify this by proving that they are lower-order as described before Lemma \ref{lemma:thm13new6}. However, it is also a first-order differential operator, so by the Leibniz rule, the {$\mathscr{L}^{\e,\mathfrak{q}^{\e}_{\s}}_{\mathrm{flow}}$}-operators actually cancel each other out exactly. After this, \eqref{eq:thm13new2IIa}-\eqref{eq:thm13new2IIc} for $\ell=0$ becomes
\begin{align}
&{\textstyle\int_{0}^{\t}}\mathscr{L}^{\e,\mathbf{I}^{\e}_{\s}}_{\mathrm{DtN}}[|(\lambda-\mathscr{L}^{\e,\mathbf{I}^{\e}_{\s}}_{\mathrm{DtN}})^{-1}\mathrm{Fluc}^{\mathrm{noise},\e}_{\x,\mathfrak{q}^{\e}_{\s},\mathbf{I}^{\e}_{\s}}|^{2}]\d\s\nonumber\\
&-2{\textstyle\int_{0}^{\t}}(\lambda-\mathscr{L}^{\e,\mathbf{I}^{\e}_{\s}}_{\mathrm{DtN}})^{-1}\mathrm{Fluc}^{\mathrm{noise},\e}_{\x,\mathfrak{q}^{\e}_{\s},\mathbf{I}^{\e}_{\s}}\times\mathscr{L}^{\e,\mathbf{I}^{\e}_{\s}}_{\mathrm{DtN}}[(\lambda-\mathscr{L}^{\e,\mathbf{I}^{\e}_{\s}}_{\mathrm{DtN}})^{-1}\mathrm{Fluc}^{\mathrm{noise},\e}_{\x,\mathfrak{q}^{\e}_{\s},\mathbf{I}^{\e}_{\s}}]\d\s. \nonumber
\end{align}
\item Every resolvent is order ${{}\e^{}}$ (see the beginning of the proof of Lemma \ref{lemma:thm13new4}). Every Dirichlet-to-Neumann operator itself is order ${{}\e^{-1}}$. Also, $\mathrm{Fluc}^{\mathrm{noise},\e}$ is order $\lesssim{{}\e^{-1/2}}$. With this, it is not hard to see that the previous display is order $1$. \emph{Moreover, we time-average, thus we expect to replace the $\d\s$-integrand above by its expectation in the particle $\mathfrak{q}^{\e}_{\s}$ with respect to the Riemannian measure {$\mu[{{}\grad_{}}\mathbf{I}^{\e}_{\s}]$} induced by {$\mathbf{g}[{{}\grad_{}}\mathbf{I}^{\e}_{\s}]$}.} (This is exactly the idea behind Section \ref{subsubsection:difficultiesthm1}.) After this replacement, the previous display becomes
\begin{align}
&{\textstyle\int_{0}^{\t}\int_{\partial\mathds{M}}}\mathscr{L}^{\e,\mathbf{I}^{\e}_{\s}}_{\mathrm{DtN}}[|(\lambda-\mathscr{L}^{\e,\mathbf{I}^{\e}_{\s}}_{\mathrm{DtN}})^{-1}\mathrm{Fluc}^{\mathrm{noise},\e}_{\x,\z,\mathbf{I}^{\e}_{\s}}|^{2}]\d\mu[{{}\grad_{}}\mathbf{I}^{\e}_{\s}]_{\z}\d\s\nonumber\\
&-2{\textstyle\int_{0}^{\t}\int_{\partial\mathds{M}}}(\lambda-\mathscr{L}^{\e,\mathbf{I}^{\e}_{\s}}_{\mathrm{DtN}})^{-1}\mathrm{Fluc}^{\mathrm{noise},\e}_{\x,\z,\mathbf{I}^{\e}_{\s}}\times\mathscr{L}^{\e,\mathbf{I}^{\e}_{\s}}_{\mathrm{DtN}}[(\lambda-\mathscr{L}^{\e,\mathbf{I}^{\e}_{\s}}_{\mathrm{DtN}})^{-1}\mathrm{Fluc}^{\mathrm{noise},\e}_{\x,\z,\mathbf{I}^{\e}_{\s}}]\d\mu[{{}\grad_{}}\mathbf{I}^{\e}_{\s}]_{\z}\d\s. \nonumber
\end{align}
The first line in this display vanishes, since the {{}$\d\mu[{{}\grad_{}}\mathbf{I}^{\e}_{\s}]_{\z}$-integrand} is in the image of the Dirichlet-to-Neumann map, which has {$\mu[{{}\grad_{}}\mathbf{I}^{\e}_{\s}]$} as an invariant measure. Since $\lambda={{}\e^{-1+\gamma}}$ is much smaller than the scaling ${{}\e^{-1}}$ of {$\mathscr{L}^{\e,\mathbf{I}^{\e}_{\s}}_{\mathrm{DtN}}$}, we can drop $\lambda$-terms in the second line above. We are therefore left with
\begin{align}
-2{\textstyle\int_{0}^{\t}\int_{\partial\mathds{M}}}\mathrm{Fluc}^{\mathrm{noise},\e}_{\x,\z,\mathbf{I}^{\e}_{\s}}\times[\mathscr{L}^{\e,\mathbf{I}^{\e}_{\s}}_{\mathrm{DtN}}]^{-1}\mathrm{Fluc}^{\mathrm{noise},\e}_{\x,\z,\mathbf{I}^{\e}_{\s}}\d\mu[{{}\grad_{}}\mathbf{I}^{\e}_{\s}]_{\z}\d\s,
\end{align}
where operators act on $\z$. Finally, all dependence on $\mathbf{I}^{\e}$ above is through ${{}\grad_{}}\mathbf{I}^{\e}={{}\e^{1/4}}{{}\grad_{}}\mathbf{Y}^{\e}$ (which should be $\ll1$), so we can replace $\mathbf{I}^{\e}$ by $0$. In view of \eqref{eq:step1fluc} and \eqref{eq:scalinglimitmartingaleI}, we get $[\mathbf{M}^{\e,0}]\approx[\mathbf{M}^{\mathrm{limit}}]$.
\item Now take $\ell>1$ both on the RHS of \eqref{eq:thm13new3Ia} and in \eqref{eq:thm13new2IIa}-\eqref{eq:thm13new2IIc}. Again, drop all {$\mathscr{L}^{\e,\mathfrak{q}^{\e}_{\s}}_{\mathrm{flow}}$}-operators as before in our discussion of $\ell=0$. Now, note that every term in \eqref{eq:thm13new2IIa}-\eqref{eq:thm13new2IIc} has at least one additional factor of {$\lambda(\lambda-\mathscr{L}^{\e,\mathbf{I}^{\e}_{\s}}_{\mathrm{DtN}})^{-1}$}, which is $\lesssim\e^{\gamma}$ as used in the proof of Lemma \ref{lemma:thm13new4}. As \eqref{eq:thm13new2IIa}-\eqref{eq:thm13new2IIc} was order $1$ with $\ell=0$ (so without the helpful $\e^{\gamma}$-factors), the RHS of \eqref{eq:thm13new3Ia} has vanishing predictable bracket for $\ell\geq1$.
\end{itemize}
The actual proof of Lemma \ref{lemma:thm13new7} is slightly different for ease of writing, but the idea is the same.

Before we state the result precisely, we recall \eqref{eq:scalinglimitmartingaleI} and the notation of Lemma \ref{lemma:thm13new2}.
\begin{lemma}\label{lemma:thm13new7}
\fsp Take any stopping time $\tau\in[0,1]$ and any $\k\geq0$. There exists uniformly positive $\beta>0$ such that the following hold with high probability:
\begin{align}
\|[\mathbf{M}^{\e,0}]-[\mathbf{M}^{\mathrm{limit}}]\|_{\mathscr{C}^{0}_{\tau}\mathscr{C}^{\k}_{}}&\lesssim_{\k,\|\mathbf{Y}^{\e}\|_{\mathscr{C}^{0}_{\tau}\mathscr{C}^{2}_{}}}\e^{\beta}\label{eq:thm13new7I}\\
\sup_{1\leq\ell\leq\ell_{\max}}\|[\mathbf{M}^{\e,\ell}]\|_{\mathscr{C}^{0}_{\tau}\mathscr{C}^{\k}_{}}&\lesssim_{\k,\ell_{\max},\|\mathbf{Y}^{\e}\|_{\mathscr{C}^{0}_{\tau}\mathscr{C}^{2}_{}}}\e^{\beta}.\label{eq:thm13new7II}
\end{align}
\end{lemma}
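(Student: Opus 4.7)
The plan is to treat \eqref{eq:thm13new7II} and \eqref{eq:thm13new7I} somewhat separately, though both rest on Lemma \ref{lemma:thm13new4}, on dropping the $\mathscr{L}^{\e,\mathfrak{q}^{\e}_{\s}}_{\mathrm{flow}}$-contributions to the Carr\'{e}-du-Champ, and on a homogenization step. Throughout, I would use that every resolvent $(\lambda-\mathscr{L}^{\e,\mathbf{I}^{\e}_{\s}}_{\mathrm{DtN}})^{-1}$ hitting $\mathrm{Fluc}^{\mathrm{noise},\e}$ (or any of its $\x$-derivatives, which are still orthogonal to constants in $\mathfrak{q}$) gains a factor of $\e^{4/3}$ by the spectral gap of Lemma \ref{lemma:dtonestimates} and Lemma \ref{lemma:aux1}, while each prefactor $\lambda(\lambda-\mathscr{L}^{\e,\mathbf{I}^{\e}_{\s}}_{\mathrm{DtN}})^{-1}$ is then $\mathrm{O}(\e^{\gamma})$ thanks to the choice $\lambda=\e^{-4/3+\gamma}$ in \eqref{eq:lambda}.

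For \eqref{eq:thm13new7II}, i.e.\ $\ell\geq 1$: I would bound each of \eqref{eq:thm13new2IIa}--\eqref{eq:thm13new2IIc} in $\mathscr{C}^{\k}(\partial\mathds{M})$ directly. The $\mathscr{L}^{\e,\mathbf{I}^{\e}_{\s}}_{\mathrm{DtN}}$ acting on the squared resolvent expression in \eqref{eq:thm13new2IIa} is handled by writing one of the two inner resolvents as $(\lambda-\mathscr{L}^{\e,\mathbf{I}^{\e}_{\s}}_{\mathrm{DtN}})^{-1}$ and using $\mathscr{L}^{\e,\mathbf{I}^{\e}_{\s}}_{\mathrm{DtN}}(\lambda-\mathscr{L}^{\e,\mathbf{I}^{\e}_{\s}}_{\mathrm{DtN}})^{-1}=-\mathrm{Id}+\lambda(\lambda-\mathscr{L}^{\e,\mathbf{I}^{\e}_{\s}}_{\mathrm{DtN}})^{-1}$ so that the whole expression becomes a sum of products of resolvents acting on $\mathrm{Fluc}^{\mathrm{noise},\e}$ (times a smoothing of a similar quantity). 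Each factor is controlled by Lemma \ref{lemma:thm13new4}, and one counts at least $2\ell+1$ copies of $\lambda(\lambda-\mathscr{L})^{-1}$ remaining after the algebra; this gives $\lesssim[\lambda\e^{4/3}]^{2\ell+1}\e^{-4/3}=\e^{(2\ell+1)\gamma-4/3+4/3}$, which is $\mathrm{O}(\e^{\beta})$ for some uniformly positive $\beta$. The $\mathscr{L}^{\e,\mathfrak{q}^{\e}_{\s}}_{\mathrm{flow}}$-contributions require computing the action of the flow generator on the resolvent (handled by the machinery of Section \ref{section:dtondomain} and Section \ref{section:flowproofs}); each such action produces $\e^{-1/3}$ against one $\lambda(\lambda-\mathscr{L})^{-1}=\mathrm{O}(\e^{\gamma})$, which still leaves $\e^{\gamma-1/3}$ times positive powers from the remaining $\ell-1$ factors, giving the desired $\e^{\beta}$ bound once $\gamma$ is chosen small enough that $\ell\gamma>1/3$ after summing.

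For \eqref{eq:thm13new7I}, i.e.\ $\ell=0$: I would first argue that the $\mathscr{L}^{\e,\mathfrak{q}^{\e}_{\s}}_{\mathrm{flow}}$-contributions to \eqref{eq:thm13new2IIa}--\eqref{eq:thm13new2IIc} cancel each other to leading order by the Leibniz/product rule; the residual that survives the Leibniz cancellation carries an extra $\e^{-1/3}$ but \emph{one} additional resolvent-like smallness, and is $\mathrm{O}(\e^{\beta})$ by the same estimates used for Lemma \ref{lemma:thm13new6} (to be proved in Section \ref{section:flowproofs}). What is left is
\begin{align*}
\mathscr{B}^{\e}_{\t,\x}&:=\int_{0}^{\t}\mathscr{L}^{\e,\mathbf{I}^{\e}_{\s}}_{\mathrm{DtN}}\bigl[|(\lambda-\mathscr{L}^{\e,\mathbf{I}^{\e}_{\s}}_{\mathrm{DtN}})^{-1}\mathrm{Fluc}^{\mathrm{noise},\e}_{\x,\mathfrak{q}^{\e}_{\s},\mathbf{I}^{\e}_{\s}}|^{2}\bigr]\d\s\\
&\quad -2\int_{0}^{\t}\bigl\{(\lambda-\mathscr{L}^{\e,\mathbf{I}^{\e}_{\s}}_{\mathrm{DtN}})^{-1}\mathrm{Fluc}^{\mathrm{noise},\e}_{\x,\mathfrak{q}^{\e}_{\s},\mathbf{I}^{\e}_{\s}}\bigr\}\bigl\{\mathscr{L}^{\e,\mathbf{I}^{\e}_{\s}}_{\mathrm{DtN}}[(\lambda-\mathscr{L}^{\e,\mathbf{I}^{\e}_{\s}}_{\mathrm{DtN}})^{-1}\mathrm{Fluc}^{\mathrm{noise},\e}_{\x,\mathfrak{q}^{\e}_{\s},\mathbf{I}^{\e}_{\s}}]\bigr\}\d\s.
\end{align*}
The key step is then to replace the $\mathfrak{q}^{\e}_{\s}$-dependent integrand by its average $\mathbb{E}_{\mu[\grad_{\partial\mathds{M}}\mathbf{I}^{\e}_{\s}]}[\cdots]$ against the Riemannian measure $\mu[\grad_{\partial\mathds{M}}\mathbf{I}^{\e}_{\s}]$; this is the same homogenization that produced Proposition \ref{prop:thm13new} itself, and I would carry it out by a second round of the same Poisson-equation expansion as in Lemma \ref{lemma:thm13new1} and Corollary \ref{corollary:thm13new3}, applied to the (already smaller) fluctuation at hand, keeping only finitely many terms. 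Once averaged, the first line in the display vanishes since $\mu[\grad_{\partial\mathds{M}}\mathbf{I}^{\e}_{\s}]$ is invariant for $\mathscr{L}^{\e,\mathbf{I}^{\e}_{\s}}_{\mathrm{DtN}}$; integrating by parts in the second line and dropping the $\lambda$-term (which is $\mathrm{O}(\e^{\gamma})$ times the dominant contribution) yields
\begin{align*}
\mathscr{B}^{\e}_{\t,\x}\approx -2\int_{0}^{\t}\!\!\int_{\partial\mathds{M}}\mathrm{Fluc}^{\mathrm{noise},\e}_{\x,\z,\mathbf{I}^{\e}_{\s}}\,[\mathscr{L}^{\e,\mathbf{I}^{\e}_{\s}}_{\mathrm{DtN}}]^{-1}\mathrm{Fluc}^{\mathrm{noise},\e}_{\x,\z,\mathbf{I}^{\e}_{\s}}\,\d\mu[\grad_{\partial\mathds{M}}\mathbf{I}^{\e}_{\s}]_{\z}\,\d\s.
\end{align*}
Using $\grad_{\partial\mathds{M}}\mathbf{I}^{\e}=\e^{1/3}\grad_{\partial\mathds{M}}\mathbf{Y}^{\e}$ and continuity of the Dirichlet-to-Neumann map and of the Riemannian measure in the metric (Lemma \ref{lemma:aux1} and the perturbation theory of Section \ref{section:dtondomain}), I would replace $\mathbf{I}^{\e}$ by $0$ at the cost of $\mathrm{O}(\e^{1/3})$ in $\mathscr{C}^{\k}$; combined with the definition \eqref{eq:step1fluc} of $\mathrm{Fluc}^{\mathrm{noise},\e}$ and the fact that $\e^{-2/3}[\mathrm{Vol}_{0}\mathbf{K}_{\x,\z}-1]=\e^{-2/3}[\mathbf{K}_{\x,\z}-1]$ (using $\int\mathbf{K}_{\x,\z}\d\z=1$), the result equals exactly \eqref{eq:scalinglimitmartingaleI} up to $\mathrm{O}(\e^{\beta})$.

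The main obstacle is the homogenization replacement of the $\mathfrak{q}^{\e}_{\s}$-integrand by its spatial average, because $\mathfrak{q}^{\e}$ is not Markovian; this is genuinely where the non-trivial analysis sits, and it forces the bootstrapped use of the same resolvent/Poisson-equation argument underlying Proposition \ref{prop:thm13new}. A secondary but manageable difficulty is keeping track of how many derivatives of $\mathbf{I}^{\e}$ are required to justify each perturbative estimate on the DtN operator and its resolvent, which I would control uniformly via Lemma \ref{lemma:aux1} together with the assumed $\mathscr{C}^{2}(\partial\mathds{M})$-bound on $\mathbf{Y}^{\e}$ inside the stopping time $\tau$.
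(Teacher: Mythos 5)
Your proposal has the right high-level anatomy (drop the flow-generator from the Carr\'{e}-du-Champ, power-count resolvents for $\ell\geq1$, homogenize for $\ell=0$), but there are two places where you and the paper part ways and the paper's choices are materially better.

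First, the cancellation of the $\mathscr{L}^{\e,\mathfrak{q}^{\e}_{\s}}_{\mathrm{flow}}$-contributions to \eqref{eq:thm13new2IIa}--\eqref{eq:thm13new2IIc} is \emph{exact}, not ``to leading order.'' Since $\mathscr{L}^{\e,\mathfrak{q}^{\e}_{\s}}_{\mathrm{flow}}$ is a first-order (directional Fr\'{e}chet) derivative, the Leibniz rule gives $\mathscr{L}^{\e,\mathfrak{q}^{\e}_{\s}}_{\mathrm{flow}}|\mathscr{F}[\mathbf{I}]|^{2}-2\mathscr{F}[\mathbf{I}]\,\mathscr{L}^{\e,\mathfrak{q}^{\e}_{\s}}_{\mathrm{flow}}\mathscr{F}[\mathbf{I}]=0$ identically for every $\ell$ (this is \eqref{eq:thm13new7prelim}). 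The ``residual carrying an extra $\e^{-1/3}$'' that you propose to bound does not exist; $[\mathbf{M}^{\e,\ell}]$ equals the Carr\'{e}-du-Champ of $\mathscr{L}^{\e,\mathbf{I}^{\e}_{\s}}_{\mathrm{DtN}}$ alone, i.e.\ \eqref{eq:thm13new7prelim1a}--\eqref{eq:thm13new7prelim1c}. Chasing a spurious residual would waste effort and, worse, obscure the exact structure you should be exploiting.

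Second, and this is the real issue, for $\ell=0$ you homogenize the $\mathfrak{q}^{\e}_{\s}$-integrand against the $\mathbf{I}^{\e}$-dependent measure $\mu[\grad_{\partial\mathds{M}}\mathbf{I}^{\e}_{\s}]$ \emph{before} replacing $\mathbf{I}^{\e}\to0$. This follows the informal outline, but the paper's proof deliberately reverses the order: it first replaces $\mathscr{L}^{\e,\mathbf{I}^{\e}_{\s}}_{\mathrm{DtN}}\mapsto\e^{-4/3}\mathscr{L}$ (via \eqref{eq:1lastI4b}--\eqref{eq:1lastI4c}, giving $[\mathbf{M}^{\e,0,1}]$) and $\mathbf{I}^{\e}\mapsto0$ inside $\mathrm{Fluc}^{\mathrm{noise},\e}$ (via \eqref{eq:1lastI4a}, giving $[\mathbf{M}^{\e,0,2}]$), and only \emph{then} homogenizes. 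After these replacements the quantity $\mathrm{Fluc}\mathfrak{C}^{\lambda}_{\x,\mathfrak{q}^{\e}_{\s}}$ is $\mathbf{I}^{\e}$-independent, so $\mathscr{L}^{\e,\mathfrak{q}^{\e}_{\s}}_{\mathrm{flow}}$ annihilates $(\e^{-4/3}\mathscr{L})^{-1}\mathrm{Fluc}\mathfrak{C}^{\lambda}$ automatically and the It\^{o} argument \eqref{eq:1lastI3c1c}--\eqref{eq:1lastI3c5d} runs with a \emph{fixed} generator and no $\lambda$-regularization. Your ordering keeps the $\mathbf{I}^{\e}$-dependence through the homogenization, so when you Poisson-invert $\mathscr{L}^{\e,\mathbf{I}^{\e}_{\s}}_{\mathrm{DtN}}$ and apply It\^{o}, $\mathscr{L}^{\e,\mathfrak{q}^{\e}_{\s}}_{\mathrm{flow}}$ hits the resolvent, destroys the orthogonality-to-constants underlying the spectral-gap bound, and you are back to the exact obstruction that forced $\lambda=\e^{-4/3+\gamma}$ in the first place. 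You acknowledge this as ``the main obstacle'' and say you would rerun the finite-$\ell_{\max}$ expansion of Corollary \ref{corollary:thm13new3}, but you do not address that every layer of that expansion inherits the same instability, and nothing in your sketch shows the iteration terminates at the claimed rate. The paper's re-sequencing — replace metrics first, then homogenize — is precisely what dissolves this difficulty, and it is the key technical idea your proposal is missing.

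Your handling of $\ell\geq1$ (the resolvent identity $\mathscr{L}(\lambda-\mathscr{L})^{-1}=-\mathrm{Id}+\lambda(\lambda-\mathscr{L})^{-1}$, Sobolev multiplication, spectral gap) is in the right spirit, though the paper's power count gives $\e^{2\ell\gamma}$ rather than your $\e^{(2\ell+1)\gamma}$; both are uniformly positive for $\ell\geq1$, so this is only a bookkeeping discrepancy.
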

\begin{remark}\label{remark:regmg}
\fsp For $n\geq0$ fixed, take any tangent vectors $\mathsf{e}_{i_{1}},\ldots,\mathsf{e}_{i_{n}}$ on the tangent space of $\partial\mathds{M}$ to differentiate along. (These tangent vectors depend on an implicit variable $\x\in\partial\mathds{M}$.) The first estimate \eqref{eq:thm13new7I} still holds even if we make the following replacements, as we explain shortly.
\begin{itemize}
\item Replace $\mathbf{M}^{\e,0}$ by its $n$-th order derivative in $\mathsf{e}_{i_{1}},\ldots,\mathsf{e}_{i_{n}}$. This is still a martingale, since martingales are closed under linear operations.
\item Replace $[\mathbf{M}^{\mathrm{limit}}]$ by the object obtained by replacing $\mathbf{K}$ in \eqref{eq:scalinglimitmartingaleI} by its $n$-th order derivative with respect to the $\x$-variable in $\mathsf{e}_{i_{1}},\ldots,\mathsf{e}_{i_{n}}$. We denote this object by $[\grad_{i_{1}\ldots i_{n}}\mathbf{M}^{\mathrm{limit}}]$. (It is easy to see from \eqref{eq:scalinglimitmartingaleI} that $[\grad_{i_{1}\ldots i_{n}}\mathbf{M}^{\mathrm{limit}}]$ is uniformly smooth in the $\x\in\partial\mathds{M}$-variable; its regularity is controlled by that of $\mathbf{K}$.)
\end{itemize}
Indeed, the only difference in the argument is to replace $\mathbf{K}$ by its aforementioned derivative. We only rely on regularity of $\mathbf{K}$ (as alluded to in the outline of Lemma \ref{lemma:thm13new7} before its statement and as the proof will make clear), so our claim follows. Ultimately, combining this remark with \eqref{eq:thm13new7II} and $\mathbf{M}^{\e}=\mathbf{M}^{\e,0}+\sum_{\ell=1}^{\ell_{\max}}\mathbf{M}^{\e,\ell}$, we deduce that the following estimate holds with high probability:
\begin{align}
\|[\grad_{i_{1}\ldots i_{n}}\mathbf{M}^{\e}]-[\grad_{i_{1}\ldots i_{n}}\mathbf{M}^{\mathrm{limit}}]\|_{\mathscr{C}^{0}_{\tau}\mathscr{C}^{\k}_{}}&\lesssim_{n,\k,\|\mathbf{Y}^{\e}\|_{\mathscr{C}^{0}_{\tau}\mathscr{C}^{2}_{}}}\e^{\beta}.\label{eq:regmg}
\end{align}
\end{remark}
\subsection{Proof of Proposition \ref{prop:thm13new} (and thus of Proposition \ref{prop:thm13})}
Define $\mathbf{M}^{\e}$ in the statement of Proposition \ref{prop:thm13new} be the RHS of \eqref{eq:thm13new3Ia} (for $\ell_{\max}\lesssim1$ chosen shortly). In particular, the quantity of interest
\begin{align}
\mathrm{Int}^{\mathrm{noise},\e}_{\t,\x,\mathfrak{q}^{\e}_{\cdot},\mathbf{I}^{\e}_{\cdot}}-\mathbf{M}^{\e}_{\t,\x}
\end{align}
equals the sum of \eqref{eq:thm13new3Ib}, \eqref{eq:thm13new3Ic}, \eqref{eq:thm13new3Id}, and \eqref{eq:thm13new3Ie}. Now, use Lemmas \ref{lemma:thm13new4}, \ref{lemma:thm13new5}, and \ref{lemma:thm13new6} to control {$\mathscr{C}^{0}_{\tau}\mathscr{C}^{\k}_{}$}-norms of \eqref{eq:thm13new3Ib}, \eqref{eq:thm13new3Ic}, \eqref{eq:thm13new3Id}, and \eqref{eq:thm13new3Ie} altogether by
\begin{align}
\lesssim_{\k,\ell_{\max},\|\mathbf{Y}^{\e}\|_{\mathscr{C}^{0}_{\tau}\mathscr{C}^{2}_{}}}[\lambda{{}\e^{}}]^{\ell_{\max}+1}\cdot{{}\e^{-\frac12}}+\sup_{0\leq\ell\leq\ell_{\max}}{{}\e^{}}\cdot[\lambda{{}\e^{}}]^{\ell}\cdot{{}\e^{-\frac12}}+\e^{\beta}.\label{eq:thm13newproof1}
\end{align}
Since $\lambda={{}\e^{-1+\gamma}}$ for $\gamma>0$ uniformly positive (see \eqref{eq:lambda}), we know the upper bound \eqref{eq:thm13newproof1} is $\lesssim\e^{\beta}$ for $\beta>0$ uniformly positive, as long as $\ell_{\max}$ is sufficiently large depending only on $\gamma$ (we can take $\ell_{\max}\lesssim_{\gamma}1$). Therefore, using what we said immediately before \eqref{eq:thm13newproof1}, we deduce \eqref{eq:thm13newI}.

We now show $\mathbf{M}^{\e}$ is a {{}family of good martingales} (see Definition \ref{definition:espde}). It is smooth since every other term in \eqref{eq:thm13new3Ia}-\eqref{eq:thm13new3Ie} is smooth. It is {{}c\`{a}dl\`{a}g} for the same reason (note $(\mathbf{I}^{\e},\mathfrak{q}^{\e})$ is {{}c\`{a}dl\`{a}g}). Also, by Corollary \ref{corollary:thm13new3}, the jumps of $\mathbf{M}^{\e}$ are given by the sum of jumps of \eqref{eq:thm13new3Id}-\eqref{eq:thm13new3Ie}, since the other non-martingale terms in that display are time-integrals. Thus, it suffices to show these terms vanish deterministically as $\e\to0$ uniformly in time (at a rate depending on $\|\mathbf{Y}^{\e}\|_{\mathscr{C}^{0}_{\tau}\mathscr{C}^{2}_{}}$). For this, see Lemma \ref{lemma:thm13new4}. Next, we show the derivative bounds on $\mathbf{M}^{\e}$. Fix tangent vectors $\mathsf{e}_{i_{1}},\ldots,\mathsf{e}_{i_{n}}$ as in Remark \ref{remark:regmg}. Fix any stopping time $\tau\in[0,1]$ for which:
\begin{itemize}
\item $\|\mathbf{Y}^{\e}_{\t,\cdot}\|_{\mathscr{C}^{2}{}}\leq\Lambda$ for $\Lambda\geq0$ fixed for all $0\leq\t\leq\tau$.
\item \eqref{eq:regmg} holds for all $0\leq\t\leq\tau$ and for $n\geq0$ fixed.
\end{itemize}
We claim that
\begin{align}
\E\sup_{0\leq\t\leq\tau}{\textstyle\int_{\partial\mathds{M}}}|\grad_{i_{1}\ldots i_{n}}\mathbf{M}^{\e}_{\t,\x}|^{2}\d\x&\lesssim{\textstyle\int_{\partial\mathds{M}}}\E|\grad_{i_{1}\ldots i_{n}}\mathbf{M}^{\e}_{\tau,\x}|^{2}\d\x\lesssim_{\Lambda,n}1.
\end{align}
The first bound is by Doob's maximal inequality (note that $\mathbf{M}^{\e}$ and its derivatives are all martingales, since the martingale property is preserved under linear operations). The second inequality follows by \eqref{eq:regmg}, the a priori $\mathscr{C}^{2}{}$ bound on $\mathbf{Y}^{\e}$ before time $\tau$, and bounds on $[\grad_{i_{1}\ldots i_{n}}\mathbf{M}^{\mathrm{limit}}]$ as explained in Remark \ref{remark:regmg}. This is true for all $n\lesssim1$, so we can use a Sobolev embedding $\mathrm{H}^{n}{}\hookrightarrow\mathscr{C}^{\k}{}$ (for any $\k$ and for any $n$ large enough depending only on $\k$) to deduce the desired derivative estimates for {{}good martingales}. (Said derivative estimates hold with high probability, since the claims of Remark \ref{remark:regmg} hold with high probability.)

It remains to show that the martingale $\mathbf{M}^{\e}$ satisfies \eqref{eq:scalinglimitmartingale}. Intuitively, this should be immediate because of Lemma \ref{lemma:thm13new7}, but we have to be (a little) careful about taking the predictable bracket of the sum. We first use $[\mathbf{m}+\mathbf{n}]=[\mathbf{m}]+[\mathbf{n}]+2[\mathbf{m},\mathbf{n}]$ for brackets of martingales $\mathbf{m},\mathbf{n}$, where $[,]$ is the cross bracket. (We will take $\mathbf{m}=\mathbf{M}^{\e,0}$ and $\mathbf{n}=\mathbf{M}^{\e,1}+\ldots+\mathbf{M}^{\e,\ell_{\max}}$; see Lemma \ref{lemma:thm13new2} for notation.) This is just a standard inner product calculation, so that
\begin{align}
[\mathbf{M}^{\e}]=\left[{\textstyle\sum_{\ell=0}^{\ell_{\max}}}\mathbf{M}^{\e,\ell}\right]=[\mathbf{M}^{\e,0}]+\left[{\textstyle\sum_{\ell=1}^{\ell_{\max}}}\mathbf{M}^{\e,\ell}\right]+2\left[\mathbf{M}^{\e,0},{\textstyle\sum_{\ell=1}^{\ell_{\max}}}\mathbf{M}^{\e,\ell}\right].
\end{align}
By \eqref{eq:thm13new7I}, we can compare the first term on the far RHS to $[\mathbf{M}^{\mathrm{limit}}]$. Thus, to show that $[\mathbf{M}^{\e}]-[\mathbf{M}^{\mathrm{limit}}]$ vanishes (i.e. prove that $\mathbf{M}^{\e}$ satisfies \eqref{eq:scalinglimitmartingale}), it suffices to show that, with high probability,
\begin{align}
\left\|\left[{\textstyle\sum_{\ell=1}^{\ell_{\max}}}\mathbf{M}^{\e,\ell}\right]\right\|_{\mathscr{C}^{0}_{\tau}\mathscr{C}^{\k}_{}}+\left\|\left[\mathbf{M}^{\e,0},{\textstyle\sum_{\ell=1}^{\ell_{\max}}}\mathbf{M}^{\e,\ell}\right]\right\|_{\mathscr{C}^{0}_{\tau}\mathscr{C}^{\k}_{}}\lesssim_{\k,\ell_{\max},\|\mathbf{Y}^{\e}\|_{\mathscr{C}^{0}_{\tau}\mathscr{C}^{2}_{}}}\e^{\beta}.\label{eq:thm13newproof2}
\end{align}
We assume $\k=0$ in what follows; for general $\k$, use the same argument but replace brackets by their $\k$-th order derivatives in $\x$. To bound the first term on the LHS of \eqref{eq:thm13newproof2}, use the Schwarz inequality with \eqref{eq:thm13new7II}: 
\begin{align}
\left[{\textstyle\sum_{\ell=1}^{\ell_{\max}}}\mathbf{M}^{\e,\ell}\right]\lesssim_{\ell_{\max}}{\textstyle\sum_{\ell=1}^{\ell_{\max}}}[\mathbf{M}^{\e,\ell}].
\end{align}
For the second term on the LHS of \eqref{eq:thm13newproof2}, we use another Cauchy-Schwarz combined with \eqref{eq:thm13new7II}:
\begin{align}
\left|\left[\mathbf{M}^{\e,0},{\textstyle\sum_{\ell=1}^{\ell_{\max}}}\mathbf{M}^{\e,\ell}\right]\right|&\lesssim[\mathbf{M}^{\e,0}]^{\frac12}\left(\left[{\textstyle\sum_{\ell=1}^{\ell_{\max}}}\mathbf{M}^{\e,\ell}\right]\right)^{\frac12}. \label{eq:thm13newproof3a}\\
&\lesssim_{\k,\ell_{\max},\|\mathbf{Y}^{\e}\|_{\mathscr{C}^{0}_{\tau}\mathscr{C}^{2}_{}}}\e^{\beta}\times[\mathbf{M}^{\e,0}]^{\frac12}.\label{eq:thm13newproof3b}
\end{align}
By \eqref{eq:thm13new7I}, we can replace $[\mathbf{M}^{\e,0}]$ by $[\mathbf{M}^{\mathrm{limit}}]$ in \eqref{eq:thm13newproof3b} with error $\lesssim\e^{\beta}$. But we know that the {$\mathscr{C}^{0}_{\tau}\mathscr{C}^{\k}_{}$}-norm of $[\mathbf{M}^{\mathrm{limit}}]$ is $\lesssim_{\k}1$; this holds by differentiating \eqref{eq:scalinglimitmartingaleI} in $\x$ up to $\k$-th order, using regularity of $\mathbf{K}$ in \eqref{eq:scalinglimitmartingaleI} in both of its inputs, and using the spectral gap of $\mathscr{L}$. (Indeed, this spectral gap ingredient, which comes from Lemma \ref{lemma:dtonestimates}, just says that $\mathbf{K}-1$ is smooth both before and after we hit it by $\mathscr{L}^{-1}$.) Ultimately, we deduce that $\eqref{eq:thm13newproof3b}\lesssim\e^{\beta}$. Combining this with every display starting after \eqref{eq:thm13newproof2} then shows \eqref{eq:thm13newproof2}. \qed
\subsection{What is left}
As far as Proposition \ref{prop:thm13new} (and thus Proposition \ref{prop:thm13} and Theorem \ref{theorem:1}) is concerned, we are left with Lemmas \ref{lemma:thm13new6} and \ref{lemma:thm13new7}. However, we must also show that every term we hit {$\mathscr{L}^{\e,\mathfrak{q}^{\e}_{\s}}_{\mathrm{flow}}$} with in this section is actually in its domain. Said terms include \eqref{eq:thm13new2IIa}-\eqref{eq:thm13new2IIc} and \eqref{eq:thm13new3Ic}. This will be dealt with in this next section, whereas Lemmas \ref{lemma:thm13new6} and \ref{lemma:thm13new7} are proved in Section \ref{section:flowproofs}.
%
%
%
\section{Computations for the action of {$\mathscr{L}^{\e,\mathfrak{q}^{\e}_{\s}}_{\mathrm{flow}}$}-operators}\label{section:dtondomain}
\subsection{Setup for our calculations}
The main goal of this section is to compute, for any $\x,\mathfrak{q}\in\partial\mathds{M}$,
\begin{align}
\mathbf{I}\mapsto\mathscr{L}^{\e,\mathfrak{q}}_{\mathrm{flow}}(\lambda-\mathscr{L}^{\e,\mathbf{I}}_{\mathrm{DtN}})^{-1}[\lambda(\lambda-\mathscr{L}^{\e,\mathbf{I}}_{\mathrm{DtN}})^{-1}]^{\ell}\mathrm{Fluc}^{\mathrm{noise},\e}_{\x,\mathfrak{q},\mathbf{I}}.\label{eq:flowcompute1}
\end{align}
Above, all of the operators act on the $\mathfrak{q}$-variable. {{}We emphasize the relevance of \eqref{eq:flowcompute1} by recalling the need to estimate \eqref{eq:thm13new3Ic} in Lemma \ref{lemma:thm13new6}.}

Part of computing \eqref{eq:flowcompute1} means showing existence of the limit \eqref{eq:step1flow} that defines {$\mathscr{L}^{\e,\mathfrak{q}}_{\mathrm{flow}}$} on the RHS of \eqref{eq:flowcompute1}. For convenience, we recall $\mathrm{Fluc}^{\mathrm{noise},\e}$ from \eqref{eq:step1fluc} below, in which {$\mathrm{Vol}_{\mathbf{I}}:={\textstyle\int_{\partial\mathds{M}}}(1+|{{}\grad_{}}\mathbf{I}_{\z}|^{2})^{1/2}\d\z$}:
\begin{align}
\mathrm{Fluc}^{\mathrm{noise},\e}_{\x,\mathfrak{q},\mathbf{I}}:={{}\e^{-\frac12}}\left[\mathrm{Vol}_{\mathbf{I}}\mathbf{K}_{\x,\mathfrak{q}}-{\textstyle\int_{\partial\mathds{M}}}\mathbf{K}_{\x,\z}(1+|{{}\grad_{}}\mathbf{I}_{\z}|^{2})^{\frac12}\d\z\right].\label{eq:flowcomputefluc}
\end{align}
Our computation of \eqref{eq:flowcompute1} takes the following steps.
\begin{enumerate}
\item First, we compute $\mathbf{I}\mapsto\mathscr{L}^{\e,\mathfrak{q}}_{\mathrm{flow}}\mathrm{Fluc}^{\mathrm{noise},\e}_{\x,\mathfrak{q},\mathbf{I}}$. This is not hard given the formula \eqref{eq:flowcomputefluc}.
\item {{}Next, we compute the following operator on $\mathscr{C}^{\infty}{}$:
\begin{align}
\mathscr{L}^{\e,\mathfrak{q}}_{\mathrm{flow}}(\mathscr{L}^{\e,\mathbf{I}}_{\mathrm{DtN}}):=\lim_{h\to0}\tfrac{1}{h}\left\{\mathscr{L}^{\e,\mathbf{I}+h\mathbf{J}[\mathbf{I}]}_{\mathrm{DtN}}-\mathscr{L}^{\e,\mathbf{I}}_{\mathrm{DtN}}\right\}\label{eq:flowcompute3I1},
\end{align}
where $\mathbf{J}[\mathbf{I}]$ is given by the following direction of differentiation in \eqref{eq:step1flow}:
\begin{align}
\mathbf{I}\mapsto\mathbf{J}[\mathbf{I}]={{}\Delta}\mathbf{I}+{{}\e^{-\frac14}}\mathrm{Vol}_{\mathbf{I}}\mathbf{K}_{\cdot,\mathfrak{q}}\in\mathscr{C}^{\infty}{}.\label{eq:flowdirection}
\end{align}
(Although $\mathbf{J}[\mathbf{I}]$ depends on $\mathfrak{q}\in\partial\mathds{M}$, we omit this dependence since it will not be very important.) The equality in \eqref{eq:flowcompute3I1} meant as operators on $\mathscr{C}^{\infty}{}$ (so it holds true when we apply the RHS to generic $\varphi\in\mathscr{C}^{\infty}{}$). We clarify that the above is \emph{not} a composition of operators, but rather the directional derivative of the operator $\mathscr{L}^{\e,\mathbf{I}}_{\mathrm{DtN}}$ in $\mathbf{I}$. (More precisely, its action on any $\varphi\in\mathscr{C}^{\infty}$ is given by the directional derivative of $\mathscr{L}^{\e,\mathbf{I}}_{\mathrm{DtN}}\varphi$. Again, computing this involves showing that it is well-defined.)}
\item Using point (2) and classical resolvent perturbation identities from functional analysis, we compute the operator $\mathbf{I}\mapsto\mathscr{L}^{\e,\mathfrak{q}}_{\mathrm{flow}}(\lambda-\mathscr{L}^{\e,\mathbf{I}}_{\mathrm{DtN}})^{-1}$. It is not too hard to use this result and the same resolvent identities to derive a Leibniz-type rule for the {{}directional derivative} $\mathscr{L}^{\e,\mathfrak{q}}_{\mathrm{flow}}$ and then compute the operator
\begin{align}
\mathbf{I}\mapsto\mathscr{L}^{\e,\mathfrak{q}}_{\mathrm{flow}}(\lambda-\mathscr{L}^{\e,\mathbf{I}}_{\mathrm{DtN}})^{-1}[\lambda(\lambda-\mathscr{L}^{\e,\mathbf{I}}_{\mathrm{DtN}})^{-1}]^{\ell}.
\end{align}
The subtlety is that {$(\lambda-\mathscr{L}^{\e,\mathbf{I}}_{\mathrm{DtN}})^{-1}[\lambda(\lambda-\mathscr{L}^{\e,\mathbf{I}}_{\mathrm{DtN}})^{-1}]^{\ell}$} is a product of operators; we need a non-commutative version of the Leibniz rule (which requires a bit of attention but is not difficult to derive).
\item Finally, we use a Leibniz rule for $\mathscr{L}^{\e,\mathfrak{q}}_{\mathrm{flow}}$ (we emphasize that $\mathscr{L}^{\e,\mathfrak{q}}_{\mathrm{flow}}$ is just a derivative!) with points (1) and (3) above to compute \eqref{eq:flowcompute1}.
\end{enumerate}
\subsection{Point (1): Computing $\mathbf{I}\mapsto\mathscr{L}^{\e,\mathfrak{q}}_{\mathrm{flow}}\mathrm{Fluc}^{\mathrm{noise},\e}_{\x,\mathfrak{q},\mathbf{I}}$}
As noted earlier, this computation is easy given \eqref{eq:flowcomputefluc}. In particular, when we differentiate in $\mathbf{I}$, we must only do so pointwise in $\z\in\partial\mathds{M}$, i.e. differentiate analytic functions of $\mathbf{I}_{\z}$ and ${{}\grad_{}}\mathbf{I}_{\z}$ per $\z\in\partial\mathds{M}$. Ultimately, we get the following result.
\begin{lemma}\label{lemma:flowcompute1}
\fsp Fix $\x,\mathfrak{q}\in\partial\mathds{M}$ and $\mathbf{I}\in\mathscr{C}^{\infty}{}$. The following limit exists:
\begin{align}
\mathscr{L}^{\e,\mathfrak{q}}_{\mathrm{flow}}\mathrm{Fluc}^{\mathrm{noise},\e}_{\x,\mathfrak{q},\mathbf{I}}=\lim_{h\to0}\tfrac{1}{h}\left\{\mathrm{Fluc}^{\mathrm{noise},\e}_{\x,\mathfrak{q},\mathbf{I}+h\mathbf{J}[\mathbf{I}]}-\mathrm{Fluc}^{\mathrm{noise},\e}_{\x,\mathfrak{q},\mathbf{I}}\right\}\label{eq:flowcompute1I}
\end{align}
Also, \eqref{eq:flowcompute1I} is jointly smooth in $\x,\mathfrak{q}$ with $\k$-th order derivatives satisfying the following estimate:
\begin{align}
\lesssim_{\k,\|\mathbf{I}\|_{\mathscr{C}^{2}{}},\|{{}\grad_{}}\mathbf{I}\|_{\mathscr{C}^{2}{}}}{{}\e^{-\frac34}}.\label{eq:flowcompute1II}
\end{align}
\end{lemma}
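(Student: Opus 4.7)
The plan is a direct pointwise-in-$\z$ computation followed by elementary $\e$-power counting. First I would rewrite \eqref{eq:flowcomputefluc} as a single surface integral by absorbing the $\mathrm{Vol}_{\mathbf{I}}$ factor:
\begin{align*}
\mathrm{Fluc}^{\mathrm{noise},\e}_{\x,\mathfrak{q},\mathbf{I}} = \e^{-\frac23}\int_{\partial\mathds{M}}[\mathbf{K}_{\x,\mathfrak{q}} - \mathbf{K}_{\x,\z}](1+|\grad_{\partial\mathds{M}}\mathbf{I}_{\z}|^{2})^{\frac12}\d\z,
\end{align*}
so that the only $\mathbf{I}$-dependence sits inside the smooth map $v\mapsto(1+|v|^2)^{1/2}$ composed with $\grad_{\partial\mathds{M}}\mathbf{I}_{\z}$. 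With $\mathbf{J}=\mathbf{J}[\mathbf{I}]$ as in \eqref{eq:flowdirection}, Taylor expansion in $h$ yields, uniformly in $\z\in\partial\mathds{M}$,
\begin{align*}
(1+|\grad_{\partial\mathds{M}}\mathbf{I}_{\z}+h\grad_{\partial\mathds{M}}\mathbf{J}_{\z}|^{2})^{\frac12} = (1+|\grad_{\partial\mathds{M}}\mathbf{I}_{\z}|^{2})^{\frac12} + h\,\frac{\grad_{\partial\mathds{M}}\mathbf{I}_{\z}\cdot\grad_{\partial\mathds{M}}\mathbf{J}_{\z}}{(1+|\grad_{\partial\mathds{M}}\mathbf{I}_{\z}|^{2})^{\frac12}} + \mathrm{O}(h^{2}),
\end{align*}
with an $\mathrm{O}(h^2)$ remainder controlled uniformly in $\z$ by $\|\grad_{\partial\mathds{M}}\mathbf{I}\|_{\mathscr{C}^0(\partial\mathds{M})}$ and $\|\grad_{\partial\mathds{M}}\mathbf{J}\|_{\mathscr{C}^0(\partial\mathds{M})}$. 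This uniformity lets me swap the limit $h\to0$ with the $\z$-integral and produces both the existence of \eqref{eq:flowcompute1I} and the explicit formula
\begin{align*}
\mathscr{L}^{\e,\mathfrak{q}}_{\mathrm{flow}}\mathrm{Fluc}^{\mathrm{noise},\e}_{\x,\mathfrak{q},\mathbf{I}} = \e^{-\frac23}\int_{\partial\mathds{M}}[\mathbf{K}_{\x,\mathfrak{q}}-\mathbf{K}_{\x,\z}]\,\frac{\grad_{\partial\mathds{M}}\mathbf{I}_{\z}\cdot\grad_{\partial\mathds{M}}\mathbf{J}[\mathbf{I}]_{\z}}{(1+|\grad_{\partial\mathds{M}}\mathbf{I}_{\z}|^{2})^{\frac12}}\d\z.
\end{align*}

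With this formula in hand, joint smoothness in $(\x,\mathfrak{q})$ is immediate by differentiating under the integral: the $(\x,\mathfrak{q})$-dependence lives entirely in the smooth kernel $\mathbf{K}$, both through $\mathbf{K}_{\x,\mathfrak{q}}$ and $\mathbf{K}_{\x,\z}$ directly, and, through $\mathbf{K}_{\cdot,\mathfrak{q}}$, inside $\mathbf{J}[\mathbf{I}]$, while the $\mathbf{I}$-dependent factors are continuous on the compact manifold $\partial\mathds{M}$. For the $\e^{-1}$-scaling in \eqref{eq:flowcompute1II}, I would split $\mathbf{J}[\mathbf{I}]=\Delta_{\partial\mathds{M}}\mathbf{I}+\e^{-1/3}\mathrm{Vol}_{\mathbf{I}}\mathbf{K}_{\cdot,\mathfrak{q}}$: the $\Delta_{\partial\mathds{M}}\mathbf{I}$ piece contributes $\mathrm{O}(\e^{-2/3})$ via the prefactor, with implicit constant controlled by $\|\grad_{\partial\mathds{M}}\mathbf{I}\|_{\mathscr{C}^{2}(\partial\mathds{M})}$ (needed to bound $\grad_{\partial\mathds{M}}\Delta_{\partial\mathds{M}}\mathbf{I}$); the second piece contributes $\mathrm{O}(\e^{-1})$, the dominant term, with implicit constants depending on $\|\mathbf{I}\|_{\mathscr{C}^{1}(\partial\mathds{M})}$ (through $\mathrm{Vol}_{\mathbf{I}}$ and the $(1+|\grad_{\partial\mathds{M}}\mathbf{I}|^2)^{-1/2}$ denominator) and on the smooth kernel $\mathbf{K}$. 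Taking up to $\k$ derivatives in $(\x,\mathfrak{q})$ only loads additional derivatives onto the smooth $\mathbf{K}$-factors, producing the bound \eqref{eq:flowcompute1II}.

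The main (non-)obstacle is bookkeeping: one must verify that the a priori regularity assumed in \eqref{eq:flowcompute1II}, namely $\mathscr{C}^{2}(\partial\mathds{M})$-control on both $\mathbf{I}$ and $\grad_{\partial\mathds{M}}\mathbf{I}$, suffices for both the uniformity of the Taylor remainder in $\z$ and the $\k$-fold differentiation under the integral. Since the worst $\mathbf{I}$-derivative appearing in the integrand is $\grad_{\partial\mathds{M}}\Delta_{\partial\mathds{M}}\mathbf{I}$, which is controlled by $\|\grad_{\partial\mathds{M}}\mathbf{I}\|_{\mathscr{C}^{2}(\partial\mathds{M})}$, this regularity is indeed sufficient and no genuine analytic difficulty arises in this lemma; everything beyond this is repeated use of smoothness of $\mathbf{K}$ and compactness of $\partial\mathds{M}$.
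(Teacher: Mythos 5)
Your proposal is correct and follows essentially the same route as the paper's proof: a pointwise-in-$\z$ chain-rule/Taylor computation of the Frechet derivative of $(1+|\grad_{\partial\mathds{M}}\mathbf{I}_{\z}|^{2})^{1/2}$ in the direction $\grad_{\partial\mathds{M}}\mathbf{J}[\mathbf{I}]$, followed by $\e$-power-counting (the $\e^{-2/3}$ prefactor of $\mathrm{Fluc}^{\mathrm{noise},\e}$ times the $\e^{-1/3}$ scaling of $\mathbf{J}[\mathbf{I}]$), with the worst $\mathbf{I}$-derivative being $\grad_{\partial\mathds{M}}\Delta_{\partial\mathds{M}}\mathbf{I}$ exactly as the paper notes. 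The only cosmetic variation is that you first absorb $\mathrm{Vol}_{\mathbf{I}}\mathbf{K}_{\x,\mathfrak{q}}$ into a single surface integral before differentiating, whereas the paper keeps the two terms of \eqref{eq:flowcomputefluc} separate; the substance is identical.
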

\begin{proof}
{{}By using the identity $(a+hb)^{2}-a^{2}=2hab+\mathrm{o}(h)$ for each partial derivative in ${{}\grad_{}}$, we have
\begin{align}
\lim_{h\to0}\tfrac{1}{h}\left\{|{{}\grad_{}}\mathbf{I}_{\z}+h{{}\grad_{}}\mathbf{J}[\mathbf{I}]_{\z}|^{2}-|{{}\grad_{}}\mathbf{I}_{\z}|^{2}\right\}=2\langle{{}\grad_{}}\mathbf{J}[\mathbf{I}]_{\z},{{}\grad_{}}\mathbf{I}_{\z}\rangle,\label{eq:flowcompute1I1}
\end{align}
where $\langle\cdot,\cdot\rangle$ is the dot product.} Using \eqref{eq:flowcompute1I1} with the chain rule then gives
\begin{align}
&\lim_{h\to0}\tfrac{1}{h}\left\{(1+|{{}\grad_{}}\mathbf{I}_{\z}+h{{}\grad_{}}\mathbf{J}[\mathbf{I}]_{\z}|^{2})^{\frac12}-(1+|{{}\grad_{}}\mathbf{I}_{\z}|^{2})^{\frac12}\right\}\label{eq:flowcompute1I2a}\\
&=\tfrac12(1+|{{}\grad_{}}\mathbf{I}_{\z}|^{2})^{-\frac12}\cdot2{{}\langle{{}\grad_{}}\mathbf{J}[\mathbf{I}]_{\z},{{}\grad_{}}\mathbf{I}_{\z}\rangle}.\label{eq:flowcompute1I2b}
\end{align}
Note that \eqref{eq:flowcompute1I2b} is {{}$\e^{-1/4}$} times a smooth function of $\mathbf{I}$ and its $k$-th derivatives for $k\leq3$ indeed, see \eqref{eq:flowdirection}. Since $\mathrm{Fluc}^{\mathrm{noise},\e}$-noise is determined by integrals of \eqref{eq:flowcompute1I2b} against smooth functions (like $\mathbf{K}$ and $1$) on $\partial\mathds{M}$, verifying existence of \eqref{eq:flowcompute1I} and showing \eqref{eq:flowcompute1II} is straightforward. (For \eqref{eq:flowcompute1II}, it suffices to use the {{}$\e^{-1/2}$}-scaling in \eqref{eq:flowcomputefluc} and the ${{}\e^{-1/4}}$ scaling in \eqref{eq:flowcompute1I2b}, which can be seen from \eqref{eq:flowdirection}, to get ${{}\e^{-3/4}}$.)
\end{proof}
If we now specialize Lemma \ref{lemma:flowcompute1} to $\mathbf{I}$ given by the $\mathbf{I}^{\e}$ process, we get the following.
\begin{corollary}\label{corollary:flowcompute2}
\fsp Fix $\x\in\partial\mathds{M}$ and any stopping time $\tau\in[0,1]$. For any $0\leq\s\leq\tau$, the quantity
\begin{align}
\mathscr{L}^{\e,\mathfrak{q}^{\e}_{\s}}_{\mathrm{flow}}\mathrm{Fluc}^{\mathrm{noise},\e}_{\x,\mathfrak{q}^{\e}_{\s},\mathbf{I}^{\e}_{\s}}
\end{align}
is jointly smooth in $\x,\mathfrak{q}^{\e}_{\s}$ with $\k$-th order derivatives $\lesssim_{\k,\|\mathbf{Y}^{\e}\|_{\mathscr{C}^{0}_{\tau}\mathscr{C}^{2}_{}}}{{}\e^{-3/4}}$. (This is all deterministic.)
\end{corollary}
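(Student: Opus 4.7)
The plan is to obtain Corollary \ref{corollary:flowcompute2} as a direct pointwise specialization of Lemma \ref{lemma:flowcompute1}. The only real content is translating the implicit constants from norms of $\mathbf{I}^{\e}_{\s}$ into norms of $\mathbf{Y}^{\e}$, which is done by combining the fluctuation scaling \eqref{eq:flucprocess} with the bootstrap in Lemma \ref{lemma:aux1} flagged at the start of Section \ref{section:thm13proof}.

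\emph{Step 1: pointwise application.} Fix an outcome in the probability space and any $\s\in[0,\tau]$. By Construction \ref{construction:model}, $\mathbf{I}^{\e}_{\s,\cdot}\in\mathscr{C}^{\infty}(\partial\mathds{M})$ and $\mathfrak{q}^{\e}_{\s}\in\partial\mathds{M}$, so Lemma \ref{lemma:flowcompute1} applies with $\mathbf{I}=\mathbf{I}^{\e}_{\s,\cdot}$ and $\mathfrak{q}=\mathfrak{q}^{\e}_{\s}$. This yields both existence of the Frechet limit \eqref{eq:step1flow} defining $\mathscr{L}^{\e,\mathfrak{q}^{\e}_{\s}}_{\mathrm{flow}}\mathrm{Fluc}^{\mathrm{noise},\e}_{\x,\mathfrak{q}^{\e}_{\s},\mathbf{I}^{\e}_{\s}}$, joint smoothness in $(\x,\mathfrak{q}^{\e}_{\s})$, and the deterministic bound
\begin{align}
\lesssim_{\k,\,\|\mathbf{I}^{\e}_{\s}\|_{\mathscr{C}^{2}(\partial\mathds{M})},\,\|\grad_{\partial\mathds{M}}\mathbf{I}^{\e}_{\s}\|_{\mathscr{C}^{2}(\partial\mathds{M})}}\e^{-1}
\end{align}
on $\k$-th order derivatives in $(\x,\mathfrak{q}^{\e}_{\s})$.

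\emph{Step 2: converting $\mathbf{I}^{\e}$-norms into $\mathbf{Y}^{\e}$-norms.} By \eqref{eq:flucprocess}, one has $\mathbf{I}^{\e}_{\s,\x}=\e^{1/3}\mathbf{Y}^{\e}_{\s,\x}+\e^{-1/3}\s$. The $\e^{-1/3}\s$ piece is constant in $\x$ and thus drops out of every spatial derivative; since $\s\leq 1$, it contributes at most $\e^{-1/3}$ to $\|\mathbf{I}^{\e}_{\s}\|_{\mathscr{C}^{0}(\partial\mathds{M})}$. All spatial derivatives of $\mathbf{I}^{\e}_{\s}$ of order $\geq 1$ equal $\e^{1/3}$ times the corresponding derivatives of $\mathbf{Y}^{\e}_{\s}$. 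Consequently $\|\mathbf{I}^{\e}_{\s}\|_{\mathscr{C}^{2}(\partial\mathds{M})}$ is controlled by a smooth function of $\e$ and $\|\mathbf{Y}^{\e}\|_{\mathscr{C}^{0}_{\tau}\mathscr{C}^{2}_{\partial\mathds{M}}}$.

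\emph{Step 3: the derivative bootstrap.} The quantity $\|\grad_{\partial\mathds{M}}\mathbf{I}^{\e}_{\s}\|_{\mathscr{C}^{2}(\partial\mathds{M})}$ involves three spatial derivatives of $\mathbf{I}^{\e}_{\s}$, i.e.\ one derivative more than what $\|\mathbf{Y}^{\e}\|_{\mathscr{C}^{2}_{\partial\mathds{M}}}$ provides directly. This is exactly the situation addressed by Lemma \ref{lemma:aux1}: any fixed, $O(1)$ number of spatial derivatives of $\mathbf{I}^{\e}_{\s}$ is bounded by a smooth function of $\|\mathbf{Y}^{\e}\|_{\mathscr{C}^{0}_{\tau}\mathscr{C}^{2}_{\partial\mathds{M}}}$. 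Plugging these controls into the bound from Step 1, the implied constant becomes a smooth function of $\|\mathbf{Y}^{\e}\|_{\mathscr{C}^{0}_{\tau}\mathscr{C}^{2}_{\partial\mathds{M}}}$ and the $\e^{-1}$ scaling survives unchanged, proving the corollary. The main (and only) obstacle is this last derivative-bootstrap gap, but it is handled cleanly by the black-box Lemma \ref{lemma:aux1}; otherwise the argument is entirely bookkeeping.
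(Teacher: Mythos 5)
Your argument is, in spirit, exactly the paper's: apply Lemma \ref{lemma:flowcompute1} with $\mathbf{I}=\mathbf{I}^{\e}_{\s,\cdot}$, $\mathfrak{q}=\mathfrak{q}^{\e}_{\s}$, then use Lemma \ref{lemma:aux1} to convert $\|\grad_{\partial\mathds{M}}\mathbf{I}^{\e}_{\s}\|_{\mathscr{C}^{2}(\partial\mathds{M})}$-dependence into $\|\mathbf{Y}^{\e}\|_{\mathscr{C}^{0}_{\tau}\mathscr{C}^{2}_{\partial\mathds{M}}}$-dependence. The paper's own proof is the one-line version of your Steps 1 and 3.

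There is, however, a loose thread in your Step 2 that you flag but don't actually tie off. Bounding $\|\mathbf{I}^{\e}_{\s}\|_{\mathscr{C}^{2}(\partial\mathds{M})}$ by ``a smooth function of $\e$ and $\|\mathbf{Y}^{\e}\|$'' is not useful here: because of the $\e^{-1/3}\s$ piece, that function diverges as $\e\to0$, so on its face the implied constant coming out of Lemma \ref{lemma:flowcompute1} would still carry a divergent $\e$-dependence, contradicting what the corollary asserts. The correct resolution is the observation you started to make (``drops out of every spatial derivative'') but did not carry through: inspecting \eqref{eq:flowcomputefluc}, \eqref{eq:flowdirection}, and \eqref{eq:flowcompute1I2b}, one sees $\mathrm{Fluc}^{\mathrm{noise},\e}$, $\mathrm{Vol}_{\mathbf{I}}$, and $\mathbf{J}[\mathbf{I}]$ depend on $\mathbf{I}$ only through $\grad_{\partial\mathds{M}}\mathbf{I}$ and its derivatives, never through $\mathbf{I}$ itself, so the $\|\mathbf{I}\|_{\mathscr{C}^{2}(\partial\mathds{M})}$-dependence advertised in \eqref{eq:flowcompute1II} is vacuous and only derivative norms of $\mathbf{I}$ actually enter. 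Once that is said explicitly, the $\e^{-1/3}\s$ shift never appears, Lemma \ref{lemma:aux1} controls everything remaining, and the implied constant is genuinely $\e$-independent. (To be fair, the paper's one-line proof makes this same tacit assumption; spelling it out as above is what makes your Step 2 airtight.)
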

\begin{proof}
Use Lemma \ref{lemma:flowcompute1} and Lemma \ref{lemma:aux1} to control $\|{{}\grad_{}}\mathbf{I}^{\e}_{\s}\|_{\mathscr{C}^{2}{}}\lesssim1+\|\mathbf{Y}^{\e}\|_{\mathscr{C}^{2}{}}$.
\end{proof}
\subsection{Point (2): Computing {{}$\mathbf{I}\mapsto\mathscr{L}^{\e,\mathfrak{q}}_{\mathrm{flow}}(\mathscr{L}^{\e,\mathbf{I}}_{\mathrm{DtN}})$}}
Our goal now is to prove Lemma \ref{lemma:flowcompute3} below, i.e. compute (and show existence of) \eqref{eq:flowcompute3I1}. Recall \eqref{eq:step1dton} and fix $\varphi\in\mathscr{C}^{\infty}{}$. Using this, we get (essentially by definition of Dirichlet-to-Neumann) the following with notation explained after:
\begin{align}
\left\{\mathscr{L}^{\e,\mathbf{I}+h\mathbf{J}[\mathbf{I}]}_{\mathrm{DtN}}-\mathscr{L}^{\e,\mathbf{I}}_{\mathrm{DtN}}\right\}\varphi={{}\e^{-1}}\grad_{\mathsf{N}}[\mathscr{U}^{\mathbf{I}+h\mathbf{J}[\mathbf{I}],\varphi}-\mathscr{U}^{\mathbf{I},\varphi}].\label{eq:flowcompute3I2a}
\end{align}
Above, $\mathsf{N}$ is the inward unit normal vector field on $\partial\mathds{M}$, and $\grad_{\mathsf{N}}$ is gradient in this direction. The $\mathscr{U}$-terms are harmonic extensions of $\varphi$ with respect to metrics $\mathbf{g}[{{}\grad_{}}(\mathbf{I}+h\mathbf{J}[\mathbf{I}])]$ and $\mathbf{g}[{{}\grad_{}}\mathbf{I}]$, respectively. To write this precisely, let ${{}\Delta_{\mathbf{I},\mathds{M}}}$ be the Laplacian on $\mathds{M}$ with respect to the metric $\mathbf{g}[{{}\grad_{}}\mathbf{I}]$ (see before \eqref{eq:step1dton}). We have
\begin{align}
{{}\Delta_{\mathbf{I}+h\mathbf{J}[\mathbf{I}],\mathds{M}}}\mathscr{U}^{\mathbf{I}+h\mathbf{J}[\mathbf{I}],\varphi},{{}\Delta_{\mathbf{I},\mathds{M}}}\mathscr{U}^{\mathbf{I},\varphi}=0 \quad\mathrm{and}\quad \mathscr{U}^{\mathbf{I}+h\mathbf{J}[\mathbf{I}],\varphi},\mathscr{U}^{\mathbf{I},\varphi}|_{\partial\mathds{M}}=\varphi.\label{eq:flowcompute3I2b}
\end{align}
For convenience, let us define $\mathscr{V}^{\mathbf{I},h,\varphi}:=\mathscr{U}^{\mathbf{I}+h\mathbf{J}[\mathbf{I}],\varphi}-\mathscr{U}^{\mathbf{I},\varphi}$. By \eqref{eq:flowcompute3I2b}, we get the \abbr{PDE}
\begin{align}
{{}\Delta_{\mathbf{I},\mathds{M}}}\mathscr{V}^{\mathbf{I},h,\varphi}=-[{{}\Delta_{\mathbf{I}+h\mathbf{J}[\mathbf{I}],\mathds{M}}}-{{}\Delta_{\mathbf{I},\mathds{M}}}]\mathscr{U}^{\mathbf{I}+h\mathbf{J}[\mathbf{I}],\varphi}\quad\mathrm{and}\quad\mathscr{V}^{\mathbf{I},h,\varphi}|_{\partial\mathds{M}}=0.\label{eq:flowcompute3I3}
\end{align}
We now make two claims.
\begin{enumerate}
\item The operator {${{}\Delta_{\mathbf{I}+h\mathbf{J}[\mathbf{I}]},\mathds{M}}-{{}\Delta_{\mathbf{I},\mathds{M}}}:{{}\mathscr{C}^{\k+2}(\mathds{M})\to\mathscr{C}^{\k}(\mathds{M})}$} is bounded. Its norm is $\lesssim h$, with implied constant depending on at most {{}$4$} derivatives of $\mathbf{I}$. Indeed, in local coordinates, we have the following in which we view $\mathbf{g}[\cdot]$-metrics as matrices (normalized by the square root of their determinants):
\begin{align}
{{}\Delta_{\mathbf{I}+h\mathbf{J}[\mathbf{I}],\mathds{M}}}-{{}\Delta_{\mathbf{I},\mathds{M}}}={\textstyle\sum_{\i,\j=1}^{\d}}\grad_{\i}\left\{\left(\mathbf{g}[{{}\grad_{}}\mathbf{I}+h{{}\grad_{}}\mathbf{J}[\mathbf{I}]]^{-1}_{\i\j}-\mathbf{g}[{{}\grad_{}}\mathbf{I}]^{-1}_{\i\j}\right)\grad_{\j}\right\}.\label{eq:flowcompute3I4a}
\end{align}
(Above, $\grad_{\i}$ is derivative in the direction of the orthonormal frame vector $\mathsf{e}_{\i}$.) Since the metric matrix $\mathbf{g}[\cdot]$ is strictly positive definite, the inverse matrix $\mathbf{g}[\cdot]^{-1}$ is smooth in the input. (Everything here is allowed to depend on as many derivatives of $\mathbf{I}$ as we need.) Thus, \eqref{eq:flowcompute3I4a} turns into the following (noting that $\mathbf{J}[\mathbf{I}]$ in \eqref{eq:flowdirection} has scaling of order ${{}\e^{-1/4}}$):
\begin{align}
{{}\Delta_{\mathbf{I}+h\mathbf{J}[\mathbf{I}],\mathds{M}}}-{{}\Delta_{\mathbf{I},\mathds{M}}}={\textstyle\sum_{\i,\j=1}^{\d}}\grad_{\i}\left\{\mathrm{O}(h{{}\e^{-\frac14}})\grad_{\j}\right\},\label{eq:flowcompute3I4b}
\end{align}
where $\mathrm{O}({{}\e^{-1/4}}h)$ is something smooth whose $\k$-derivatives are $\lesssim_{\k,\mathbf{I}}{{}\e^{-1/4}}h$. Moreover, $\mathscr{U}^{\mathbf{I}+h\mathbf{J}[\mathbf{I}],\varphi}$ is smooth with derivatives $\lesssim_{\mathbf{I},\varphi}1$ (indeed, use elliptic regularity for \eqref{eq:flowcompute3I2b}.) If we use this estimate with \eqref{eq:flowcompute3I4b}, then \eqref{eq:flowcompute3I3} plus elliptic regularity shows that $\mathscr{V}^{\mathbf{I},h,\varphi}$ has $\mathscr{C}^{\k}(\mathds{M})$-norm that is $\lesssim_{\k,\mathbf{I},\varphi}h$. To finish this first step, we now rewrite \eqref{eq:flowcompute3I3} by replacing $\mathscr{U}^{\mathbf{I}+h\mathbf{J}[\mathbf{I}],\varphi}$ with $\mathscr{U}^{\mathbf{I},\varphi}$ with error $\mathscr{V}^{\mathbf{I},h,\varphi}$:
\begin{align}
{{}\Delta_{\mathbf{I},\mathds{M}}}\mathscr{V}^{\mathbf{I},h,\varphi}&=-[{{}\Delta_{\mathbf{I}+h\mathbf{J}[\mathbf{I}],\mathds{M}}}-{{}\Delta_{\mathbf{I},\mathds{M}}}]\mathscr{U}^{\mathbf{I},\varphi}-[{{}\Delta_{\mathbf{I}+h\mathbf{J}[\mathbf{I}],\mathds{M}}}-{{}\Delta_{\mathbf{I},\mathds{M}}}]\mathscr{V}^{\mathbf{I},h,\varphi}\nonumber\\
\mathscr{V}^{\mathbf{I},h,\varphi}|_{\partial\mathds{M}}&=0.\label{eq:flowcompute3I5}
\end{align}
\item We investigate \eqref{eq:flowcompute3I4a} a little more carefully. We got \eqref{eq:flowcompute3I4b} by smoothness of $\mathbf{g}[\cdot]^{-1}$ entry-wise. We now claim that $h^{-1}[{{}\Delta_{\mathbf{I}+h\mathbf{J}[\mathbf{I}],\mathds{M}}}-{{}\Delta_{\mathbf{I},\mathds{M}}}]$ is not only bounded as a differential operator as $h\to0$, but it has a limit. In particular, we claim that 
\begin{align}
\mathscr{O}^{\mathbf{I}}:=\lim_{h\to0}\tfrac{1}{h}\left\{{{}\Delta_{\mathbf{I}+h\mathbf{J}[\mathbf{I}],\mathds{M}}}-{{}\Delta_{\mathbf{I},\mathds{M}}}\right\}\label{eq:flowcompute3I6}
\end{align}
exists, and it is bounded as a map $\mathscr{C}^{\k+2}(\mathds{M})\to\mathscr{C}^{\k}(\mathds{M})$ for any $\k$, with norm bounded above by ${{}\e^{-1/4}}$ times something depending only on $\k$ {{}and the $\mathscr{C}^{3}{}$-norm of $\mathbf{I}$}. Indeed, this holds by Taylor expanding the smooth matrix $\mathbf{g}[\cdot]^{-1}$ entry-wise in \eqref{eq:flowcompute3I4a} and controlling regularity of $\mathbf{J}[\mathbf{I}]$ by directly inspecting \eqref{eq:flowdirection}. (The dependence on $\mathbf{I}$ of the norm of $\mathscr{O}^{\mathbf{I}}$ comes from the $\mathscr{C}^{1}{}$-dependence of \eqref{eq:flowdirection} in $\mathbf{I}$ and $\grad_{\i}\mathbf{I}$, which gets upgraded to $\mathscr{C}^{2}{}$-data because of the additional $\grad_{\i}$-differential on the outside on the RHS of \eqref{eq:flowcompute3I4b}.) To conclude this step, we note the last term $[{{}\Delta_{\mathbf{I}+h\mathbf{J}[\mathbf{I}],\mathds{M}}}-{{}\Delta_{\mathbf{I},\mathds{M}}}]\mathscr{V}^{\mathbf{I},h,\varphi}$ in the \abbr{PDE} in \eqref{eq:flowcompute3I5} is $\mathrm{O}(h^{2})$. (This follows by \eqref{eq:flowcompute3I4b} and our estimate $\mathscr{V}^{\mathbf{I},h,\varphi}\lesssim h$ from after \eqref{eq:flowcompute3I4b}.)
\end{enumerate}
We can now divide \eqref{eq:flowcompute3I5} by $h$ and take $h\to0$ using \eqref{eq:flowcompute3I6}. By standard elliptic regularity, we can take this limit in the ``naive" sense, so that
\begin{align}
{{}\Delta_{\mathbf{I},\mathds{M}}}\left\{\lim_{h\to0}\tfrac{1}{h}\mathscr{V}^{\mathbf{I},h,\varphi}\right\}=-\mathscr{O}^{\mathbf{I}}\mathscr{U}^{\mathbf{I},\varphi}\quad\mathrm{and}\quad\left\{\lim_{h\to0}\tfrac{1}{h}\mathscr{V}^{\mathbf{I},h,\varphi}\right\}|_{\partial\mathds{M}}=0.\label{eq:flowcompute3I7}
\end{align}
In view of \eqref{eq:flowcompute3I2a} and \eqref{eq:flowcompute3I7}, we ultimately deduce the following.
\begin{lemma}\label{lemma:flowcompute3}
\fsp Fix $\mathbf{I}\in\mathscr{C}^{\infty}{}$. We have the following, where the limit is taken as an operator $\mathscr{C}^{\infty}{}\to\mathscr{C}^{\infty}{}$, and $\varphi\in\mathscr{C}^{\infty}{}$ is any test function:
\begin{align}
{{}\mathscr{L}^{\e,\mathfrak{q}}_{\mathrm{flow}}(\mathscr{L}^{\e,\mathbf{I}}_{\mathrm{DtN}})}\varphi:=\lim_{h\to0}\tfrac{1}{h}\left\{\mathscr{L}^{\e,\mathbf{I}+h\mathbf{J}[\mathbf{I}]}_{\mathrm{DtN}}-\mathscr{L}^{\e,\mathbf{I}}_{\mathrm{DtN}}\right\}\varphi={{}\e^{-1}}\grad_{\mathsf{N}}\mathscr{V}^{\mathbf{I},\varphi},\label{eq:flowcompute3I}
\end{align}
where $\grad_{\mathsf{N}}$ is gradient in the direction of the inward unit normal vector field $\mathsf{N}$, and $\mathscr{V}^{\mathbf{I},\varphi}$ solves the following \abbr{PDE} (with notation explained afterwards):
\begin{align}
{{}\Delta_{\mathbf{I},\mathds{M}}}\mathscr{V}^{\mathbf{I},\varphi}=\mathscr{O}^{\mathbf{I}}\mathscr{U}^{\mathbf{I},\varphi}\quad\mathrm{and}\quad\mathscr{V}^{\mathbf{I},\varphi}|_{\partial\mathds{M}}=0.\label{eq:flowcompute3II}
\end{align}
%
\begin{itemize}
\item $\mathscr{O}^{\mathbf{I}}$ is a bounded {{}linear} map $\mathscr{C}^{\k+2}(\mathds{M})\to\mathscr{C}^{\k}(\mathds{M})$ with norm {{}$\lesssim_{\k,\|\mathbf{I}\|_{\mathscr{C}^{3}{}}}{{}\e^{-1/4}}$}.
\item $\mathscr{U}^{\mathbf{I},\varphi}$ is the $\mathbf{g}[{{}\grad_{}}\mathbf{I}]$-harmonic extension of $\varphi$ to $\mathds{M}$:
\begin{align}
{{}\Delta_{\mathbf{I},\mathds{M}}}\mathscr{U}^{\mathbf{I},\varphi}=0\quad\mathrm{and}\quad\mathscr{U}^{\mathbf{I},\varphi}|_{\partial\mathds{M}}=\varphi.\label{eq:flowcompute3III}
\end{align}
\end{itemize}
\end{lemma}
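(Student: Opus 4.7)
The plan is to reduce the $h\to 0$ limit in \eqref{eq:flowcompute3I} to a Dirichlet problem for the fixed operator $\Delta_{\mathbf{I}}$ and then apply elliptic regularity. The first step would be to recall from \eqref{eq:step1dton} that $\mathscr{L}^{\e,\mathbf{I}}_{\mathrm{DtN}}\varphi=\e^{-4/3}\grad_{\mathsf{N}}\mathscr{U}^{\mathbf{I},\varphi}$ and rewrite the finite difference as
\begin{equation}
\tfrac{1}{h}\{\mathscr{L}^{\e,\mathbf{I}+h\mathbf{J}[\mathbf{I}]}_{\mathrm{DtN}}-\mathscr{L}^{\e,\mathbf{I}}_{\mathrm{DtN}}\}\varphi=\e^{-\frac{4}{3}}\grad_{\mathsf{N}}\Big(\tfrac{1}{h}\mathscr{V}^{\mathbf{I},h,\varphi}\Big),
\end{equation}
where $\mathscr{V}^{\mathbf{I},h,\varphi}:=\mathscr{U}^{\mathbf{I}+h\mathbf{J}[\mathbf{I}],\varphi}-\mathscr{U}^{\mathbf{I},\varphi}$ vanishes on $\partial\mathds{M}$. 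Subtracting the two harmonic-extension equations for $\mathscr{U}^{\mathbf{I}+h\mathbf{J}[\mathbf{I}],\varphi}$ and $\mathscr{U}^{\mathbf{I},\varphi}$ gives the Dirichlet problem
\begin{equation}
\Delta_{\mathbf{I}}\mathscr{V}^{\mathbf{I},h,\varphi}=-[\Delta_{\mathbf{I}+h\mathbf{J}[\mathbf{I}]}-\Delta_{\mathbf{I}}]\mathscr{U}^{\mathbf{I}+h\mathbf{J}[\mathbf{I}],\varphi},\qquad \mathscr{V}^{\mathbf{I},h,\varphi}|_{\partial\mathds{M}}=0,
\end{equation}
so the whole question reduces to understanding how $\Delta_{\mathbf{I}}$ varies with $\mathbf{I}$.

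The next step is to expand $\Delta_{\mathbf{I}+h\mathbf{J}[\mathbf{I}]}-\Delta_{\mathbf{I}}$ in a smooth coordinate chart. Writing Laplace--Beltrami in divergence form (with the square-root of the determinant of $\mathbf{g}[\grad_{\partial\mathds{M}}\mathbf{I}]$ incorporated into the coefficients), the difference is a second-order divergence-form operator whose symbol is $\mathbf{g}[\grad_{\partial\mathds{M}}(\mathbf{I}+h\mathbf{J}[\mathbf{I}])]^{-1}-\mathbf{g}[\grad_{\partial\mathds{M}}\mathbf{I}]^{-1}$. Because $\mathbf{g}[\cdot]$ is strictly positive and matrix inversion is analytic in its entries, a first-order Taylor expansion yields entries of the form $h\mathsf{A}^{\mathbf{I}}_{ij}+\mathrm{O}(h^{2})$, with $\mathsf{A}^{\mathbf{I}}_{ij}$ smooth and linear in $\grad_{\partial\mathds{M}}\mathbf{J}[\mathbf{I}]$; by \eqref{eq:flowdirection} this coefficient scales like $\e^{-1/3}$, with the constants controlled by $\mathscr{C}^{2}$-norms of $\mathbf{I}$ and of $\grad_{\partial\mathds{M}}\mathbf{I}$ (the extra derivative arising because the outer $\grad_i$ in the divergence form spends one derivative on $\grad_{\partial\mathds{M}}\mathbf{J}[\mathbf{I}]$). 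This simultaneously justifies the a priori $\mathrm{O}(h)$ operator bound $\Delta_{\mathbf{I}+h\mathbf{J}[\mathbf{I}]}-\Delta_{\mathbf{I}}:\mathscr{C}^{k+2}\to\mathscr{C}^{k}$ and produces the candidate limit $\mathscr{O}^{\mathbf{I}}:=\sum_{i,j}\grad_i(\mathsf{A}^{\mathbf{I}}_{ij}\grad_j\cdot)$ with the advertised mapping properties.

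To pass to the limit, I would first replace $\mathscr{U}^{\mathbf{I}+h\mathbf{J}[\mathbf{I}],\varphi}$ on the right-hand side of the $\mathscr{V}^{\mathbf{I},h,\varphi}$-PDE by $\mathscr{U}^{\mathbf{I},\varphi}+\mathscr{V}^{\mathbf{I},h,\varphi}$; the second contribution is $\mathrm{O}(h^{2})$ because $\mathscr{V}^{\mathbf{I},h,\varphi}$ is itself $\mathrm{O}(h)$ (this in turn follows from the a priori bound just established together with elliptic regularity for the fixed problem $\Delta_{\mathbf{I}}\cdot=\cdot$ with zero Dirichlet data). Dividing through by $h$ and sending $h\to 0$, standard $\mathscr{C}^{k}$ elliptic estimates for $\Delta_{\mathbf{I}}$ yield convergence of $h^{-1}\mathscr{V}^{\mathbf{I},h,\varphi}$ in every $\mathscr{C}^{k}(\mathds{M})$ to the solution $\mathscr{V}^{\mathbf{I},\varphi}$ of \eqref{eq:flowcompute3II}, and convergence of the boundary normal trace then gives \eqref{eq:flowcompute3I}. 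The main obstacle is not the limit itself---that is essentially routine---but the careful bookkeeping of which norms of $\mathbf{I}$ appear in the implied constants. Because $\mathbf{J}[\mathbf{I}]$ already contains $\Delta_{\partial\mathds{M}}\mathbf{I}$ (two derivatives of $\mathbf{I}$) and the divergence form of $\mathscr{O}^{\mathbf{I}}$ spends an additional derivative on $\grad_{\partial\mathds{M}}\mathbf{J}[\mathbf{I}]$, the constants involve $\mathscr{C}^{2}$-norms of both $\mathbf{I}$ and $\grad_{\partial\mathds{M}}\mathbf{I}$ rather than anything lower-order; this is the specific dependence that must be tracked when invoking elliptic regularity so that the statement feeds cleanly into the resolvent computations in the subsequent points of the section outline.
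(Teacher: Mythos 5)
Your proposal is correct and follows essentially the same route as the paper: you reduce the finite difference of Dirichlet-to-Neumann maps to a Dirichlet problem for $\mathscr{V}^{\mathbf{I},h,\varphi}=\mathscr{U}^{\mathbf{I}+h\mathbf{J}[\mathbf{I}],\varphi}-\mathscr{U}^{\mathbf{I},\varphi}$, expand $\Delta_{\mathbf{I}+h\mathbf{J}[\mathbf{I}]}-\Delta_{\mathbf{I}}$ in divergence form by Taylor-expanding $\mathbf{g}[\cdot]^{-1}$ to identify $\mathscr{O}^{\mathbf{I}}$, replace $\mathscr{U}^{\mathbf{I}+h\mathbf{J}[\mathbf{I}],\varphi}$ by $\mathscr{U}^{\mathbf{I},\varphi}+\mathscr{V}^{\mathbf{I},h,\varphi}$ to isolate the $\mathrm{O}(h^{2})$ remainder, and close via elliptic regularity. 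The bookkeeping of $\mathscr{C}^{2}$-norm dependence and the $\e^{-1/3}$ scaling from $\mathbf{J}[\mathbf{I}]$ is also the same as in the paper's argument.
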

\begin{proof}
See everything from \eqref{eq:flowcompute3I1} until the statement of Lemma \ref{lemma:flowcompute3}.
\end{proof}
\begin{corollary}\label{corollary:flowcompute4}
\fsp Fix any stopping time $\tau\in[0,1]$. For any $0\leq\s\leq\tau$, the operator
\begin{align}
{{}\mathscr{L}^{\e,\mathfrak{q}^{\e}_{\s}}_{\mathrm{flow}}(\mathscr{L}^{\e,\mathbf{I}^{\e}_{\s}}_{\mathrm{DtN}})}\label{eq:flowcompute4I}
\end{align}
is bounded as an operator {{}$\mathscr{C}^{\k}{}\to\mathscr{C}^{\k}{}$} with operator norm $\lesssim_{\k,\|\mathbf{Y}^{\e}\|_{\mathscr{C}^{0}_{\tau}\mathscr{C}^{2}_{}}}{{}\e^{-5/4}}$. 
\end{corollary}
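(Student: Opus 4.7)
The plan is to combine the explicit formula from Lemma \ref{lemma:flowcompute3} with standard elliptic regularity for the Dirichlet problem on $(\mathds{M},\mathbf{g}[\grad_{\partial\mathds{M}}\mathbf{I}^{\e}_{\s}])$, and then convert the bulk estimate into a boundary estimate via the trace theorem for the normal derivative. The factor $\e^{-5/3}$ will come directly from multiplying the $\e^{-4/3}$ in \eqref{eq:flowcompute3I} by the $\e^{-1/3}$ coming from $\mathscr{O}^{\mathbf{I}}$ in Lemma \ref{lemma:flowcompute3}.

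First, I would fix $\varphi\in\mathscr{C}^{\infty}(\partial\mathds{M})$ and apply \eqref{eq:flowcompute3I}--\eqref{eq:flowcompute3III} with $\mathbf{I}=\mathbf{I}^{\e}_{\s}$. The harmonic extension $\mathscr{U}^{\mathbf{I}^{\e}_{\s},\varphi}$ satisfies $\Delta_{\mathbf{I}^{\e}_{\s}}\mathscr{U}^{\mathbf{I}^{\e}_{\s},\varphi}=0$ with boundary data $\varphi$, so classical boundary Schauder estimates for the elliptic operator $\Delta_{\mathbf{I}^{\e}_{\s}}$ give
\begin{align}
\|\mathscr{U}^{\mathbf{I}^{\e}_{\s},\varphi}\|_{\mathscr{C}^{\k+4}(\mathds{M})}\lesssim_{\k,\|\mathbf{I}^{\e}_{\s}\|_{\mathscr{C}^{\k+6}(\partial\mathds{M})}}\|\varphi\|_{\mathscr{C}^{\k+4}(\partial\mathds{M})}.
\end{align}
The metric coefficients of $\Delta_{\mathbf{I}^{\e}_{\s}}$ depend smoothly on $\grad_{\partial\mathds{M}}\mathbf{I}^{\e}_{\s}$, and by Lemma \ref{lemma:aux1} their $\mathscr{C}^{\k+5}(\partial\mathds{M})$-norm is controlled in terms of $\|\mathbf{Y}^{\e}\|_{\mathscr{C}^{0}_{\tau}\mathscr{C}^{2}_{\partial\mathds{M}}}$ (up to $\mathrm{O}(1)$-many additional derivatives, which is where the loss of $10$ derivatives in the final statement will be absorbed).

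Next, by the estimate on $\mathscr{O}^{\mathbf{I}}$ stated in Lemma \ref{lemma:flowcompute3}, the RHS $\mathscr{O}^{\mathbf{I}^{\e}_{\s}}\mathscr{U}^{\mathbf{I}^{\e}_{\s},\varphi}$ of \eqref{eq:flowcompute3II} is bounded in $\mathscr{C}^{\k+2}(\mathds{M})$ by $\e^{-1/3}\|\varphi\|_{\mathscr{C}^{\k+\mathrm{O}(1)}(\partial\mathds{M})}$, up to constants depending on $\|\mathbf{Y}^{\e}\|_{\mathscr{C}^{0}_{\tau}\mathscr{C}^{2}_{\partial\mathds{M}}}$. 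Applying boundary Schauder estimates once more to the Dirichlet problem \eqref{eq:flowcompute3II} for $\mathscr{V}^{\mathbf{I}^{\e}_{\s},\varphi}$ (with zero boundary data) then yields
\begin{align}
\|\mathscr{V}^{\mathbf{I}^{\e}_{\s},\varphi}\|_{\mathscr{C}^{\k+2}(\mathds{M})}\lesssim_{\k,\|\mathbf{Y}^{\e}\|_{\mathscr{C}^{0}_{\tau}\mathscr{C}^{2}_{\partial\mathds{M}}}}\e^{-\frac13}\|\varphi\|_{\mathscr{C}^{\k+\mathrm{O}(1)}(\partial\mathds{M})}.
\end{align}
Since $\grad_{\mathsf{N}}$ is a first-order operator, its trace on $\partial\mathds{M}$ is controlled by one derivative of $\mathscr{V}$ inside $\mathds{M}$, so $\|\grad_{\mathsf{N}}\mathscr{V}^{\mathbf{I}^{\e}_{\s},\varphi}\|_{\mathscr{C}^{\k}(\partial\mathds{M})}\lesssim \|\mathscr{V}^{\mathbf{I}^{\e}_{\s},\varphi}\|_{\mathscr{C}^{\k+1}(\mathds{M})}$ with the same implicit dependence. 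Multiplying by the $\e^{-4/3}$ prefactor in \eqref{eq:flowcompute3I} produces the desired $\e^{-5/3}$ bound.

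The main obstacle I anticipate is bookkeeping the number of derivatives lost at each step: one application of elliptic regularity to $\mathscr{U}^{\mathbf{I}^{\e}_{\s},\varphi}$, the two-derivative loss from $\mathscr{O}^{\mathbf{I}^{\e}_{\s}}$, another application of elliptic regularity to $\mathscr{V}^{\mathbf{I}^{\e}_{\s},\varphi}$, and finally the trace of $\grad_{\mathsf{N}}$; together these should comfortably fit under $10$ derivatives, so the choice $\mathscr{C}^{\k+10}\to\mathscr{C}^{\k}$ in the statement is safe. The $\e^{-1/3}$ and $\e^{-4/3}$ scalings multiply cleanly because no further $\e$-dependence enters through the elliptic estimates (the metric $\mathbf{g}[\grad_{\partial\mathds{M}}\mathbf{I}^{\e}_{\s}]=\mathbf{g}[\e^{1/3}\grad_{\partial\mathds{M}}\mathbf{Y}^{\e}_{\s}]$ is a bounded perturbation of $\mathbf{g}[\grad_{\partial\mathds{M}}0]$ whose constants depend only on $\|\mathbf{Y}^{\e}\|_{\mathscr{C}^{0}_{\tau}\mathscr{C}^{2}_{\partial\mathds{M}}}$, again via Lemma \ref{lemma:aux1}).
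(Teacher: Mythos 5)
Your proposal is correct and follows essentially the same route as the paper: apply Lemma \ref{lemma:flowcompute3}, use Lemma \ref{lemma:aux1} to control derivatives of the metric by $\|\mathbf{Y}^{\e}\|_{\mathscr{C}^{0}_{\tau}\mathscr{C}^{2}_{\partial\mathds{M}}}$, apply elliptic regularity twice (for $\mathscr{U}$ and then $\mathscr{V}$), take the normal trace, and multiply the $\e^{-4/3}$ and $\e^{-1/3}$ scalings. The paper phrases the elliptic step as generic ``elliptic regularity'' rather than Schauder estimates, but the derivative bookkeeping and the $\e^{-5/3}$ accounting are the same.
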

\begin{proof}
As in the proof of Corollary \ref{corollary:flowcompute2}, by Lemma \ref{lemma:aux1}, we know that for all $n\geq0$, we have
\begin{align}
\|{{}\grad_{}}\mathbf{I}^{\e}\|_{\mathscr{C}^{0}_{\tau}\mathscr{C}^{n}_{}}\lesssim_{n}1+\|\mathbf{Y}^{\e}\|_{\mathscr{C}^{0}_{\tau}\mathscr{C}^{2}_{}}.\label{eq:flowcompute4I1}
\end{align}
It now suffices to use the formula for {{}$\mathscr{L}^{\e,\mathfrak{q}^{\e}_{\s}}_{\mathrm{flow}}(\mathscr{L}^{\e,\mathbf{I}^{\e}_{\s}}_{\mathrm{DtN}})$} and elliptic regularity for \eqref{eq:flowcompute3II} and \eqref{eq:flowcompute3III}; {{} this argument was given in the proof of Lemma \ref{lemma:flowcompute3}. (Indeed, all the elliptic regularity bounds there depend only on a finite number of derivatives of $\mathbf{g}[{{}\grad_{}}\mathbf{I}]$. Thus, by Construction \ref{construction:model}, they are controlled via $\|\mathbf{I}\|_{\mathscr{C}^{n}{}}$ for $n=\mathrm{O}(1)$.)}
\end{proof}
\subsection{Point (3), part 1: Computing {$\mathscr{L}^{\e,\mathfrak{q}}_{\mathrm{flow}}(\lambda-\mathscr{L}^{\e,\mathbf{I}}_{\mathrm{DtN}})^{-1}$} and {$\mathscr{L}^{\e,\mathfrak{q}}_{\mathrm{flow}}(\lambda-\mathscr{L}^{\e,\mathbf{I}}_{\mathrm{DtN}})^{-1}[\lambda(\lambda-\mathscr{L}^{\e,\mathbf{I}}_{\mathrm{DtN}})^{-1}]^{\ell}$}}
The basis for this step is the following resolvent identity:
\begin{align}
\mathrm{A}^{-1}-\mathrm{B}^{-1}=\mathrm{A}^{-1}(\mathrm{B}-\mathrm{A})\mathrm{B}^{-1}.\label{eq:flowcompute5resolvent}
\end{align}
Indeed, this turns computation of {$\mathscr{L}^{\e,\mathfrak{q}}_{\mathrm{flow}}(\lambda-\mathscr{L}^{\e,\mathbf{I}}_{\mathrm{DtN}})^{-1}$} into an application of Lemma \ref{lemma:flowcompute3}. More precisely, we have the following result (in which we retain the notation from Lemma \ref{lemma:flowcompute3}).
\begin{lemma}\label{lemma:flowcompute5}
\fsp Fix any $\mathbf{I}\in\mathscr{C}^{\infty}{}$. We have the following limit of operators $\mathscr{C}^{\infty}{}\to\mathscr{C}^{\infty}{}$:
\begin{align}
\lim_{h\to0}\tfrac{1}{h}\left\{(\lambda-\mathscr{L}^{\e,\mathbf{I}+h\mathbf{J}[\mathbf{I}]}_{\mathrm{DtN}})^{-1}-(\lambda-\mathscr{L}^{\e,\mathbf{I}}_{\mathrm{DtN}})^{-1}\right\}=(\lambda-\mathscr{L}^{\e,\mathbf{I}}_{\mathrm{DtN}})^{-1}\mathscr{L}^{\e,\mathfrak{q}}_{\mathrm{flow}}\mathscr{L}^{\e,\mathbf{I}}_{\mathrm{DtN}}(\lambda-\mathscr{L}^{\e,\mathbf{I}}_{\mathrm{DtN}})^{-1}.\label{eq:flowcompute5I}
\end{align}
\end{lemma}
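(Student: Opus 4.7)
The plan is to apply the resolvent identity \eqref{eq:flowcompute5resolvent} with $\mathrm{A}=\lambda-\mathscr{L}^{\e,\mathbf{I}+h\mathbf{J}[\mathbf{I}]}_{\mathrm{DtN}}$ and $\mathrm{B}=\lambda-\mathscr{L}^{\e,\mathbf{I}}_{\mathrm{DtN}}$. Since $\mathrm{B}-\mathrm{A}=\mathscr{L}^{\e,\mathbf{I}+h\mathbf{J}[\mathbf{I}]}_{\mathrm{DtN}}-\mathscr{L}^{\e,\mathbf{I}}_{\mathrm{DtN}}$, this identity yields
\begin{align}
\tfrac{1}{h}\bigl\{(\lambda-\mathscr{L}^{\e,\mathbf{I}+h\mathbf{J}[\mathbf{I}]}_{\mathrm{DtN}})^{-1}-(\lambda-\mathscr{L}^{\e,\mathbf{I}}_{\mathrm{DtN}})^{-1}\bigr\}=(\lambda-\mathscr{L}^{\e,\mathbf{I}+h\mathbf{J}[\mathbf{I}]}_{\mathrm{DtN}})^{-1}\bigl[\tfrac{1}{h}(\mathscr{L}^{\e,\mathbf{I}+h\mathbf{J}[\mathbf{I}]}_{\mathrm{DtN}}-\mathscr{L}^{\e,\mathbf{I}}_{\mathrm{DtN}})\bigr](\lambda-\mathscr{L}^{\e,\mathbf{I}}_{\mathrm{DtN}})^{-1}. \nonumber
\end{align}
By Lemma \ref{lemma:flowcompute3} the bracketed middle factor converges (as a map $\mathscr{C}^{\infty}(\partial\mathds{M})\to\mathscr{C}^{\infty}(\partial\mathds{M})$) to $\mathscr{L}^{\e,\mathfrak{q}}_{\mathrm{flow}}\mathscr{L}^{\e,\mathbf{I}}_{\mathrm{DtN}}$. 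Thus the whole proof reduces to showing that the outer resolvent is continuous in $\mathbf{I}$, namely that $(\lambda-\mathscr{L}^{\e,\mathbf{I}+h\mathbf{J}[\mathbf{I}]}_{\mathrm{DtN}})^{-1}\to(\lambda-\mathscr{L}^{\e,\mathbf{I}}_{\mathrm{DtN}})^{-1}$ in the appropriate operator topology on smooth functions as $h\to 0$.

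The continuity of the resolvent is obtained by a second application of \eqref{eq:flowcompute5resolvent}. The same computation gives
\begin{align}
(\lambda-\mathscr{L}^{\e,\mathbf{I}+h\mathbf{J}[\mathbf{I}]}_{\mathrm{DtN}})^{-1}-(\lambda-\mathscr{L}^{\e,\mathbf{I}}_{\mathrm{DtN}})^{-1}=(\lambda-\mathscr{L}^{\e,\mathbf{I}+h\mathbf{J}[\mathbf{I}]}_{\mathrm{DtN}})^{-1}\bigl(\mathscr{L}^{\e,\mathbf{I}+h\mathbf{J}[\mathbf{I}]}_{\mathrm{DtN}}-\mathscr{L}^{\e,\mathbf{I}}_{\mathrm{DtN}}\bigr)(\lambda-\mathscr{L}^{\e,\mathbf{I}}_{\mathrm{DtN}})^{-1}, \nonumber
\end{align}
so it suffices to show the middle difference is $\mathrm{O}(h)$ in a suitable operator norm. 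This is an analogue of the Laplacian computation \eqref{eq:flowcompute3I4a}-\eqref{eq:flowcompute3I4b}: one uses the explicit formula \eqref{eq:step1dton} for $\mathscr{L}^{\e,\mathbf{I}}_{\mathrm{DtN}}$, the elliptic regularity estimates for the harmonic extensions $\mathscr{U}^{\varphi,\mathbf{I}+h\mathbf{J}[\mathbf{I}]}$ and $\mathscr{U}^{\varphi,\mathbf{I}}$, and smoothness of the metric $\mathbf{g}[\grad_{\partial\mathds{M}}\mathbf{I}]$ in $\grad_{\partial\mathds{M}}\mathbf{I}$, exactly as in the proof of Lemma \ref{lemma:flowcompute3}. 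This shows $\mathscr{L}^{\e,\mathbf{I}+h\mathbf{J}[\mathbf{I}]}_{\mathrm{DtN}}-\mathscr{L}^{\e,\mathbf{I}}_{\mathrm{DtN}}=\mathrm{O}(h)$ as an operator $\mathscr{C}^{\k+L}(\partial\mathds{M})\to\mathscr{C}^{\k}(\partial\mathds{M})$ for some fixed $L$ depending on $\k$.

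The last ingredient is that the resolvents $(\lambda-\mathscr{L}^{\e,\mathbf{I}}_{\mathrm{DtN}})^{-1}$ are bounded from $\mathscr{C}^{\k}$ to $\mathscr{C}^{\k+1}$ uniformly in small $h$; this follows from elliptic regularity for the first-order pseudodifferential operator $\mathscr{L}^{\e,\mathbf{I}}_{\mathrm{DtN}}$ together with the spectral gap noted in the proof of Lemma \ref{lemma:thm13new4} (which ensures $\lambda$ lies in the resolvent set uniformly in $h$ small). Composing these estimates gives norm convergence of $(\lambda-\mathscr{L}^{\e,\mathbf{I}+h\mathbf{J}[\mathbf{I}]}_{\mathrm{DtN}})^{-1}\to(\lambda-\mathscr{L}^{\e,\mathbf{I}}_{\mathrm{DtN}})^{-1}$ as operators $\mathscr{C}^{\k+L'}(\partial\mathds{M})\to\mathscr{C}^{\k}(\partial\mathds{M})$ for every $\k$, which is precisely convergence as operators $\mathscr{C}^{\infty}(\partial\mathds{M})\to\mathscr{C}^{\infty}(\partial\mathds{M})$. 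The main (and only) obstacle is bookkeeping of the loss of derivatives in each resolvent/operator application, but this is purely an issue of tracking how many derivatives $L,L'$ of the test function $\varphi$ are consumed; the argument is otherwise a direct two-step application of \eqref{eq:flowcompute5resolvent} combined with Lemma \ref{lemma:flowcompute3}.
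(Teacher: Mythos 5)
Your argument is correct and follows essentially the same route as the paper's proof: apply the resolvent identity \eqref{eq:flowcompute5resolvent} to factor the difference quotient of resolvents, observe via Lemma \ref{lemma:flowcompute3} that the middle factor converges to $\mathscr{L}^{\e,\mathfrak{q}}_{\mathrm{flow}}\mathscr{L}^{\e,\mathbf{I}}_{\mathrm{DtN}}$, and then justify replacing the perturbed outer resolvent $(\lambda-\mathscr{L}^{\e,\mathbf{I}+h\mathbf{J}[\mathbf{I}]}_{\mathrm{DtN}})^{-1}$ by $(\lambda-\mathscr{L}^{\e,\mathbf{I}}_{\mathrm{DtN}})^{-1}$ in the $h\to0$ limit. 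The paper compresses the last step into a single remark that the substitution costs $O(h^{2})$ (by boundedness of resolvents plus the $\mathrm{O}(h)$-smallness of the Dirichlet-to-Neumann difference), while you spell it out as a second application of the resolvent identity together with uniform resolvent bounds; these are the same estimate organized slightly differently, and both are complete.
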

\begin{proof}
We first use \eqref{eq:flowcompute5resolvent} with $\mathrm{A}=(\lambda-\mathscr{L}^{\e,\mathbf{I}+h\mathbf{J}[\mathbf{I}]}_{\mathrm{DtN}})^{-1}$ and $\mathrm{B}=(\lambda-\mathscr{L}^{\e,\mathbf{I}}_{\mathrm{DtN}})^{-1}$:
\begin{align}
(\lambda-\mathscr{L}^{\e,\mathbf{I}+h\mathbf{J}[\mathbf{I}]}_{\mathrm{DtN}})^{-1}-(\lambda-\mathscr{L}^{\e,\mathbf{I}}_{\mathrm{DtN}})^{-1}&=(\lambda-\mathscr{L}^{\e,\mathbf{I}+h\mathbf{J}[\mathbf{I}]}_{\mathrm{DtN}})^{-1}[\mathscr{L}^{\e,\mathbf{I}+h\mathbf{J}[\mathbf{I}]}_{\mathrm{DtN}}-\mathscr{L}^{\e,\mathbf{I}}_{\mathrm{DtN}}](\lambda-\mathscr{L}^{\e,\mathbf{I}}_{\mathrm{DtN}})^{-1}.\label{eq:flowcompute5I1}
\end{align}
{{}The resolvents are bounded operators on any Sobolev space by Lemma \ref{lemma:dtonestimates}. By Lemma \ref{lemma:flowcompute3} and standard elliptic regularity, for any test function $\varphi\in\mathscr{C}^{\infty}{}$, the quantity $\|[\mathscr{L}^{\e,\mathbf{I}+h\mathbf{J}[\mathbf{I}]}_{\mathrm{DtN}}-\mathscr{L}^{\e,\mathbf{I}}_{\mathrm{DtN}}]\varphi\|_{\mathscr{C}^{k}{}}$ is $\mathrm{O}(h)$ for any fixed integer $k\geq0$. Thus, up to an error of $\mathrm{O}(h^{2})$, we can replace the resolvent $(\lambda-\mathscr{L}^{\e,\mathbf{I}+h\mathbf{J}[\mathbf{I}]}_{\mathrm{DtN}})^{-1}$ on the RHS of \eqref{eq:flowcompute5I1} by {$(\lambda-\mathscr{L}^{\e,\mathbf{I}}_{\mathrm{DtN}})^{-1}$}. Dividing by $h$ and sending $h\to0$ then gives \eqref{eq:flowcompute5I}}.
\end{proof}
We now use another chain-rule-type argument to differentiate {$(\lambda-\mathscr{L}^{\e,\mathbf{I}}_{\mathrm{DtN}})^{-1}[\lambda(\lambda-\mathscr{L}^{\e,\mathbf{I}}_{\mathrm{DtN}})^{-1}]^{\ell}$} in $\mathbf{I}$. To this end, we require another resolvent identity. In particular, we first claim that
\begin{align}
\mathrm{A}^{-[\ell+1]}-\mathrm{B}^{-[\ell+1]}&={\textstyle\sum_{\n=0}^{\ell}}\mathrm{A}^{-\n}(\mathrm{A}^{-1}-\mathrm{B}^{-1})\mathrm{B}^{-\ell+\n}.\label{eq:flowcompute6resolvent}
\end{align}
Indeed, if $\ell=0$, this is trivial. To induct, we first write
\begin{align}
\mathrm{A}^{-[\ell+1]}-\mathrm{B}^{-[\ell+1]}=\mathrm{A}^{-1}[\mathrm{A}^{-\ell}-\mathrm{B}^{-\ell}]+[\mathrm{A}^{-1}-\mathrm{B}^{-1}]\mathrm{B}^{-\ell},
\end{align}
and plug \eqref{eq:flowcompute6resolvent} (but with $\ell$ instead of $\ell+1$) into the first term on the RHS above to deduce \eqref{eq:flowcompute6resolvent} for $\ell+1$.
\begin{lemma}\label{lemma:flowcompute6}
\fsp Fix any $\mathbf{I}\in\mathscr{C}^{\infty}{}$. We have the following limit of operators $\mathscr{C}^{\infty}{}\to\mathscr{C}^{\infty}{}$:
\begin{align}
&\lim_{h\to0}\tfrac{1}{h}\left\{(\lambda-\mathscr{L}^{\e,\mathbf{I}+h\mathbf{J}[\mathbf{I}]}_{\mathrm{DtN}})^{-1}[\lambda(\lambda-\mathscr{L}^{\e,\mathbf{I}+h\mathbf{J}[\mathbf{I}]}_{\mathrm{DtN}})^{-1}]^{\ell}-(\lambda-\mathscr{L}^{\e,\mathbf{I}}_{\mathrm{DtN}})^{-1}[\lambda(\lambda-\mathscr{L}^{\e,\mathbf{I}}_{\mathrm{DtN}})^{-1}]^{\ell}\right\}\label{eq:flowcompute6Ia}\\
&=\lambda^{\ell}{\textstyle\sum_{\n=0}^{\ell}}(\lambda-\mathscr{L}^{\e,\mathbf{I}}_{\mathrm{DtN}})^{-\n-1}\mathscr{L}^{\e,\mathfrak{q}}_{\mathrm{flow}}\mathscr{L}^{\e,\mathbf{I}}_{\mathrm{DtN}}(\lambda-\mathscr{L}^{\e,\mathbf{I}}_{\mathrm{DtN}})^{-\ell-1+\n}.\label{eq:flowcompute6Ib}
\end{align}
\end{lemma}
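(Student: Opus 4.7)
The plan is to reduce Lemma \ref{lemma:flowcompute6} to the already-established Lemma \ref{lemma:flowcompute5} by combining the operator-algebraic identity \eqref{eq:flowcompute6resolvent} with a simple continuity argument for the outer resolvent powers. First, I observe the purely algebraic simplification
\begin{align*}
(\lambda-\mathscr{L}^{\e,\mathbf{I}}_{\mathrm{DtN}})^{-1}[\lambda(\lambda-\mathscr{L}^{\e,\mathbf{I}}_{\mathrm{DtN}})^{-1}]^{\ell}=\lambda^{\ell}(\lambda-\mathscr{L}^{\e,\mathbf{I}}_{\mathrm{DtN}})^{-(\ell+1)},
\end{align*}
so that \eqref{eq:flowcompute6Ia} can be rewritten as $\lambda^{\ell}h^{-1}\{\mathrm{A}^{-(\ell+1)}-\mathrm{B}^{-(\ell+1)}\}$, where I set $\mathrm{A}=\lambda-\mathscr{L}^{\e,\mathbf{I}+h\mathbf{J}[\mathbf{I}]}_{\mathrm{DtN}}$ and $\mathrm{B}=\lambda-\mathscr{L}^{\e,\mathbf{I}}_{\mathrm{DtN}}$. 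Applying \eqref{eq:flowcompute6resolvent} converts this into
\begin{align*}
\lambda^{\ell}\sum_{\n=0}^{\ell}\mathrm{A}^{-\n}\cdot\tfrac{1}{h}(\mathrm{A}^{-1}-\mathrm{B}^{-1})\cdot\mathrm{B}^{-\ell+\n}.
\end{align*}

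Next I pass to the limit $h\to 0$ in each summand. The central factor $h^{-1}(\mathrm{A}^{-1}-\mathrm{B}^{-1})$ is exactly the object computed in Lemma \ref{lemma:flowcompute5}, whose limit equals $\mathrm{B}^{-1}\mathscr{L}^{\e,\mathfrak{q}}_{\mathrm{flow}}\mathscr{L}^{\e,\mathbf{I}}_{\mathrm{DtN}}\mathrm{B}^{-1}$ in the strong operator topology on $\mathscr{C}^{\infty}(\partial\mathds{M})$. For the outer powers $\mathrm{A}^{-\n}$, continuity $\mathrm{A}^{-\n}\to\mathrm{B}^{-\n}$ as $h\to 0$ follows by a telescoping resolvent argument: write $\mathrm{A}^{-\n}-\mathrm{B}^{-\n}=\sum_{j=0}^{\n-1}\mathrm{A}^{-j}(\mathrm{A}^{-1}-\mathrm{B}^{-1})\mathrm{B}^{-\n+j+1}$, use the identity $\mathrm{A}^{-1}-\mathrm{B}^{-1}=\mathrm{A}^{-1}(\mathrm{B}-\mathrm{A})\mathrm{B}^{-1}$ together with the $\mathrm{O}(h)$ operator-norm bound on $\mathscr{L}^{\e,\mathbf{I}+h\mathbf{J}[\mathbf{I}]}_{\mathrm{DtN}}-\mathscr{L}^{\e,\mathbf{I}}_{\mathrm{DtN}}$ supplied by Lemma \ref{lemma:flowcompute3}, and invoke the uniform Sobolev-scale resolvent bounds provided by Lemma \ref{lemma:dtonestimates} to control the remaining resolvent powers.

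Combining these two convergences, the $\n$-th summand tends to
\begin{align*}
(\lambda-\mathscr{L}^{\e,\mathbf{I}}_{\mathrm{DtN}})^{-\n}\cdot(\lambda-\mathscr{L}^{\e,\mathbf{I}}_{\mathrm{DtN}})^{-1}\mathscr{L}^{\e,\mathfrak{q}}_{\mathrm{flow}}\mathscr{L}^{\e,\mathbf{I}}_{\mathrm{DtN}}(\lambda-\mathscr{L}^{\e,\mathbf{I}}_{\mathrm{DtN}})^{-1}\cdot(\lambda-\mathscr{L}^{\e,\mathbf{I}}_{\mathrm{DtN}})^{-\ell+\n},
\end{align*}
which collapses to $(\lambda-\mathscr{L}^{\e,\mathbf{I}}_{\mathrm{DtN}})^{-\n-1}\mathscr{L}^{\e,\mathfrak{q}}_{\mathrm{flow}}\mathscr{L}^{\e,\mathbf{I}}_{\mathrm{DtN}}(\lambda-\mathscr{L}^{\e,\mathbf{I}}_{\mathrm{DtN}})^{-\ell-1+\n}$ since the two Dirichlet-to-Neumann resolvents commute with themselves. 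Re-inserting the prefactor $\lambda^{\ell}$ and summing over $\n\in\{0,\ldots,\ell\}$ yields \eqref{eq:flowcompute6Ib}.

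There is no genuine obstacle here beyond bookkeeping; the only thing requiring care is the continuity of the outer resolvent powers $\mathrm{A}^{-\n}$ in the perturbation parameter $h$, but this reduces (via the telescoping identity above) to the $\n=1$ case, which in turn follows from Lemmas \ref{lemma:flowcompute3} and \ref{lemma:dtonestimates}. In particular, no new spectral theory or new estimates on $\mathscr{O}^{\mathbf{I}}$ are required beyond what was already developed in the derivation of Lemmas \ref{lemma:flowcompute3} and \ref{lemma:flowcompute5}.
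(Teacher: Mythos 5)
Your proof is correct and follows essentially the same route as the paper's: rewrite the $\ell$-fold resolvent product as $\lambda^{\ell}$ times the $(\ell+1)$-th inverse power, apply the telescoping identity \eqref{eq:flowcompute6resolvent} with $\mathrm{A}=\lambda-\mathscr{L}^{\e,\mathbf{I}+h\mathbf{J}[\mathbf{I}]}_{\mathrm{DtN}}$ and $\mathrm{B}=\lambda-\mathscr{L}^{\e,\mathbf{I}}_{\mathrm{DtN}}$, feed the central resolvent difference into Lemma \ref{lemma:flowcompute5}, and replace the outer $\mathrm{A}^{-\n}$ factors by $\mathrm{B}^{-\n}$ using the $\mathrm{O}(h)$ operator bound from Lemma \ref{lemma:flowcompute3} together with the resolvent bounds from Lemma \ref{lemma:dtonestimates}. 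The only stylistic difference is that you phrase the last replacement as a continuity statement ($\mathrm{A}^{-\n}\to\mathrm{B}^{-\n}$) established by a further telescoping, while the paper instead tracks the error directly as $\mathrm{O}(h^{2})$ — one $\mathrm{O}(h)$ factor from replacing $\mathrm{A}^{-\n}$ by $\mathrm{B}^{-\n}$ and one from the central difference $\mathrm{A}^{-1}-\mathrm{B}^{-1}$ — and these are the same observation expressed two ways.
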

\begin{proof}
We first use \eqref{eq:flowcompute6resolvent} for with $\mathrm{A}=(\lambda-\mathscr{L}^{\e,\mathbf{I}+h\mathbf{J}[\mathbf{I}]}_{\mathrm{DtN}})^{-1}$ and $\mathrm{B}=(\lambda-\mathscr{L}^{\e,\mathbf{I}}_{\mathrm{DtN}})^{-1}$:
\begin{align}
&(\lambda-\mathscr{L}^{\e,\mathbf{I}+h\mathbf{J}[\mathbf{I}]}_{\mathrm{DtN}})^{-1}[\lambda(\lambda-\mathscr{L}^{\e,\mathbf{I}+h\mathbf{J}[\mathbf{I}]}_{\mathrm{DtN}})^{-1}]^{\ell}-(\lambda-\mathscr{L}^{\e,\mathbf{I}}_{\mathrm{DtN}})^{-1}[\lambda(\lambda-\mathscr{L}^{\e,\mathbf{I}}_{\mathrm{DtN}})^{-1}]^{\ell}\label{eq:flowcompute6I1a}\\
&=\lambda^{\ell}\left\{(\lambda-\mathscr{L}^{\e,\mathbf{I}+h\mathbf{J}[\mathbf{I}]}_{\mathrm{DtN}})^{-[\ell+1]}-(\lambda-\mathscr{L}^{\e,\mathbf{I}}_{\mathrm{DtN}})^{-[\ell+1]}\right\}\label{eq:flowcompute6I1b}\\
&=\lambda^{\ell}{\textstyle\sum_{\n=0}^{\ell}}(\lambda-\mathscr{L}^{\e,\mathbf{I}+h\mathbf{J}[\mathbf{I}]}_{\mathrm{DtN}})^{-\n}\left\{(\lambda-\mathscr{L}^{\e,\mathbf{I}+h\mathbf{J}[\mathbf{I}]}_{\mathrm{DtN}})^{-1}-(\lambda-\mathscr{L}^{\e,\mathbf{I}}_{\mathrm{DtN}})^{-1}\right\}(\lambda-\mathscr{L}^{\e,\mathbf{I}}_{\mathrm{DtN}})^{-\ell+n}.\label{eq:flowcompute6I1c}
\end{align}
We can replace the difference of resolvents by $h\times(\lambda-\mathscr{L}^{\e,\mathbf{I}}_{\mathrm{DtN}})^{-1}\mathscr{L}^{\e,\mathfrak{q}}_{\mathrm{flow}}\mathscr{L}^{\e,\mathbf{I}}_{\mathrm{DtN}}(\lambda-\mathscr{L}^{\e,\mathbf{I}}_{\mathrm{DtN}})^{-1}$ plus an error of $o(h)$ by Lemma \ref{lemma:flowcompute5}. By the same token, in \eqref{eq:flowcompute6I1c}, we can also replace {$(\lambda-\mathscr{L}^{\e,\mathbf{I}+h\mathbf{J}[\mathbf{I}]}_{\mathrm{DtN}})^{-\n}$} by {$(\lambda-\mathscr{L}^{\e,\mathbf{I}}_{\mathrm{DtN}})^{-\n}$} with an error of $O(h^{2})$ (this replacement has error $O(h)$, but the difference of resolvents in \eqref{eq:flowcompute6I1c} is $O(h)$ as we just mentioned). Thus, when we divide by $h$ and send $h\to0$, \eqref{eq:flowcompute6I1c} becomes \eqref{eq:flowcompute6Ib}, so we are done.
\end{proof}
\subsection{Point (4): Putting it altogether via Leibniz rule}
Observe that the operator \eqref{eq:step1flow} is an actual derivative, so the Leibniz rule applies. Thus, to compute \eqref{eq:flowcompute1}, we get two terms. The first comes from differentiating the operator in $\mathbf{I}$, and the second comes from differentiating $\mathrm{Fluc}^{\mathrm{noise},\e}$ in $\mathbf{I}$. In particular, by Lemmas \ref{lemma:flowcompute1} and \ref{lemma:flowcompute6}, we get the following (whose proof is, again, immediate by the Leibniz rule, so we omit it).
\begin{lemma}\label{lemma:flowcompute7}
\fsp Retain the notation from Lemmas \ref{lemma:flowcompute1} and \ref{lemma:flowcompute3}. Fix $\x,\mathfrak{q}\in\partial\mathds{M}$ and $\mathbf{I}\in\mathscr{C}^{\infty}{}$. The quantity \eqref{eq:flowcompute1}, which is defined as a limit via \eqref{eq:step1flow}, exists, and
\begin{align}
\eqref{eq:flowcompute1}&=\lambda^{\ell}{\textstyle\sum_{\n=0}^{\ell}}(\lambda-\mathscr{L}^{\e,\mathbf{I}}_{\mathrm{DtN}})^{-\n-1}\mathscr{L}^{\e,\mathfrak{q}}_{\mathrm{flow}}\mathscr{L}^{\e,\mathbf{I}}_{\mathrm{DtN}}(\lambda-\mathscr{L}^{\e,\mathbf{I}}_{\mathrm{DtN}})^{-\ell-1+\n}\mathrm{Fluc}^{\mathrm{noise},\e}_{\x,\mathfrak{q},\mathbf{I}}\label{eq:flowcompute7Ia}\\
&+(\lambda-\mathscr{L}^{\e,\mathbf{I}}_{\mathrm{DtN}})^{-1}[\lambda(\lambda-\mathscr{L}^{\e,\mathbf{I}}_{\mathrm{DtN}})^{-1}]^{\ell}\mathscr{L}^{\e,\mathfrak{q}}_{\mathrm{flow}}\mathrm{Fluc}^{\mathrm{noise},\e}_{\x,\mathfrak{q},\mathbf{I}}.\label{eq:flowcompute7Ib}
\end{align}
\end{lemma}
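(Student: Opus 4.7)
The plan is to recognize $\mathscr{L}^{\e,\mathfrak{q}}_{\mathrm{flow}}$ as a single Fr\'echet-type directional derivative at $\mathbf{I}\in\mathscr{C}^{\infty}(\partial\mathds{M})$ in the direction $\mathbf{J}[\mathbf{I}]$ of \eqref{eq:flowdirection}, and then apply the elementary product rule to the two $\mathbf{I}$-dependent factors appearing in \eqref{eq:flowcompute1}: the operator
\begin{align*}
\mathrm{A}[\mathbf{I}]:=(\lambda-\mathscr{L}^{\e,\mathbf{I}}_{\mathrm{DtN}})^{-1}[\lambda(\lambda-\mathscr{L}^{\e,\mathbf{I}}_{\mathrm{DtN}})^{-1}]^{\ell}
\end{align*}
and the vector $\mathrm{Fluc}^{\mathrm{noise},\e}_{\x,\mathfrak{q},\mathbf{I}}$. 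Concretely, I would write the difference quotient in \eqref{eq:step1flow} as
\begin{align*}
\tfrac{1}{h}\bigl\{\mathrm{A}[\mathbf{I}+h\mathbf{J}[\mathbf{I}]]\,\mathrm{Fluc}^{\mathrm{noise},\e}_{\x,\mathfrak{q},\mathbf{I}+h\mathbf{J}[\mathbf{I}]}-\mathrm{A}[\mathbf{I}]\,\mathrm{Fluc}^{\mathrm{noise},\e}_{\x,\mathfrak{q},\mathbf{I}}\bigr\},
\end{align*}
add and subtract $\mathrm{A}[\mathbf{I}+h\mathbf{J}[\mathbf{I}]]\,\mathrm{Fluc}^{\mathrm{noise},\e}_{\x,\mathfrak{q},\mathbf{I}}$, and split into an ``operator-times-derivative'' term and a ``derivative-of-operator-times-vector'' term.

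For the operator-times-derivative term, Lemma \ref{lemma:flowcompute1} gives $h^{-1}$-convergence of the $\mathrm{Fluc}^{\mathrm{noise},\e}$ difference quotient to $\mathscr{L}^{\e,\mathfrak{q}}_{\mathrm{flow}}\mathrm{Fluc}^{\mathrm{noise},\e}_{\x,\mathfrak{q},\mathbf{I}}$ in $\mathscr{C}^{\k}(\partial\mathds{M})$ for every $\k$, and Lemma \ref{lemma:dtonestimates} together with the resolvent identity \eqref{eq:flowcompute5resolvent} shows that $\mathrm{A}[\mathbf{I}+h\mathbf{J}[\mathbf{I}]]$ converges in operator norm on each Sobolev space to $\mathrm{A}[\mathbf{I}]$ as $h\to0$; passing the limit inside yields the contribution \eqref{eq:flowcompute7Ib}. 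For the derivative-of-operator term, Lemma \ref{lemma:flowcompute6} supplies exactly the limit
\begin{align*}
\lambda^{\ell}{\textstyle\sum_{\n=0}^{\ell}}(\lambda-\mathscr{L}^{\e,\mathbf{I}}_{\mathrm{DtN}})^{-\n-1}\mathscr{L}^{\e,\mathfrak{q}}_{\mathrm{flow}}\mathscr{L}^{\e,\mathbf{I}}_{\mathrm{DtN}}(\lambda-\mathscr{L}^{\e,\mathbf{I}}_{\mathrm{DtN}})^{-\ell-1+\n},
\end{align*}
as an operator on $\mathscr{C}^{\infty}(\partial\mathds{M})$; applying this to the (fixed, $h$-independent, smooth) vector $\mathrm{Fluc}^{\mathrm{noise},\e}_{\x,\mathfrak{q},\mathbf{I}}$ reproduces \eqref{eq:flowcompute7Ia}. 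Summing the two limits then recovers \eqref{eq:flowcompute7Ia}--\eqref{eq:flowcompute7Ib}.

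The only real obstacle is to justify that the two limits just described are taken in compatible topologies, so that the cross term $\mathrm{A}[\mathbf{I}+h\mathbf{J}[\mathbf{I}]]\,\tfrac{1}{h}(\mathrm{Fluc}^{\mathrm{noise},\e}_{\x,\mathfrak{q},\mathbf{I}+h\mathbf{J}[\mathbf{I}]}-\mathrm{Fluc}^{\mathrm{noise},\e}_{\x,\mathfrak{q},\mathbf{I}})$ actually converges to $\mathrm{A}[\mathbf{I}]\mathscr{L}^{\e,\mathfrak{q}}_{\mathrm{flow}}\mathrm{Fluc}^{\mathrm{noise},\e}_{\x,\mathfrak{q},\mathbf{I}}$. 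This is handled by noting two uniform bounds: (i) the resolvents $(\lambda-\mathscr{L}^{\e,\mathbf{I}+h\mathbf{J}[\mathbf{I}]}_{\mathrm{DtN}})^{-1}$ are uniformly bounded on every $\mathrm{H}^{\alpha}(\partial\mathds{M})$ for $|h|$ small, by Lemma \ref{lemma:dtonestimates} together with the elliptic-regularity estimates used in the proof of Lemma \ref{lemma:flowcompute3}; and (ii) the $\mathrm{o}(h)$ remainders in Lemmas \ref{lemma:flowcompute5}--\ref{lemma:flowcompute6} are $\mathrm{o}(h)$ in operator norm on those same spaces, so they remain $\mathrm{o}(h)$ after acting on the smooth, uniformly bounded test vector $\mathrm{Fluc}^{\mathrm{noise},\e}_{\x,\mathfrak{q},\mathbf{I}}$. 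With this bookkeeping in place, the Leibniz rule is immediate and the lemma follows.
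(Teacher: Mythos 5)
Your proposal is correct and follows exactly the route the paper indicates (and omits): it is the Leibniz rule for the Fr\'echet derivative $\mathscr{L}^{\e,\mathfrak{q}}_{\mathrm{flow}}$ applied to the product of the resolvent operator and $\mathrm{Fluc}^{\mathrm{noise},\e}$, with Lemma \ref{lemma:flowcompute6} supplying the operator-derivative term \eqref{eq:flowcompute7Ia} and Lemma \ref{lemma:flowcompute1} supplying the vector-derivative term \eqref{eq:flowcompute7Ib}. The uniform resolvent bounds you invoke to control the cross term and justify passing to the limit are precisely the bookkeeping the paper leaves implicit when it says the proof ``is immediate by the Leibniz rule.''
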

\section{Proofs of Lemmas \ref{lemma:thm13new6} and \ref{lemma:thm13new7}}\label{section:flowproofs}
Before we start, we invite the reader to go back to right before the statements of Lemmas \ref{lemma:thm13new6} and \ref{lemma:thm13new7} to get the idea behind their proofs, respectively. (In a nutshell, the proofs are just power-counting and explicitly writing out the topologies in which we get estimates. The only other idea is the homogenization step for the proof of \eqref{eq:thm13new7I} that we described briefly in the second bullet point after Lemma \ref{lemma:thm13new6}. But even this is built on the same ideas via the {{}It\^{o}} formula that are present in Section \ref{section:thm13proof}. Moreover, it is easier in this case, since there will be no singular ${{}\e^{-1/4}}$-factor to fight.)
\subsection{Proof of Lemma \ref{lemma:thm13new6}}
Let $\tau\in[0,1]$ be a generic stopping time. Our goal is to bound \eqref{eq:thm13new3Ic}; see \eqref{eq:thm13new6I} for the exact estimate we want. Because $\tau\leq1$, we can bound the time-integral \eqref{eq:thm13new3Ic} by the supremum of its integrand (in $\mathscr{C}^{\k}{}$-norm); this is by the triangle inequality. In particular, we have 
\begin{align}
\|\eqref{eq:thm13new3Ic}\|_{\mathscr{C}^{0}_{\tau}\mathscr{C}^{\k}}&\lesssim\sup_{0\leq\s\leq\tau}\|\mathscr{L}^{\e,\mathfrak{q}^{\e}_{\s}}_{\mathrm{flow}}(\lambda-\mathscr{L}^{\e,\mathbf{I}^{\e}_{\s,\cdot}}_{\mathrm{DtN}})^{-1}[\lambda(\lambda-\mathscr{L}^{\e,\mathbf{I}^{\e}_{\s,\cdot}}_{\mathrm{DtN}})^{-1}]^{\ell}\mathrm{Fluc}^{\mathrm{noise},\e}_{\cdot,\mathfrak{q}^{\e}_{\s},\mathbf{I}^{\e}_{\s,\cdot}}\|_{\mathscr{C}^{\k}{}}.\label{eq:thm13new6I1a}
\end{align}
We compute the term in the norm on the RHS of \eqref{eq:thm13new6I1a} using Lemma \ref{lemma:flowcompute7}. In particular, the RHS of \eqref{eq:thm13new6I1a} is
\begin{align}
&\lesssim_{\ell}\lambda^{\ell}\max_{0\leq\n\leq\ell}\sup_{0\leq\s\leq\tau}\|(\lambda-\mathscr{L}^{\e,\mathbf{I}^{\e}_{\s}}_{\mathrm{DtN}})^{-\n-1}\mathscr{L}^{\e,\mathfrak{q}^{\e}_{\s}}_{\mathrm{flow}}\mathscr{L}^{\e,\mathbf{I}^{\e}_{\s}}_{\mathrm{DtN}}(\lambda-\mathscr{L}^{\e,\mathbf{I}^{\e}_{\s}}_{\mathrm{DtN}})^{-\ell-1+\n}\mathrm{Fluc}^{\mathrm{noise},\e}_{\cdot,\mathfrak{q}^{\e}_{\s},\mathbf{I}^{\e}_{\s}}\|_{\mathscr{C}^{\k}{}}\label{eq:thm13new6I1b}\\
&+\sup_{0\leq\s\leq\tau}\|(\lambda-\mathscr{L}^{\e,\mathbf{I}^{\e}_{\s}}_{\mathrm{DtN}})^{-1}[\lambda(\lambda-\mathscr{L}^{\e,\mathbf{I}^{\e}_{\s}}_{\mathrm{DtN}})^{-1}]^{\ell}\mathscr{L}^{\e,\mathfrak{q}^{\e}_{\s}}_{\mathrm{flow}}\mathrm{Fluc}^{\mathrm{noise},\e}_{\cdot,\mathfrak{q}^{\e}_{\s},\mathbf{I}^{\e}_{\s}}\|_{\mathscr{C}^{\k}{}}.\label{eq:thm13new6I1c}
\end{align}
We will now assume that $\k=0$; bounds for general $\k\geq0$ follow by the exact same argument but replacing $\mathrm{Fluc}^{\mathrm{noise},\e}$ by its $\k$-th order derivatives in $\x$. Now, fix $\x\in\partial\mathds{M}$. Let $\llangle\rrangle_{\mathrm{H}^{\alpha}{}}$ be the $\mathrm{H}^{\alpha}{}$-norm in the {$\mathfrak{q}^{\e}_{\s}$}-variable. We also set {$\llangle\rrangle_{\mathscr{C}^{\r}{}}$} to be the {$\mathscr{C}^{\r}{}$}-norm in {$\mathfrak{q}^{\e}_{\s}$}. To control \eqref{eq:thm13new6I1b}, observe that:
\begin{itemize}
\item The resolvents in \eqref{eq:thm13new6I1b} are bounded operators on Sobolev spaces with norm $\lesssim\lambda^{-1}$. Thus, for any $\alpha\geq0$, we get the following (the last bound follows since $\mathrm{L}^{\infty}$ controls $\mathrm{L}^{2}$ on the compact manifold $\partial\mathds{M}$):
\begin{align}
&\llangle(\lambda-\mathscr{L}^{\e,\mathbf{I}^{\e}_{\s}}_{\mathrm{DtN}})^{-\n-1}\mathscr{L}^{\e,\mathfrak{q}^{\e}_{\s}}_{\mathrm{flow}}\mathscr{L}^{\e,\mathbf{I}^{\e}_{\s}}_{\mathrm{DtN}}(\lambda-\mathscr{L}^{\e,\mathbf{I}^{\e}_{\s}}_{\mathrm{DtN}})^{-\ell-1+\n}\mathrm{Fluc}^{\mathrm{noise},\e}_{\x,\mathfrak{q}^{\e}_{\s},\mathbf{I}^{\e}_{\s}}\rrangle_{\mathrm{H}^{\alpha}{}}\label{eq:thm13new6I2a}\\
&\lesssim_{\ell,\alpha}\lambda^{-\n-1}\llangle\mathscr{L}^{\e,\mathfrak{q}^{\e}_{\s}}_{\mathrm{flow}}\mathscr{L}^{\e,\mathbf{I}^{\e}_{\s}}_{\mathrm{DtN}}(\lambda-\mathscr{L}^{\e,\mathbf{I}^{\e}_{\s}}_{\mathrm{DtN}})^{-\ell-1+\n}\mathrm{Fluc}^{\mathrm{noise},\e}_{\x,\mathfrak{q}^{\e}_{\s},\mathbf{I}^{\e}_{\s}}\rrangle_{\mathrm{H}^{\alpha}{}}\label{eq:thm13new6I2b}\\
&\lesssim_{\alpha,\ell}\lambda^{-\n-1}\llangle\mathscr{L}^{\e,\mathfrak{q}^{\e}_{\s}}_{\mathrm{flow}}\mathscr{L}^{\e,\mathbf{I}^{\e}_{\s}}_{\mathrm{DtN}}(\lambda-\mathscr{L}^{\e,\mathbf{I}^{\e}_{\s}}_{\mathrm{DtN}})^{-\ell-1+\n}\mathrm{Fluc}^{\mathrm{noise},\e}_{\x,\mathfrak{q}^{\e}_{\s},\mathbf{I}^{\e}_{\s}}\rrangle_{\mathscr{C}^{\alpha}{}}.\label{eq:thm13new6I2c}
\end{align}
Now, we use the operator norm bound for $\mathscr{L}^{\e,\mathfrak{q}^{\e}_{\s}}_{\mathrm{flow}}\mathscr{L}^{\e,\mathbf{I}^{\e}_{\s}}_{\mathrm{DtN}}$ from Corollary \ref{corollary:flowcompute4}. We deduce that 
\begin{align}
\eqref{eq:thm13new6I2c}&\lesssim_{\|\mathbf{Y}^{\e}\|_{\mathscr{C}^{0}_{\tau}\mathscr{C}^{2}_{}}}{{}\e^{-\frac54}}\lambda^{-\n-1}\llangle(\lambda-\mathscr{L}^{\e,\mathbf{I}^{\e}_{\s}}_{\mathrm{DtN}})^{-\ell-1+\n}\mathrm{Fluc}^{\mathrm{noise},\e}_{\x,\mathfrak{q}^{\e}_{\s},\mathbf{I}^{\e}_{\s}}\rrangle_{\mathscr{C}^{\alpha+10}{}}.\label{eq:thm13new6I2d}
\end{align}
Use a Sobolev embedding to control the norm on the RHS of \eqref{eq:thm13new6I2d} by the $\mathrm{H}^{\alpha_{2}}{}$-norm (for $\alpha_{2}$ depending only on $\alpha$). Now, $\mathrm{Fluc}^{\mathrm{noise},\e}$ is in the null-space of {$\mathscr{L}^{\e,\mathbf{I}^{\e}_{\s}}_{\mathrm{DtN}}$}, and {$\mathscr{L}^{\e,\mathbf{I}^{\e}_{\s}}_{\mathrm{DtN}}$} has a spectral gap that is scaled by ${{}\e^{-1}}$. (See the proof of Lemma \ref{lemma:thm13new4}.) So, as in the proof of Lemma \ref{lemma:thm13new4}, each resolvent in \eqref{eq:thm13new6I2d} gives a factor of ${{}\e^{}}$. Since $\mathrm{Fluc}^{\mathrm{noise},\e}$ is smooth with derivatives of order ${{}\e^{-1/2}}$ (see \eqref{eq:step1fluc}), we ultimately get the estimate below for some $\beta>0$ uniformly positive:
\begin{align}
\mathrm{RHS}\eqref{eq:thm13new6I2d}&\lesssim_{\ell,\|\mathbf{Y}^{\e}\|_{\mathscr{C}^{0}_{\tau}\mathscr{C}^{2}_{}}}{{}\e^{-\frac54}}\cdot{{}\e^{\ell+1-n}}\lambda^{-\n-1}\cdot{{}\e^{-\frac12}}.\label{eq:thm13new6I2e}
\end{align}
\item If we now combine every display in the previous bullet point, we deduce that 
\begin{align}
&\llangle(\lambda-\mathscr{L}^{\e,\mathbf{I}^{\e}_{\s}}_{\mathrm{DtN}})^{-\n-1}\mathscr{L}^{\e,\mathfrak{q}^{\e}_{\s}}_{\mathrm{flow}}\mathscr{L}^{\e,\mathbf{I}^{\e}_{\s}}_{\mathrm{DtN}}(\lambda-\mathscr{L}^{\e,\mathbf{I}^{\e}_{\s}}_{\mathrm{DtN}})^{-\ell-1+\n}\mathrm{Fluc}^{\mathrm{noise},\e}_{\x,\mathfrak{q}^{\e}_{\s},\mathbf{I}^{\e}_{\s}}\rrangle_{\mathrm{H}^{\alpha}{}}\label{eq:thm13new6I2f}\\
&\lesssim_{\ell,\alpha,\|\mathbf{Y}^{\e}\|_{\mathscr{C}^{0}_{\tau}\mathscr{C}^{2}_{}}}{{}\e^{-\frac74}\e^{\ell+1-n}}\lambda^{-\n-1}.\label{eq:thm13new6I2g}
\end{align}
If we choose $\alpha\geq0$ sufficiently large, then by Sobolev embedding, the same estimate holds but for the $\mathscr{C}^{0}{}$-norm in \eqref{eq:thm13new6I2f} instead of $\mathrm{H}^{\alpha}{}$. In particular, the term inside $\llangle\rrangle_{\mathrm{H}^{\alpha}{}}$ in \eqref{eq:thm13new6I2f} is bounded by \eqref{eq:thm13new6I2g} uniformly over possible values of $\x,\mathfrak{q}^{\e}_{\s}\in\partial\mathds{M}$.
\end{itemize}
In view of \eqref{eq:thm13new6I2f}-\eqref{eq:thm13new6I2g} and the paragraph after it, we get
\begin{align}
\eqref{eq:thm13new6I1b}&\lesssim_{\ell,\alpha,\|\mathbf{Y}^{\e}\|_{\mathscr{C}^{0}_{\tau}\mathscr{C}^{2}_{}}}\lambda^{\ell}{{}\e^{-\frac74}\e^{\ell+1-n}}\lambda^{-\n-1}\label{eq:thm13new6I3a}\\
&=\lambda^{\ell-\n-1}{{}\e^{\ell-\n-1}\e^{-\frac74}\e^{2}}\lesssim{{}\e^{\frac14}},\label{eq:thm13new6I3b}
\end{align}
where the last bound follows by $\lambda={{}\e^{-1+\gamma}}$ for $\gamma>0$ (see \eqref{eq:lambda}). Let us now control \eqref{eq:thm13new6I1c}. To this end, a very similar argument works. In particular, each resolvent in \eqref{eq:thm13new6I1c} gives us $\lambda^{-1}$ in {$\llangle\rrangle_{\mathrm{H}^{\alpha}{}}$}-norm. On the other hand, by Corollary \ref{corollary:flowcompute2}, we know that 
\begin{align}
\llangle\mathscr{L}^{\e,\mathfrak{q}^{\e}_{\s}}_{\mathrm{flow}}\mathrm{Fluc}^{\mathrm{noise},\e}_{\x,\mathfrak{q}^{\e}_{\s},\mathbf{I}^{\e}_{\s}}\rrangle_{\mathrm{H}^{\alpha}{}}\lesssim_{\alpha,\|\mathbf{Y}^{\e}\|_{\mathscr{C}^{0}_{\tau}\mathscr{C}^{2}_{}}}{{}\e^{-\frac34}}.
\end{align}
If we combine the previous display and paragraph, we deduce that
\begin{align}
\llangle(\lambda-\mathscr{L}^{\e,\mathbf{I}^{\e}_{\s}}_{\mathrm{DtN}})^{-1}[\lambda(\lambda-\mathscr{L}^{\e,\mathbf{I}^{\e}_{\s}}_{\mathrm{DtN}})^{-1}]^{\ell}\mathscr{L}^{\e,\mathfrak{q}^{\e}_{\s}}_{\mathrm{flow}}\mathrm{Fluc}^{\mathrm{noise},\e}_{\x,\mathfrak{q}^{\e}_{\s},\mathbf{I}^{\e}_{\s}}\rrangle_{\mathrm{H}^{\alpha}{}}\lesssim\lambda^{-1-\ell}{{}\e^{-\frac34}}\lesssim{{}\e^{\frac14-\gamma}}.\label{eq:thm13new6I4}
\end{align}
Taking $\alpha$ large enough gives us the same estimate in $\llangle\rrangle_{\mathscr{C}^{0}{}}$. Because we can take $\gamma>0$ as small as we want (as long as it is uniformly positive), we get the following for $\beta>0$ uniformly positive:
\begin{align}
\eqref{eq:thm13new6I1c}\lesssim_{\alpha,\|\mathbf{Y}^{\e}\|_{\mathscr{C}^{0}_{\tau}\mathscr{C}^{2}_{}}}\e^{\beta}.
\end{align}
Combining this with \eqref{eq:thm13new6I3a}-\eqref{eq:thm13new6I3b} and \eqref{eq:thm13new6I1a}-\eqref{eq:thm13new6I1c} produces the estimate \eqref{eq:thm13new6I}, so we are done. \qed
\subsection{Proof of Lemma \ref{lemma:thm13new7}}
We start by removing the {$\mathscr{L}^{\e,\mathfrak{q}^{\e}_{\s}}_{\mathrm{flow}}$}-operator from \eqref{eq:thm13new2IIa}-\eqref{eq:thm13new2IIc}, which is helpful for all $\ell\geq0$ (in particular, for proving both estimates \eqref{eq:thm13new7I} and \eqref{eq:thm13new7II}). Indeed, as noted in the first bullet point after Lemma \ref{lemma:thm13new6}, we have the following. Fix any $\mathscr{F}:\mathscr{C}^{\infty}{}\to\R$ in the domain of {$\mathscr{L}^{\e,\mathfrak{q}^{\e}_{\s}}_{\mathrm{flow}}$}. By the Leibniz rule, since {$\mathscr{L}^{\e,\mathfrak{q}^{\e}_{\s}}_{\mathrm{flow}}$} is a first-order differential (see \eqref{eq:step1flow}), we know that {$\mathscr{L}^{\e,\mathfrak{q}^{\e}_{\s}}_{\mathrm{flow}}|\mathscr{F}[\mathbf{I}]|^{2}$} exists, and
\begin{align}
\mathscr{L}^{\e,\mathfrak{q}^{\e}_{\s}}_{\mathrm{flow}}|\mathscr{F}[\mathbf{I}]|^{2}-2\mathscr{F}[\mathbf{I}]\times\mathscr{L}^{\e,\mathfrak{q}^{\e}_{\s}}_{\mathrm{flow}}\mathscr{F}[\mathbf{I}]=0.\label{eq:thm13new7prelim}
\end{align}
Now use \eqref{eq:thm13new7prelim} for {$\mathscr{F}[\mathbf{I}]=(\lambda-\mathscr{L}^{\e,\mathbf{I}}_{\mathrm{DtN}})^{-1}[\lambda(\lambda-\mathscr{L}^{\e,\mathbf{I}}_{\mathrm{DtN}})^{-1}]^{\ell}\mathrm{Fluc}^{\mathrm{noise},\e}_{\x,\mathfrak{q}^{\e}_{\s},\mathbf{I}}$} to show that \eqref{eq:thm13new2IIa}-\eqref{eq:thm13new2IIc} is equal to the following (which is just removing $\mathscr{L}^{\e,\mathfrak{q}^{\e}_{\s}}_{\mathrm{flow}}$ from \eqref{eq:thm13new2IIa}-\eqref{eq:thm13new2IIc}):
\begin{align}
[\mathbf{M}^{\e,\ell}]_{\t,\x}&={\textstyle\int_{0}^{\t}}\mathscr{L}^{\e,\mathbf{I}^{\e}_{\s}}_{\mathrm{DtN}}[|(\lambda-\mathscr{L}^{\e,\mathbf{I}^{\e}_{\s,\cdot}}_{\mathrm{DtN}})^{-1}[\lambda(\lambda-\mathscr{L}^{\e,\mathbf{I}^{\e}_{\s,\cdot}}_{\mathrm{DtN}})^{-1}]^{\ell}\mathrm{Fluc}^{\mathrm{noise},\e}_{\x,\mathfrak{q}^{\e}_{\s},\mathbf{I}^{\e}_{\s}}|^{2}]\d\s\label{eq:thm13new7prelim1a}\\
&-2{\textstyle\int_{0}^{\t}}\left\{(\lambda-\mathscr{L}^{\e,\mathbf{I}^{\e}_{\s,\cdot}}_{\mathrm{DtN}})^{-1}[\lambda(\lambda-\mathscr{L}^{\e,\mathbf{I}^{\e}_{\s,\cdot}}_{\mathrm{DtN}})^{-1}]^{\ell}\mathrm{Fluc}^{\mathrm{noise},\e}_{\x,\mathfrak{q}^{\e}_{\s},\mathbf{I}^{\e}_{\s}}\right\}\label{eq:thm13new7prelim1b}\\
&\quad\quad\quad\times\left\{\mathscr{L}^{\e,\mathbf{I}^{\e}_{\s}}_{\mathrm{DtN}}[(\lambda-\mathscr{L}^{\e,\mathbf{I}^{\e}_{\s,\cdot}}_{\mathrm{DtN}})^{-1}[\lambda(\lambda-\mathscr{L}^{\e,\mathbf{I}^{\e}_{\s,\cdot}}_{\mathrm{DtN}})^{-1}]^{\ell}\mathrm{Fluc}^{\mathrm{noise},\e}_{\x,\mathfrak{q}^{\e}_{\s},\mathbf{I}^{\e}_{\s}}]\right\}\d\s.\label{eq:thm13new7prelim1c}
\end{align}
Let us first prove the second estimate \eqref{eq:thm13new7II}, because it requires one less step (and is thus easier) compared to \eqref{eq:thm13new7I}. (We explain this later when relevant in the proof of \eqref{eq:thm13new7I}.) In particular, \eqref{eq:thm13new7II} serves as a warm-up to the more complicated \eqref{eq:thm13new7I}.
\subsubsection{Proof of \eqref{eq:thm13new7II}}
Fix a stopping time $\tau\in[0,1]$. Our goal is to estimate the {$\mathscr{C}^{0}_{\tau}\mathscr{C}^{\k}_{}$}-norm of \eqref{eq:thm13new7prelim1a}-\eqref{eq:thm13new7prelim1c} for $1\leq\ell\leq\ell_{\max}$ and control it by a positive power of $\e$. We assume $\k=0$; for general $\k$, just replace \eqref{eq:thm13new7prelim1a}-\eqref{eq:thm13new7prelim1c} by $\k$-th order derivatives in $\x$. (Again, all we need is an algebraic property for $\mathrm{Fluc}^{\mathrm{noise},\e}$ that is closed under linear combinations and only concerns {$\mathfrak{q}^{\e}_{\s},\mathbf{I}^{\e}_{\s}$}-variables.) We start with the RHS of \eqref{eq:thm13new7prelim1a}. By the triangle inequality and $\tau\leq1$, we have 
\begin{align}
\|\mathrm{RHS}\eqref{eq:thm13new7prelim1a}\|_{\mathscr{C}^{0}_{\tau}\mathscr{C}^{0}_{}}\lesssim\sup_{0\leq\s\leq\tau}\|\mathscr{L}^{\e,\mathbf{I}^{\e}_{\s}}_{\mathrm{DtN}}[|(\lambda-\mathscr{L}^{\e,\mathbf{I}^{\e}_{\s,\cdot}}_{\mathrm{DtN}})^{-1}[\lambda(\lambda-\mathscr{L}^{\e,\mathbf{I}^{\e}_{\s,\cdot}}_{\mathrm{DtN}})^{-1}]^{\ell}\mathrm{Fluc}^{\mathrm{noise},\e}_{\cdot,\mathfrak{q}^{\e}_{\s},\mathbf{I}^{\e}_{\s}}|^{2}]\|_{\mathscr{C}^{0}{}}.\label{eq:thm13new7II1}
\end{align}
Let {$\llangle\rrangle_{\alpha}$} be the $\mathrm{H}^{\alpha}{}$-norm with respect to {$\mathfrak{q}^{\e}_{\s}\in\partial\mathds{M}$}. We claim that the norm of {$\mathscr{L}^{\e,\mathbf{I}}_{\mathrm{DtN}}:\mathrm{H}^{\alpha+1}{}\to\mathrm{H}^{\alpha}{}$} is $\mathrm{O}({{}\e^{-1}})$ (due to the scaling in \eqref{eq:step1dton}) times something {{}depending continuously only on $\|\mathbf{I}\|_{\mathscr{C}^{n}{}}$ for some $n=O(1)$. Indeed, let $\mathrm{H}^{\nu}(\partial\mathds{M},\mathbf{g}[{{}\grad_{}}\mathbf{I}])$ be the Sobolev space on $\partial\mathds{M}$ with respect to the Riemannian measure induced by the metric $\mathbf{g}[{{}\grad_{}}\mathbf{I}]$. By Lemma \ref{lemma:dtoncom}, the principal symbol of $\mathscr{L}^{\e,\mathbf{I}}_{\mathrm{DtN}}$, as a map on $\mathrm{H}^{\nu}(\partial\mathds{M},\mathbf{g}[{{}\grad_{}}\mathbf{I}])$ spaces, is $|\xi|$, so that the map $\mathscr{L}^{\e,\mathbf{I}}_{\mathrm{DtN}}:\mathrm{H}^{\alpha+1}(\partial\mathds{M},\mathbf{g}[{{}\grad_{}}\mathbf{I}])\to\mathrm{H}^{\alpha}(\partial\mathds{M},\mathbf{g}[{{}\grad_{}}\mathbf{I}])$ has operator norm $\mathrm{O}(1)$. Since the Riemannian measure induced by $\mathbf{g}[{{}\grad_{}}\mathbf{I}]$ is bounded above and away from $0$ depending only on a finite number of derivatives of $\mathbf{I}$ (see Construction \ref{construction:model}), it now suffices to change measure and go from $\mathrm{H}^{\nu}(\partial\mathds{M},\mathbf{g}[{{}\grad_{}}\mathbf{I}])$ to $\mathrm{H}^{\nu}{}$. Thus, the claim follows.}

So, given that the $\mathscr{C}^{n}{}$-norm of $\mathbf{I}^{\e}$ is bounded by that of $\mathbf{Y}^{\e}$ (see \eqref{eq:flucprocess}), for any $\x\in\partial\mathds{M}$, we deduce
\begin{align}
&\llangle\mathscr{L}^{\e,\mathbf{I}^{\e}_{\s}}_{\mathrm{DtN}}[|(\lambda-\mathscr{L}^{\e,\mathbf{I}^{\e}_{\s,\cdot}}_{\mathrm{DtN}})^{-1}[\lambda(\lambda-\mathscr{L}^{\e,\mathbf{I}^{\e}_{\s,\cdot}}_{\mathrm{DtN}})^{-1}]^{\ell}\mathrm{Fluc}^{\mathrm{noise},\e}_{\x,\mathfrak{q}^{\e}_{\s},\mathbf{I}^{\e}_{\s}}|^{2}]\rrangle_{\alpha}\label{eq:thm13new7II2a}\\
&\lesssim_{\|\mathbf{Y}^{\e}\|_{\mathscr{C}^{0}_{\tau}\mathscr{C}^{2}_{}},\alpha}{{}\e^{-1}}\llangle|(\lambda-\mathscr{L}^{\e,\mathbf{I}^{\e}_{\s,\cdot}}_{\mathrm{DtN}})^{-1}[\lambda(\lambda-\mathscr{L}^{\e,\mathbf{I}^{\e}_{\s,\cdot}}_{\mathrm{DtN}})^{-1}]^{\ell}\mathrm{Fluc}^{\mathrm{noise},\e}_{\x,\mathfrak{q}^{\e}_{\s},\mathbf{I}^{\e}_{\s}}|^{2}\rrangle_{\alpha+1}.\label{eq:thm13new7II2b}
\end{align}
Now, use Sobolev multiplication (see Lemma \ref{lemma:sobolevmultiply}). In particular, if we take $\alpha$ sufficiently large depending on the dimension $\d$, then we can bound the Sobolev norm of the square by the square of the Sobolev norm:
\begin{align}
\eqref{eq:thm13new7II2b}&\lesssim{{}\e^{-1}}\left\{\llangle(\lambda-\mathscr{L}^{\e,\mathbf{I}^{\e}_{\s,\cdot}}_{\mathrm{DtN}})^{-1}[\lambda(\lambda-\mathscr{L}^{\e,\mathbf{I}^{\e}_{\s,\cdot}}_{\mathrm{DtN}})^{-1}]^{\ell}\mathrm{Fluc}^{\mathrm{noise},\e}_{\x,\mathfrak{q}^{\e}_{\s},\mathbf{I}^{\e}_{\s}}\rrangle_{\alpha+1}\right\}^{2}.\label{eq:thm13new7II2c}
\end{align}
Now, we power-count using Sobolev estimates for operators in \eqref{eq:thm13new7II2c}. Fortunately, we already did this; use Lemma \ref{lemma:thm13new4} to bound the norm on the RHS of \eqref{eq:thm13new7II2c}. We deduce
\begin{align}
\mathrm{RHS}\eqref{eq:thm13new7II2c}&\lesssim_{\ell,\alpha,\|\mathbf{Y}^{\e}\|_{\mathscr{C}^{0}_{\tau}\mathscr{C}^{2}_{}}}{{}\e^{-1}}\left\{{{}\e^{}}[\lambda{{}\e^{}}]^{\ell}{{}\e^{-\frac12}}\right\}^{2}\lesssim\lambda^{2\ell}{{}\e^{2\ell}}\lesssim\e^{2\gamma\ell}\label{eq:thm13new7II2d}
\end{align}
for $\gamma>0$ uniformly positive, where the last bound follows from \eqref{eq:lambda}. If we now combine \eqref{eq:thm13new7II1}, \eqref{eq:thm13new7II2a}-\eqref{eq:thm13new7II2b}, \eqref{eq:thm13new7II2c}, and \eqref{eq:thm13new7II2d} with the same Sobolev embedding argument that we explained after \eqref{eq:thm13new6I2f}-\eqref{eq:thm13new6I2g}, we ultimately deduce the following for some $\beta>0$ uniformly positive:
\begin{align}
\|\mathrm{RHS}\eqref{eq:thm13new7prelim1a}\|_{\mathscr{C}^{0}_{\tau}\mathscr{C}^{0}_{}}\lesssim_{\ell,\alpha,\|\mathbf{Y}^{\e}\|_{\mathscr{C}^{0}_{\tau}\mathscr{C}^{2}_{}}}\e^{\beta}.\label{eq:thm13new7II2e}
\end{align}
We now move to \eqref{eq:thm13new7prelim1b}-\eqref{eq:thm13new7prelim1c}. First, we rewrite the $\d\s$-integrand in \eqref{eq:thm13new7prelim1c} as
\begin{align}
&\mathscr{L}^{\e,\mathbf{I}^{\e}_{\s}}_{\mathrm{DtN}}[(\lambda-\mathscr{L}^{\e,\mathbf{I}^{\e}_{\s,\cdot}}_{\mathrm{DtN}})^{-1}[\lambda(\lambda-\mathscr{L}^{\e,\mathbf{I}^{\e}_{\s,\cdot}}_{\mathrm{DtN}})^{-1}]^{\ell}\mathrm{Fluc}^{\mathrm{noise},\e}_{\x,\mathfrak{q}^{\e}_{\s},\mathbf{I}^{\e}_{\s}}]\label{eq:thm13new7II3a}\\
&=-(\lambda-\mathscr{L}^{\e,\mathbf{I}^{\e}_{\s}}_{\mathrm{DtN}})[(\lambda-\mathscr{L}^{\e,\mathbf{I}^{\e}_{\s,\cdot}}_{\mathrm{DtN}})^{-1}[\lambda(\lambda-\mathscr{L}^{\e,\mathbf{I}^{\e}_{\s,\cdot}}_{\mathrm{DtN}})^{-1}]^{\ell}\mathrm{Fluc}^{\mathrm{noise},\e}_{\x,\mathfrak{q}^{\e}_{\s},\mathbf{I}^{\e}_{\s}}]\label{eq:thm13new7II3b}\\
&+\lambda[(\lambda-\mathscr{L}^{\e,\mathbf{I}^{\e}_{\s,\cdot}}_{\mathrm{DtN}})^{-1}[\lambda(\lambda-\mathscr{L}^{\e,\mathbf{I}^{\e}_{\s,\cdot}}_{\mathrm{DtN}})^{-1}]^{\ell}\mathrm{Fluc}^{\mathrm{noise},\e}_{\x,\mathfrak{q}^{\e}_{\s},\mathbf{I}^{\e}_{\s}}]\label{eq:thm13new7II3c}\\
&=-[\lambda(\lambda-\mathscr{L}^{\e,\mathbf{I}^{\e}_{\s,\cdot}}_{\mathrm{DtN}})^{-1}]^{\ell}\mathrm{Fluc}^{\mathrm{noise},\e}_{\x,\mathfrak{q}^{\e}_{\s},\mathbf{I}^{\e}_{\s}}\label{eq:thm13new7II3d}\\
&+\lambda[(\lambda-\mathscr{L}^{\e,\mathbf{I}^{\e}_{\s,\cdot}}_{\mathrm{DtN}})^{-1}[\lambda(\lambda-\mathscr{L}^{\e,\mathbf{I}^{\e}_{\s,\cdot}}_{\mathrm{DtN}})^{-1}]^{\ell}\mathrm{Fluc}^{\mathrm{noise},\e}_{\x,\mathfrak{q}^{\e}_{\s},\mathbf{I}^{\e}_{\s}}].\label{eq:thm13new7II3e}
\end{align}
In particular, if we plug \eqref{eq:thm13new7II3d}-\eqref{eq:thm13new7II3e} into \eqref{eq:thm13new7prelim1c} and multiply by the integrand in \eqref{eq:thm13new7prelim1b}, we get the following expression for \eqref{eq:thm13new7prelim1b}-\eqref{eq:thm13new7prelim1c}:
\begin{align}
&2{\textstyle\int_{0}^{\t}}\left\{(\lambda-\mathscr{L}^{\e,\mathbf{I}^{\e}_{\s,\cdot}}_{\mathrm{DtN}})^{-1}[\lambda(\lambda-\mathscr{L}^{\e,\mathbf{I}^{\e}_{\s,\cdot}}_{\mathrm{DtN}})^{-1}]^{\ell}\mathrm{Fluc}^{\mathrm{noise},\e}_{\x,\mathfrak{q}^{\e}_{\s},\mathbf{I}^{\e}_{\s}}\right\}\times[\lambda(\lambda-\mathscr{L}^{\e,\mathbf{I}^{\e}_{\s,\cdot}}_{\mathrm{DtN}})^{-1}]^{\ell}\mathrm{Fluc}^{\mathrm{noise},\e}_{\x,\mathfrak{q}^{\e}_{\s},\mathbf{I}^{\e}_{\s}}\d\s\label{eq:thm13new7II3f}\\
&-2\lambda{\textstyle\int_{0}^{\t}}\left\{(\lambda-\mathscr{L}^{\e,\mathbf{I}^{\e}_{\s,\cdot}}_{\mathrm{DtN}})^{-1}[\lambda(\lambda-\mathscr{L}^{\e,\mathbf{I}^{\e}_{\s,\cdot}}_{\mathrm{DtN}})^{-1}]^{\ell}\mathrm{Fluc}^{\mathrm{noise},\e}_{\x,\mathfrak{q}^{\e}_{\s},\mathbf{I}^{\e}_{\s}}\right\}^{2}\d\s.\label{eq:thm13new7II3g}
\end{align}
Given the representation \eqref{eq:thm13new7II3f}-\eqref{eq:thm13new7II3g} for \eqref{eq:thm13new7prelim1b}-\eqref{eq:thm13new7prelim1c}, in order to bound the {$\mathscr{C}^{0}_{\tau}\mathscr{C}^{0}_{}$}-norm of \eqref{eq:thm13new7prelim1b}-\eqref{eq:thm13new7prelim1c}, by the triangle inequality, it suffices to show the following estimate for $\beta>0$ uniformly positive:
\begin{align}
&\sup_{0\leq\s\leq\tau}\|\left\{(\lambda-\mathscr{L}^{\e,\mathbf{I}^{\e}_{\s,\cdot}}_{\mathrm{DtN}})^{-1}[\lambda(\lambda-\mathscr{L}^{\e,\mathbf{I}^{\e}_{\s,\cdot}}_{\mathrm{DtN}})^{-1}]^{\ell}\mathrm{Fluc}^{\mathrm{noise},\e}_{\cdot,\mathfrak{q}^{\e}_{\s},\mathbf{I}^{\e}_{\s}}\right\}\times[\lambda(\lambda-\mathscr{L}^{\e,\mathbf{I}^{\e}_{\s,\cdot}}_{\mathrm{DtN}})^{-1}]^{\ell}\mathrm{Fluc}^{\mathrm{noise},\e}_{\cdot,\mathfrak{q}^{\e}_{\s},\mathbf{I}^{\e}_{\s}}\|_{\mathscr{C}^{0}{}}\nonumber\\
&+\sup_{0\leq\s\leq\tau}\|\lambda\left\{(\lambda-\mathscr{L}^{\e,\mathbf{I}^{\e}_{\s,\cdot}}_{\mathrm{DtN}})^{-1}[\lambda(\lambda-\mathscr{L}^{\e,\mathbf{I}^{\e}_{\s,\cdot}}_{\mathrm{DtN}})^{-1}]^{\ell}\mathrm{Fluc}^{\mathrm{noise},\e}_{\cdot,\mathfrak{q}^{\e}_{\s},\mathbf{I}^{\e}_{\s}}\right\}^{2}\|_{\mathscr{C}^{0}{}}\nonumber\\
&\lesssim_{\ell,\alpha,\|\mathbf{Y}^{\e}\|_{\mathscr{C}^{0}_{\tau}\mathscr{C}^{2}_{}}}\e^{\beta}.\label{eq:thm13new7II3h}
\end{align}
We give a power-counting argument that can be made rigorous using the $\llangle\rrangle_{\alpha}$-norms and Sobolev embeddings (and Lemma \ref{lemma:sobolevmultiply}) that gave us \eqref{eq:thm13new7II2e}. (We omit the explanation behind these steps, because they are identical to the proof of \eqref{eq:thm13new7II2e}.)
\begin{itemize}
\item Take the first line of the display \eqref{eq:thm13new7II3h}. First, we note {$\mathrm{Fluc}^{\mathrm{noise},\e}_{\cdot,\mathfrak{q}^{\e}_{\s},\mathbf{I}^{\e}_{\s}}$}, as a function of {$\mathfrak{q}^{\e}_{\s}$}, is in the null-space of {$\mathscr{L}^{\e,\mathbf{I}^{\e}_{\s}}_{\mathrm{DtN}}$}, which has a spectral gap of $\gtrsim{{}\e^{-1}}$. See the proof of Lemma \ref{lemma:thm13new4}. Thus, each resolvent in the first line of \eqref{eq:thm13new7II3h} gives a factor ${{}\e^{}}$. Since {$\mathrm{Fluc}^{\mathrm{noise},\e}$} itself is smooth with order ${{}\e^{-1/2}}$ derivatives, the first line of \eqref{eq:thm13new7II3h} satisfies the estimate
\begin{align}
\lesssim_{\ell,\|\mathbf{Y}^{\e}\|_{\mathscr{C}^{0}_{\tau}\mathscr{C}^{2}_{}}}{{}\e^{}}\lambda^{2\ell}{{}\e^{2\ell}}{{}\e^{-1}}\lesssim\lambda^{2\ell}{{}\e^{2\ell}}\lesssim\e^{2\ell\gamma}\label{eq:thm13new7II4}
\end{align}
for $\gamma>0$ uniformly positive (for the last bound, see \eqref{eq:lambda}). 
\item By the same token, the second line in \eqref{eq:thm13new7II3h} satisfies the estimate
\begin{align}
\lesssim_{\ell,\|\mathbf{Y}^{\e}\|_{\mathscr{C}^{0}_{\tau}\mathscr{C}^{2}_{}}}\lambda\left\{\lambda^{\ell}{{}\e^{\ell+1}}{{}\e^{-\frac12}}\right\}^{2}\lesssim\lambda^{2\ell+1}{{}\e^{2\ell}}{{}\e^{2-1}}\lesssim\e^{[2\ell+1]\gamma}\label{eq:thm13new7II5}
\end{align}
for the same uniformly positive $\gamma>0$.
\item We clarify that the dependence of these estimates on just the {$\mathscr{C}^{0}_{\tau}\mathscr{C}^{2}_{}$}-norm of $\mathbf{Y}^{\e}$ comes from tracking the same argument given in the proof of Lemma \ref{lemma:thm13new4}.
\end{itemize}
In view of the previous two bullet points, the estimate \eqref{eq:thm13new7II3h} follows since $\ell\geq1$ by assumption. Thus, as noted right before \eqref{eq:thm13new7II3h}, we deduce that the {$\mathscr{C}^{0}_{\tau}\mathscr{C}^{0}_{}$}-norm of \eqref{eq:thm13new7prelim1b}-\eqref{eq:thm13new7prelim1c} is 
\begin{align}
\lesssim_{\ell,\|\mathbf{Y}^{\e}\|_{\mathscr{C}^{0}_{\tau}\mathscr{C}^{2}_{}}}\e^{\beta}.
\end{align}
Combining this with \eqref{eq:thm13new7II2e} and \eqref{eq:thm13new7prelim1a}-\eqref{eq:thm13new7prelim1c} completes the proof of \eqref{eq:thm13new7II}.
\subsubsection{Proof of \eqref{eq:thm13new7I}}
To make the reading easier, let us recap the goal of this estimate. We want to prove that with high probability, we have the following estimate:
\begin{align}
\|[\mathbf{M}^{\e,0}]-[\mathbf{M}^{\mathrm{limit}}]\|_{\mathscr{C}^{0}_{\tau}\mathscr{C}^{\k}_{}}\lesssim_{\k,\|\mathbf{Y}^{\e}\|_{\mathscr{C}^{0}_{\tau}\mathscr{C}^{2}_{}}}\e^{\beta},\label{eq:1lastI}
\end{align}
where $\beta>0$ is uniformly positive, $\tau\in[0,1]$ is any stopping time, $\k\geq0$, and
\begin{align}
[\mathbf{M}^{\e,0}]_{\t,\x}&={\textstyle\int_{0}^{\t}}\mathscr{L}^{\e,\mathbf{I}^{\e}_{\s}}_{\mathrm{DtN}}[|(\lambda-\mathscr{L}^{\e,\mathbf{I}^{\e}_{\s}}_{\mathrm{DtN}})^{-1}\mathrm{Fluc}^{\mathrm{noise},\e}_{\x,\mathfrak{q}^{\e}_{\s},\mathbf{I}^{\e}_{\s}}|^{2}]\d\s\label{eq:thm13new7Istarta}\\
&-2{\textstyle\int_{0}^{\t}}\left\{(\lambda-\mathscr{L}^{\e,\mathbf{I}^{\e}_{\s}}_{\mathrm{DtN}})^{-1}\mathrm{Fluc}^{\mathrm{noise},\e}_{\x,\mathfrak{q}^{\e}_{\s},\mathbf{I}^{\e}_{\s}}\right\}\times\left\{\mathscr{L}^{\e,\mathbf{I}^{\e}_{\s}}_{\mathrm{DtN}}(\lambda-\mathscr{L}^{\e,\mathbf{I}^{\e}_{\s}}_{\mathrm{DtN}})^{-1}\mathrm{Fluc}^{\mathrm{noise},\e}_{\x,\mathfrak{q}^{\e}_{\s},\mathbf{I}^{\e}_{\s}}\right\}\d\s\label{eq:thm13new7Istartb}\\
[\mathbf{M}^{\mathrm{limit}}]_{\t,\x}&=-2\t{\textstyle\int_{\partial\mathds{M}}}[\mathbf{K}_{\x,\z}-1]\times\left\{\mathscr{L}^{-1}[\mathbf{K}_{\x,\z}-1]\right\}\d\z.\label{eq:thm13new7Istartc}
\end{align}
(See \eqref{eq:thm13new7prelim1a}-\eqref{eq:thm13new7prelim1c} and \eqref{eq:scalinglimitmartingaleI}.) We now explain the main steps needed to prove \eqref{eq:1lastI}. (These are essentially outlined before the statement of Lemma \ref{lemma:thm13new7}. We refer the reader there for intuition for this argument. But for reasons that entirely technical, we do things in a slightly different manner.) \emph{Also, throughout this argument, we will assume that $\k=0$ in the desired estimate \eqref{eq:1lastI}; for general $\k$, just replace $[\mathbf{M}^{\e,0}]-[\mathbf{M}^{\mathrm{limit}}]$ by its $\k$-th order derivatives in $\x$. The argument is otherwise completely identical.}
\begin{enumerate}
\item In $[\mathbf{M}^{\e,0}]$, we first replace {$\mathscr{L}^{\e,\mathbf{I}^{\e}_{\s}}_{\mathrm{DtN}}$} with ${{}\e^{-1}}\mathscr{L}$, i.e. replace the metric in the Dirichlet-to-Neumann from {$\mathbf{g}[{{}\grad_{}}\mathbf{I}^{\e}_{\s}]$} to the surface metric on $\partial\mathds{M}$ (which can be thought of as $\mathbf{g}[0]$). In particular, define
\begin{align}
[\mathbf{M}^{\e,0,1}]_{\t,\x}&={\textstyle\int_{0}^{\t}}{{}\e^{-1}}\mathscr{L}[|(\lambda-{{}\e^{-1}}\mathscr{L})^{-1}\mathrm{Fluc}^{\mathrm{noise},\e}_{\x,\mathfrak{q}^{\e}_{\s},\mathbf{I}^{\e}_{\s}}|^{2}]\d\s\label{eq:1lastI1a}\\
&-2{\textstyle\int_{0}^{\t}}\left\{(\lambda-{{}\e^{-1}}\mathscr{L})^{-1}\mathrm{Fluc}^{\mathrm{noise},\e}_{\x,\mathfrak{q}^{\e}_{\s},\mathbf{I}^{\e}_{\s}}\right\}\times\left\{{{}\e^{-1}}\mathscr{L}(\lambda-{{}\e^{-1}}\mathscr{L})^{-1}\mathrm{Fluc}^{\mathrm{noise},\e}_{\x,\mathfrak{q}^{\e}_{\s},\mathbf{I}^{\e}_{\s}}\right\}\d\s.\label{eq:1lastI1b}
\end{align}
We then want to show that for $\beta>0$ uniformly positive, we have
\begin{align}
\|[\mathbf{M}^{\e,0}]-[\mathbf{M}^{\e,0,1}]\|_{\mathscr{C}^{0}_{\tau}\mathscr{C}^{\k}_{}}\lesssim_{\k,\|\mathbf{Y}^{\e}\|_{\mathscr{C}^{0}_{\tau}\mathscr{C}^{2}_{}}}\e^{\beta}.\label{eq:1lastI1c}
\end{align}
\item Next, in \eqref{eq:1lastI1a}-\eqref{eq:1lastI1c}, we want to further replace $\mathbf{I}^{\e}$ by $0$ in the $\mathrm{Fluc}^{\mathrm{noise},\e}$-term therein. In particular, we want to show the estimate below (for $\beta>0$ uniformly positive)
\begin{align}
\|[\mathbf{M}^{\e,0,1}]-[\mathbf{M}^{\e,0,2}]\|_{\mathscr{C}^{0}_{\tau}\mathscr{C}^{\k}_{}}\lesssim_{\k,\|\mathbf{Y}^{\e}\|_{\mathscr{C}^{0}_{\tau}\mathscr{C}^{2}_{}}}\e^{\beta},\label{eq:1lastI2a}
\end{align}
where the term $[\mathbf{M}^{\e,0,2}]$ is the following time-integral:
\begin{align}
[\mathbf{M}^{\e,0,2}]_{\t,\x}&={\textstyle\int_{0}^{\t}}{{}\e^{-1}}\mathscr{L}[|(\lambda-{{}\e^{-1}}\mathscr{L})^{-1}\mathrm{Fluc}^{\mathrm{noise},\e}_{\x,\mathfrak{q}^{\e}_{\s},0}|^{2}]\d\s\label{eq:1lastI2b}\\
&-2{\textstyle\int_{0}^{\t}}\left\{(\lambda-{{}\e^{-1}}\mathscr{L})^{-1}\mathrm{Fluc}^{\mathrm{noise},\e}_{\x,\mathfrak{q}^{\e}_{\s},0}\right\}\times\left\{{{}\e^{-1}}\mathscr{L}(\lambda-{{}\e^{-1}}\mathscr{L})^{-1}\mathrm{Fluc}^{\mathrm{noise},\e}_{\x,\mathfrak{q}^{\e}_{\s},0}\right\}\d\s.\label{eq:1lastI2c}
\end{align}
\item The next step is {{}averaging}. In particular, let $\mathfrak{C}^{\lambda}_{\x,\mathfrak{q}^{\e}}$ be the $\d\s$-integrand in \eqref{eq:1lastI2b}-\eqref{eq:1lastI2c} (which is a Carre-du-Champ operator), so that 
\begin{align}
[\mathbf{M}^{\e,0,2}]_{\t,\x}={\textstyle\int_{0}^{\t}}\mathfrak{C}^{\lambda}_{\x,\mathfrak{q}^{\e}_{\s}}\d\s.\label{eq:1lastI3a}
\end{align}
Now, define the following homogenized version of \eqref{eq:1lastI3a} (i.e. one where we integrate over $\mathfrak{q}^{\e}_{\s}$ therein):
\begin{align}
\mathrm{Hom}\mathbf{M}_{\t,\x}&:={\textstyle\int_{0}^{\t}\int_{\partial\mathds{M}}}\mathfrak{C}^{\lambda}_{\x,\z}\d\z\d\s.\label{eq:1lastI3b}
\end{align}
We claim the following estimates. The first states $[\mathbf{M}^{\e,0,2}]\approx\mathrm{Hom}\mathbf{M}$, i.e. that time-averaging in \eqref{eq:1lastI3a} is enough to introduce a space-average. The second states $\mathrm{Hom}\mathbf{M}\approx[\mathbf{M}^{\mathrm{limit}}]$, i.e. closing the argument and allowing us to deduce \eqref{eq:1lastI}. In particular,
\begin{align}
\|[\mathbf{M}^{\e,0,2}]-\mathrm{Hom}\mathbf{M}\|_{\mathscr{C}^{0}_{\tau}\mathscr{C}^{\k}_{}}&\lesssim_{\k,\|\mathbf{Y}^{\e}\|_{\mathscr{C}^{0}_{\tau}\mathscr{C}^{2}_{}}}\e^{\beta}\label{eq:1lastI3c}\\
\|\mathrm{Hom}\mathbf{M}-[\mathbf{M}^{\mathrm{limit}}]\|_{\mathscr{C}^{0}_{\tau}\mathscr{C}^{\k}_{}}&\lesssim_{\k,\|\mathbf{Y}^{\e}\|_{\mathscr{C}^{0}_{\tau}\mathscr{C}^{2}_{}}}\e^{\beta}.\label{eq:1lastI3d}
\end{align}
Above, $\beta>0$ is uniformly positive, \emph{and \eqref{eq:1lastI3c} is claimed to hold with high probability}.
\end{enumerate}
By the triangle inequality and \eqref{eq:1lastI1c}, \eqref{eq:1lastI2a}, \eqref{eq:1lastI3c}, and \eqref{eq:1lastI3d}, we get \eqref{eq:1lastI} with high probability, thereby finishing the proof of this entire lemma. So, we are left to show \eqref{eq:1lastI1c}, \eqref{eq:1lastI2a}, \eqref{eq:1lastI3c}, and \eqref{eq:1lastI3d}. Before we embark on this, however, let us present the following key estimates, with proofs given immediately after. (In what follows, $\alpha_{\d}$ depends only on $\d$, and the joint $\mathscr{C}^{\m}(\partial\mathds{M}\times\partial\mathds{M})$-norm is with respect to $\x,\mathfrak{q}^{\e}_{\s}$-variables. Also, $\nu>0$ can be taken arbitrarily small.)
\begin{align}
\|\mathrm{Fluc}^{\mathrm{noise},\e}_{\x,\mathfrak{q}^{\e}_{\s},\mathbf{I}^{\e}_{\s}}-\mathrm{Fluc}^{\mathrm{noise},\e}_{\x,\mathfrak{q}^{\e}_{\s},0}\|_{\mathscr{C}^{\m}(\partial\mathds{M}\times\partial\mathds{M})}&\lesssim_{\m,\|\mathbf{Y}^{\e}\|_{\mathscr{C}^{0}_{\tau}\mathscr{C}^{2}_{}}}{{}\e^{-\frac14}}\label{eq:1lastI4a}\\
\|\mathscr{L}^{\e,\mathbf{I}^{\e}_{\s}}_{\mathrm{DtN}}-{{}\e^{-1}}\mathscr{L}\|_{\mathrm{H}^{\alpha+\alpha_{\d}}{}\to\mathrm{H}^{\alpha}{}}&\lesssim_{\alpha,\|\mathbf{Y}^{\e}\|_{\mathscr{C}^{0}_{\tau}\mathscr{C}^{2}_{}}}{{}\e^{-\frac34-\nu}}\label{eq:1lastI4b}\\
\|(\lambda-\mathscr{L}^{\e,\mathbf{I}^{\e}_{\s}}_{\mathrm{DtN}})^{-1}-(\lambda-{{}\e^{-1}}\mathscr{L})^{-1}\|_{\mathrm{H}^{\alpha+\alpha_{\d}}{}\to\mathrm{H}^{\alpha}{}}&\lesssim_{\alpha,\|\mathbf{Y}^{\e}\|_{\mathscr{C}^{0}_{\tau}\mathscr{C}^{2}_{}}}\lambda^{-2}{{}\e^{-\frac34-\nu}}\lesssim{{}\e^{\frac54-2\gamma-\nu}}.\label{eq:1lastI4c}
\end{align}
%
\begin{itemize}
\item The estimate \eqref{eq:1lastI4a} is immediate by \eqref{eq:step1fluc} and the relation ${{}\grad_{}}\mathbf{I}^{\e}={{}\e^{1/4}}{{}\grad_{}}\mathbf{Y}^{\e}$ (see \eqref{eq:flucprocess}). Indeed, by \eqref{eq:step1fluc}, the dependence on {$\mathbf{I}^{\e}_{\s}$} of {$\mathrm{Fluc}^{\mathrm{noise},\e}_{\x,\mathfrak{q}^{\e}_{\s},\mathbf{I}^{\e}_{\s}}$} is via ${{}\e^{-1/2}}$ times a smooth function of ${{}\grad_{}}\mathbf{I}^{\e}_{\s}$ (dependence on $\x,\mathfrak{q}^{\e}_{\s}$ is uniformly smooth as well). Thus, \eqref{eq:1lastI4a} is by Taylor expansion in ${{}\grad_{}}\mathbf{I}^{\e}$ about $0$, combined with ${{}\grad_{}}\mathbf{I}^{\e}={{}\e^{1/4}}{{}\grad_{}}\mathbf{Y}^{\e}$, which introduces a factor that brings ${{}\e^{-1/2}}$ scaling in \eqref{eq:step1fluc} down to ${{}\e^{-1/4}}$.
\item To prove \eqref{eq:1lastI4b}, we use Lemma \ref{lemma:dtonestimates}, which controls the difference of Dirichlet-to-Neumann operators on the LHS of \eqref{eq:1lastI4b} by some $\mathscr{C}^{n}{}$-norm of the difference of metrics $\mathbf{g}[{{}\grad_{}}\mathbf{I}^{\e}]-\mathbf{g}[0]$. However, $\mathbf{g}$ is smooth, so a similar Taylor expansion argument as in the previous bullet point shows $\mathbf{g}[{{}\grad_{}}\mathbf{I}^{\e}]-\mathbf{g}[0]=\mathrm{O}({{}\e^{1/4}})$ with implied constant depending only on $\|\mathbf{Y}^{\e}\|_{\mathscr{C}^{0}_{\tau}\mathscr{C}^{2}_{}}$; this is in $\mathscr{C}^{1,\upsilon}{}$-norm for any $\upsilon\in[0,1)$. (For this, again use \eqref{eq:flucprocess} to deduce ${{}\grad_{}}\mathbf{I}^{\e}={{}\e^{1/4}}{{}\grad_{}}\mathbf{Y}^{\e}$ and gain an extra factor of ${{}\e^{1/4}}$.) On the other hand, for any $\k\geq0$, the same Taylor expansion but now combined with Lemma \ref{lemma:aux1} shows that $\mathbf{g}[{{}\grad_{}}\mathbf{I}^{\e}]-\mathbf{g}[0]=\mathrm{O}(1)$ with implied constant depending only on $\|\mathbf{Y}^{\e}\|_{\mathscr{C}^{0}_{\tau}\mathscr{C}^{2}_{}}$. Interpolation of H\"{o}lder norms then shows the following (see Theorem 3.2 in \cite{BHSobolev}). For any $n\geq0$ and $\nu>0$, we can choose $\k\geq0$ large enough so that for $\upsilon\in(0,1)$ fixed, we have
\begin{align}
\|\mathbf{g}[{{}\grad_{}}\mathbf{I}^{\e}]-\mathbf{g}[0]\|_{\mathscr{C}^{0}_{\tau}\mathscr{C}^{n}_{}}&\lesssim\|\mathbf{g}[{{}\grad_{}}\mathbf{I}^{\e}]-\mathbf{g}[0]\|_{\mathscr{C}^{0}_{\tau}\mathscr{C}^{1,\upsilon}_{}}^{1-\nu}\|\mathbf{g}[{{}\grad_{}}\mathbf{I}^{\e}]-\mathbf{g}[0]\|_{\mathscr{C}^{0}_{\tau}\mathscr{C}^{\k}_{}}^{\nu}\\
&\lesssim_{\|\mathbf{Y}^{\e}\|_{\mathscr{C}^{0}_{\tau}\mathscr{C}^{2}_{}}}{{}\e^{\frac14(1-\nu)}}.
\end{align}
As noted in the previous paragraph, Lemma \ref{lemma:dtonestimates} then shows \eqref{eq:1lastI4b} by using the previous display. (Note the extra ${{}\e^{-1}}$-scaling on the LHS of \eqref{eq:1lastI4b}.
\item For \eqref{eq:1lastI4c}, we start with the resolvent expansion
\begin{align}
(\lambda-\mathscr{L}^{\e,\mathbf{I}^{\e}_{\s}}_{\mathrm{DtN}})^{-1}-(\lambda-{{}\e^{-1}}\mathscr{L})^{-1}=(\lambda-\mathscr{L}^{\e,\mathbf{I}^{\e}_{\s}}_{\mathrm{DtN}})^{-1}(\mathscr{L}^{\e,\mathbf{I}^{\e}_{\s}}_{\mathrm{DtN}}-{{}\e^{-1}}\mathscr{L})(\lambda-{{}\e^{-1}}\mathscr{L})^{-1}.
\end{align}
By Lemma \ref{lemma:dtonestimates}, the resolvents each have operator norm on $\mathrm{H}^{\rho}{}\to\mathrm{H}^{\rho}{}$ that is 
\begin{align}
\lesssim_{\rho,\|\mathbf{Y}^{\e}\|_{\mathscr{C}^{0}_{\tau}\mathscr{C}^{2}_{}}}\lambda^{-1}.
\end{align}
(Indeed, we should have dependence on a $\mathscr{C}^{n}{}$-norm of $\mathbf{I}^{\e}$ in the implied constant above, but that is controlled by the $\mathscr{C}^{2}{}$-norm of $\mathbf{Y}^{\e}$; see Lemma \ref{lemma:aux1}.) Now, it suffices to combine the previous bound with \eqref{eq:1lastI4b} to get the first bound in \eqref{eq:1lastI4c}. The last bound in \eqref{eq:1lastI4c} follows by $\lambda={{}\e^{-1+\gamma}}$ (see \eqref{eq:lambda}).
\end{itemize}
\begin{proof}[Proof of \eqref{eq:1lastI1c}]
See \eqref{eq:thm13new7Istarta}-\eqref{eq:thm13new7Istartb} and \eqref{eq:1lastI1a}-\eqref{eq:1lastI1b}. From these, it is easy to see that
\begin{align}
[\mathbf{M}^{\e,0}]_{\t,\x}-[\mathbf{M}^{\e,0,1}]_{\t,\x}&={\textstyle\int_{0}^{\t}}\Upsilon^{(1)}_{\x,\mathfrak{q}^{\e}_{\s},\mathbf{I}^{\e}_{\s}}\d\s+{\textstyle\int_{0}^{\t}}\Upsilon^{(2)}_{\x,\mathfrak{q}^{\e}_{\s},\mathbf{I}^{\e}_{\s}}\d\s,\label{eq:1lastI1c1a}
\end{align}
where $\Upsilon^{(1)}$ and $\Upsilon^{(2)}$ are obtained by comparing Dirichlet-to-Neumann maps and their resolvents:
\begin{align}
\Upsilon^{(1)}_{\x,\mathfrak{q}^{\e}_{\s},\mathbf{I}^{\e}_{\s}}&:=\mathscr{L}^{\e,\mathbf{I}^{\e}_{\s}}_{\mathrm{DtN}}[|(\lambda-\mathscr{L}^{\e,\mathbf{I}^{\e}_{\s}}_{\mathrm{DtN}})^{-1}\mathrm{Fluc}^{\mathrm{noise},\e}_{\x,\mathfrak{q}^{\e}_{\s},\mathbf{I}^{\e}_{\s}}|^{2}]-{{}\e^{-1}}\mathscr{L}[|(\lambda-{{}\e^{-1}}\mathscr{L})^{-1}\mathrm{Fluc}^{\mathrm{noise},\e}_{\x,\mathfrak{q}^{\e}_{\s},\mathbf{I}^{\e}_{\s}}|^{2}]\label{eq:1lastI1c1b}\\
\Upsilon^{(2)}_{\x,\mathfrak{q}^{\e}_{\s},\mathbf{I}^{\e}_{\s}}&:=-2\left\{(\lambda-\mathscr{L}^{\e,\mathbf{I}^{\e}_{\s}}_{\mathrm{DtN}})^{-1}\mathrm{Fluc}^{\mathrm{noise},\e}_{\x,\mathfrak{q}^{\e}_{\s},\mathbf{I}^{\e}_{\s}}\right\}\times\left\{\mathscr{L}^{\e,\mathbf{I}^{\e}_{\s}}_{\mathrm{DtN}}(\lambda-\mathscr{L}^{\e,\mathbf{I}^{\e}_{\s}}_{\mathrm{DtN}})^{-1}\mathrm{Fluc}^{\mathrm{noise},\e}_{\x,\mathfrak{q}^{\e}_{\s},\mathbf{I}^{\e}_{\s}}\right\}\label{eq:1lastI1c1c}\\
&\quad+2\left\{(\lambda-{{}\e^{-1}}\mathscr{L})^{-1}\mathrm{Fluc}^{\mathrm{noise},\e}_{\x,\mathfrak{q}^{\e}_{\s},\mathbf{I}^{\e}_{\s}}\right\}\times\left\{{{}\e^{-1}}\mathscr{L}(\lambda-{{}\e^{-1}}\mathscr{L})^{-1}\mathrm{Fluc}^{\mathrm{noise},\e}_{\x,\mathfrak{q}^{\e}_{\s},\mathbf{I}^{\e}_{\s}}\right\}.\label{eq:1lastI1c1d}
\end{align}
If we now fix any stopping time $\tau\in[0,1]$, then by triangle inequality and \eqref{eq:1lastI1c1a}, we have 
\begin{align}
\|[\mathbf{M}^{\e,0}]-[\mathbf{M}^{\e,0,1}]\|_{\mathscr{C}^{0}_{\tau}\mathscr{C}^{\k}_{}}&\lesssim\sup_{0\leq\s\leq\tau}\|\Upsilon^{(1)}_{\cdot,\mathfrak{q}^{\e}_{\s},\mathbf{I}^{\e}_{\s}}\|_{\mathscr{C}^{\k}{}}+\sup_{0\leq\s\leq\tau}\|\Upsilon^{(2)}_{\cdot,\mathfrak{q}^{\e}_{\s},\mathbf{I}^{\e}_{\s}}\|_{\mathscr{C}^{\k}{}}.\label{eq:1lastI1c1e}
\end{align}
(Recall that we have assumed $\k=0$ for simplicity; see after \eqref{eq:thm13new7Istartc}.) We start by estimating the first term on the RHS of \eqref{eq:1lastI1c1e}. In view of \eqref{eq:1lastI1c1b}, we write {$\Upsilon^{(1)}$} as the error obtained by replacing the outer {$\mathscr{L}^{\e,\mathbf{I}^{\e}_{\s}}_{\mathrm{DtN}}$}-operator in the first term on the RHS of \eqref{eq:1lastI1c1b} by {${{}\e^{-1}}\mathscr{L}$}, plus the error obtained by making the same replacement but in the resolvent. In particular, we have the identity
\begin{align}
\Upsilon^{(1)}_{\x,\mathfrak{q}^{\e}_{\s},\mathbf{I}^{\e}_{\s}}&:=\mathscr{L}^{\e,\mathbf{I}^{\e}_{\s}}_{\mathrm{DtN}}[|(\lambda-\mathscr{L}^{\e,\mathbf{I}^{\e}_{\s}}_{\mathrm{DtN}})^{-1}\mathrm{Fluc}^{\mathrm{noise},\e}_{\x,\mathfrak{q}^{\e}_{\s},\mathbf{I}^{\e}_{\s}}|^{2}]-{{}\e^{-1}}\mathscr{L}[|(\lambda-\mathscr{L}^{\e,\mathbf{I}^{\e}_{\s}}_{\mathrm{DtN}})^{-1}\mathrm{Fluc}^{\mathrm{noise},\e}_{\x,\mathfrak{q}^{\e}_{\s},\mathbf{I}^{\e}_{\s}}|^{2}]\label{eq:1lastI1c2a}\\
&+{{}\e^{-1}}\mathscr{L}[|(\lambda-\mathscr{L}^{\e,\mathbf{I}^{\e}_{\s}}_{\mathrm{DtN}})^{-1}\mathrm{Fluc}^{\mathrm{noise},\e}_{\x,\mathfrak{q}^{\e}_{\s},\mathbf{I}^{\e}_{\s}}|^{2}]-{{}\e^{-1}}\mathscr{L}[|(\lambda-{{}\e^{-1}}\mathscr{L})^{-1}\mathrm{Fluc}^{\mathrm{noise},\e}_{\x,\mathfrak{q}^{\e}_{\s},\mathbf{I}^{\e}_{\s}}|^{2}].\label{eq:1lastI1c2b}
\end{align}
Again, for any $\alpha\geq0$, let {$\llangle\rrangle_{\alpha}$} be the $\mathrm{H}^{\alpha}{}$-norm in the {$\mathfrak{q}^{\e}_{\s}$}-variable. By \eqref{eq:1lastI4b}, we have (for $\alpha_{\d}\lesssim1$)
\begin{align}
\llangle\mathrm{RHS}\eqref{eq:1lastI1c2a}\rrangle_{\alpha}&\lesssim_{\alpha,\|\mathbf{Y}^{\e}\|_{\mathscr{C}^{0}_{\tau}\mathscr{C}^{2}_{}}}{{}\e^{-\frac34-\nu}}\llangle|(\lambda-\mathscr{L}^{\e,\mathbf{I}^{\e}_{\s}}_{\mathrm{DtN}})^{-1}\mathrm{Fluc}^{\mathrm{noise},\e}_{\x,\mathfrak{q}^{\e}_{\s},\mathbf{I}^{\e}_{\s}}|^{2}\rrangle_{\alpha+\alpha_{\d}}.\label{eq:1lastI1c3a}
\end{align}
Again, if $\alpha$ is sufficiently large but depending only on the dimension $\d$, then $\mathrm{H}^{\alpha}{}$ is a Hilbert algebra (see Lemma \ref{lemma:sobolevmultiply}), so the Sobolev norm of the square is controlled by the square of the Sobolev norm. This gives
\begin{align}
\mathrm{RHS}\eqref{eq:1lastI1c3a}&\lesssim{{}\e^{-\frac34}}\llangle(\lambda-\mathscr{L}^{\e,\mathbf{I}^{\e}_{\s}}_{\mathrm{DtN}})^{-1}\mathrm{Fluc}^{\mathrm{noise},\e}_{\x,\mathfrak{q}^{\e}_{\s},\mathbf{I}^{\e}_{\s}}\rrangle_{\alpha+\alpha_{\d}}^{2}.\label{eq:1lastI1c3b}
\end{align}
The resolvent is bounded on Sobolev spaces with norm $\lesssim\lambda^{-1}$ (see Lemma \ref{lemma:dtonestimates}). Moreover, $\mathrm{Fluc}^{\mathrm{noise},\e}$ has derivatives of order ${{}\e^{-1/2}}$ (see \eqref{eq:step1fluc}). Thus, the RHS of \eqref{eq:1lastI1c3b} is $\lesssim{{}\e^{-3/4}}\lambda^{-2}{{}\e^{-1}}\lesssim{{}\e^{1/4-2\gamma}}$ (with implied constant depending on the {$\mathscr{C}^{0}_{\tau}\mathscr{C}^{2}_{}$}-norm of $\mathbf{Y}^{\e}$ as before, since the metric in the Dirichlet-to-Neumann map on the RHS of \eqref{eq:1lastI1c3b} depends on at most two derivatives of $\mathbf{I}^{\e}$; see the proof of Lemma \ref{lemma:thm13new4} for this argument, for example). By the previous two displays, if we take $\alpha$ large enough depending on dimension $\d$, then by Sobolev embedding in the {$\mathfrak{q}^{\e}_{\s}$}-variable, we deduce that 
\begin{align}
|\mathrm{RHS}\eqref{eq:1lastI1c2a}|&\lesssim_{\|\mathbf{Y}^{\e}\|_{\mathscr{C}^{0}_{\tau}\mathscr{C}^{2}_{}}}{{}\e^{\frac14-2\gamma}}.\label{eq:1lastI1c3c}
\end{align}
We now control \eqref{eq:1lastI1c2b}. First, $\mathscr{L}:\mathrm{H}^{\alpha+1}{}\to\mathrm{H}^{\alpha}{}$ is bounded with norm $\mathrm{O}(1)$ (see Lemma \ref{lemma:dtonbasics}). By this and difference of squares and the algebra property of $\mathrm{H}^{\alpha}{}$-spaces (for $\alpha$ big enough), we get
\begin{align}
\llangle\eqref{eq:1lastI1c2b}\rrangle_{\alpha}&\lesssim{{}\e^{-1}}\llangle(\lambda-\mathscr{L}^{\e,\mathbf{I}^{\e}_{\s}}_{\mathrm{DtN}})^{-1}\mathrm{Fluc}^{\mathrm{noise},\e}_{\x,\mathfrak{q}^{\e}_{\s},\mathbf{I}^{\e}_{\s}}-(\lambda-{{}\e^{-1}}\mathscr{L})^{-1}\mathrm{Fluc}^{\mathrm{noise},\e}_{\x,\mathfrak{q}^{\e}_{\s},\mathbf{I}^{\e}_{\s}}\rrangle_{\alpha+1}\label{eq:1lastI1c4a}\\
&\times\llangle(\lambda-\mathscr{L}^{\e,\mathbf{I}^{\e}_{\s}}_{\mathrm{DtN}})^{-1}\mathrm{Fluc}^{\mathrm{noise},\e}_{\x,\mathfrak{q}^{\e}_{\s},\mathbf{I}^{\e}_{\s}}+(\lambda-{{}\e^{-1}}\mathscr{L})^{-1}\mathrm{Fluc}^{\mathrm{noise},\e}_{\x,\mathfrak{q}^{\e}_{\s},\mathbf{I}^{\e}_{\s}}\rrangle_{\alpha+1}.\label{eq:1lastI1c4b}
\end{align}
Use \eqref{eq:1lastI4c} and the fact that derivatives of $\mathrm{Fluc}^{\mathrm{noise},\e}$ are order ${{}\e^{-1/2}}$ to show that the RHS of \eqref{eq:1lastI1c4a} is order ${{}\e^{-1/4-2\gamma-\nu}}$. Use the $\lambda^{-1}$-estimate for resolvents in \eqref{eq:1lastI1c4b} to show that \eqref{eq:1lastI1c4b} is $\lesssim\lambda^{-1}{{}\e^{-1/2}}\lesssim{{}\e^{1/2-\gamma}}$. Thus, the LHS of \eqref{eq:1lastI1c4a} is $\lesssim{{}\e^{1/4-3\gamma-\nu}}$. All estimates have implied constants depending on $\alpha$ and the {$\mathscr{C}^{0}_{\tau}\mathscr{C}^{2}_{}$}-norm of $\mathbf{Y}^{\e}$, as before. Taking $\alpha$ big enough to use a Sobolev embedding like we did after \eqref{eq:thm13new6I2f}-\eqref{eq:thm13new6I2g}, we get the following for $\nu>0$ fixed but arbitrarily small:
\begin{align}
|\eqref{eq:1lastI1c2b}|&\lesssim_{\|\mathbf{Y}^{\e}\|_{\mathscr{C}^{0}_{\tau}\mathscr{C}^{2}_{}}}{{}\e^{\frac14-3\gamma-\nu}}.\label{eq:1lastI1c4c}
\end{align}
Now, combine \eqref{eq:1lastI1c2a}-\eqref{eq:1lastI1c2b}, \eqref{eq:1lastI1c3c}, and \eqref{eq:1lastI1c4c}. This shows that for $\beta>0$ uniformly positive,
\begin{align}
|\Upsilon^{(1)}_{\x,\mathfrak{q}^{\e}_{\s},\mathbf{I}^{\e}_{\s}}|&\lesssim_{\|\mathbf{Y}^{\e}\|_{\mathscr{C}^{0}_{\tau}\mathscr{C}^{2}_{}}}\e^{\beta}.\label{eq:1lastI1c5}
\end{align}
We now treat {$\Upsilon^{(2)}$} (see \eqref{eq:1lastI1c1c}-\eqref{eq:1lastI1c1d}). This follows from the same type of argument. Let us be precise. We first rewrite \eqref{eq:1lastI1c1c}-\eqref{eq:1lastI1c1d} as the error obtained by replacing {$\mathscr{L}^{\e,\mathbf{I}^{\e}_{\s}}_{\mathrm{DtN}}\mapsto{{}\e^{-1}}\mathscr{L}$} outside of the resolvents, plus the error obtained by this replacement inside the resolvents:
\begin{align}
\Upsilon^{(2)}_{\x,\mathfrak{q}^{\e}_{\s},\mathbf{I}^{\e}_{\s}}&=-2\left\{(\lambda-\mathscr{L}^{\e,\mathbf{I}^{\e}_{\s}}_{\mathrm{DtN}})^{-1}\mathrm{Fluc}^{\mathrm{noise},\e}_{\x,\mathfrak{q}^{\e}_{\s},\mathbf{I}^{\e}_{\s}}\right\}\times\left\{\mathscr{L}^{\e,\mathbf{I}^{\e}_{\s}}_{\mathrm{DtN}}(\lambda-\mathscr{L}^{\e,\mathbf{I}^{\e}_{\s}}_{\mathrm{DtN}})^{-1}\mathrm{Fluc}^{\mathrm{noise},\e}_{\x,\mathfrak{q}^{\e}_{\s},\mathbf{I}^{\e}_{\s}}\right\}\label{eq:1lastI1c6a}\\
&\quad+2\left\{(\lambda-\mathscr{L}^{\e,\mathbf{I}^{\e}_{\s}}_{\mathrm{DtN}})^{-1}\mathrm{Fluc}^{\mathrm{noise},\e}_{\x,\mathfrak{q}^{\e}_{\s},\mathbf{I}^{\e}_{\s}}\right\}\times\left\{{{}\e^{-1}}\mathscr{L}(\lambda-\mathscr{L}^{\e,\mathbf{I}^{\e}_{\s}}_{\mathrm{DtN}})^{-1}\mathrm{Fluc}^{\mathrm{noise},\e}_{\x,\mathfrak{q}^{\e}_{\s},\mathbf{I}^{\e}_{\s}}\right\}\label{eq:1lastI1c6b}\\
&\quad-2\left\{(\lambda-\mathscr{L}^{\e,\mathbf{I}^{\e}_{\s}}_{\mathrm{DtN}})^{-1}\mathrm{Fluc}^{\mathrm{noise},\e}_{\x,\mathfrak{q}^{\e}_{\s},\mathbf{I}^{\e}_{\s}}\right\}\times\left\{{{}\e^{-1}}\mathscr{L}(\lambda-\mathscr{L}^{\e,\mathbf{I}^{\e}_{\s}}_{\mathrm{DtN}})^{-1}\mathrm{Fluc}^{\mathrm{noise},\e}_{\x,\mathfrak{q}^{\e}_{\s},\mathbf{I}^{\e}_{\s}}\right\}\label{eq:1lastI1c6c}\\
&\quad+2\left\{(\lambda-{{}\e^{-1}}\mathscr{L})^{-1}\mathrm{Fluc}^{\mathrm{noise},\e}_{\x,\mathfrak{q}^{\e}_{\s},\mathbf{I}^{\e}_{\s}}\right\}\times\left\{{{}\e^{-1}}\mathscr{L}(\lambda-{{}\e^{-1}}\mathscr{L})^{-1}\mathrm{Fluc}^{\mathrm{noise},\e}_{\x,\mathfrak{q}^{\e}_{\s},\mathbf{I}^{\e}_{\s}}\right\}.\label{eq:1lastI1c6d}
\end{align}
Look at $\mathrm{RHS}\eqref{eq:1lastI1c6a}+\eqref{eq:1lastI1c6b}$. This contribution gives us 
\begin{align}
-2\left\{(\lambda-\mathscr{L}^{\e,\mathbf{I}^{\e}_{\s}}_{\mathrm{DtN}})^{-1}\mathrm{Fluc}^{\mathrm{noise},\e}_{\x,\mathfrak{q}^{\e}_{\s},\mathbf{I}^{\e}_{\s}}\right\}\times\left\{[\mathscr{L}^{\e,\mathbf{I}^{\e}_{\s}}_{\mathrm{DtN}}-{{}\e^{-1}}\mathscr{L}](\lambda-\mathscr{L}^{\e,\mathbf{I}^{\e}_{\s}}_{\mathrm{DtN}})^{-1}\mathrm{Fluc}^{\mathrm{noise},\e}_{\x,\mathfrak{q}^{\e}_{\s},\mathbf{I}^{\e}_{\s}}\right\}. \label{eq:1lastI1c6e}
\end{align}
The first factor is $\lesssim\lambda^{-1}{{}\e^{-1/2}}\lesssim{{}\e^{1/2-\gamma}}$ (for reasons we explained in the proof of \eqref{eq:1lastI1c5}). By \eqref{eq:1lastI4b}, the difference of Dirichlet-to-Neumann maps is $\lesssim{{}\e^{-3/4-\nu}}$; it acts on something of order $\lesssim{{}\e^{1/2-\gamma}}$ as we just noted. Therefore, \eqref{eq:1lastI1c6e}, which is $\mathrm{RHS}\eqref{eq:1lastI1c6a}+\eqref{eq:1lastI1c6b}$, is $\lesssim{{}\e^{1/4-2\gamma-\nu}}$ (with implied constant depending on the {$\mathscr{C}^{0}_{\tau}\mathscr{C}^{2}_{}$}-norm of $\mathbf{Y}^{\e}$, as before). Of course, this heuristic can be made precise by the same Sobolev multiplication and embedding argument that we just illustrated. We omit the lengthy details. 

Take $\eqref{eq:1lastI1c6c}+\eqref{eq:1lastI1c6d}$. When we replace resolvents {$(\lambda-\mathscr{L}^{\e,\mathbf{I}^{\e}_{\s}}_{\mathrm{DtN}})^{-1}\mapsto(\lambda-{{}\e^{-1}}\mathscr{L})^{-1}$} in one of the factors in \eqref{eq:1lastI1c6c}, by \eqref{eq:1lastI4c}, we pick up a factor of ${{}\e^{5/4-2\gamma-\nu}}$. This acts on $\mathrm{Fluc}^{\mathrm{noise},\e}$, which has derivatives of order ${{}\e^{-1/2}}$. We then multiply by the second factor in \eqref{eq:1lastI1c6c}, which is order $\lesssim{{}\e^{-1}}\lambda^{-1}{{}\e^{-1/2}}\lesssim{{}\e^{-1/2-\gamma}}$. By multiplying all bounds together, we get that the error in replacing resolvents in the first factor in \eqref{eq:1lastI1c6c} is $\lesssim{{}\e^{1/4-3\gamma-\nu}}$. The rest of $\eqref{eq:1lastI1c6c}+\eqref{eq:1lastI1c6d}$ is obtained by making this same replacement of resolvents in the second factor in \eqref{eq:1lastI1c6c}. By the same argument, this error is $\lesssim{{}\e^{1/4-3\gamma}}$. Thus, for $\nu>0$ fixed but arbitrarily small, we get $|\eqref{eq:1lastI1c6c}+\eqref{eq:1lastI1c6d}|\lesssim{{}\e^{1/4-3\gamma-\nu}}$. By combining the previous two paragraphs and \eqref{eq:1lastI1c6a}-\eqref{eq:1lastI1c6d}, we deduce that for $\beta>0$ uniformly positive, 
\begin{align}
|\Upsilon^{(2)}_{\x,\mathfrak{q}^{\e}_{\s},\mathbf{I}^{\e}_{\s}}|&\lesssim_{\|\mathbf{Y}^{\e}\|_{\mathscr{C}^{0}_{\tau}\mathscr{C}^{2}_{}}}\e^{\beta}.\label{eq:1lastI1c7}
\end{align}
Combine \eqref{eq:1lastI1c1e} with \eqref{eq:1lastI1c5} and \eqref{eq:1lastI1c7} to get the desired bound \eqref{eq:1lastI1c} (recall we assumed that $\k=0$).
\end{proof}
\begin{proof}[Proof of \eqref{eq:1lastI2a}]
By \eqref{eq:1lastI1a}-\eqref{eq:1lastI1b} and \eqref{eq:1lastI2b}-\eqref{eq:1lastI2c} (and the triangle inequality argument that gave \eqref{eq:1lastI1c1e}),
\begin{align}
\|[\mathbf{M}^{\e,0,1}]-[\mathbf{M}^{\e,0,2}]\|_{\mathscr{C}^{0}_{\tau}\mathscr{C}^{\k}_{}}\lesssim\sup_{0\leq\s\leq\tau}\|\Upsilon^{(3)}_{\cdot,\mathfrak{q}^{\e}_{\s},\mathbf{I}^{\e}_{\s}}\|_{\mathscr{C}^{\k}{}}+\|\Upsilon^{(4)}_{\cdot,\mathfrak{q}^{\e}_{\s},\mathbf{I}^{\e}_{\s}}\|_{\mathscr{C}^{\k}{}}, \label{eq:1lastI2a1a}
\end{align}
where
\begin{align}
\Upsilon^{(3)}_{\x,\mathfrak{q}^{\e}_{\s},\mathbf{I}^{\e}_{\s}}&:={{}\e^{-1}}\mathscr{L}[|(\lambda-{{}\e^{-1}}\mathscr{L})^{-1}\mathrm{Fluc}^{\mathrm{noise},\e}_{\x,\mathfrak{q}^{\e}_{\s},\mathbf{I}^{\e}_{\s}}|^{2}]-{{}\e^{-1}}\mathscr{L}[|(\lambda-{{}\e^{-1}}\mathscr{L})^{-1}\mathrm{Fluc}^{\mathrm{noise},\e}_{\x,\mathfrak{q}^{\e}_{\s},0}|^{2}]\label{eq:1lastI2a1b}\\
\Upsilon^{(4)}_{\x,\mathfrak{q}^{\e}_{\e},\mathbf{I}^{\e}_{\s}}&:=-2\left\{(\lambda-{{}\e^{-1}}\mathscr{L})^{-1}\mathrm{Fluc}^{\mathrm{noise},\e}_{\x,\mathfrak{q}^{\e}_{\s},\mathbf{I}^{\e}_{\s}}\right\}\times\left\{{{}\e^{-1}}\mathscr{L}(\lambda-{{}\e^{-1}}\mathscr{L})^{-1}\mathrm{Fluc}^{\mathrm{noise},\e}_{\x,\mathfrak{q}^{\e}_{\s},\mathbf{I}^{\e}_{\s}}\right\}\label{eq:1lastI2a1c}\\
&\quad+2\left\{(\lambda-{{}\e^{-1}}\mathscr{L})^{-1}\mathrm{Fluc}^{\mathrm{noise},\e}_{\x,\mathfrak{q}^{\e}_{\s},0}\right\}\times\left\{{{}\e^{-1}}\mathscr{L}(\lambda-{{}\e^{-1}}\mathscr{L})^{-1}\mathrm{Fluc}^{\mathrm{noise},\e}_{\x,\mathfrak{q}^{\e}_{\s},0}\right\}.\label{eq:1lastI2a1d}
\end{align}
We first treat $\Upsilon^{(3)}$. By difference of squares, we have
\begin{align}
\Upsilon^{(3)}_{\x,\mathfrak{q}^{\e}_{\s},\mathbf{I}^{\e}_{\s}}&={{}\e^{-1}}\mathscr{L}\left\{\left[(\lambda-{{}\e^{-1}}\mathscr{L})^{-1}\mathrm{Fluc}^{\mathrm{noise},\e}_{\x,\mathfrak{q}^{\e}_{\s},\mathbf{I}^{\e}_{\s}}-(\lambda-{{}\e^{-1}}\mathscr{L})^{-1}\mathrm{Fluc}^{\mathrm{noise},\e}_{\x,\mathfrak{q}^{\e}_{\s},0}\right]\right.\label{eq:1lastI2a2a}\\
&\quad\quad\quad\quad\quad\left.\times\left[(\lambda-{{}\e^{-1}}\mathscr{L})^{-1}\mathrm{Fluc}^{\mathrm{noise},\e}_{\x,\mathfrak{q}^{\e}_{\s},\mathbf{I}^{\e}_{\s}}+(\lambda-{{}\e^{-1}}\mathscr{L})^{-1}\mathrm{Fluc}^{\mathrm{noise},\e}_{\x,\mathfrak{q}^{\e}_{\s},0}\right]\right\}.\label{eq:1lastI2a2b}
\end{align}
We now give a heuristic that immediately turns rigorous when we use the Sobolev multiplication/embedding framework that we explained in detail in the proof \eqref{eq:1lastI1c}. The ${{}\e^{-1}}\mathscr{L}$ operator gives a factor of ${{}\e^{-1}}$. The resolvent on the RHS of \eqref{eq:1lastI2a2a} gives a factor of $\lesssim\lambda^{-1}$. It acts on the difference of $\mathrm{Fluc}^{\mathrm{noise},\e}$-terms, which by \eqref{eq:1lastI4a}, has derivatives of order $\lesssim{{}\e^{-1/4}}$. Thus, the factor in the curly braces on the RHS of \eqref{eq:1lastI2a2a} is $\lesssim\lambda^{-1}{{}\e^{-1/4}}\lesssim{{}\e^{3/4-\gamma}}$ (see \eqref{eq:lambda}). It multiplies \eqref{eq:1lastI2a2b}, which is $\lesssim\lambda^{-1}{{}\e^{-1/2}}\lesssim{{}\e^{1/2-\gamma}}$, since the resolvents give $\lambda^{-1}$, and the $\mathrm{Fluc}^{\mathrm{noise},\e}$-terms are $\lesssim{{}\e^{-1/2}}$. Thus, the term inside the curly brackets in \eqref{eq:1lastI2a2a}-\eqref{eq:1lastI2a2b} is $\lesssim{{}\e^{3/4-\gamma}\e^{1/2-\gamma}}\lesssim{{}\e^{5/4-2\gamma}}$. Multiplying by ${{}\e^{-1}}$ therefore shows that for $\beta>0$ uniformly positive, 
\begin{align}
|\Upsilon^{(3)}_{\x,\mathfrak{q}^{\e}_{\s},\mathbf{I}^{\e}_{\s}}|&\lesssim_{\|\mathbf{Y}^{\e}\|_{\mathscr{C}^{0}_{\tau}\mathscr{C}^{2}_{}}}\e^{\beta}.\label{eq:1lastI2a3a}
\end{align}
For $\Upsilon^{(4)}$ (see \eqref{eq:1lastI2a1c}-\eqref{eq:1lastI2a1d}), a similar argument works. When we replace {$\mathbf{I}^{\e}_{\s}\mapsto0$} in the first factor in \eqref{eq:1lastI2a1c}, the error we get is something of order $\lesssim{{}\e^{-1/4}}$ (by \eqref{eq:1lastI4a}) which is hit by a resolvent that gives $\lesssim\lambda^{-1}$. We then multiply by the second factor in \eqref{eq:1lastI2a1c}, which is $\lesssim{{}\e^{-1}}\lambda^{-1}{{}\e^{-1/2}}\lesssim{{}\e^{-1/2-\gamma}}$. Multiplying all bounds, the error is $\lesssim\lambda^{-1}{{}\e^{-1/4}\e^{-1/2-\gamma}}\lesssim{{}\e^{1/4-2\gamma}}$. When we replace {$\mathbf{I}^{\e}_{\s}\mapsto0$} in the second factor in \eqref{eq:1lastI2a1c}, the error is estimated in the same way. Thus, since these two errors are exactly what gives \eqref{eq:1lastI2a1c}-\eqref{eq:1lastI2a1d}, we deduce that for $\beta>0$ uniformly positive, we have 
\begin{align}
|\Upsilon^{(4)}_{\x,\mathfrak{q}^{\e}_{\s},\mathbf{I}^{\e}_{\s}}|&\lesssim_{\|\mathbf{Y}^{\e}\|_{\mathscr{C}^{0}_{\tau}\mathscr{C}^{2}_{}}}\e^{\beta}.\label{eq:1lastI2a3b}
\end{align}
Now, combine \eqref{eq:1lastI2a3a}-\eqref{eq:1lastI2a3b} and \eqref{eq:1lastI2a1a} (recall $\k=0$). This gives the desired estimate \eqref{eq:1lastI2a}.
\end{proof}
\begin{proof}[Proof of \eqref{eq:1lastI3d}]
To start, we compute $\mathrm{Hom}\mathbf{M}$ in detail. By \eqref{eq:1lastI3b} (with $\mathfrak{C}^{\lambda}$ equal to the $\d\s$-integrand of \eqref{eq:1lastI2b}-\eqref{eq:1lastI2c}), we claim that 
\begin{align}
\mathrm{Hom}\mathbf{M}_{\t,\x}&=-2{\textstyle\int_{0}^{\t}\int_{\partial\mathds{M}}}\left\{(\lambda-{{}\e^{-1}}\mathscr{L})^{-1}\mathrm{Fluc}^{\mathrm{noise},\e}_{\x,\z,0}\right\}\left\{{{}\e^{-1}}\mathscr{L}(\lambda-{{}\e^{-1}}\mathscr{L})^{-1}\mathrm{Fluc}^{\mathrm{noise},\e}_{\x,\z,0}\right\}\d\z\d\s.\label{eq:1lastI3d1a}
\end{align}
Indeed, the claim is just that if we integrate the $\d\s$-integrand in \eqref{eq:1lastI2b} over $\partial\mathds{M}$ (in the {$\mathfrak{q}^{\e}_{\s}$}-variable), we get $0$. This is because said $\d\s$-integrand is in the image of $\mathscr{L}$ by construction, and $\mathscr{L}$ has invariant measure given by the surface measure on $\partial\mathds{M}$ (see Lemma \ref{lemma:dtonbasics}). We now proceed in two steps. First, rewrite ${{}\e^{-1}}\mathscr{L}={{}\e^{-1}}\mathscr{L}-\lambda+\lambda$ for the Dirichlet-to-Neumann map in \eqref{eq:1lastI3d1a} that is not inside any resolvent. The ${{}\e^{-1}}\mathscr{L}-\lambda$ piece, when multiplied by the outer negative sign, cancels the resolvent $(\lambda-{{}\e^{-1}}\mathscr{L})$. Thus,
\begin{align}
\mathrm{Hom}\mathbf{M}_{\t,\x}&=2{\textstyle\int_{0}^{\t}\int_{\partial\mathds{M}}}\mathrm{Fluc}^{\mathrm{noise},\e}_{\x,\z,0}\times(\lambda-{{}\e^{-1}}\mathscr{L})^{-1}\mathrm{Fluc}^{\mathrm{noise},\e}_{\x,\z,0}\d\z\d\s\label{eq:1lastI3d1b}\\
&-2\lambda{\textstyle\int_{0}^{\t}\int_{\partial\mathds{M}}}|(\lambda-{{}\e^{-1}}\mathscr{L})^{-1}\mathrm{Fluc}^{\mathrm{noise},\e}_{\x,\z,0}|^{2}\d\z\d\s.\label{eq:1lastI3d1c}
\end{align}
The RHS of \eqref{eq:1lastI3d1b} has integrand that is independent of $\s$, so we can replace the $\d\s$-integration by a factor of $\t$. The same is true for \eqref{eq:1lastI3d1c}. Now, recall $[\mathbf{M}^{\mathrm{limit}}]$ from \eqref{eq:thm13new7Istartc}. By triangle inequality (exactly like in what gave us \eqref{eq:1lastI1c1e}), we therefore get the bound
\begin{align}
\|\mathrm{Hom}\mathbf{M}-[\mathbf{M}^{\mathrm{limit}}]\|_{\mathscr{C}^{0}_{\tau}\mathscr{C}^{\k}_{}}&\lesssim\|\Upsilon^{(5)}\|_{\mathscr{C}^{\k}{}}+\lambda\left\|{\textstyle\int_{\partial\mathds{M}}}|(\lambda-{{}\e^{-1}}\mathscr{L})^{-1}\mathrm{Fluc}^{\mathrm{noise},\e}_{\cdot,\z,0}|^{2}\d\z\right\|_{\mathscr{C}^{\k}{}},\label{eq:1lastI3d1d}
\end{align}
where 
\begin{align}
\Upsilon^{(5)}_{\x}&:={\textstyle\int_{\partial\mathds{M}}}\mathrm{Fluc}^{\mathrm{noise},\e}_{\x,\z,0}\left\{(\lambda-{{}\e^{-1}}\mathscr{L})^{-1}\mathrm{Fluc}^{\mathrm{noise},\e}_{\x,\z,0}\right\}\d\z+{\textstyle\int_{\partial\mathds{M}}}[\mathbf{K}_{\x,\z}-1]\left\{\mathscr{L}^{-1}[\mathbf{K}_{\x,\z}-1]\right\}\d\z.\label{eq:1lastI3d1e}
\end{align}
Let us control the second term on the RHS of \eqref{eq:1lastI3d1d}. By ${{}\e^{-1/2}}$-bounds for $\mathrm{Fluc}^{\mathrm{noise},\e}$ (see \eqref{eq:step1fluc}) and \eqref{eq:thm13new4I} for $\ell=0$ (and setting {$\mathbf{I}^{\e}_{\s}=0$}, which is okay because we never used any data about $\mathbf{I}^{\e}$ in the proof), the term in the square is $\lesssim{{}\e^{1/2}}$, so its square is $\lesssim{{}\e^{}}$. Multiply by $\lambda={{}\e^{-1+\gamma}}$ (see \eqref{eq:lambda}) to get
\begin{align}
\lambda\left\|{\textstyle\int_{\partial\mathds{M}}}|(\lambda-{{}\e^{-1}}\mathscr{L})^{-1}\mathrm{Fluc}^{\mathrm{noise},\e}_{\cdot,\z,0}|^{2}\d\z\right\|_{\mathscr{C}^{\k}{}}\lesssim\e^{\gamma}.\label{eq:1lastI3d2}
\end{align}
We now treat $\Upsilon^{(5)}$. First, by \eqref{eq:step1fluc}, we have the following (since $\mathrm{Vol}_{0}=1$; see Construction \ref{construction:model}):
\begin{align}
\mathrm{Fluc}^{\mathrm{noise},\e}_{\x,\z,0}&={{}\e^{-\frac12}}[\mathbf{K}_{\x,\z}-{\textstyle\int_{\partial\mathds{M}}}\mathbf{K}_{\x,\w}\d\w]={{}\e^{-\frac12}}[\mathbf{K}_{\x,\z}-1],\label{eq:1lastI3d3a}
\end{align}
since $\mathbf{K}$ is normalized to have total mass $1$ (see Construction \ref{construction:model}). Using \eqref{eq:1lastI3d3a}, we can rewrite \eqref{eq:1lastI3d1e} as
\begin{align}
\Upsilon^{(5)}_{\x}={\textstyle\int_{\partial\mathds{M}}}[\mathbf{K}_{\x,\z}-1]\times\left\{[{{}\e^{-1}}(\lambda-{{}\e^{-1}}\mathscr{L})^{-1}+\mathscr{L}^{-1}][\mathbf{K}_{\x,\z}-1]\right\}\d\z.\label{eq:1lastI3d3b}
\end{align}
Now, we use a resolvent expansion (see \eqref{eq:flowcompute5resolvent} with $\mathrm{A}={{}\e^{}}(\lambda-{{}\e^{-1}}\mathscr{L})$ and $\mathrm{B}=-\mathscr{L}$). This implies 
\begin{align}
{{}\e^{-1}}(\lambda-{{}\e^{-1}}\mathscr{L})^{-1}+\mathscr{L}^{-1}={{}\e^{-1}}(\lambda-{{}\e^{-1}}\mathscr{L})^{-1}({{}\e^{}}\lambda)\mathscr{L}^{-1}=\lambda(\lambda-{{}\e^{-1}}\mathscr{L})^{-1}\mathscr{L}^{-1}.\label{eq:1lastI3d3c}
\end{align}
Now, note that $\mathbf{K}_{\x,\cdot}-1$ is orthogonal to the null-space of $\mathscr{L}$ (since $1$ is just the projection of $\mathbf{K}_{\x,\cdot}$ onto the space of constant functions on $\partial\mathds{M}$, which is exactly the null-space of $\mathscr{L}$; see Lemma \ref{lemma:dtonbasics}). Thus, we can use a spectral gap for $\mathscr{L}$ (see Lemma \ref{lemma:dtonestimates}) when we apply its inverse and resolvents to $\mathbf{K}_{\x,\cdot}-1$. So, when we apply the far RHS of \eqref{eq:1lastI3d3c} to $\mathbf{K}_{\x,\cdot}-1$, the $\mathscr{L}^{-1}$-operator is bounded. For $\lambda(\lambda-{{}\e^{-1}}\mathscr{L})^{-1}$, we ignore $\lambda$ inside the resolvent (since $\lambda$ only regularizes the resolvent), and then we have $\lambda{{}\e^{}}\mathscr{L}^{-1}$. By spectral gap for $\mathscr{L}$ and $\lambda={{}\e^{-1+\gamma}}$, the term in curly braces in \eqref{eq:1lastI3d3b} is $\lesssim\e^{\gamma}$. (Technically, this is all in Sobolev norms in $\z$; we need to use Sobolev embedding as in the proof of \eqref{eq:1lastI1c}.) So, by smoothness of $\mathbf{K}$,
\begin{align}
\|\Upsilon^{(5)}\|_{\mathscr{C}^{\k}{}}\lesssim\e^{\gamma}.\label{eq:1lastI3d3d}
\end{align}
Now, combine \eqref{eq:1lastI3d1d}, \eqref{eq:1lastI3d2}, and \eqref{eq:1lastI3d3d}. This gives the desired bound \eqref{eq:1lastI3d}.
\end{proof}
\begin{proof}[Proof of \eqref{eq:1lastI3c}]
For the sake of clarity, we want to estimate the {$\mathscr{C}^{0}_{\tau}\mathscr{C}^{\k}_{}$}-norm (for $\tau\in[0,1]$) of
\begin{align}
[\mathbf{M}^{\e,0,2}]_{\t,\x}-\mathrm{Hom}\mathbf{M}_{\t,\x}={\textstyle\int_{0}^{\t}}\left\{\mathfrak{C}^{\lambda}_{\x,\mathfrak{q}^{\e}_{\s}}-{\textstyle\int_{\partial\mathds{M}}}\mathfrak{C}^{\lambda}_{\x,\z}\d\z\right\}\d\s.\label{eq:1lastI3c1a}
\end{align}
(Indeed, see \eqref{eq:1lastI3a} and \eqref{eq:1lastI3b}.) Again, we assume $\k=0$; for general $\k$, just apply the argument below but for the $\k$-th order derivatives of \eqref{eq:1lastI3c1a} in $\x$. For notational convenience, we define
\begin{align}
\mathrm{Fluc}\mathfrak{C}^{\lambda}_{\x,\mathfrak{q}^{\e}_{\s}}:=\mathfrak{C}^{\lambda}_{\x,\mathfrak{q}^{\e}_{\s}}-{\textstyle\int_{\partial\mathds{M}}}\mathfrak{C}^{\lambda}_{\x,\z}\d\z\label{eq:1lastI3c1b}
\end{align}
to be the fluctuation of $\mathfrak{C}^{\lambda}$, more or less (or equivalently, the $\d\s$-integrand in \eqref{eq:1lastI3c1a}). Note that {$\mathrm{Fluc}\mathfrak{C}^{\lambda}_{\x,\cdot}$} is in the image of $\mathscr{L}$, since it vanishes under integration over $\mathfrak{q}^{\e}_{\s}\in\partial\mathds{M}$ with respect to the invariant measure $\d\z$ of $\mathscr{L}$; see Lemma \ref{lemma:dtonbasics}. (We emphasize that we are integrating with respect to {{}a} measure that, in principle, can have nothing to do with the law of $\mathfrak{q}^{\e}_{\s}$. Also, this is true for all $\x\in\partial\mathds{M}$.) So, we can \emph{rigorously} (not just formally) hit $\mathrm{Fluc}\mathfrak{C}^{\lambda}_{\x,\mathfrak{q}^{\e}_{\s}}$ by $\mathscr{L}\mathscr{L}^{-1}$ and rewrite \eqref{eq:1lastI3c1a} as follows, where all operators act on the $\mathfrak{q}^{\e}_{\s}$-variable:
\begin{align}
[\mathbf{M}^{\e,0,2}]_{\t,\x}-\mathrm{Hom}\mathbf{M}_{\t,\x}&={\textstyle\int_{0}^{\t}}{{}\e^{-1}}\mathscr{L}[({{}\e^{-1}}\mathscr{L})^{-1}\mathrm{Fluc}\mathfrak{C}^{\lambda}_{\x,\mathfrak{q}^{\e}_{\s}}]\d\s\label{eq:1lastI3c1c}\\
&={\textstyle\int_{0}^{\t}}\mathscr{L}^{\e,\mathbf{I}^{\e}_{\s}}_{\mathrm{DtN}}[({{}\e^{-1}}\mathscr{L})^{-1}\mathrm{Fluc}\mathfrak{C}^{\lambda}_{\x,\mathfrak{q}^{\e}_{\s}}]\d\s\label{eq:1lastI3c1d}\\
&+{\textstyle\int_{0}^{\t}}[{{}\e^{-1}}\mathscr{L}-\mathscr{L}^{\e,\mathbf{I}^{\e}_{\s}}_{\mathrm{DtN}}][({{}\e^{-1}}\mathscr{L})^{-1}\mathrm{Fluc}\mathfrak{C}^{\lambda}_{\x,\mathfrak{q}^{\e}_{\s}}]\d\s.\label{eq:1lastI3c1e}
\end{align}
Let us first control the {$\mathscr{C}^{0}_{\tau}\mathscr{C}^{\k}_{}$}-norm of \eqref{eq:1lastI3c1e}. Again, by triangle inequality, we have 
\begin{align}
\|\eqref{eq:1lastI3c1e}\|_{\mathscr{C}^{0}_{\tau}\mathscr{C}^{\k}_{}}&\lesssim\sup_{0\leq\s\leq\tau}\|[{{}\e^{-1}}\mathscr{L}-\mathscr{L}^{\e,\mathbf{I}^{\e}_{\s}}_{\mathrm{DtN}}][({{}\e^{-1}}\mathscr{L})^{-1}\mathrm{Fluc}\mathfrak{C}^{\lambda}_{\x,\mathfrak{q}^{\e}_{\s}}]\|_{\mathscr{C}^{\k}{}}.\label{eq:1lastI3c2a}
\end{align}
We now give the heuristic for controlling the RHS of \eqref{eq:1lastI3c2a} (that can be made precise by the same Sobolev embedding argument given throughout this section). Note that {$\mathrm{Fluc}\mathfrak{C}^{\lambda}_{\x,\cdot}$} is in the image of $\mathscr{L}$ as noted after \eqref{eq:1lastI3c1b}, and thus it is orthogonal to its null-space. Thus, when we apply the inverse of $\mathscr{L}$ to {$\mathrm{Fluc}\mathfrak{C}^{\lambda}_{\x,\cdot}$}, we can use a spectral gap estimate (see Lemma \ref{lemma:dtonestimates}). This means that the difference of Dirichlet-to-Neumann maps hits something of order ${{}\e^{}}$. (Indeed, $\mathrm{Fluc}\mathfrak{C}^{\lambda}$ has derivatives of $\mathrm{O}(1)$. This is by \eqref{eq:1lastI3c1b} and that $\mathfrak{C}^{\lambda}$ has derivatives of $\mathrm{O}(1)$. For this last fact, recall $\mathfrak{C}^{\lambda}$ as the $\d\s$-integrand in \eqref{eq:1lastI2b}-\eqref{eq:1lastI2c}, and use the estimate \eqref{eq:thm13new4I} with $\ell=0$ and $\mathbf{I}^{\e}_{\s,\cdot}$ replaced by $0$. Indeed, this estimate shows that all resolvents acting on $\mathrm{Fluc}^{\mathrm{noise},\e}$-terms in \eqref{eq:1lastI2b}-\eqref{eq:1lastI2c} are $\lesssim{{}\e^{1/2}}$; all of these factors are then cancelled by ${{}\e^{-1}}$-factors hitting $\mathscr{L}$-operators in \eqref{eq:1lastI2b}-\eqref{eq:1lastI2c}.) Next, by \eqref{eq:1lastI4b}, the difference of Dirichlet-to-Neumann maps on the RHS of \eqref{eq:1lastI3c2a} is $\lesssim{{}\e^{-3/4-\nu}}$. So, the RHS of \eqref{eq:1lastI3c2a} is $\lesssim{{}\e^{1/4-\nu}}$ for $\nu>0$ fixed but arbitrarily small, and thus
\begin{align}
\|\eqref{eq:1lastI3c1e}\|_{\mathscr{C}^{0}_{\tau}\mathscr{C}^{\k}_{}}\lesssim_{\k,\|\mathbf{Y}^{\e}\|_{\mathscr{C}^{0}_{\tau}\mathscr{C}^{2}_{}}}{{}\e^{\frac14-\nu}}.\label{eq:1lastI3c2b}
\end{align}
(The dependence on $\mathbf{Y}^{\e}$ can be tracked from \eqref{eq:1lastI4b}; all other estimates used to get \eqref{eq:1lastI3c2b} do not depend on $\mathbf{Y}^{\e}$.) We now move to \eqref{eq:1lastI3c1d}. \emph{This is now where randomness comes in; so far, our estimates in this section have all been deterministic, whereas our estimate for \eqref{eq:1lastI3c1d} will be with high probability.} Note that we can add {$\mathscr{L}^{\e,\mathfrak{q}^{\e}_{\s}}_{\mathrm{flow}}$} to {$\mathscr{L}^{\e,\mathbf{I}^{\e}_{\s}}_{\mathrm{DtN}}$} in \eqref{eq:1lastI3c1d}, because the term in square brackets in \eqref{eq:1lastI3c1d} does not depend on {$\mathbf{I}^{\e}_{\s}$}. We then end up with the full generator of $(\mathbf{I}^{\e},\mathfrak{q}^{\e})$, and the {{}It\^{o}} formula can be applied. (There is no issue of domain for the generator of $\mathbf{I}^{\e}$ because, again, the square bracket in \eqref{eq:1lastI3c1d} does not depend on $\mathbf{I}^{\e}$.) So,
\begin{align}
\eqref{eq:1lastI3c1d}&=\mathfrak{M}_{\t,\x}+({{}\e^{-1}}\mathscr{L})^{-1}\mathrm{Fluc}\mathfrak{C}^{\lambda}_{\x,\mathfrak{q}^{\e}_{\t}}-({{}\e^{-1}}\mathscr{L})^{-1}\mathrm{Fluc}\mathfrak{C}^{\lambda}_{\x,\mathfrak{q}^{\e}_{0}},\label{eq:1lastI3c3a}
\end{align}
where $\mathfrak{M}$ is a martingale with predictable bracket given by
\begin{align}
\mathfrak{B}_{\t,\x}:=&{\textstyle\int_{0}^{\t}}\mathscr{L}^{\e,\mathbf{I}^{\e}_{\s}}_{\mathrm{DtN}}[|({{}\e^{-1}}\mathscr{L})^{-1}\mathrm{Fluc}\mathfrak{C}^{\lambda}_{\x,\mathfrak{q}^{\e}_{\s}}|^{2}]\d\s\label{eq:1lastI3c3b}\\
&-2{\textstyle\int_{0}^{\t}}({{}\e^{-1}}\mathscr{L})^{-1}\mathrm{Fluc}\mathfrak{C}^{\lambda}_{\x,\mathfrak{q}^{\e}_{\s}}\times\mathscr{L}^{\e,\mathbf{I}^{\e}_{\s}}_{\mathrm{DtN}}[({{}\e^{-1}}\mathscr{L})^{-1}\mathrm{Fluc}\mathfrak{C}^{\lambda}_{\x,\mathfrak{q}^{\e}_{\s}}]\d\s.\label{eq:1lastI3c3c}
\end{align}
As we explained in the paragraph after \eqref{eq:1lastI3c2a}, the last two terms on the RHS of \eqref{eq:1lastI3c3a} are deterministically controlled as follows:
\begin{align}
\sup_{0\leq\s\leq\tau}\|({{}\e^{-1}}\mathscr{L})^{-1}\mathrm{Fluc}\mathfrak{C}^{\lambda}_{\cdot,\mathfrak{q}^{\e}_{\s}}\|_{\mathscr{C}^{\k}{}}\lesssim_{\k}{{}\e^{}}.\label{eq:1lastI3c4}
\end{align}
It remains to treat the first term on the RHS of \eqref{eq:1lastI3c3a}. To this end, we use Doob's maximal inequality (and that $|\mathfrak{M}|^{2}-\mathfrak{B}$ is a martingale) to get the following (for $\tau\in[0,1]$ a stopping time):
\begin{align}
\E\left\{\sup_{0\leq\s\leq\tau}|\mathfrak{M}_{\t,\x}|^{2}\right\}&\lesssim\E|\mathfrak{B}_{\tau,\x}|^{2}.\label{eq:1lastI3c5a}
\end{align}
We now estimate \eqref{eq:1lastI3c3b}-\eqref{eq:1lastI3c3c} with the following heuristic (which is, again, immediately rigorous once we use Sobolev norms, embeddings, and multiplication). As explained in the paragraph after \eqref{eq:1lastI3c2a}, the $({{}\e^{-1}}\mathscr{L})^{-1}\mathrm{Fluc}\mathfrak{C}^{\lambda}$-terms in \eqref{eq:1lastI3c3b}-\eqref{eq:1lastI3c3c} are $\lesssim{{}\e^{}}$. The {$\mathscr{L}^{\e,\mathbf{I}^{\e}_{\s}}_{\mathrm{DtN}}$} have scaling of $\lesssim{{}\e^{-1}}$. Thus, we deduce that even without expectations, the RHS of \eqref{eq:1lastI3c5a} is $\lesssim{{}\e^{}}$, so that
\begin{align}
\E\left\{\sup_{0\leq\s\leq\tau}|\mathfrak{M}_{\t,\x}|^{2}\right\}\lesssim_{\|\mathbf{Y}^{\e}\|_{\mathscr{C}^{0}_{\tau}\mathscr{C}^{2}_{}}}{{}\e^{}}.\label{eq:1lastI3c5b}
\end{align}
The $\mathbf{Y}^{\e}$-dependence comes from the fact that the metric defining {$\mathscr{L}^{\e,\mathbf{I}^{\e}_{\s}}_{\mathrm{DtN}}$} depends on the first derivative of the metric {$\mathbf{g}[{{}\grad_{}}\mathbf{I}^{\e}_{\s}]$} at most; see the paragraph after \eqref{eq:thm13new7II1}. \eqref{eq:1lastI3c5b} gives a pointwise-in-$\x$ estimate with high probability. To upgrade this into a uniform-in-$\x$ estimate with high probability, it suffices to show that 
\begin{align}
\|\mathfrak{M}\|_{\mathscr{C}^{0}_{\tau}\mathscr{C}^{\n}_{}}&\lesssim_{\n}1\label{eq:1lastI3c5c}
\end{align}
with high probability for sufficiently large $\n$. (Indeed, by \eqref{eq:1lastI3c5b} and union bound, we can bound $\mathfrak{M}$ uniformly in time until $\tau$ and uniformly over a discretization of $\partial\mathds{M}$ of size ${{}\e^{-1+\beta}}$ for $\beta>0$ uniformly positive, on one high probability event. We can then use \eqref{eq:1lastI3c5c} to show that $\mathfrak{M}$ cannot change by more than $\e^{\kappa}$ between points in said discretization of $\partial\mathds{M}$ for some $\kappa>0$ uniformly positive.) To show \eqref{eq:1lastI3c5c}, it suffices to control every other term in \eqref{eq:1lastI3c3a}. For the last two terms in \eqref{eq:1lastI3c5c}, use \eqref{eq:1lastI3c4}. For the LHS of \eqref{eq:1lastI3c5c}, see \eqref{eq:1lastI3c1d}; the {$\mathscr{L}^{\e,\mathbf{I}^{\e}_{\s}}_{\mathrm{DtN}}$} has scaling $\lesssim{{}\e^{-1}}$, and the square-bracketed term in \eqref{eq:1lastI3c1d} has scaling $\lesssim{{}\e^{}}$ (see the paragraph after \eqref{eq:1lastI3c2a}). So, the LHS of \eqref{eq:1lastI3c5c} is $\lesssim1$ in {$\mathscr{C}^{0}_{\tau}\mathscr{C}^{\n}_{}$}-norm for any $\n$. We arrive at \eqref{eq:1lastI3c5c}. (The point is that we only need an $\mathrm{O}(1)$ bound in \eqref{eq:1lastI3c5c}.) As explained before \eqref{eq:1lastI3c5c}, combining \eqref{eq:1lastI3c5b} and \eqref{eq:1lastI3c5c} shows 
\begin{align}
\|\mathfrak{M}\|_{\mathscr{C}^{0}_{\tau}\mathscr{C}^{\k}_{}}\lesssim_{\k,\|\mathbf{Y}^{\e}\|_{\mathscr{C}^{0}_{\tau}\mathscr{C}^{2}_{}}}\e^{\beta}
\end{align}
with high probability, where $\beta>0$ is uniformly positive. Combining this with \eqref{eq:1lastI3c3a} and \eqref{eq:1lastI3c4} gives 
\begin{align}
\|\eqref{eq:1lastI3c1d}\|_{\mathscr{C}^{0}_{\tau}\mathscr{C}^{\k}_{}}\lesssim_{\k,\|\mathbf{Y}^{\e}\|_{\mathscr{C}^{0}_{\tau}\mathscr{C}^{2}_{}}}\e^{\beta}.\label{eq:1lastI3c5d}
\end{align}
Now combine \eqref{eq:1lastI3c1c}-\eqref{eq:1lastI3c1e} with \eqref{eq:1lastI3c2b} and \eqref{eq:1lastI3c5d}. This gives the desired bound \eqref{eq:1lastI3c}.
\end{proof}
%
%
\section{Proof of Theorem \ref{theorem:3}}\label{section:thm3proof}
We make precise the content of Section \ref{subsubsection:difficultiesthm3}. {{}Many of the results and analysis in this section are standard in the singular \abbr{SPDE} literature; we only detail parts that are, for example, special to the Dirichlet-to-Neumann operator and $\mathrm{L}^{2}$-based Sobolev spaces. (Our analysis below is likely to hold in $\mathscr{C}^{\alpha}$-H\"{o}lder spaces after some more work; the use of $\mathrm{L}^{2}$-based Sobolev spaces lets us more readily use symbol calculus for $\mathscr{L}$.)}
{{}
\subsection{The Da Prato-Debussche schematic}
We first give a decomposition of the \abbr{SPDE} \eqref{eq:singSPDE} of interest. For the reader's convenience, we recall $\mathfrak{h}^{\eta,\mathrm{lin}}$ from \eqref{eq:3resultI}. Consider the projection
\begin{align}
\wt{\Pi}^{\eta,\perp}:\mathrm{L}^{2}(\partial\mathds{M})\to\bigoplus_{i=1}^{\mathrm{N}}\bigoplus_{k=1}^{\lfloor\eta^{-1}\rfloor}\mathbf{V}_{\lambda_{i,k}},\label{eq:tildeprojection}
\end{align}
where we recall the notation from \eqref{eq:subspaceproject}. We now recall the \abbr{SPDE} \eqref{eq:3resultI} below:
\begin{align}
\partial_{\t}\mathfrak{h}^{\eta,\mathrm{lin}}_{\t,\x}&=\Delta\mathfrak{h}^{\eta,\mathrm{lin}}_{\t,\x}+\wt{\Pi}^{\eta,\perp}(-\Delta)^{-\frac14}\xi_{\t,\x}.\label{eq:hlin}
\end{align}
We will give \eqref{eq:hlin} a stationary initial condition. To specify this precisely, we make preliminary observations. Since $\partial\mathds{M}$ is a finite union of circles, the space $\mathbf{V}_{\lambda_{i,k}}$ is two-dimensional, and any smooth function on $\partial\mathds{M}$ admits a Fourier series representation. In these Fourier coordinates, we specify the initial data
\begin{align}
\mathfrak{h}^{\eta,\mathrm{lin}}_{0,\x}&=\sum_{i=1,\ldots,\mathrm{N}}\mathbf{1}_{\x\in\mathbb{T}_{i}}{{}\sum_{\k=1}^{\eta^{-1}}}\Big\{\mathsf{z}_{i,k,1}\psi_{i,k,1}(x)+\mathsf{z}_{i,k,2}\psi_{i,k,2}(x)\Big\}.\label{eq:hlindata}
\end{align}
Up to an isometry that maps $\mathbb{T}_{i}$ to a circle, the $\psi_{i,k,1}$ have the form {{}$|\mathbb{T}_{i}|^{-1/2}\sqrt{2}\cdot\cos(2\pi|k|\x/|\mathbb{T}_{i}|)$}, and the $\psi_{i,k,2}$ have the form {{}$|\mathbb{T}_{i}|^{-1/2}\sqrt{2}\cdot\sin(2\pi|k|\x/|\mathbb{T}_{i}|)$}, all for $\x\in[0,|\mathbb{T}_{i}|)$. The $\{\mathsf{z}_{i,k,j}\}_{i,k,j}$ are independent Gaussian random variables with $\mathsf{z}_{i,k,j}\sim\mathscr{N}(0,|\lambda_{i,k}|^{-3})$ (where $\lambda_{i,k}$ is the eigenvalue of $(-\Delta_{\mathbb{T}_{i}})^{-1/2}$ corresponding to $\psi_{i,k,1},\psi_{i,k,2}$). Since the noise in \eqref{eq:hlin} projects away from $0$-eigenspaces of $\Delta$, in Fourier coordinates, the equation \eqref{eq:hlin} can be written as 
\begin{align}
\mathfrak{h}^{\eta,\mathrm{lin}}_{\t,\x}&=\sum_{i=1,\ldots,\mathrm{N}}\mathbf{1}_{\x\in\mathbb{T}_{i}}{{}\sum_{\k=1}^{\eta^{-1}}}\Big\{\mathsf{z}_{i,k,1,t}\psi_{i,k,1}(x)+\mathsf{z}_{i,k,2,t}\psi_{i,k,2}(x)\Big\},\label{eq:hlinfourier}
\end{align}
where $\mathsf{z}_{i,k,j,t}$ are solutions to the following \abbr{SDE}s (driven by independent standard Brownian motions $\mathsf{b}_{i,k,j,t}$):
\begin{align}
\d\mathsf{z}_{i,k,j,t}&=-\tfrac12\lambda_{i,k}^{2}\mathsf{z}_{i,k,j,t}\d\t+\lambda_{i,k}^{-\frac12}\d\mathsf{b}_{i,k,j,t}.\label{eq:hlinou}
\end{align}
By construction, the solutions to \eqref{eq:hlinou} are (statistically) stationary. We now move to the next piece of \eqref{eq:singSPDE}. Let $\mathfrak{h}^{\eta,\mathrm{reg},1}$ solve the following with zero initial data (which turns out to be convenient):
\begin{align}
\partial_{t}\mathfrak{h}^{\eta,\mathrm{reg},1}_{\t,\x}&=\Delta\mathfrak{h}^{\eta,\mathrm{reg},1}_{\t,\x}+(\Pi^{\eta,\perp}-\wt{\Pi}^{\eta,\perp})(-\mathscr{L})^{-\frac12}\xi_{\t,\x}+\wt{\Pi}^{\eta,\perp}\Big\{(-\mathscr{L})^{-\frac12}-(-\Delta)^{-\frac14}\Big\}\xi_{\t,\x}.\label{eq:hreg1}
\end{align}
(We use the $\mathrm{reg}$ superscript because we will ultimately be able to use regularity arguments to make sense of \eqref{eq:hreg1}.) Keeping track of what is left, the final piece of \eqref{eq:singSPDE} is given by the following \abbr{PDE}:
\begin{align}
\partial_{t}\mathfrak{h}^{\eta,\mathrm{reg},2}_{\t,\x}&=\Delta\mathfrak{h}^{\eta,\mathrm{reg},2}_{\t,\x}+\Pi^{\eta}\Big(|\grad\mathfrak{h}^{\eta,\mathrm{lin}}_{\t,\x}+\grad\mathfrak{h}^{\eta,\mathrm{reg},1}_{\t,\x}|^{2}\Big)-{{}\mathscr{C}_{\eta}}\label{eq:hreg2a}\\
&+2\Pi^{\eta}\Big\{(\grad\mathfrak{h}^{\eta,\mathrm{lin}}_{\t,\x}+\grad\mathfrak{h}^{\eta,\mathrm{reg},2}_{\t,\x})\grad\mathfrak{h}^{\eta,\mathrm{reg},2}_{\t,\x}\Big\}+\Pi^{\eta}|\grad\mathfrak{h}^{\eta,\mathrm{reg},2}_{\t,\x}|^{2}.\label{eq:hreg2b}
\end{align}
The initial data $\mathfrak{h}^{\eta,\mathrm{reg},2}_{0,\x}=\mathfrak{h}^{\mathrm{initial}}_{x}-\mathfrak{h}^{\eta,\mathrm{lin}}_{0,\x}$ to \eqref{eq:hreg2a} is specified by the initial data to \eqref{eq:hlin}, \eqref{eq:hreg1}, and \eqref{eq:singSPDE}. As before, we call \eqref{eq:hreg2a}-\eqref{eq:hreg2b} a ``regular piece" because we can solve it classically, it turns out, {once we provide a stochastic estimate for the squared gradient of $\mathfrak{h}^{\eta,\mathrm{lin}}$}. Note that \eqref{eq:hreg2a}-\eqref{eq:hreg2b} is nonlinear in its solution, so local-in-time solutions are all we guarantee for now.

We now record the following; it will be convenient to reference but follows essentially by construction.
\begin{lemma}\label{lemma:thm31}
\fsp There is a stopping time $\tau>0$ with respect to the filtration generated by $\xi$ so that \eqref{eq:hlin}, \eqref{eq:hreg1}, and \eqref{eq:hreg2a}-\eqref{eq:hreg2b} are well-posed in {$\mathscr{C}^{\infty}_{\tau}\mathscr{C}^{\infty}_{}$}. Moreover, for $\t\leq\tau$, we have the identity
\begin{align}
\mathfrak{h}^{\eta}_{\t,\x}=\mathfrak{h}^{\eta,\mathrm{lin}}_{\t,\x}+\mathfrak{h}^{\eta,\mathrm{reg},1}_{\t,\x}+\mathfrak{h}^{\eta,\mathrm{reg},2}_{\t,\x}.\label{eq:thm31I}
\end{align}
\end{lemma}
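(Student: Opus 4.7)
The plan is to establish local well-posedness of \eqref{eq:dpd3a}--\eqref{eq:dpd3b} pathwise at fixed $\eta>0$, and then verify \eqref{eq:thm31I} by summing the three evolution equations and appealing to uniqueness for \eqref{eq:singSPDE}. First, since $\eta>0$ is fixed, $\mathfrak{h}^{\eta,\mathrm{lin}}$ is the finite spectral sum \eqref{eq:dpd1c} with coefficients $\mathsf{z}_{\k,\t}$ continuous in $\t$ (they solve the \abbr{OU} \abbr{SDE}s \eqref{eq:dpd1d}) and eigenfunctions $\psi_{\k;\cdot}$ smooth on $\partial\mathds{M}$ (Lemma \ref{lemma:dtonbasics}), so $\mathfrak{h}^{\eta,\mathrm{lin}}\in\mathscr{C}^{0}_{T}\mathscr{C}^{\infty}_{\partial\mathds{M}}$ almost surely for every $T>0$. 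Consequently, the forcing $(\Delta_{\partial\mathds{M}}+\mathscr{L}^{2})\mathfrak{h}^{\eta,\mathrm{lin}}$ in \eqref{eq:dpd2a} also lies in $\mathscr{C}^{0}_{T}\mathscr{C}^{\infty}_{\partial\mathds{M}}$, and the Duhamel representation combined with Lemma \ref{lemma:regheat} yields $\mathfrak{h}^{\eta,\mathrm{reg},1}\in\mathscr{C}^{0}_{T}\mathscr{C}^{\infty}_{\partial\mathds{M}}$ globally in time.

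For \eqref{eq:dpd3a}--\eqref{eq:dpd3b}, I would set $v:=\mathfrak{h}^{\eta,\mathrm{reg},2}$, $\mathbf{B}:=2\grad_{\partial\mathds{M}}(\mathfrak{h}^{\eta,\mathrm{lin}}+\mathfrak{h}^{\eta,\mathrm{reg},1})$, and $\mathbf{F}:=|\grad_{\partial\mathds{M}}(\mathfrak{h}^{\eta,\mathrm{lin}}+\mathfrak{h}^{\eta,\mathrm{reg},1})|^{2}-\mathscr{C}_{\eta}$, so that the equation reads $\partial_{\t}v=\Delta_{\partial\mathds{M}}v+\mathbf{F}-\mathbf{B}\cdot\grad_{\partial\mathds{M}}v+|\grad_{\partial\mathds{M}}v|^{2}$ with initial datum $v_{0,\cdot}=\mathfrak{h}^{\mathrm{initial}}-\mathfrak{h}^{\eta,\mathrm{lin}}_{0,\cdot}\in\mathscr{C}^{2}(\partial\mathds{M})$. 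Both $\mathbf{B}$ and $\mathbf{F}$ lie in $\mathscr{C}^{0}_{T}\mathscr{C}^{\infty}_{\partial\mathds{M}}$; the nonlinearity $|\grad_{\partial\mathds{M}}v|^{2}$ is locally Lipschitz on balls of $\mathscr{C}^{2}(\partial\mathds{M})$; and $\{\exp[\mathfrak{t}\Delta_{\partial\mathds{M}}]\}_{\mathfrak{t}\geq0}$ acts boundedly on $\mathscr{C}^{2}(\partial\mathds{M})$ locally uniformly in $\mathfrak{t}$ by Lemma \ref{lemma:regheat}. A standard Picard iteration on the Duhamel form therefore produces a unique local solution $v$ in $\mathscr{C}^{0}_{\tau}\mathscr{C}^{2}_{\partial\mathds{M}}$; taking $\tau$ to be the first time the $\mathscr{C}^{2}(\partial\mathds{M})$-norm of $v$ exceeds a threshold gives a stopping time adapted to the filtration of $\xi$ with $\tau>0$ almost surely. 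A standard parabolic bootstrap (repeated differentiation of the Duhamel formula, using smoothness of $\mathbf{B}, \mathbf{F}$ and Lemma \ref{lemma:regheat}) then upgrades $v$ to $\mathscr{C}^{\infty}_{\tau}\mathscr{C}^{\infty}_{\partial\mathds{M}}$.

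To finish, I set $u:=\mathfrak{h}^{\eta,\mathrm{lin}}+\mathfrak{h}^{\eta,\mathrm{reg},1}+\mathfrak{h}^{\eta,\mathrm{reg},2}$. Evaluating at $\t=0$, using $\mathfrak{h}^{\eta,\mathrm{reg},1}_{0,\cdot}=0$ and the prescribed initial datum for $\mathfrak{h}^{\eta,\mathrm{reg},2}$, gives $u_{0,\cdot}=\mathfrak{h}^{\mathrm{initial}}$. Adding \eqref{eq:dpd1a}, \eqref{eq:dpd2a}, and \eqref{eq:dpd3a}--\eqref{eq:dpd3b} termwise, the $\pm\mathscr{L}^{2}\mathfrak{h}^{\eta,\mathrm{lin}}$ contributions cancel, the remaining second-order pieces collapse to $\Delta_{\partial\mathds{M}}u$, the noise $\Pi^{\eta}(-\mathscr{L})^{-1/2}\xi$ appears exactly once, and the gradient-quadratic terms in \eqref{eq:dpd3a}--\eqref{eq:dpd3b} recombine via the pairwise expansion of $|\grad_{\partial\mathds{M}}u|^{2}$ in $(\grad_{\partial\mathds{M}}\mathfrak{h}^{\eta,\mathrm{lin}},\grad_{\partial\mathds{M}}\mathfrak{h}^{\eta,\mathrm{reg},1},\grad_{\partial\mathds{M}}\mathfrak{h}^{\eta,\mathrm{reg},2})$ to $|\grad_{\partial\mathds{M}}u|^{2}-\mathscr{C}_{\eta}$. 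Hence $u$ satisfies \eqref{eq:singSPDE} on $[0,\tau]$ with initial datum $\mathfrak{h}^{\mathrm{initial}}$, and uniqueness for this classical semilinear parabolic \abbr{PDE} with smooth forcing (at fixed $\eta>0$) forces $u=\mathfrak{h}^{\eta}$ on $[0,\tau]$. The only nontrivial point is the contraction step of the Picard iteration, and the main obstacle there is arranging the gradient estimates so that the heat semigroup recovers the derivative spent on the $|\grad_{\partial\mathds{M}}v|^{2}$ nonlinearity, which is handled routinely by Lemma \ref{lemma:regheat}.
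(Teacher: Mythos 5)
Your approach is exactly what the paper intends: the authors state that Lemma \ref{lemma:thm31} ``can be checked by an elementary calculation, which we omit,'' and your three-part verification (finite-spectral smoothness of $\mathfrak{h}^{\eta,\mathrm{lin}}$, global well-posedness of \eqref{eq:dpd2a} by Duhamel, Picard iteration plus bootstrap for \eqref{eq:dpd3a}--\eqref{eq:dpd3b}, then termwise addition and uniqueness for \eqref{eq:singSPDE}) is that calculation written out. The reduction to a semilinear heat equation in $v=\mathfrak{h}^{\eta,\mathrm{reg},2}$ with smooth coefficients $\mathbf{B},\mathbf{F}$ and initial data in $\mathscr{C}^{2}(\partial\mathds{M})$, the definition of $\tau$ as a $\mathscr{C}^{2}$-blow-up time, and the parabolic bootstrap to $\mathscr{C}^{\infty}_{\tau}\mathscr{C}^{\infty}_{\partial\mathds{M}}$ are all fine.

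One point you pass over, and which you should not: the claimed recombination of the gradient-quadratic terms to $|\grad_{\partial\mathds{M}}u|^{2}-\mathscr{C}_{\eta}$ does not work with \eqref{eq:dpd3b} as printed. Writing $a=\mathfrak{h}^{\eta,\mathrm{lin}}+\mathfrak{h}^{\eta,\mathrm{reg},1}$ and $b=\mathfrak{h}^{\eta,\mathrm{reg},2}$, one has $|\grad_{\partial\mathds{M}}(a+b)|^{2}=|\grad_{\partial\mathds{M}}a|^{2}+2\grad_{\partial\mathds{M}}a\cdot\grad_{\partial\mathds{M}}b+|\grad_{\partial\mathds{M}}b|^{2}$, so summing the three equations produces $|\grad_{\partial\mathds{M}}u|^{2}-\mathscr{C}_{\eta}$ only if the cross term in \eqref{eq:dpd3b} has coefficient $+2$; the displayed equation has $-2$, which instead corresponds to $u=a-b$ and therefore contradicts \eqref{eq:thm31I}. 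Since adding \eqref{eq:dpd1a} and \eqref{eq:dpd2a} gives $\partial_{\t}a=\Delta_{\partial\mathds{M}}a+\Pi^{\eta}(-\mathscr{L})^{-1/2}\xi$, the correct remainder equation is $\partial_{\t}b=\Delta_{\partial\mathds{M}}b+|\grad_{\partial\mathds{M}}a|^{2}-\mathscr{C}_{\eta}+2\grad_{\partial\mathds{M}}a\cdot\grad_{\partial\mathds{M}}b+|\grad_{\partial\mathds{M}}b|^{2}$; the sign in \eqref{eq:dpd3b} (and in its Duhamel counterpart \eqref{eq:thm36I0c}) is a typo, harmless to the estimates in Lemma \ref{lemma:thm35} but essential to the algebraic identity you are proving here. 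When you do the termwise addition you implicitly use the corrected $+2$; a blind reader checking your proof against the printed equations would conclude the recombination fails, so you should state the correction explicitly.
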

\subsection{Estimates for $\mathfrak{h}^{\eta,\mathrm{lin}}$}
The goal of this subsection is to record the key properties of $\mathfrak{h}^{\eta,\mathrm{lin}}$. Before we state the following result, recall the spaces $\mathscr{C}^{0}_{\mathfrak{t}}\mathrm{H}^{\alpha}{}$ from Section \ref{section:notation}.
\begin{lemma}\label{lemma:thm32}
\fsp Fix any deterministic $\mathfrak{t}\geq0$ and $\delta>0$ and $\p\geq1$. The sequence $\mathfrak{h}^{\eta,\mathrm{lin}}$ converges in $\mathscr{C}^{0}_{\mathfrak{t}}\mathrm{H}^{1-\delta}$ for any $\delta>0$ as $\eta\to0$.
\end{lemma}
\begin{proof}
We note that $(-\Delta)^{1/4}\mathfrak{h}^{\eta,\mathrm{lin}}$ solves \eqref{eq:hlin} with (statistically) stationary initial data, but after we replace $(-\Delta)^{-1/4}\xi$ by $\xi$ therein. In particular, because $\partial\mathds{M}$ is a finite union of circles, we are left with the additive-noise \abbr{SHE} (with the zero Fourier mode projected away) on each such circle. The additive \abbr{SHE} is well-posed in $\mathrm{H}^{1/2-\delta}$ for any $\delta>0$. {{}It now suffices to use that $(-\Delta)^{-1/4}$ maps $\mathrm{H}^{\alpha}$ to $\mathrm{H}^{\alpha+1/2}$ after projecting to the orthogonal complement of the null-space of $\Delta$}, and the fact that $\mathfrak{h}^{\eta,\mathrm{lin}}$ lives in said orthogonal complement by construction (see \eqref{eq:hlinfourier}). Therefore, well-posedness of the additive \abbr{SHE} in $\mathscr{C}^{0}_{\mathfrak{t}}\mathrm{H}^{1/2-\delta}$ implies well-posedness of \eqref{eq:hlin} in $\mathscr{C}^{0}_{\mathfrak{t}}\mathrm{H}^{1-\delta}$, which completes the proof.
\end{proof}
We now present a result for the renormalized square of $\grad\mathfrak{h}^{\eta,\mathrm{lin}}$ which appears in \eqref{eq:hreg2a}-\eqref{eq:hreg2b}. In a nutshell, the result below follows because $\grad\mathfrak{h}^{\eta,\mathrm{lin}}$ lives in $\mathscr{C}^{-\delta}$ for any $\delta>0$ (see \eqref{eq:hlinou}, for example). Thus, its Wick square also lives in $\mathscr{C}^{-\delta}$ for any $\delta>0$. Then, we use smoothing properties of the heat semigroup. We sketch this argument in more detail below.
\begin{lemma}\label{lemma:thm33}
\fsp Fix any deterministic $\mathfrak{t}\geq0$. {{}Recall $\mathscr{C}_{\eta}\in\R$ from \eqref{eq:renorm}}. The following function of $(\t,\x)\in[0,\mathfrak{t}]\times\partial\mathds{M}$ converges in {$\mathscr{C}^{0}_{\mathfrak{t}}\mathrm{H}^{1+\gamma}_{}$} in probability for some $\gamma>0$ uniformly positive:
\begin{align}
\Psi^{\eta}_{\t,\x}:={\textstyle\int_{0}^{\t}\int_{\partial\mathds{M}}}\Gamma^{{}}_{\t-\s,\x,\z}[\Pi^{\eta}|{{}\grad_{}}\mathfrak{h}^{\eta,\mathrm{lin}}_{\s,\z}|^{2}-{{}\mathscr{C}_{\eta}}\}]\d\z\d\s.\label{eq:thm33I}
\end{align}
\end{lemma}
\begin{proof}
Recall that $\Pi^{\eta}$ denotes projection onto \eqref{eq:subspaceproject}. Note also that $\mathbf{1}_{\z\in\mathbb{T}_{i}}$ belongs to $\mathbf{V}_{\lambda_{i,0}}$ in \eqref{eq:subspaceproject}, since it is an element in $\mathrm{L}^{2}(\mathbb{T}_{i})$ that vanishes under $\Delta_{\mathbb{T}_{i}}$. Therefore, $\mathbf{1}_{\z\in\mathbb{T}_{i}}=\Pi^{\eta}\mathbf{1}_{\z\in\mathbb{T}_{i}}$, which implies 
\begin{align}
\Psi^{\eta}_{\t,\x}&=\sum_{i=1,\ldots,\mathrm{N}}\int_{0}^{\t}\int_{\mathbb{T}_{i}}\Gamma_{\t-\s,\x,\z}\Pi^{\eta}\Big\{|\grad\mathfrak{h}^{\eta,\mathrm{lin}}_{\s,\z}|^{2}-{{}\mathscr{C}_{\eta}}\Big\}\d\z\d\s\nonumber\\
&=\Pi^{\eta}\Big(\sum_{i=1,\ldots,\mathrm{N}}\int_{0}^{\t}\int_{\mathbb{T}_{i}}\Gamma_{\t-\s,\x,\z}\Big\{|\grad\mathfrak{h}^{\eta,\mathrm{lin}}_{\s,\z}|^{2}-{{}\mathscr{C}_{\eta}}\Big\}\d\z\d\s\Big),\label{eq:thm33I1}
\end{align}
where in the last line, the projection acts on the $\x$-variable in $\Gamma$. (The last line follows because $\Pi^{\eta}$ and the $\Delta$-semigroup commute.) Because $\Pi^{\eta}$ converges strongly to the identity as an operator from $\mathrm{H}^{1+\gamma}$ to itself, it suffices to show that the function inside the $\Pi^{\eta}$ operator converges in $\mathscr{C}^{0}_{\mathfrak{t}}\mathrm{H}^{1+\gamma}$. By \eqref{eq:hlinou}, we have
\begin{align*}
|\grad\mathfrak{h}^{\eta,\mathrm{lin}}_{\s,\z}|^{2}-{{}\mathscr{C}_{\eta}}&=\left({{}\sum_{\k=1}^{\eta^{-1}}}\Big\{\mathsf{z}_{i,k,1,s}\grad\psi_{i,k,1}(z)+\mathsf{z}_{i,k,2,s}\grad\psi_{i,k,2}(z)\Big\}\right)^{2}-{{}\mathscr{C}_{\eta}}, \quad\text{for } \z\in\mathbb{T}_{i}.
\end{align*}
Recall that $\mathbb{T}_{i}$ is isometric to a circle (of length $|\mathbb{T}_{i}|$); we will assume for the rest of the argument that $\mathbb{T}_{i}$ is indeed such a circle. In this case, recall from the paragraph after \eqref{eq:hlindata} that
\begin{align}
{{}\psi_{i,k,1}(z)=\frac{\sqrt{2}}{\sqrt{|\mathbb{T}_{i}|}}\cos(2\pi|k||\mathbb{T}_{i}|^{-1}z) \quad\text{and}\quad \psi_{i,k,2}(z)=\frac{\sqrt{2}}{\sqrt{|\mathbb{T}_{i}|}}\sin(2\pi|k||\mathbb{T}_{i}|^{-1}z)}\nonumber
\end{align}
Using this, we can compute 
\begin{align*}
\mathsf{z}_{i,k,1,s}\grad\psi_{i,k,1}(z)+\mathsf{z}_{i,k,2,s}\grad\psi_{i,k,2}(z)=-\frac{2\pi|k|\mathsf{z}_{i,k,1,s}}{|\mathbb{T}_{i}|}\psi_{i,k,2}(z)+\frac{2\pi|k|\mathsf{z}_{i,k,2,s}}{|\mathbb{T}_{i}|}\psi_{i,k,1}(z).
\end{align*}
We note that $\psi_{i,k,j}$ form an orthonormal basis for $\mathrm{L}^{2}(\mathbb{T}_{i})$. Moreover, recall $\mathsf{z}_{i,k,j,s}\sim\mathscr{N}(0,|\lambda_{i,k}|^{-3})$. Using these inputs, the previous two displays, and the formula \eqref{eq:renorm} for renormalization constants, we deduce that $|\grad\mathfrak{h}^{\eta,\mathrm{lin}}_{\s,\z}|^{2}-{{}\mathscr{C}_{\eta}}$ is the Wick-renormalized square of $\grad\mathfrak{h}^{\eta,\mathrm{lin}}$. We also deduce from the same calculations that $\grad\mathfrak{h}^{\eta,\mathrm{lin}}_{\s,\z}$ itself belongs to $\mathscr{C}^{-\delta}(\mathbb{T}_{i})$ as a function of $\z$ for any $\delta>0$, and thus so does its Wick square. (See Lemma 3.2 in \cite{DPD}, for example.) The heat semigroup in \eqref{eq:thm33I1} is smoothing by $2$ derivatives (see Proposition 2.4 in \cite{MWX}), so we get convergence of the following in $\mathscr{C}^{0}_{\mathfrak{t}}\mathscr{C}^{2-\delta}(\mathbb{T}_{i})$ for any $i$ and $\delta>0$:
\begin{align*}
\int_{0}^{\t}\int_{\mathbb{T}_{i}}\Gamma_{\t-\s,\x,\z}\Big\{|\grad\mathfrak{h}^{\eta,\mathrm{lin}}_{\s,\z}|^{2}-{{}\mathscr{C}_{\eta}}\Big\}\d\z\d\s.
\end{align*}
Since $\mathscr{C}^{\alpha}(\mathbb{T}_{i})$-norms control $\mathrm{H}^{\alpha}(\mathbb{T}_{i})$-norms for any non-integer $\alpha\geq0$ (see, for example, the Littlewood-Paley representation of $\mathscr{C}^{\alpha}(\mathbb{T}_{i})$-norms in \cite{MWX}), we deduce that the term in parentheses in \eqref{eq:thm33I1} converges in $\mathscr{C}^{0}_{\mathfrak{t}}\mathrm{H}^{2-\delta}$ for any $\delta>0$. This finishes the proof.
\end{proof}
\subsection{Estimates for $\mathfrak{h}^{\eta,\mathrm{reg},1}$}
We will now use properties of $\mathscr{L}$ and $\Delta$ to control \eqref{eq:hreg1} as $\eta\to0$.
\begin{lemma}\label{lemma:thm34}
\fsp Fix any deterministic $\mathfrak{t}\geq0$ and $\delta>0$ and $\p\geq1$. The sequence $\mathfrak{h}^{\eta,\mathrm{reg},1}$ converges in {$\mathscr{C}^{0}_{\mathfrak{t}}\mathrm{H}^{3-\delta}_{}$}.
\end{lemma}
\begin{proof}
By the Duhamel formula (see Lemma \ref{lemma:duhamel}), we have
\begin{align}
\mathfrak{h}^{\eta,\mathrm{reg},1}_{\t,\x}&=\int_{0}^{\t}\int_{\partial\mathds{M}}\Gamma_{\t-\s,\x,\z}(\Pi^{\eta,\perp}-\wt{\Pi}^{\eta,\perp})(-\mathscr{L})^{-\frac12}\xi_{\s,\z}\d\z\d\s\label{eq:thm341a}\\
&+\int_{0}^{\t}\int_{\partial\mathds{M}}\Gamma_{\t-\s,\x,\z}\wt{\Pi}^{\eta,\perp}\Big\{(-\mathscr{L})^{-\frac12}-(-\Delta)^{-\frac14}\Big\}\xi_{\s,\z}\d\z\d\s.\label{eq:thm341b}
\end{align}
We note that the operators hitting the noise terms in \eqref{eq:thm341a}-\eqref{eq:thm341b} are compositions of self-adjoint operators with respect to Euclidean surface measure on $\partial\mathds{M}$ (see Lemma \ref{lemma:dtonbasics}). Thus, we have 
\begin{align}
\mathfrak{h}^{\eta,\mathrm{reg},1}_{\t,\x}&=\int_{0}^{\t}\int_{\partial\mathds{M}}\mathscr{T}_{1,\eta}\Gamma_{\t-\s,\x,\z}\xi_{\s,\z}\d\z\d\s+\int_{0}^{\t}\int_{\partial\mathds{M}}\mathscr{T}_{2,\eta}\Gamma_{\t-\s,\x,\z}\xi_{\s,\z}\d\z\d\s,\label{eq:thm341c}
\end{align}
where $\mathscr{T}_{1,\eta},\mathscr{T}_{2,\eta}$ are the following operators acting on $\Gamma$ through the $\z$-variable:
\begin{align}
\mathscr{T}_{1,\eta}&:=(-\mathscr{L})^{-\frac12}(\Pi^{\eta,\perp}-\wt{\Pi}^{\eta,\perp}),\label{eq:thm341d}\\
\mathscr{T}_{2,\eta}&:=\Big\{(-\mathscr{L})^{-\frac12}-(-\Delta)^{-\frac14}\Big\}\wt{\Pi}^{\eta,\perp}.\label{eq:thm341e}
\end{align}
We first study $\mathscr{T}_{2,\eta}$; the operator $\mathscr{T}_{1,\eta}$ is simpler to analyze. We use standard resolvent identities to rewrite
\begin{align*}
\mathscr{T}_{2,\eta}=(-\mathscr{L})^{-\frac12}\Big\{(-\Delta)^{\frac14}-(-\mathscr{L})^{\frac12}\Big\}(-\Delta)^{-\frac14}\wt{\Pi}^{\eta,\perp}.
\end{align*}
{{}We now use the theory of pseudo-differential operators and their symbols in Chapter 7 of \cite{TaylorII}. Specifically, we use the spaces $S^{m}_{1,0}$ consisting of symbols that, as functions of $\xi\in\R$, behave like degree $m$ polynomials as $|\xi|\to\infty$. As explained in Chapter 7.10 in \cite{TaylorII}, this theory extends readily to Riemannian manifolds, and by Proposition 5.5 of Chapter 7 of \cite{TaylorII}, operators with symbols in $S^{m}_{1,0}$ map $\mathrm{H}^{\alpha+m}\to\mathrm{H}^{\alpha}$ for any $\alpha\in\R$.}

Lemma \ref{lemma:dtoncom} gives $-\mathscr{L}=(-\Delta)^{1/2}+\mathscr{O}$, where $\mathscr{O}$ is a pseudo-differential operator of order $-1$. Since the symbol of $(-\Delta)^{1/2}$ is $|\xi|$, by Taylor expansion, the principal symbol of $(-\Delta)^{1/4}-(-\mathscr{L})^{1/2}$ is $\frac12|\xi|^{-1/2}s_{\mathscr{O}}(\xi)$, where $s_{\mathscr{O}}(\xi)$ is the principal symbol of $\mathscr{O}$; {{}see Proposition 3.3 in Chapter 7 of \cite{TaylorII} for the computation of the symbol of the operator square-root.} Thus, the map $(-\Delta)^{1/4}-(-\mathscr{L})^{1/2}:\mathrm{H}^{\alpha}\to\mathrm{H}^{\alpha+3/2}$ is bounded. 

Recall that $\wt{\Pi}^{\eta,\perp}$ projects away from the null-space of $\Delta$ (see \eqref{eq:tildeprojection}), and on this orthogonal complement, the operator $(-\Delta)^{-1/4}$ maps $\mathrm{H}^{\alpha}$ to $\mathrm{H}^{\alpha+1/2}$. Also, on the image of $(-\Delta)^{1/4}-(-\mathscr{L})^{1/2}$, which is orthogonal to the space of constant functions on $\partial\mathds{M}$, the operator $(-\mathscr{L})^{-1/2}$ maps $\mathrm{H}^{\alpha}$ to $\mathrm{H}^{\alpha+1/2}$. Thus, $\mathscr{T}_{2,\eta}:\mathrm{H}^{\alpha}\to\mathrm{H}^{\alpha+5/2}$ is a bounded map. Moreover, this argument shows that this operator converges in the strong topology to $\mathscr{T}_{2}:=\{(-\mathscr{L})^{-1/2}-(-\Delta)^{-1/4}\}\wt{\Pi}^{\perp}$, where $\wt{\Pi}^{\perp}$ denotes the projection onto the orthogonal complement of functions that are constant on $\mathbb{T}_{i}$-components. Thus, by a standard Galerkin-type approximation, in order to show convergence for the last term in \eqref{eq:thm341c}, it suffices to show that the function
\begin{align}
(\t,\x)\mapsto\int_{0}^{\t}\int_{\partial\mathds{M}}\mathscr{T}_{2}\Gamma_{\t-\s,\x,\z}\xi_{\s,\z}\d\z\d\s\label{eq:thm342}
\end{align}
is in $\mathscr{C}^{0}_{\mathfrak{t}}\mathrm{H}^{3-\delta}$ for any $\delta>0$ with probability $1$, where $\mathscr{T}_{2}$ acts on the $\z$-variable. {{}This follows by boundedness of $\mathscr{T}_{2}:\mathrm{H}^{\alpha}\to\mathrm{H}^{\alpha+5/2}$ and well-posedness of the additive \abbr{SHE} in $\mathscr{C}^{0}_{\mathfrak{t}}\mathrm{H}^{1/2-\delta}$ as in the proof of Lemma \ref{lemma:thm32}.}

It remains to show convergence of the first term on the \abbr{RHS} of \eqref{eq:thm341c}. We make a few observations about the projections therein. First, $\Pi^{\eta,\perp}-\wt{\Pi}^{\eta,\perp}$ is a projection onto the space of continuous, piecewise constant functions on $\partial\mathds{M}=\mathbb{T}_{1}\cup\ldots\cup\mathbb{T}_{\mathrm{N}}$ modulo the space of constant functions on $\partial\mathds{M}$. Note that this image of $\Pi^{\eta,\perp}-\wt{\Pi}^{\eta,\perp}$ is a finite-dimensional space of smooth functions that are orthogonal to constant functions on $\partial\mathds{M}$ which is independent of $\eta$. Moreover, $(-\mathscr{L})^{-1/2}$ maps smooth functions that are orthogonal to constant functions on $\partial\mathds{M}$ to smooth functions. Thus, $\mathscr{T}_{1,\eta}:\mathrm{H}^{\alpha}\to\mathrm{H}^{\beta}$ is independent of $\eta>0$ and bounded for any $\alpha,\beta$, so the argument showing convergence of the last term in \eqref{eq:thm341c} also gives the desired convergence of the first term on the \abbr{RHS} of \eqref{eq:thm341c}. This completes the proof.
\end{proof}
\subsection{Estimates for $\mathfrak{h}^{\eta,\mathrm{reg},2}$}
Instead of considering $\mathfrak{h}^{\eta,\mathrm{reg},2}$, it will be more convenient to study the following (for $\omega>0$ small):
\begin{align}
\mathfrak{h}^{\eta,\mathrm{reg},2,\omega}_{\t,\x}:=\t^{\omega}\mathfrak{h}^{\eta,\mathrm{reg},2}_{\t,\x}.\label{eq:reg2omega}
\end{align}
The reason why \eqref{eq:reg2omega} is ``better" is because to analyze \eqref{eq:hreg2a}-\eqref{eq:hreg2b}, we will need estimates for its solution in $\mathrm{H}^{1+\gamma}{}$-norms. However, the initial data for $\mathfrak{h}^{\eta,\mathrm{reg},2}$ is in $\mathrm{H}^{1-\delta}{}$ for any $\delta>0$ (see after \eqref{eq:hreg2a}-\eqref{eq:hreg2b} and Lemma \ref{lemma:thm32}). Of course, integrating this initial data against {$\Gamma^{{}}$} regularizes a little (see Lemma \ref{lemma:regheat}), but at the cost of a factor which is cancelled out by $\t^{\omega}$. (In any case, because $\omega>0$ is small and we ask for convergence in Theorem \ref{theorem:3} to be analytically weak, this extra factor $\t^{\omega}$ is harmless after integration in time.)

{{}We now state the main result for $\mathfrak{h}^{\eta,\mathrm{reg},2,\omega}$. It states that the blow-up time for $\mathfrak{h}^{\eta,\mathrm{reg},2,\omega}$ in an appropriate Sobolev space remains positive almost surely as $\eta\to0$. It also gives convergence of $\mathfrak{h}^{\eta,\mathrm{reg},2,\omega}$ as $\eta\to0$ in said Sobolev space upon stopping it strictly before the aforementioned blow-up time.}
\begin{lemma}\label{lemma:thm35}
\fsp {{}Fix any (small) constant $\omega>0$. There exists a constant $\gamma=\gamma(\omega)>0$ such that
\begin{align*}
\tau_{\mathrm{BU}}:=\inf\Big\{\s\geq0: \limsup_{\eta\to0}\|\mathfrak{h}^{\eta,\mathrm{reg},2,\omega}\|_{\mathscr{C}^{0}_{\s}\mathrm{H}^{1+\gamma}}=\infty\Big\}>0.
\end{align*}
with probability $1$. Moreover, for any possibly random $\tau_{\mathrm{stop}}\in(0,\tau_{\mathrm{BU}})$, the sequence of functions {$(\t,\x)\mapsto\mathfrak{h}^{\eta,\mathrm{reg},2,\omega}_{t,\x}$} converges in probability in {$\mathscr{C}^{0}_{\tau_{\mathrm{stop}}}\mathrm{H}^{1+\gamma}_{}$} as $\eta\to0$.}
\end{lemma}
\begin{remark}\label{remark:thm36}
\fsp The limit of $\mathfrak{h}^{\eta,\mathrm{reg},2}$ is the solution of the \abbr{PDE} obtained from formally taking $\eta\to0$ for every term in \eqref{eq:hreg2a}-\eqref{eq:hreg2b} (with the renormalized square of {${{}\grad_{}}\mathfrak{h}^{\eta,\mathrm{lin}}$} handled by Lemma \ref{lemma:thm33}, and with all other terms in \eqref{eq:hreg2a}-\eqref{eq:hreg2b} handled by Lemmas \ref{lemma:thm32} and \ref{lemma:thm34}.) We do not record this \abbr{SPDE} as it does not serve any immediate purpose for us as far as we can tell, and it is a little complicated to write down. We note, however, that proving this remark amounts to following the proof of Lemma \ref{lemma:thm35}.
\end{remark}
\begin{proof}
By \eqref{eq:hreg2a}-\eqref{eq:hreg2b} {{}and} the Duhamel formula, we have
\begin{align}
\mathfrak{h}^{\eta,\mathrm{reg},2,\omega}_{\t,\x}&=\t^{\omega}{\textstyle\int_{\partial\mathds{M}}}\Gamma^{{}}_{\t,\x,\z}\mathfrak{h}^{\eta,\mathrm{reg},2}_{0,\z}\d\z\label{eq:thm36I1a}\\
&+\t^{\omega}{\textstyle\int_{0}^{\t}\int_{\partial\mathds{M}}}\Gamma^{{}}_{\t-\s,\x,\z}\cdot\Pi^{\eta}\Big(|{{}\grad_{}}\mathfrak{h}^{\eta,\mathrm{lin}}_{\s,\z}+{{}\grad_{}}\mathfrak{h}^{\eta,\mathrm{reg},1}_{\s,\z}|^{2}-{{}\mathscr{C}_{\eta}}\Big)\d\z\d\s\label{eq:thm36I1b}\\
&+2\t^{\omega}{\textstyle\int_{0}^{\t}\s^{-\omega}\int_{\partial\mathds{M}}}\Gamma^{{}}_{\t-\s,\x,\z}\cdot\Pi^{\eta}\Big\{({{}\grad_{}}\mathfrak{h}^{\eta,\mathrm{lin}}_{\s,\z}+{{}\grad_{}}\mathfrak{h}^{\eta,\mathrm{reg},1}_{\s,\z}){{}\grad_{}}\mathfrak{h}^{\eta,\mathrm{reg},2,\omega}_{\s,\z}\Big\}\d\z\d\s\label{eq:thm36I1c}\\
&+\t^{\omega}{\textstyle\int_{0}^{\t}\s^{-2\omega}\int_{\partial\mathds{M}}}\Gamma^{{}}_{\t-\s,\x,\z}\cdot\Pi^{\eta}(|{{}\grad_{}}\mathfrak{h}^{\eta,\mathrm{reg},2,\omega}_{\s,\z}|^{2})\d\z\d\s.\label{eq:thm36I1d}
\end{align}
We will now show that there exists $\nu>0$ such that if we choose $\omega>0$ and $\gamma>0$ small enough, then
\begin{align}
\|\eqref{eq:thm36I1a}\|_{\mathscr{C}^{0}_{\mathfrak{t}}\mathrm{H}^{1+\gamma}_{}}&\lesssim_{\gamma,\omega}1\label{eq:thm36I1e}\\
\|\eqref{eq:thm36I1b}\|_{\mathscr{C}^{0}_{\mathfrak{t}}\mathrm{H}^{1+\gamma}_{}}&\lesssim_{\gamma,\omega,\mathfrak{t}}1\label{eq:thm36I1f}\\
\|\eqref{eq:thm36I1c}\|_{\mathscr{C}^{0}_{\mathfrak{t}}\mathrm{H}^{1+\gamma}_{}}&\lesssim_{\gamma,\omega}\mathfrak{t}^{\nu}\|\mathfrak{h}^{\eta,\mathrm{reg},2,\omega}\|_{\mathscr{C}^{0}_{\mathfrak{t}}\mathrm{H}^{1+\gamma}_{}}\label{eq:thm36I1g}\\
\|\eqref{eq:thm36I1d}\|_{\mathscr{C}^{0}_{\mathfrak{t}}\mathrm{H}^{1+\gamma}_{}}&\lesssim_{\gamma,\omega}\mathfrak{t}^{\nu}\|\mathfrak{h}^{\eta,\mathrm{reg},2,\omega}\|_{\mathscr{C}^{0}_{\mathfrak{t}}\mathrm{H}^{1+\gamma}_{}}^{2}.\label{eq:thm36I1h}
\end{align}
(The implied constants are possibly random but tight as random variables as $\eta\to0$.) From this, we get
\begin{align*}
\|\mathfrak{h}^{\eta,\mathrm{reg},2,\omega}\|_{\mathscr{C}^{0}_{\mathfrak{t}}\mathrm{H}^{1+\gamma}}\lesssim_{\gamma,\omega}1
\end{align*}
if $\mathfrak{t}$ is sufficiently small but almost surely positive (and possibly random). Moreover, we will also show that for any $\mathfrak{t}\geq0$, the \abbr{RHS} of \eqref{eq:thm36I1a} and \eqref{eq:thm36I1b} converge in probability as $\eta\to0$ in $\mathscr{C}^{0}_{\mathfrak{t}}\mathrm{H}^{1+\gamma}$, and that \eqref{eq:thm36I1c}-\eqref{eq:thm36I1d} are continuous with respect to $\mathfrak{h}^{\eta,\mathrm{lin}}$ in the $\mathscr{C}^{0}_{\mathfrak{t}}\mathrm{H}^{1-\delta}$-norm, with respect to $\mathfrak{h}^{\eta,\mathrm{reg},1}$ in the $\mathscr{C}^{0}_{\mathfrak{t}}\mathrm{H}^{3-\delta}$-norm, and with respect to $\mathfrak{h}^{\eta,\mathrm{reg},2,\omega}$ in the $\mathscr{C}^{0}_{\mathfrak{t}}\mathrm{H}^{1+\gamma}$ norm, where $\gamma,\delta>0$ are small. Combining all of this with the fact that $\Pi^{\eta}\to\mathrm{Id}$ in the strong operator topology (as maps $\mathrm{H}^{\alpha}\to\mathrm{H}^{\alpha}$ for any $\alpha$) ultimately finishes the proof of Lemma \ref{lemma:thm35} (e.g. by a standard Picard iteration and Galerkin approximation argument). 

We will now treat each of \eqref{eq:thm36I1a}-\eqref{eq:thm36I1d} as follows.
\begin{itemize}
\item The first estimate \eqref{eq:thm36I1e} (and convergence of the \abbr{RHS} of \eqref{eq:thm36I1a} in {\small$\mathscr{C}^{0}_{\mathfrak{t}}\mathrm{H}^{1+\gamma}$}) follows by the convergence of $\mathfrak{h}^{\eta,\mathrm{reg},2}_{0,\cdot}$ in $\mathrm{H}^{1-\delta}$ for any $\delta>0$ (see Lemmas \ref{lemma:thm32} and \ref{lemma:thm33}) and the boundedness of $\t^{\omega}\exp(\t\Delta):\mathrm{H}^{1-\delta}\to\mathrm{H}^{1+\gamma}$ with norm $\mathrm{O}_{\gamma,\omega}(1)$ (assuming that $\gamma,\delta$ are small and $\omega\geq\gamma+\delta$); see Lemma \ref{lemma:regheat}.
\item We first expand the square in \eqref{eq:thm36I1b} to get the following (in which we recall \eqref{eq:thm33I}):
\begin{align}
\eqref{eq:thm36I1b}=\t^{\omega}\Psi^{\eta}_{\t,\x}&+2\t^{\omega}\int_{0}^{\t}\int_{\partial\mathds{M}}\Gamma_{\t-\s,\x,\z}\cdot\Pi^{\eta}(\grad\mathfrak{h}^{\eta,\mathrm{lin}}_{\s,\z}\grad\mathfrak{h}^{\eta,\mathrm{reg},1}_{\s,\z})\d\z\d\s\label{eq:thm36I2a}\\
&+\t^{\omega}\int_{0}^{\t}\int_{\partial\mathds{M}}\Gamma_{\t-\s,\x,\z}\cdot\Pi^{\eta}(|\grad\mathfrak{h}^{\eta,\mathrm{reg},1}_{\s,\z}|^{2})\d\z\d\s.\label{eq:thm36I2b}
\end{align}
Lemma \ref{lemma:thm34} implies that $\t^{\omega}\Psi^{\eta}$ converges as $\eta\to0$ in $\mathscr{C}^{0}_{\mathfrak{t}}\mathrm{H}^{1+\gamma}$, and that $\|\Psi^{\eta}\|_{\mathscr{C}^{0}_{\mathfrak{t}}\mathrm{H}^{1+\gamma}}\lesssim_{\gamma}1$. Next, we use Lemma \ref{lemma:sobolevmultiply} to show that if $\gamma\geq\delta$, then $\|\grad\mathfrak{h}^{\eta,\mathrm{lin}}_{\s,\cdot}\grad\mathfrak{h}^{\eta,\mathrm{reg},1}_{\s,\cdot}\|_{\mathrm{H}^{-1/2-\delta}}\lesssim\|\grad\mathfrak{h}^{\eta,\mathrm{lin}}_{\s,\cdot}\|_{\mathrm{H}^{-\delta}}\|\grad\mathfrak{h}^{\eta,\mathrm{reg},1}_{\s,\cdot}\|_{\mathrm{H}^{\gamma}}$ (in particular, point (1) in Lemma \ref{lemma:sobolevmultiply}). (We always take $\delta>0$ small and $\gamma>0$ depending on $\delta$.)

Now, we use Lemmas \ref{lemma:thm32} and \ref{lemma:thm33} to deduce $\|\grad\mathfrak{h}^{\eta,\mathrm{lin}}_{\s,\cdot}\grad\mathfrak{h}^{\eta,\mathrm{reg},1}_{\s,\cdot}\|_{\mathrm{H}^{-1/2-\delta}}\lesssim_{\delta}1$. Thus, by Lemma \ref{lemma:regheat}, we can use the heat semigroup to gain $3/2+\gamma+\delta$-many derivatives; more precisely, we get the estimate below for $\delta,\nu>0$ small enough and for all $\t\leq\mathfrak{t}$:
\begin{align*}
\int_{0}^{\t}\Big\|\int_{\partial\mathds{M}}\Gamma_{\t-\s,\cdot,\z}\Pi^{\eta}(\grad\mathfrak{h}^{\eta,\mathrm{lin}}_{\s,\z}\grad\mathfrak{h}^{\eta,\mathrm{reg},1}_{\s,\z})\d\z\Big\|_{\mathrm{H}^{1+\gamma}}\d\s\lesssim\int_{0}^{\t}|\t-\s|^{-\frac34-\frac12\gamma-\frac12\delta}\d\s\lesssim_{\gamma,\delta}\mathfrak{t}^{\nu}.
\end{align*}
The same argument also shows that the second term in \eqref{eq:thm36I2a} is continuous in $\mathfrak{h}^{\eta,\mathrm{lin}},\mathfrak{h}^{\eta,\mathrm{reg},1}$ with respect to the $\mathscr{C}^{0}_{\mathfrak{t}}\mathrm{H}^{1-\delta}$ and $\mathscr{C}^{0}_{\mathfrak{t}}\mathrm{H}^{1+\gamma}$ topologies, respectively, if we restrict \eqref{eq:thm36I2a} to $\t\leq\mathfrak{t}$. Finally, everything we proved about the second term in \eqref{eq:thm36I2a} also applies to \eqref{eq:thm36I2b}, since $\mathfrak{h}^{\eta,\mathrm{reg},1}$ is more regular than $\mathfrak{h}^{\eta,\mathrm{lin}}$ (see Lemmas \ref{lemma:thm32} and \ref{lemma:thm33}). Ultimately, since $\mathfrak{h}^{\eta,\mathrm{lin}},\mathfrak{h}^{\eta,\mathrm{reg},1}$ converge in the $\mathscr{C}^{0}_{\mathfrak{t}}\mathrm{H}^{1-\delta}$ and $\mathscr{C}^{0}_{\mathfrak{t}}\mathrm{H}^{1+\gamma}$ topologies (see Lemmas \ref{lemma:thm32} and \ref{lemma:thm33}), we deduce \eqref{eq:thm36I1f}, as well as convergence of \eqref{eq:thm36I1b} in $\mathscr{C}^{0}_{\mathfrak{t}}\mathrm{H}^{1+\gamma}$.
\item The previous bullet point only requires convergence of $\mathfrak{h}^{\eta,\mathrm{lin}}$ in $\mathscr{C}^{0}_{\mathfrak{t}}\mathrm{H}^{1-\delta}$ and convergence of $\mathfrak{h}^{\eta,\mathrm{reg},1}$ in $\mathscr{C}^{0}_{\mathfrak{t}}\mathrm{H}^{1+\gamma}$. Since $\mathfrak{h}^{\eta,\mathrm{lin}}+\mathfrak{h}^{\eta,\mathrm{reg},1}$ converges in $\mathscr{C}^{0}_{\mathfrak{t}}\mathrm{H}^{1-\delta}$, we can use the same argument from the previous bullet point, namely our analysis of the second term in \eqref{eq:thm36I2a} but with $\mathfrak{h}^{\eta,\mathrm{lin}}$ replaced by $\mathfrak{h}^{\eta,\mathrm{lin}}+\mathfrak{h}^{\eta,\mathrm{reg},1}$ and with $\mathfrak{h}^{\eta,\mathfrak{reg},1}$ replaced by $\mathfrak{h}^{\eta,\mathrm{reg},2}$. Doing so shows \eqref{eq:thm36I1g}, and that \eqref{eq:thm36I1c} is continuous in $\mathfrak{h}^{\eta,\mathrm{reg},2}$ with respect to the $\mathscr{C}^{0}_{\mathfrak{t}}\mathrm{H}^{1+\gamma}$ topology. By the same token, our analysis of \eqref{eq:thm36I2b} (but with $\mathfrak{h}^{\eta,\mathrm{reg},1}$ replaced by $\mathfrak{h}^{\eta,\mathrm{reg},2}$) also gives \eqref{eq:thm36I1h} and that \eqref{eq:thm36I1d} is continuous in $\mathfrak{h}^{\eta,\mathrm{reg},2}$ with respect to the $\mathscr{C}^{0}_{\mathfrak{t}}\mathrm{H}^{1+\gamma}$ topology.
\end{itemize}
Therefore, as we explained after \eqref{eq:thm36I1e}-\eqref{eq:thm36I1h}, the proof is complete.
\end{proof}
\subsection{Proof of Theorem \ref{theorem:3}}
By Lemma \ref{lemma:thm31}, we have the following {{}for any $\tau_{\mathrm{stop}}\in(0,\tau_{\mathrm{BU}})$ as in Lemma \ref{lemma:thm35}}:
\begin{align}
{\textstyle\int_{[0,{{}\tau_{\mathrm{stop}}})}\int_{\partial\mathds{M}}}\mathtt{F}_{\t,\x}\mathfrak{h}^{\eta}_{\t,\x}\d\x\d\t&={\textstyle\int_{[0,{{}\tau_{\mathrm{stop}}})}\int_{\partial\mathds{M}}}\mathtt{F}_{\t,\x}\mathfrak{h}^{\eta,\mathrm{lin}}_{\t,\x}\d\x\d\t+{\textstyle\int_{[0,{{}\tau_{\mathrm{stop}}})}\int_{\partial\mathds{M}}}\mathtt{F}_{\t,\x}\mathfrak{h}^{\eta,\mathrm{reg},1}_{\t,\x}\d\x\d\t\\
&+{\textstyle\int_{[0,{{}\tau_{\mathrm{stop}}})}\int_{\partial\mathds{M}}}\mathtt{F}_{\t,\x}\mathfrak{h}^{\eta,\mathrm{reg},2}_{\t,\x}\d\x\d\t.
\end{align}
The RHS of the first line converges in probability as $\eta\to0$ by Lemmas \ref{lemma:thm32}, \ref{lemma:thm34} since $\tau_{\mathrm{stop}}$ is finite almost surely. For the second line, we first use \eqref{eq:reg2omega}:
\begin{align}
{\textstyle\int_{[0,{{}\tau_{\mathrm{stop}}})}\int_{\partial\mathds{M}}}\mathtt{F}_{\t,\x}\mathfrak{h}^{\eta,\mathrm{reg},2}_{\t,\x}\d\x\d\t={\textstyle\int_{[0,{{}\tau_{\mathrm{stop}}})}\int_{\partial\mathds{M}}}\t^{-\omega}\mathtt{F}_{\t,\x}\mathfrak{h}^{\eta,\mathrm{reg},2,\omega}_{\t,\x}\d\x\d\t.
\end{align}
As $\omega>0$ is small, $\t^{-\omega}$ is integrable near $0$. Since $\tau_{\mathrm{stop}}$ is finite almost surely, the above quantity converges in probability as $\eta\to0$ by Lemma \ref{lemma:thm35}. This completes the proof. \qed}
\appendix
\section{Deterministic results about the heat kernel $\Gamma^{{}}$ on $\partial\mathds{M}$}\label{section:heat}
\begin{lemma}\label{lemma:duhamel}[Duhamel formula]
\fsp Suppose $\mathbf{F}\in\mathscr{C}^{0}_{\t}\mathscr{C}^{\infty}_{\x}$ solves $\partial_{\t}\mathbf{F}_{\t,\x}={{}\Delta}\mathbf{F}_{\t,\x}+\mathbf{G}_{\t,\x}$ for $\t\geq0$ and $\x\in\partial\mathds{M}$, where $\mathbf{G}\in\mathrm{L}^{\infty}(\R_{\geq0}\times\partial\mathds{M})$. For all $\t\geq0$ and $\x\in\partial\mathds{M}$, we have
\begin{align}
\mathbf{F}_{\t,\x} \ = \ {\textstyle\int_{\partial\mathds{M}}}\Gamma^{{}}_{\t,\x,\y}\mathbf{F}_{0,\y}\d\y + {\textstyle\int_{0}^{\t}\int_{\partial\mathds{M}}}\Gamma^{{}}_{\t-\s,\x,\y}\mathbf{G}_{\s,\y}\d\y\d\s. \label{eq:duhamelI}
\end{align}
\end{lemma}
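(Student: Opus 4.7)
The strategy is the standard one for Duhamel: I will define an auxiliary one-parameter family of spatial integrals interpolating between the two sides of \eqref{eq:duhamelI}, differentiate in the interpolation parameter, and integrate back. Concretely, for fixed $\t > 0$ and $\x \in \partial\mathds{M}$, I would set
\begin{align}
\Phi(\s) := {\textstyle\int_{\partial\mathds{M}}}\Gamma^{(\partial\mathds{M})}_{\t-\s,\x,\y}\mathbf{F}_{\s,\y}\d\y, \quad \s \in [0,\t],
\end{align}
and aim to show $\frac{\d}{\d\s}\Phi(\s) = \int_{\partial\mathds{M}}\Gamma^{(\partial\mathds{M})}_{\t-\s,\x,\y}\mathbf{G}_{\s,\y}\d\y$ for $\s \in (0,\t)$, so that \eqref{eq:duhamelI} follows by integrating from $\s = 0$ to $\s = \t$, provided the boundary values of $\Phi$ are $\Phi(0) = \int \Gamma^{(\partial\mathds{M})}_{\t,\x,\y}\mathbf{F}_{0,\y}\d\y$ and $\Phi(\t) = \mathbf{F}_{\t,\x}$ (the latter using the delta-function convergence from \eqref{eq:heatkerneldef} together with continuity of $\mathbf{F}$ in time).

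The derivative computation is a direct application of the product rule. The $\s$-derivative hitting $\mathbf{F}_{\s,\y}$ produces $\Delta_{\partial\mathds{M}}\mathbf{F}_{\s,\y} + \mathbf{G}_{\s,\y}$ by the hypothesis that $\mathbf{F}$ satisfies the PDE. The $\s$-derivative hitting $\Gamma^{(\partial\mathds{M})}_{\t-\s,\x,\y}$ produces $-\partial_\tau\Gamma^{(\partial\mathds{M})}_{\t-\s,\x,\y} = -\Delta_{\partial\mathds{M}}\Gamma^{(\partial\mathds{M})}_{\t-\s,\x,\y}$ by \eqref{eq:heatkerneldef}, where I will take the Laplacian to act in the $\y$-variable (which is legitimate since $\Gamma^{(\partial\mathds{M})}$ solves the heat equation in either spatial variable — this is also reflected in the symmetry $\Gamma^{(\partial\mathds{M})}_{\t-\s,\x,\y} = \Gamma^{(\partial\mathds{M})}_{\t-\s,\y,\x}$ recorded in Lemma \ref{lemma:basicheat}). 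Then self-adjointness of $\Delta_{\partial\mathds{M}}$ on the closed manifold $\partial\mathds{M}$ (no boundary terms to worry about) lets me transfer the Laplacian off of $\Gamma$ and onto $\mathbf{F}$, cancelling the $\Delta_{\partial\mathds{M}}\mathbf{F}_{\s,\y}$ contribution and leaving only the $\Gamma \cdot \mathbf{G}$ term, as claimed.

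The only nontrivial point, and the one I would treat most carefully, is justifying the various interchanges of derivative and integral together with the boundary value $\Phi(\t) = \mathbf{F}_{\t,\x}$. Smoothness of $\mathbf{F}$ in space (by hypothesis $\mathbf{F} \in \mathscr{C}^{0}_{\t}\mathscr{C}^{\infty}_{\x}$), smoothness of $\Gamma^{(\partial\mathds{M})}_{\tau,\cdot,\cdot}$ for $\tau > 0$, and compactness of $\partial\mathds{M}$ make the differentiation under the integral sign and the integration by parts on the compact manifold $\partial\mathds{M}$ straightforward for $\s \in (0,\t)$. For the endpoint $\s \to \t^-$, I would use that $\mathbf{F}$ is continuous in time and that $\Gamma^{(\partial\mathds{M})}_{\t-\s,\x,\cdot}$ converges to $\delta_{\y = \x}$ as a probability measure (from \eqref{eq:heatkerneldef}, which also provides total mass $1$ via Lemma \ref{lemma:basicheat}); hence $\Phi(\s) \to \mathbf{F}_{\t,\x}$. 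For $\s \to 0^+$, $\Phi$ is continuous there by smoothness of $\Gamma^{(\partial\mathds{M})}_{\t,\x,\cdot}$ and continuity of $\mathbf{F}$ in time. Integrating $\Phi'(\s)$ from $0$ to $\t$ and rearranging then yields \eqref{eq:duhamelI}. No step presents a real obstacle, so the main care is bookkeeping of the interchange of limits.
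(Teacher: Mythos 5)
Your proposal is correct and follows essentially the same argument as the paper: differentiate $\s\mapsto\int_{\partial\mathds{M}}\Gamma^{(\partial\mathds{M})}_{\t-\s,\x,\y}\mathbf{F}_{\s,\y}\d\y$ via the Leibniz rule, cancel the two Laplacian terms using the heat equation for $\Gamma^{(\partial\mathds{M})}$ and self-adjointness of $\Delta_{\partial\mathds{M}}$, integrate from $\s=0$ to $\s=\t$, and identify the endpoint values (with the $\s\to\t^-$ limit handled by the delta-function convergence in \eqref{eq:heatkerneldef}). The only cosmetic difference is that you invoke the kernel symmetry from Lemma \ref{lemma:basicheat} to move the Laplacian between variables, whereas the paper phrases the same step as the commutation of $\Delta_{\partial\mathds{M}}$ with integration against $\Gamma^{(\partial\mathds{M})}$.
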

\begin{proof}
By the Leibniz rule, the \abbr{PDE} for $\mathbf{F}$, and the \abbr{PDE} for $\Gamma^{{}}$, for $0\leq\s<\t$, we have
\begin{align}
&\partial_{\s}{\textstyle\int_{\partial\mathds{M}}}\Gamma^{{}}_{\t-\s,\x,\y}\mathbf{F}_{\s,\y}\d\y=-{\textstyle\int_{\partial\mathds{M}}}{{}\Delta}\Gamma^{{}}_{\t-\s,\x,\y}\mathbf{F}_{\s,\y}\d\y+{\textstyle\int_{\partial\mathds{M}}}\Gamma^{{}}_{\t-\s,\x,\y}{{}\Delta}\mathbf{F}_{\s,\y}\d\y+{\textstyle\int_{\partial\mathds{M}}}\Gamma^{{}}_{\t-\s,\x,\y}\mathbf{G}_{\s,\y}\d\y. \nonumber
\end{align}
Since $\Gamma^{{}}$ is the kernel for the ${{}\Delta}$-semigroup, integrating against it commutes with ${{}\Delta}$. So, we can move ${{}\Delta}$ onto $\mathbf{F}$ in the first term on the RHS, and the first two terms above cancel. Now, by calculus,
\begin{align}
\lim_{\r\to\t}{\textstyle\int_{\partial\mathds{M}}}\Gamma^{{}}_{\t-\r,\x,\y}\mathbf{F}_{\r,\y}\d\y \ = \ \int_{\partial\mathds{M}}\Gamma^{{}}_{\t,\x,\y}\mathbf{F}_{0,\y}\d\y+\lim_{\r\to\t}{\textstyle\int_{0}^{\r}\int_{\partial\mathds{M}}}\Gamma^{{}}_{\t-\s,\x,\y}\mathbf{G}_{\s,\y}\d\y\d\s.
\end{align}
The last limit above is computed by plugging $\r=\t$; the $\d\s$-integral is certainly continuous in $\t$. Moreover, by definition of the heat kernel, the LHS converges to a delta function at $\x=\y$ integrating against $\mathbf{F}$. We can plug in $\r=\t$ for $\mathbf{F}$ on the LHS because $\mathbf{F}\in\mathscr{C}^{0}_{\t}\mathscr{C}^{\infty}_{\x}$. So, the LHS of the previous display is $\mathbf{F}_{\t,\x}$. 
\end{proof}
\begin{lemma}\label{lemma:regheat}
\fsp Fix any $\tau>0$ and $\alpha_{1}\leq\alpha_{2}$. The operator $\exp[\tau{{}\Delta}]:\mathrm{H}^{\alpha_{1}}{}\to\mathrm{H}^{\alpha_{2}}{}$ is bounded with norm $\lesssim(C\tau)^{-[\alpha_{2}-\alpha_{1}]/2}$ for a constant $C>0$ depending only on $\mathds{M}$.
\end{lemma}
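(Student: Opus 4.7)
\textbf{Proof proposal for Lemma \ref{lemma:regheat}.} The plan is to diagonalize both the semigroup and the Sobolev norms in the eigenbasis of $\Delta_{\partial\mathds{M}}$ and reduce the claim to a one-variable inequality. Since $\partial\mathds{M}$ is a smooth compact Riemannian manifold without boundary, the Laplace--Beltrami operator $\Delta_{\partial\mathds{M}}$ is essentially self-adjoint on $\mathrm{L}^{2}(\partial\mathds{M})$ (with respect to surface measure), with pure point spectrum $\{-\mu_{\k}\}_{\k\geq0}$ satisfying $0=\mu_{0}\leq\mu_{1}\leq\mu_{2}\leq\ldots\to\infty$, and a corresponding $\mathrm{L}^{2}$-orthonormal eigenbasis $\{\phi_{\k}\}_{\k\geq0}$.

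First, I would record the spectral characterization of Sobolev norms: for any $\alpha\in\R$,
\begin{align}
\|\varphi\|_{\mathrm{H}^{\alpha}(\partial\mathds{M})}^{2}\asymp_{\alpha}{\textstyle\sum_{\k\geq0}}(1+\mu_{\k})^{\alpha}|\langle\varphi,\phi_{\k}\rangle|^{2}.\nonumber
\end{align}
For non-negative integer $\alpha$ this follows by interpreting the RHS as $\|(1-\Delta_{\partial\mathds{M}})^{\alpha/2}\varphi\|_{\mathrm{L}^{2}}^{2}$ and then comparing with the definition in Section \ref{subsection:notation} using elliptic regularity for $\Delta_{\partial\mathds{M}}$ (which controls all derivatives by $(1-\Delta_{\partial\mathds{M}})^{\alpha/2}\varphi$ and vice versa on a compact manifold). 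For fractional $\alpha$ the equivalence is obtained by the interpolation definition of $\mathrm{H}^{\alpha}(\partial\mathds{M})$, and for negative $\alpha$ by duality. This is a standard fact, and I would only sketch it briefly.

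Next, using this characterization and that $\exp[\tau\Delta_{\partial\mathds{M}}]\phi_{\k}=e^{-\tau\mu_{\k}}\phi_{\k}$, I would write, for $\varphi=\sum_{\k}c_{\k}\phi_{\k}$,
\begin{align}
\|\exp[\tau\Delta_{\partial\mathds{M}}]\varphi\|_{\mathrm{H}^{\alpha_{2}}(\partial\mathds{M})}^{2}\asymp{\textstyle\sum_{\k\geq0}}(1+\mu_{\k})^{\alpha_{2}-\alpha_{1}}e^{-2\tau\mu_{\k}}\cdot(1+\mu_{\k})^{\alpha_{1}}|c_{\k}|^{2}.\nonumber
\end{align}
Setting $\beta:=\alpha_{2}-\alpha_{1}\geq0$, the claim then reduces to the pointwise bound
\begin{align}
\sup_{x\geq0}(1+x)^{\beta}e^{-2\tau x}\lesssim_{\beta}(C\tau)^{-\beta},\nonumber
\end{align}
for an absolute constant $C>0$. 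This is elementary: split into $x\leq1$ (where the LHS is $\lesssim_{\beta}1$) and $x\geq1$ (where $(1+x)^{\beta}\leq(2x)^{\beta}$, and then maximize $x^{\beta}e^{-2\tau x}$ in $x$ by calculus, obtaining the bound $(\beta/(2e\tau))^{\beta}$). Combining the two cases yields the desired constant $C$ depending only on $\mathds{M}$ (via how the eigenvalues $\mu_{\k}$ are normalized) and $\beta$.

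There is no serious obstacle; the only slightly non-trivial ingredient is the spectral characterization of $\mathrm{H}^{\alpha}(\partial\mathds{M})$, which is classical and could alternatively be cited directly. The main thing to be careful about is that the implied constant absorbs the fixed (finite) multiplicative comparison between the two characterizations of $\|\cdot\|_{\mathrm{H}^{\alpha}(\partial\mathds{M})}$, and that we use $\beta\geq 0$ crucially (the statement only claims a one-sided inequality).
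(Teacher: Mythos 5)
Your proof is correct in substance and gives a self-contained spectral argument, whereas the paper simply cites (1.15) in Chapter 15 of Taylor's \emph{Partial Differential Equations III}; your version is the standard proof underlying that reference, so there is no conceptual divergence, only a difference in how much is written out. The reduction via the spectral characterization of $\mathrm{H}^{\alpha}(\partial\mathds{M})$ and the diagonalization of the semigroup are both exactly what one would expect, and the calculus step is handled correctly.

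One small point deserves flagging, though it is a defect of the lemma statement as much as of your proof. The pointwise inequality you reduce to, namely $\sup_{x\geq0}(1+x)^{\beta}e^{-2\tau x}\lesssim_{\beta}(C\tau)^{-\beta}$ with $\beta=\alpha_{2}-\alpha_{1}\geq0$, is false for $\tau$ large whenever $\beta>0$: taking $x=0$ shows the left side is $\geq1$, while the right side tends to $0$ as $\tau\to\infty$. Your case split actually produces the bound $1+(\beta/(2e\tau))^{\beta}$, and this is $\lesssim_{\beta}\tau^{-\beta}$ only on a bounded range of $\tau$ (say $\tau\in(0,1]$), where the constant term is absorbed. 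In every application in the paper the heat semigroup is evaluated at times at most a fixed horizon, so the estimate is used precisely in that regime, and the Taylor reference is stated with the same implicit restriction. It would be cleaner to state the lemma (and your reduction) with $\tau\in(0,1]$, or to replace the right side by $\max(1,\tau^{-\beta/2})$; as written, "combining the two cases yields the desired constant" quietly drops the additive $1$. This is a cosmetic repair, not a flaw in the method.
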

\begin{proof}
See (1.15) in Chapter 15 of \cite{Taylor}. (Roughly, one derivative is worth $\tau^{-1/2}$.)
\end{proof}
%
%
%
\section{Deterministic estimates for the Dirichlet-to-Neumann map}
\begin{lemma}\label{lemma:dtonbasics}
\fsp We have the following properties of $\mathscr{L}$.
\begin{itemize}
\item For any $\alpha$, the map $\mathscr{L}:\mathrm{H}^{\alpha+1}{}\to\mathrm{H}^{\alpha}{}$ is bounded with norm $\lesssim1$. So, the map $\mathscr{C}^{\infty}{}\to\mathscr{C}^{\infty}{}$ is continuous in the Fr\'{e}chet topology on $\mathscr{C}^{\infty}{}$. Also, it is self-adjoint with respect to the surface measure on $\partial\mathds{M}$, and it vanishes on constant functions on $\partial\mathds{M}$.

Hence, the invariant measure of $\mathscr{L}$ is the surface measure on $\partial\mathds{M}$. (By invariant measure, we mean the measure $\mu$, up to a constant factor, such that ${\textstyle\int_{\partial\mathds{M}}}\mathscr{L}\varphi\d\mu=0$ for all $\varphi\in\mathscr{C}^{\infty}{}$.
\end{itemize}
\end{lemma}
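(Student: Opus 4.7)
The plan is to combine classical elliptic regularity for the Dirichlet problem on $\mathds{M}$ with basic spectral theory for compact self-adjoint operators. First, for the $H^{\alpha+1}(\partial\mathds{M})\to H^{\alpha}(\partial\mathds{M})$ boundedness, I would invoke standard elliptic regularity for the Dirichlet problem to get $\|\mathscr{U}^{\varphi}\|_{H^{\alpha+3/2}(\mathds{M})}\lesssim \|\varphi\|_{H^{\alpha+1}(\partial\mathds{M})}$, then apply the trace theorem to the normal derivative to land in $H^{\alpha}(\partial\mathds{M})$. (Alternatively, since $\mathscr{L}$ is a first-order classical elliptic pseudo-differential operator on $\partial\mathds{M}$ with principal symbol $|\xi|_{\partial\mathds{M}}$, this is immediate from standard $\Psi$DO mapping properties.) Frechet continuity on $\mathscr{C}^{\infty}(\partial\mathds{M})$ follows by letting $\alpha\to\infty$ and using Sobolev embedding.

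Next, for self-adjointness, the key ingredient is Green's second identity applied to the pair $(\mathscr{U}^{\varphi},\mathscr{U}^{\psi})$: because both are harmonic in $\mathds{M}$, the bulk terms cancel and one obtains
\begin{align*}
0={\textstyle\int_{\mathds{M}}}[\mathscr{U}^{\varphi}\Delta\mathscr{U}^{\psi}-\mathscr{U}^{\psi}\Delta\mathscr{U}^{\varphi}]\d x=\pm{\textstyle\int_{\partial\mathds{M}}}[\varphi\grad_{\mathsf{N}}\mathscr{U}^{\psi}-\psi\grad_{\mathsf{N}}\mathscr{U}^{\varphi}]\d\sigma,
\end{align*}
i.e. $\langle\varphi,\mathscr{L}\psi\rangle=\langle\mathscr{L}\varphi,\psi\rangle$ in $L^{2}(\partial\mathds{M})$. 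Vanishing on constants is trivial since the harmonic extension of a constant is itself constant, so $\grad_{\mathsf{N}}$ annihilates it. Combining self-adjointness with this vanishing, ${\textstyle\int_{\partial\mathds{M}}}\mathscr{L}\varphi\,\d\sigma=\langle\mathscr{L}\varphi,1\rangle=\langle\varphi,\mathscr{L}1\rangle=0$, which identifies surface measure as the invariant measure.

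For the spectral assertions: discreteness follows from compactness of $(1-\mathscr{L})^{-1}$ on $L^{2}(\partial\mathds{M})$. Indeed, $1-\mathscr{L}:H^{1}(\partial\mathds{M})\to L^{2}(\partial\mathds{M})$ is invertible with bounded inverse (the ellipticity of $\mathscr{L}$ as a first-order $\Psi$DO with positive principal symbol gives coercivity via a G\r{a}rding inequality), so $(1-\mathscr{L})^{-1}:L^{2}(\partial\mathds{M})\to H^{1}(\partial\mathds{M})$ is bounded and hence compact on $L^{2}(\partial\mathds{M})$ by Rellich--Kondrachov on the compact manifold $\partial\mathds{M}$. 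Combined with self-adjointness, this yields a complete orthonormal eigenbasis with eigenvalues accumulating only at infinity. For non-negativity of eigenvalues of $-\mathscr{L}$, I would use Green's first identity:
\begin{align*}
\langle\varphi,-\mathscr{L}\varphi\rangle_{L^{2}(\partial\mathds{M})}=-{\textstyle\int_{\partial\mathds{M}}}\mathscr{U}^{\varphi}\grad_{\mathsf{N}}\mathscr{U}^{\varphi}\d\sigma={\textstyle\int_{\mathds{M}}}|\grad\mathscr{U}^{\varphi}|^{2}\d x\geq0,
\end{align*}
where the last equality uses $\Delta\mathscr{U}^{\varphi}=0$ and the sign is dictated by $\mathsf{N}$ being the \emph{inward} unit normal. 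Fractional powers $(-\mathscr{L})^{s}$ are then defined by the functional calculus on the eigenbasis.

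The main (mild) obstacle is keeping track of the inward-normal sign convention (as opposed to the more common outward-normal convention in textbook treatments of the Dirichlet-to-Neumann map), so as to be certain that $-\mathscr{L}$, rather than $+\mathscr{L}$, is the one producing the positive Dirichlet energy on the right. Beyond that, every step is a direct application of classical elliptic PDE theory and spectral theory on the smooth compact manifold with boundary $\mathds{M}$.
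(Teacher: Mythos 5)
Your proof is correct and complete; the one point to be careful about — the sign coming from the inward-normal convention — you handle correctly, since with $\nu=-\mathsf{N}$ the outward normal, Green's first identity applied to the harmonic extension gives $\int_{\mathds{M}}|\grad\mathscr{U}^{\varphi}|^{2}=\int_{\partial\mathds{M}}\mathscr{U}^{\varphi}\partial_{\nu}\mathscr{U}^{\varphi}=-\int_{\partial\mathds{M}}\varphi\grad_{\mathsf{N}}\mathscr{U}^{\varphi}=\langle\varphi,-\mathscr{L}\varphi\rangle\geq0$, as you state. Where you diverge from the paper is only in style: the paper dispatches every bullet (except the vanishing-on-constants remark) with a single citation to Section 1.1 of \cite{GKLP}, while you give a self-contained derivation from elliptic regularity, trace theory, Green's identities, and Rellich compactness. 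Your route is more explicit and does not rely on the reference; the paper's is shorter. One small presentational note: you introduce discreteness before non-negativity, but your compactness argument implicitly uses positivity (or at least a G\r{a}rding bound) to invert $1-\mathscr{L}$, so it is cleaner to establish non-negativity via Green's first identity first, then note $1-\mathscr{L}\geq 1$ gives invertibility, then Rellich gives compactness of the resolvent; this removes any appearance of circularity. This is a cosmetic reordering rather than a gap, since a G\r{a}rding inequality for the first-order elliptic $\Psi$DO $-\mathscr{L}$ can indeed be established independently.
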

\begin{proof}
See Section 1.1 of \cite{GKLP}. (The vanishing on constants is clear by definition of $\mathscr{L}$, since the harmonic extension of any constant function is constant; see after \eqref{eq:scalinglimitIIduhamela}-\eqref{eq:scalinglimitIIduhamelb}.)
\end{proof}
{{}We now compute the difference  $-\mathscr{L}-[-\Delta]^{1/2}$ in terms of pseudo-differential operators. In particular, in what follows, we use the theory of pseudo-differential operators from Chapter 7 of \cite{TaylorII} as noted earlier in the proof of Lemma \ref{lemma:thm34}.}
\begin{lemma}\label{lemma:dtoncom}
\fsp Suppose $\d=1$. In this case, we have $-\mathscr{L}=[-{{}\Delta}]^{1/2}+\mathscr{O}$, where $\mathscr{O}$ is a pseudo-differential operator of order $-1$. Thus, the principal symbol of $-\mathscr{L}$ is $|\xi|$. Moreover, ${{}\Delta}+\mathscr{L}^{2}$ is a zeroth-order pseudo-differential operator, i.e. a bounded map $\mathrm{H}^{\alpha}{}\to\mathrm{H}^{\alpha}{}$.
\end{lemma}
\begin{proof}
{{}For the first claim, see Proposition C.1 in Chapter 12 of \cite{TaylorII}.} (Here, it is shown that  $-\mathscr{L}=[-{{}\Delta}]^{1/2}+\mathscr{O}$, where $\mathscr{O}$ is zeroth-order, and the zeroth-order term in $\mathscr{O}$ has a coefficient given by the second fundamental form of $\partial\mathds{M}$ minus its trace. But in $\d=1$, these vanish, and we are left with an order $-1$ operator.) For the second claim, we note that
\begin{align}
{{}\Delta}+\mathscr{L}^{2}=[-{{}\Delta}]^{\frac12}\mathscr{O}+\mathscr{O}[-{{}\Delta}]^{\frac12}+\mathscr{O}^{2}.
\end{align}
The $[-\Delta]^{1/2}$ has order $1$, and $\mathscr{O}$ has order $-1$, so their product is zeroth-order, and $\mathscr{O}^{2}$ is order $-2$. 
\end{proof}
\begin{lemma}\label{lemma:dtonestimates}
\fsp  We have the following.
\begin{itemize}
\item (Spectral gap) The null-space of $-\mathscr{L}$ is one-dimensional. So, it has a spectral gap, i.e. its first eigenvalue $\lambda_{1}$ is strictly positive.
\item (Resolvent estimates) Take any $\lambda>0$ and $\alpha\in\R$. The resolvent map $(\lambda-\mathscr{L})^{-1}:\mathrm{H}^{\alpha}{}\to\mathrm{H}^{\alpha}{}$ is bounded with norm $\leq\lambda^{-1}$ for all $\alpha$. 
\item (Regularity in metric) Let $\mathbf{g}$ be a smooth Riemannian metric on $\partial\mathds{M}$, which extends to a Riemannian metric on $\mathds{M}$ in the same way as in Construction \ref{construction:model}. Let $\mathscr{L}_{\mathbf{g}}$ be the Dirichlet-to-Neumann map with respect to $\mathbf{g}$ (defined in the same way as after \eqref{eq:scalinglimitIIduhamela}-\eqref{eq:scalinglimitIIduhamelb} but the harmonic extension is with respect to $\mathbf{g}$). We have the operator norm estimate below for any $\alpha\geq0$, where $\mathbf{g}[0]$ is surface metric on $\partial\mathds{M}$ and where $\alpha_{\d},n_{\alpha,\d}$ depend only on $\alpha,\d$:
\begin{align}
\|\mathscr{L}_{\mathbf{g}}-\mathscr{L}\|_{\mathrm{H}^{\alpha+\alpha_{\d}}{}\to\mathrm{H}^{\alpha}{}}\lesssim_{\alpha,\|\mathbf{g}\|_{\mathscr{C}^{n_{\alpha,\d}}}}\|\mathbf{g}-\mathbf{g}[0]\|_{\mathscr{C}^{n_{\alpha,\d}}}.\label{eq:dtonestimatesIV}
\end{align}
(The $\mathscr{C}^{n}$-norm of a metric means said norm of its entries under any fixed choice of local coordinates.)
\end{itemize}
\end{lemma}
\begin{proof}
For the spectral gap, see the beginning of \cite{GLSteklov} and Lemma \ref{lemma:dtonbasics}. For the resolvent estimate, we use
\begin{align}
(\lambda-\mathscr{L})^{-1}={\textstyle\int_{0}^{\infty}}\mathrm{e}^{-\tau(\lambda-\mathscr{L})}\d\tau
\end{align}
and contractivity of the $\mathscr{L}$-semigroup on $\mathrm{H}^{\alpha}{}$ (which holds since $\mathscr{L}\leq0$). Finally, we are left with \eqref{eq:dtonestimatesIV}. By definition, for any $\varphi\in\mathscr{C}^{\infty}{}$, we have
\begin{align}
\mathscr{L}_{\mathbf{g}}\varphi-\mathscr{L}\varphi=\grad_{\mathsf{N}}[\mathscr{U}^{\mathbf{g},\varphi}-\mathscr{U}^{\varphi}],
\end{align}
where $\mathsf{N}$ is the inward unit normal vector field (and $\grad_{\mathsf{N}}$ is gradient in this direction), and $\mathscr{U}^{\mathbf{g},\varphi},\mathscr{U}^{\varphi}$ are harmonic extensions of $\varphi$ with respect to $\mathbf{g}$ and surface metric on $\partial\mathds{M}$, respectively. In particular, we have 
\begin{align}
{{}\Delta_{\mathbf{g},\mathds{M}}}\mathscr{U}^{\mathbf{g},\varphi},{{}\Delta_{\mathds{M}}}\mathscr{U}^{\varphi}=0\quad\mathrm{and}\quad\mathscr{U}^{\mathbf{g},\varphi},\mathscr{U}^{\varphi}|_{\partial\mathds{M}}=\varphi,
\end{align}
{{}where $\Delta_{\mathbf{g},\mathds{M}}$ is the Laplacian on $\mathds{M}$ with respect to the metric $\mathbf{g}$, and $\Delta_{\mathds{M}}$ is the Laplacian on $\mathds{M}$ with its standard Euclidean metric}. The previous \abbr{PDE} implies the following for $\mathscr{V}:=\mathscr{U}^{\mathbf{g},\varphi}-\mathscr{U}^{\varphi}$:
\begin{align}
{{}\Delta_{\mathds{M}}}\mathscr{V}=[{{}\Delta_{\mathds{M}}}-{{}\Delta_{\mathds{M},\mathbf{g}}}]\mathscr{U}^{\varphi}+[{{}\Delta_{\mathds{M}}}-{{}\Delta_{\mathbf{g},\mathds{M}}}]\mathscr{V}\quad\mathrm{and}\quad\mathscr{V}|_{\partial\mathds{M}}=0.
\end{align}
We now use a usual elliptic regularity argument for Sobolev spaces. First, by construction, we have $\mathscr{L}_{\mathbf{g}}\varphi-\mathscr{L}\varphi=\grad_{\mathsf{N}}\mathscr{V}$, and therefore, for any $\k\geq0$, we have
\begin{align}
\|\mathscr{L}_{\mathbf{g}}\varphi-\mathscr{L}\varphi\|_{\mathscr{C}^{\k}{}}\lesssim\|\mathscr{V}\|_{\mathscr{C}^{\k+1}(\mathds{M})}.\label{eq:dtonestimatesfinal1}
\end{align}
By elliptic regularity, we can control the RHS of the previous display by $\mathscr{C}^{\m}{}$-data of the RHS of the \abbr{PDE} for $\mathscr{V}$ (for $\m$ depending appropriately on $\k$). In particular, by Theorem 2.35 of \cite{FRRO} (with $\Omega$ there given by $\mathds{M}\subseteq\R^{\d+1}$ here), we deduce the estimate
\begin{align}
\|\mathscr{V}\|_{\mathscr{C}^{\k+1}(\mathds{M})}\lesssim\|[{{}\Delta_{\mathds{M}}}-{{}\Delta_{\mathbf{g},\mathds{M}}}]\mathscr{U}^{\varphi}\|_{\mathscr{C}^{\k-1}(\mathds{M})}+\|[{{}\Delta_{\mathds{M}}}-{{}\Delta_{\mathbf{g},\mathds{M}}}]\mathscr{V}\|_{\mathscr{C}^{\k-1}(\mathds{M})}.
\end{align}
The implied constant depends only on $\mathds{M}$ (since it is based on elliptic regularity for ${{}\Delta_{\mathds{M}}}$). 

Now, for the rest of this argument, let $n_{\k}\lesssim1$ be a positive integer depending only on $\k$. If the $\mathscr{C}^{n_{\k}}(\mathds{M})$-norm of $\mathbf{g}-\mathbf{g}[0]$ is small enough, then even with the implied constant, the last term on the RHS of the previous display is strictly less than half of the LHS. Indeed, in local coordinates, it is easy to see that {${{}\Delta_{\mathds{M}}}-{{}\Delta_{\mathbf{g},\mathds{M}}}$} is a second-order operator whose coefficients are smooth functions of $\mathbf{g}-\mathbf{g}[0]$ and its first-derivatives. So, if these quantities are sufficiently small, then the operator norm of ${{}\Delta_{\mathds{M}}}-{{}\Delta_{\mathbf{g},\mathds{M}}}:\mathrm{H}^{\alpha+1}{}\to\mathrm{H}^{\alpha-1}{}$ is strictly less than $1/2$. By the same token, under the same assumption on $\mathbf{g}-\mathbf{g}[0]$, we bound the first term on the RHS of the previous display as follows:
\begin{align}
\|[{{}\Delta_{\mathds{M}}}-{{}\Delta_{\mathbf{g},\mathds{M}}}]\mathscr{U}^{\varphi}\|_{\mathscr{C}^{\k-1}(\mathds{M})}\lesssim_{\k,\|\mathbf{g}\|_{\mathscr{C}^{n_{\k}}}}\|\mathbf{g}-\mathbf{g}[0]\|_{\mathscr{C}^{n_{\k}}}\|\mathscr{U}^{\varphi}\|_{\mathscr{C}^{\k+1}(\mathds{M})}.
\end{align}
(The implied constant should depend on $\|\mathbf{g}[0]\|_{\mathscr{C}^{n_{\k}}}$ as well, but this depends only on $\mathds{M}$.)

Elliptic regularity (e.g. Theorem 2.35 in \cite{FRRO}) lets us replace the $\mathscr{C}^{\k+1}(\mathds{M})$-norm of $\mathscr{U}^{\varphi}$ with $\|\varphi\|_{\mathscr{C}^{\k+1}}$ itself. So, by the previous two displays and the paragraph between them, we deduce
\begin{align}
\|\mathscr{V}\|_{\mathscr{C}^{\k+1}(\mathds{M})}\leq\mathrm{O}_{\k,\|\mathbf{g}\|_{\mathscr{C}^{n_{\k}}}}(\|\mathbf{g}-\mathbf{g}[0]\|_{\mathscr{C}^{n_{\k}}}\|\varphi\|_{\mathscr{C}^{\k+1}{}})+\tfrac12\|\mathscr{V}\|_{\mathscr{C}^{\k+1}(\mathds{M})}.\label{eq:dtonestimatesfinal2}
\end{align}
By changing the implied constant in the big-Oh term above, we can drop the last term in \eqref{eq:dtonestimatesfinal2} (by moving it to the LHS and multiply by $2$). Combining this with \eqref{eq:dtonestimatesfinal1}, we deduce \eqref{eq:dtonestimatesIV} except with $\mathscr{C}^{\m}{}$-norms instead of $\mathrm{H}^{\alpha}{}$-norms. To conclude, we trade $\mathscr{C}^{\m}{}$-norms for $\mathrm{H}^{\alpha}{}$-norms by the trivial embedding $\mathscr{C}^{\m}{}\hookrightarrow\mathrm{H}^{\m}{}$ and, again, the Sobolev embedding $\mathrm{H}^{\alpha}{}\to\mathscr{C}^{\m}{}$ (for $\alpha$ large). This gives \eqref{eq:dtonestimatesIV}, assuming the $\mathscr{C}^{n_{\k}}{}$-norm of $\mathbf{g}-\mathbf{g}[0]$ is less than a fixed, positive threshold depending only on $\mathds{M}$. In the case where this is not met, then the RHS of \eqref{eq:dtonestimatesIV} is $\gtrsim1$, while the LHS of \eqref{eq:dtonestimatesIV} is $\lesssim1$ by boundedness of $\mathscr{L},\mathscr{L}_{\mathbf{g}}$. So \eqref{eq:dtonestimatesIV} follows immediately in this case. (To see that $\mathscr{L}_{\mathbf{g}}:\mathrm{H}^{\alpha+\alpha_{\d}}{}\to\mathrm{H}^{\alpha}{}$ has norm depending only on $\|\mathbf{g}\|_{\mathscr{C}^{n_{\k}}}$ for appropriate $n_{\k}$ depending on $\alpha$, use Lemma \ref{lemma:dtonbasics}.)
\end{proof}
%
%
%
\section{Auxiliary estimates}
\subsection{A priori bounds for $\mathbf{I}^{\e}$ and $\mathbf{Y}^{\e}$}
The following bounds higher derivatives of $\mathbf{I}^{\e}$ by only two derivatives of $\mathbf{Y}^{\e}$. In a nutshell, this is because the RHS of \eqref{eq:modelflow} is smoothing, and because at the level of gradients, $\mathbf{I}^{\e}$ is much smaller than $\mathbf{Y}^{\e}$ (see \eqref{eq:flucprocess}). (For a reality check, note that the following result \eqref{eq:aux1I} is obvious just from \eqref{eq:flucprocess} if we take $\k=2$ and $\upsilon=0$. It is even sub-optimal in this case by a factor of ${{}\e^{-1/4}}$. The following result says that on the LHS of \eqref{eq:aux1I}, we can trade this additional factor that we gain for $\k=2$ and $\upsilon=0$ for more derivatives on the LHS of \eqref{eq:aux1I}. It essentially follows by interpolation theory.)
\begin{lemma}\label{lemma:aux1}
\fsp Fix any $\mathfrak{t}\geq0$ and $\k\geq0$ and $\upsilon\in[0,1)$. We have the estimate
\begin{align}
\|{{}\grad_{}}\mathbf{I}^{\e}\|_{\mathscr{C}^{0}_{\mathfrak{t}}\mathscr{C}^{\k,\upsilon}_{}}&\lesssim_{\mathfrak{t},\k,\upsilon}1+\|\mathbf{Y}^{\e}\|_{\mathscr{C}^{0}_{\mathfrak{t}}\mathscr{C}^{2}_{}}.\label{eq:aux1I}
\end{align}
\end{lemma}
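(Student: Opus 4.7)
The plan is a three-step interpolation argument that combines the trivial low-regularity identity with a much weaker but classical high-regularity Duhamel bound. First I would exploit the identity $\grad_{\partial\mathds{M}}\mathbf{I}^{\e}_{\t,\cdot}=\e^{1/3}\grad_{\partial\mathds{M}}\mathbf{Y}^{\e}_{\t,\cdot}$, which follows directly from \eqref{eq:flucprocess} since the term $\e^{-1/3}\t$ is constant in space. This immediately yields the low-regularity estimate
\begin{align*}
\|\grad_{\partial\mathds{M}}\mathbf{I}^{\e}\|_{\mathscr{C}^{0}_{\mathfrak{t}}\mathscr{C}^{0}_{\partial\mathds{M}}}\leq\e^{\frac13}\|\mathbf{Y}^{\e}\|_{\mathscr{C}^{0}_{\mathfrak{t}}\mathscr{C}^{1}_{\partial\mathds{M}}}\lesssim 1+\|\mathbf{Y}^{\e}\|_{\mathscr{C}^{0}_{\mathfrak{t}}\mathscr{C}^{2}_{\partial\mathds{M}}},
\end{align*}
(for $\e\leq 1$), which settles the claim at the bottom of the regularity scale.

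Second, I would establish a high-regularity bound of the form $\|\grad_{\partial\mathds{M}}\mathbf{I}^{\e}\|_{\mathscr{C}^{0}_{\mathfrak{t}}\mathscr{C}^{m}_{\partial\mathds{M}}}\lesssim_{\mathfrak{t},m}\e^{-1/3}(1+\|\mathbf{Y}^{\e}\|_{\mathscr{C}^{0}_{\mathfrak{t}}\mathscr{C}^{2}_{\partial\mathds{M}}})$ for arbitrary integer $m$. This is obtained via Duhamel (Lemma \ref{lemma:duhamel}) applied to the evolution equation \eqref{eq:modelflow}, using three ingredients: the initial data $\mathbf{I}^{\e}_{0,\cdot}=\e^{1/3}\mathbf{Y}^{\mathrm{init}}_{0,\cdot}$ is smooth and $\e$-independent (Assumption \ref{ass:id}); the kernel $\mathbf{K}$ is smooth in both arguments (Construction \ref{construction:model}); and the scalar factor $\mathrm{Vol}_{\mathbf{I}^{\e}_{\s}}\leq|\partial\mathds{M}|(1+\e^{1/3}\|\mathbf{Y}^{\e}\|_{\mathscr{C}^{1}})$ is bounded by $C(1+\|\mathbf{Y}^{\e}\|_{\mathscr{C}^{2}})$. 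The heat semigroup maps Sobolev spaces to themselves boundedly (Lemma \ref{lemma:regheat}), so passing to a sufficiently high Sobolev space and using Sobolev embedding yields a $\mathscr{C}^{m}$ bound with the stated $\e^{-1/3}$ scaling coming directly from the forcing prefactor in \eqref{eq:modelflow}.

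Third, I would conclude by interpolating between the two bounds. By the standard H\"{o}lder interpolation inequality (Theorem 3.2 of \cite{BHSobolev}), for any $\k,\upsilon$ and any integer $m\geq 2(\k+\upsilon)$, setting $\theta:=(\k+\upsilon)/m\leq\tfrac12$ gives
\begin{align*}
\|\grad_{\partial\mathds{M}}\mathbf{I}^{\e}\|_{\mathscr{C}^{\k,\upsilon}_{\partial\mathds{M}}}\lesssim\|\grad_{\partial\mathds{M}}\mathbf{I}^{\e}\|_{\mathscr{C}^{0}_{\partial\mathds{M}}}^{1-\theta}\|\grad_{\partial\mathds{M}}\mathbf{I}^{\e}\|_{\mathscr{C}^{m}_{\partial\mathds{M}}}^{\theta}\lesssim_{\mathfrak{t},\k,\upsilon}\e^{\frac{1-2\theta}{3}}(1+\|\mathbf{Y}^{\e}\|_{\mathscr{C}^{0}_{\mathfrak{t}}\mathscr{C}^{2}_{\partial\mathds{M}}}),
\end{align*}
and since the exponent $(1-2\theta)/3\geq 0$, the power of $\e$ can be dropped (for $\e\leq 1$), giving \eqref{eq:aux1I}. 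Taking the supremum over $\t\in[0,\mathfrak{t}]$ preserves the estimate.

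The main obstacle is purely cosmetic: ensuring the Duhamel step in the second bullet correctly handles the fact that $\grad_{\partial\mathds{M}}$ and the heat semigroup do not exactly commute on the curved manifold $\partial\mathds{M}$. This is harmless because the commutator produces only lower-order terms that are absorbed by the same Sobolev smoothing plus the a priori $\mathscr{C}^{2}(\partial\mathds{M})$ control on $\mathbf{Y}^{\e}$; operationally one simply estimates $\|\grad\mathbf{I}^{\e}\|_{\mathscr{C}^{m}}\leq\|\mathbf{I}^{\e}\|_{\mathscr{C}^{m+1}}$ and runs Duhamel on $\mathbf{I}^{\e}$ itself in a high enough Sobolev norm before embedding down.
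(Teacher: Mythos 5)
Your proof is correct and follows the same three-step architecture as the paper's own argument: the trivial low-regularity identity $\grad_{\partial\mathds{M}}\mathbf{I}^{\e}=\e^{1/3}\grad_{\partial\mathds{M}}\mathbf{Y}^{\e}$ for the lower endpoint, a Duhamel plus Sobolev embedding bound of order $\e^{-1/3}(1+\|\mathbf{Y}^{\e}\|_{\mathscr{C}^{2}})$ for the upper endpoint, and H\"{o}lder interpolation (citing the same Theorem 3.2 of \cite{BHSobolev}) to land in between. The only substantive difference is cosmetic: you interpolate between $\mathscr{C}^{0}$ and $\mathscr{C}^{m}$ with a variable exponent $\theta=(\k+\upsilon)/m\leq\tfrac12$, whereas the paper fixes the interpolation exponents at $\tfrac12,\tfrac12$ and uses $\mathscr{C}^{1,\upsilon}$ and $\mathscr{C}^{n,\upsilon}$ as endpoints; both produce a non-negative power of $\e$ and hence the same conclusion. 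One small point you leave implicit: the H\"{o}lder interpolation inequality you invoke (producing $\mathscr{C}^{\k,\upsilon}$ from integer-order endpoints) requires $\k+\upsilon$ to be non-integer, i.e.\ $\upsilon\neq0$. The paper disposes of $\upsilon=0$ with the one-line remark that the $\mathscr{C}^{\k,\upsilon}$-norms are non-decreasing in $\upsilon$, so the claim for any $\upsilon'\in(0,1)$ implies it for $\upsilon=0$; you should include the same remark to cover that endpoint.
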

\begin{proof}
It suffices to assume $\upsilon\neq0$; the claim for $\upsilon=0$ follows because the norms on the LHS of \eqref{eq:aux1I} are non-decreasing in $\upsilon$. As explained above, by \eqref{eq:flucprocess}, we trivially have the inequality
\begin{align}
\|{{}\grad_{}}\mathbf{I}^{\e}\|_{\mathscr{C}^{0}_{\mathfrak{t}}\mathscr{C}^{1,\upsilon}_{}}&\lesssim\|{{}\grad_{}}\mathbf{I}^{\e}\|_{\mathscr{C}^{0}_{\mathfrak{t}}\mathscr{C}^{2}_{}}\lesssim{{}\e^{\frac14}}\|\mathbf{Y}^{\e}\|_{\mathscr{C}^{0}_{\mathfrak{t}}\mathscr{C}^{2}_{}}.\label{eq:aux1I1}
\end{align}
Now, by Duhamel (Lemma \ref{lemma:duhamel}) and \eqref{eq:modelflow}, for any $\t\leq\mathfrak{t}$, we have
\begin{align}
\mathbf{I}^{\e}_{\t,\x}&=\exp[\t{{}\Delta}]\left\{\mathbf{I}^{\e}_{0,\cdot}\right\}_{\x}+{\textstyle\int_{0}^{\t}}\exp[(\t-\s){{}\Delta}]\left\{{{}\e^{-\frac14}}\mathrm{Vol}_{\mathbf{I}^{\e}_{\s}}\mathbf{K}_{\cdot,\mathfrak{q}^{\e}_{\s}}\right\}_{\x}\d\s.\label{eq:aux1I1a}
\end{align}
(Here, the terms inside the curly braces are the functions on $\partial\mathds{M}$ that the semigroup acts on, and the subscript $\x$ means evaluate the image of this function under the semigroup at $\x$.) By Taylor expanding to get $(1+a^{2})^{1/2}=1+\mathrm{O}(a)$ in the definition of $\mathrm{Vol}_{\mathbf{I}}$ from Construction \ref{construction:model}, we have
\begin{align}
\mathrm{Vol}_{\mathbf{I}^{\e}_{\s,\cdot}}&=1+\mathrm{O}(\|{{}\grad_{}}\mathbf{I}^{\e}\|_{\mathscr{C}^{0}_{\mathfrak{t}}\mathscr{C}^{0}_{}}).
\end{align}
The heat semigroup operator is bounded on Sobolev spaces by Lemma \ref{lemma:regheat}. Thus, by smoothness of $\mathbf{K}$ and \eqref{eq:aux1I1a}, we deduce the following for any $\alpha\geq0$:
\begin{align}
\|{{}\grad_{}}\mathbf{I}^{\e}\|_{\mathscr{C}^{0}_{\mathfrak{t}}\mathrm{H}^{\alpha}_{}}\lesssim_{\mathfrak{t},\alpha}{{}\e^{-\frac14}}(1+\|{{}\grad_{}}\mathbf{I}^{\e}\|_{\mathscr{C}^{0}_{\mathfrak{t}}\mathscr{C}^{0}_{}})\lesssim{{}\e^{-\frac14}}+\|\mathbf{Y}^{\e}\|_{\mathscr{C}^{0}_{\mathfrak{t}}\mathscr{C}^{2}_{}}.\label{eq:aux1I1b}
\end{align}
(The second bound follows by \eqref{eq:aux1I1}.) Now, for any fixed $n,\upsilon$, we can take $\alpha\geq0$ big enough so that the $\mathrm{H}^{\alpha}$-norm on the far LHS of \eqref{eq:aux1I1b} controls the $\mathscr{C}^{n,\upsilon}$-norm. This is by Sobolev embedding. Thus, 
\begin{align}
\|{{}\grad_{}}\mathbf{I}^{\e}\|_{\mathscr{C}^{0}_{\mathfrak{t}}\mathscr{C}^{n,\upsilon}_{}}&\lesssim_{\mathfrak{t},n,\upsilon}{{}\e^{-\frac14}}+\|\mathbf{Y}^{\e}\|_{\mathscr{C}^{0}_{\mathfrak{t}}\mathscr{C}^{2}_{}}.\label{eq:aux1I1c}
\end{align}
Recall the fixed choices of $\k,\upsilon$ from the statement of this lemma. If we take $n$ big enough depending only on $\k,\upsilon$, then we have the following interpolation bound of norms by Theorem 3.2 in \cite{BHSobolev} (which needs $\upsilon\neq0$):
\begin{align}
\|\|_{\mathscr{C}^{0}_{\mathfrak{t}}\mathscr{C}^{\k,\upsilon}_{}}\lesssim\|\|_{\mathscr{C}^{0}_{\mathfrak{t}}\mathscr{C}^{n,\upsilon}_{}}^{1/2}\|\|_{\mathscr{C}^{0}_{\mathfrak{t}}\mathscr{C}^{1,\upsilon}_{}}^{1/2}.
\end{align}
Applying this to ${{}\grad_{}}\mathbf{I}^{\e}$ and using \eqref{eq:aux1I1} and \eqref{eq:aux1I1c} shows that 
\begin{align}
\|{{}\grad_{}}\mathbf{I}^{\e}\|_{\mathscr{C}^{0}_{\mathfrak{t}}\mathscr{C}^{\k,\upsilon}_{}}\lesssim_{\mathfrak{t},\k,\upsilon}\left({{}\e^{-\frac18}}+\|\mathbf{Y}^{\e}\|_{\mathscr{C}^{0}_{\mathfrak{t}}\mathscr{C}^{2}_{}}^{\frac12}\right){{}\e^{\frac18}}\|\mathbf{Y}^{\e}\|_{\mathscr{C}^{0}_{\mathfrak{t}}\mathscr{C}^{2}_{}}^{\frac12}\lesssim1+\|\mathbf{Y}^{\e}\|_{\mathscr{C}^{0}_{\mathfrak{t}}\mathscr{C}^{2}_{}}.
\end{align}
(For the last bound in this display, we also used $a^{1/2}\lesssim1+a$ for any $a\geq0$.) This gives \eqref{eq:aux1I}.
\end{proof}
\subsection{Sobolev multiplication}
When we say a multiplication map is bounded, we mean that multiplication of smooth functions extends continuously in the topology of interest.
\begin{lemma}\label{lemma:sobolevmultiply}
\fsp We have the following multiplication estimates in Sobolev spaces.
\begin{enumerate}
\item Suppose $\alpha_{1},\alpha_{2},\alpha\in\R$ satisfy the following conditions.
\begin{itemize}
\item We have $\alpha_{1},\alpha_{2}\geq\alpha$, and $\alpha_{1}\wedge\alpha_{2}<0$ (i.e. at least one is negative). Suppose that $\alpha_{1}+\alpha_{2}\geq0$.
\item Suppose that $\alpha_{1}+\alpha_{2}>\frac{\d}{2}+\alpha$. (In words, we lose $\d/2$-many derivatives in multiplication.)
\end{itemize}
Then the multiplication map $\mathrm{H}^{\alpha_{1}}{}\times\mathrm{H}^{\alpha_{2}}{}\to\mathrm{H}^{\alpha}{}$ is bounded with norm $\lesssim_{\alpha_{1},\alpha_{2},\alpha}1$.
\item Suppose $\alpha_{1},\alpha_{2},\alpha\in\R$ satisfy the following conditions.
\begin{itemize}
\item We have $\alpha_{1},\alpha_{2}\geq\alpha\geq0$ and $\alpha_{1}+\alpha_{2}>\frac{\d}{2}+\alpha$.
\end{itemize}
Then the multiplication map $\mathrm{H}^{\alpha_{1}}{}\times\mathrm{H}^{\alpha_{2}}{}\to\mathrm{H}^{\alpha}{}$ is bounded with norm $\lesssim_{\alpha_{1},\alpha_{2},\alpha}1$. Thus, if $\alpha>\d/2$, then $\mathrm{H}^{\alpha}{}$ is a Hilbert algebra.
\end{enumerate}
\end{lemma}
\begin{proof}
For the point (1), see Theorem 8.1 of \cite{BHSobolev}. For the second bullet point, see Theorem 5.1 of \cite{BHSobolev}. 
\end{proof}
%
%
%



\end{document}